\let\uml\"
\newtheorem{thm}{Theorem}[section]
\newtheorem{lemma}[thm]{Lemma}
\newtheorem{prop}[thm]{Proposition}
\newtheorem{cor}[thm]{Corollary}
\theoremstyle{definition}
\newtheorem{defn}[thm]{Definition}
\newtheorem{remark}[thm]{Remark}
\newtheorem{example}[thm]{Example}
\newcommand\ve{\varepsilon}
\newcommand\R{{\mathbb R}}
\newcommand\Q{{\mathcal Q}}
\newcommand\C{{\mathbb C}}
\renewcommand\H{{\mathbb H}}
\newcommand\N{{\mathbb N}}
\newcommand\K{{\mathbb K}}
\newcommand\B{{\mathbb B}}
\renewcommand\k{{\mathbb k}}
\newcommand\Z{{\mathbb Z}}
\renewcommand\1{{\mathbb 1}}
\newcommand\pr{^{\scriptstyle \R}}
\newcommand\sr{_{\scriptscriptstyle \R}}
\renewcommand\sc{_{\scriptscriptstyle \C}}
\newcommand\crt{^{\scriptscriptstyle {\it CRT}}}
\newcommand\scrt{_{\scriptscriptstyle {\it CRT}}}
\newcommand\ct{{\it CRT}}
\newcommand\id{\rm{id}}
\newcommand\im{\rm{image} \,}
\renewcommand\hom{\rm{Hom}}
\newcommand{\diag}{\rm{diag}}
\newcommand{\ev}{\rm{ev}}
\newcommand\trace{\rm{trace}}
\newcommand\tr{\mathrm{Tr}}
\newcommand\trt{\widetilde{\rm{Tr}}}
\newcommand\taut{\widetilde{\tau}}
\newcommand\Utd{\widetilde{U}}
\newcommand\Ktd{\widetilde{K}O}
\newcommand\TT{\rule{0pt}{2.6ex}} % Top strut
\newcommand\BB{\rule[-1.2ex]{0pt}{0pt}} % Bottom strut
\newcommand\calg{$C\sp *$-algebra}
\newcommand\ctalg{$C\sp {*, \tau}$-algebra}
\newcommand{\sm}[4]
	{ \left( \begin{smallmatrix} {#1} & {#2} \\ {#3} & {#4} \end{smallmatrix} \right)  }
\newcommand{\smv}[2]
	{ \left( \begin{smallmatrix} {#1}  \\ {#2} \end{smallmatrix} \right)  }
\newcommand{\smh}[2]
	{ \left( \begin{smallmatrix} {#1} & {#2}  \end{smallmatrix} \right)  }
\newcommand{\smiiii}[4]
	{\left( \begin{matrix} {#1} & {0} & {0} & {#2} \\  
				{0} & {0} & {0} & {0} 
				\\  {0} & {0} & {0} & {0}  \\  
			      {#3} & {0} & {0} & {#4}  
	\end{matrix} \right) }
\newcommand{\smd}[3]
	{ \left( \begin{smallmatrix} {#1} & 0 & 0 \\ 0 & {#2} & 0 \\ 0 & 0 & {#3} \end{smallmatrix} \right)  }
\renewcommand{\Im}{\operatorname{Im}}
\newcolumntype{C}[1]{>{\Centering}m{#1}}
\title[Unitary Picture of $K$-theory]{$K$-theory for real $C \sp *$-algebras via unitary elements with symmetries}
	\author{Jeffrey L. Boersema}	
	\address{Seattle University \\ Department of Mathematics \\
	Seattle, Washington 98133, USA}
	\email{boersema@seattle.edu}
	\author{Terry A. Loring}
	\address{University of New Mexico \\ Department of Mathematics and Statistics \\
	 Albuquerque, New Mexico  87131, USA}
	  \email{loring@math.unm.edu}
\thanks{
This work was partially supported by a grant from the Simons Foundation
(208723 to Loring).  
  }
\subjclass[2010]{46L80, 19K99, 81Q99}
\keywords{topological insulator, semiprojectivity, $K$-theory, $E$-theory, ten-fold way}
\begin{document}

\begin{abstract}
We prove that all eight $KO$ groups for a real \calg~can be
constructed from homotopy classes of unitary matrices that respect
a variety of symmetries.  In this manifestation of the $KO$ groups,
all eight boudary maps in the 24-term exact sequences associated to an
ideal in a real \calg~can be computed as exponential or
index maps with formulas that are nearly identical to the complex case. \\
\vspace{-1cm}	  
\end{abstract}
	
\maketitle
\setcounter{tocdepth}{1}
\tableofcontents 
%%%%%%%%%%%%%%%%%%%%%%%%%%%%%%%%%%%%%%\Utd%%%%%

\section{Introduction}

In the common picture of $K$-theory for \calg s, the abelian groups $K_0(A)$ and $K_1(A)$ arise from projections and unitaries in 
$M_n(A)$, respectively. Because of Bott periodicity, we do not worry about independent descriptions 
of $K_i(A)$ for other integer values of $i$. In the case of real \calg s, 
the same pictures carry over to give us concrete descriptions of $KO_0(A)$ and 
$KO_1(A)$ in terms of projections and unitaries. The higher $K$-theory groups 
(for $i \neq 0,1$) can be defined using suspensions or using Clifford algebras.
While this reliance on suspensions allows the theoretical development of $K$-theory to proceed nicely, 
it leaves much to be desired in terms of being able to represent specific 
$K$-theory classes for purposes of computation.

\begin{figure}[t]
\centering
\caption{Unitary Picture of $K$-theory}  \label{table:summary1}
\begin{tabular}{|c|c|}
\hline 
K-group & unitary symmetries \TT \BB \\ \hline \hline
$KO_{-1}(A, \tau)$ \TT \BB & $u^\tau = u $ \\ \hline
$KO_0(A, \tau)$ \TT \BB &  $u = u^*$, $u^{\tau} = u^* $  \\ \hline
$KO_1(A, \tau)$ \TT \BB &  $u^{\tau} = u^* $ \\ \hline 
$KO_2(A, \tau)$ \TT \BB &  $u = u^*$, $u^\tau = -u$ \\ \hline 
$KO_{3}(A, \tau)$ \TT \BB & $u^{\tau\otimes \sharp} = u $ \\ \hline
$KO_4(A, \tau)$ \TT \BB &  $u = u^*$, $u^{\tau \otimes \sharp} = u^*$  \\ \hline
$KO_5(A, \tau)$ \TT \BB &  $u^{\tau \otimes \sharp} = u^*$ \\ \hline 
$KO_6(A, \tau)$ \TT \BB &  $u = u^*$, $u^{\tau \otimes \sharp} = -u$ \\ \hline \hline 
\end{tabular} \par
\caption*{
The classes in $KO_j(A,\tau)$, for a unital \calg~ with real
structure are, in our picture, given
by unitary elements of $M_n(\C) \otimes A$ with the symmetries as indicated.
See Theorem~\ref{thm:summary} and Table~\ref{table:summary} for details.}
\end{figure}

We rectify this situation by putting forward a unified description 
of all ten $K$-theory groups (eight $KO$-groups and two $KU$-groups) 
of a real \calg~ $A$ using unitaries in $M_n(\widetilde{A}\sc)$ satisfying appropriate symmetries, 
completing the project that we began in \cite{BLR}. This unified 
description is summarized in condensed form in Table~\ref{table:summary1}. A complete description of our 
picture of $K$-theory can be found in Theorem~\ref{thm:summary} and Table~\ref{table:summary}, 
which summarize the results developed in detail through 
Sections~\ref{sec:even} and~\ref{sec:odd}. A salient feature of our picture is that all of the groups are obtained without using the Grothendieck construction, so any $KO$-element can be represented exactly by a single unitary. 

The boundary maps associated to $I \rightarrow A \rightarrow A/I$ can be 
critical when calculating $K$-theory groups.  In the complex case,
both boundary maps have explicit formulas
in terms of lifting problems associated to projections and unitaries.  Any picture
of real $K$-theory should have computable boundary maps in the 24-term exact sequence
of abelian groups associated to a short exact sequence of real \calg s.

For real \calg s, we have had explicit pictures for $KO_j$ for all
$j$ except $j=3$ and $j=7$ \cite{hastingsloring}.  There were some
details missing for $j=2$ and $j=6$ to adapt to the \calg~ setting,
but essentially these cases were dealt
with already in \cite{wood1966banach}.  The boundary map has been less developed.
Given $I \rightarrow A \rightarrow A/I$ in the real case, it is a folk-theorem
that the usual formulas in the complex case work to determine both
$\partial_1: KO_1(A/I) \rightarrow KO_0(I)$ and 
$\partial_5: KO_5(A/I) \rightarrow KO_4(I)$.  For this form of $\partial_5$
it is essential to work with the isomorphism
$KO_{j+4}(D) \cong KO_j(D \otimes \mathbb \H)$ where $\H$ is the algebra of the
quaternions.

We seek a consistent picture of the $KO$ and $KU$ groups that will allow
us to have essentially only two formulas for the boundary maps, one for the
even-to-odd cases and one for the odd-to-even cases. It will also tie real
$K$-theory more closely to classical mathematics. For example, the isomorphism 
$KO_2(\R)\cong \Z _2$ can be given simply as sign of the Pfaffian of a 
self-adjoint unitary that is purely imaginary.

We work with the complexified form of a real \calg~ with the real structure
determined by a generalized involution.  That is, our objects are typically
pairs $(A,\tau)$ where $A$ is a complex \calg~ and $\tau :A \rightarrow A$
an involution that is antimultiplicative and written $a^\tau$.  In the case where
$A$ has a unit, the unitaries we consider live in $M_n(\R) \otimes A$ and the
symmetries are in terms of the usual involution $*$ and one of two extensions of
$\tau$ to matrix algebras over $A$.  These extensions are $\tau = \tr \otimes \tau$
and $\sharp \otimes \tau$ where $\tr$ is the familiar transpose and $\sharp$
is the dual operation, discussed in detail later, that is based on the derived
involution on the complexification of $\H $.

Recently  there has been much interest in physics regarding
real $K$-theory.  This has been true in string theory, to classifying
$D$-branes, and in condensed matter phonetics, to classify topological insulators.
There are mathematical reasons to study our constructions in real $K$-theory,
but lets us briefly review some of the physics.

In string theory, the utility of real $K$-theory in classifying
$D$-branes was discovered by Witten \cite{WittenKthDbranes}.  A more recent work
more closely related to this paper is \cite{berenstein2012matrix}.  More recent
developments coming from this connection have involved twisted $KR$-theory, as in 
\cite{DoranTwistedKRtheory}.  In condensed matter physics, real $K$-theory 
is used to classify topological insulators \cite{kitaev-2009,ryu2010topological}.
Many of the invariants, for example the computable invariant used to detected 3D topological
insulators \cite{fu2007topological}, do not seem at first to be part of
an $KO$ group.  Recently detailed studies of $KR$-theory of low-dimensional
spaces \cite{deNittisClassificationReal,deNittisClassificationQuaternion}
explain the place in $K$-theory for such invariants, but only in the
case of no disorder.  For methods that handle disorder, see 
\cite{EsinGurari-Bulk-boundary,Loring15,Mondr_Prodan_AIII_1D}.

The ten-fold way in physics  \cite{ryu2010topological} was a key motivation for this
work.  The Altland-Zirnbauer \cite{altland1997nonstandard} classification of 
the essential antiunitary symmetries on a quantum system has ten symmetry
classes, named according to associated Cartan labels. These ten classes correspond
to the two complex and eight real $K$-theory
groups, as in Table~\ref{table:summary}.  It is hoped that the consistent and simple formulas presented here for
all ten boundary maps will be of utility in understanding the indices being developed
in physics.

A typical problem involving topological insulators and $K$-theory involves
a collection of maps
\[
\varphi_t : C(\mathbb{T}^2) \rightarrow \widetilde{\K} 
\]
that are asymptotically multiplicative, while exactly preserving addition,
adjoint and the given real structure.  That is,
we have an element of real $E$-theory.  To identify that element, we need only pair
it with each of the two generators of $KO_{-2}(C(\mathbb{T}^2),\mathrm{id})$.  Other spaces and involutions
arise in a similar fashion, as in \cite{Loring15,LorSorensenOrtho}.  A typical 
real structure on $C(X)$ is $f^\tau =f$ on the domain and a typical
real structure on the compact operators is the dual operation.  
Thus the initial problem is how to calculate an explicit generator of
$KO_{-2}(C(\mathbb{T}^2),\mathrm{id})$.  Let us revisit how the calculation would look 
in the familiar complex case, where we need a generator of the reduced $KU_0$
group.

Consider the short exact sequence
\[
0 \rightarrow C_{0}((0,1)^{2})
 \rightarrow C(\mathbb{T}^{2})
 \rightarrow C(S^{1}\vee S^{1})
 \rightarrow 0
\]
coming from the closed copy of $S^{1}\times S^{1}$ consisting of
points $(z,w)$ that have either $z=1$ or $w=1$. We need to compute
the boundary map
\[
\partial_{1}:KU_{1}(C(S^{1}\vee S^{1}))\rightarrow KU_{0}(C_{0}((0,1)^{2})).
\]
This is easy. One generator of $KU_{1}(C(S^{1}\vee S^{1}))$ is $u_{1}$
defined by $(z,w)\mapsto z$. This lifts as a unitary $v_{1}$ to
$C(\mathbb{T}^{2})$. The same is true of the other generator
so $\partial_{1}=0$. Therefore
\[
\iota_* : KU_{0}(C_{0}((0,1)^{2}))\rightarrow KU_{0}(C(\mathbb{T}^{2}))
\]
is an inclusion, and the element we need comes from the generator
of $KU_{0}(C_{0}((0,1)^{2}))$. To find that, one can look at the
exact sequence 
\[
0\rightarrow C_{0}(U)\rightarrow C(\mathbb{D})\rightarrow C(S^{1})\rightarrow0
\]
and compute $\partial_{1}$ on the unitary $u(z)=z$.  Here  $U$  is the open
disk.

With a few modifications, the standard method to compute $\partial_{1}([u])$
for a unitary in $B$ is as follows, assuming
\[
0\rightarrow I\rightarrow A\rightarrow B\rightarrow0
\]
is exact with $A$ unital.  The first step is to lift $u$ to an element $a$ in $A$
with $\|a\|\leq1$ and then form the projection
\begin{equation}
p=\begin{pmatrix}
aa^{*} & a\sqrt{1-a^{*}a}\\
a^{*}\sqrt{1-aa^{*}} & 1-a^{*}a
\end{pmatrix} .
\label{eq:complex_delta_1}
\end{equation}
To see how this arises from the more usual formulas \cite[\S 9.1]{rordambookblue}, notice
\[
v=\begin{pmatrix}
a & -\sqrt{1-aa^{*}}\\
\sqrt{1-a^{*}a} & a^{*}
\end{pmatrix}
\]
is a unitary in $A$ (c.f. \cite[Lemma 9.2.1]{rordambookblue}) that is a lift of ${{\diag}}(u, u^*)$.
Then $p = v {{\diag}}(1, 0) v^*$ and so, up to identifying $M_{2}(\widetilde{I})$ with a subalgebra
of $M_{2}(B)$, we have $\partial_{1}([u])=[p]-[1]$.

Applying (\ref{eq:complex_delta_1}) in the case $u(z)=z$ on the circle we lift
(extend) to a function $a(z)=z$ on the disk. Then
\[
p(z)= \begin{pmatrix}
|z|^{2} & z\sqrt{1-|z|^{2}}\\
\overline{z}\sqrt{1-|z|^{2}} & 1-|z|^{2}
\end{pmatrix} \; . \]
Taken as a map on the sphere, this is a degree-one mapping of $S^{2}$
onto the set of projections in $M_{2}(\mathbb{C})$ of trace
one. In terms of the real coordinates $(x,y,z)$ restricted to the unit
sphere, we find the desired element of $KU(C_{0}(U))$ is
\[
\left[\left(\begin{array}{cc}
\tfrac{1}{2}z & \tfrac{1}{2}x-\tfrac{i}{2}y\\
\tfrac{1}{2}x+\tfrac{i}{2}y & \tfrac{1}{2}-\tfrac{1}{2}z
\end{array}\right)\right]-\left[\left(\begin{array}{cc}
1 & 0\\
0 & 0
\end{array}\right)\right].
\]
Pushing this forward to $C(\mathbb{T}^{2})$ is a little tricky. One
solution is the element
\begin{equation}
\left[\left(\begin{array}{cc}
f(z) & g(z)+h(z)\overline{w}\\
g(z)+h(z)w & 1-f(z)
\end{array}\right)\right]-\left[\left(\begin{array}{cc}
1 & 0\\
0 & 0
\end{array}\right)\right]
\label{eq:complex_Bott_on_Torus}
\end{equation}
where $f$, $g$ and $h$ are certain real-valued functions on the
circle satisfying $gh=0$ and $f^{2}+g^{2}+h^{2}=1$, as discussed
in \cite{loring14}.

Our immediate goal is to allow the calculation of generators of 
$KO_*$ groups to proceed in essentially the same manner as in the
preceeding calcuation.  In particular, the generator of $KO_{-1}(C(S^1),\mathrm{id})$
will be $[u]$ where $u(z)=z$.  What will be new is having to check
that this matrix is symmetric.

Given
\[
0\rightarrow I\rightarrow A\rightarrow B\rightarrow 0
\]
exact, and unital, but now with real structures, given $u$ a unitary in 
$B$ with $u^\tau =u$, we have a representative of a $KO_{-1}$ class.  To calculate the
boundary, we lift to $a$ with $\|a\|\leq 1$ and $a^\tau = a$ and
form
\[
w=\begin{pmatrix}
2aa^{*}-1 & 2a\sqrt{1-a^{*}a}\\
2a^{*}\sqrt{1-aa^{*}} & 1-2a^{*}a
\end{pmatrix} .
\]
Then $w$ is unitary, self-adjoint, and with the more subtle symmetry
that is component-wise given as 
\[
w_{11}^{\tau}=-w_{22},\quad w_{12}^{\tau}=w_{12},\quad w_{21}^{\tau}=w_{21}.
\]
We will see this is valid to specify an element of $KO_{-2}(I)$.  Thus the
boundary map $\partial_{-1}:KO_{-1}(B) \rightarrow KO_{-2}(I)$ looks very much like
the odd boundary map in the complex case.  We will see that the generator of 
$KO_{-2}(C_{0}(U),\mathrm{id})$ is
\[
\left[\left(\begin{array}{cc}
z & x-iy\\
x+iy & -z
\end{array}\right)\right].
\]
The generator of $KO_{-2}(C(\mathbb T ^2),\mathrm{id})$ will be the same
as in (\ref{eq:complex_Bott_on_Torus}) with just a small modification of
the three functions.

The even boundary maps will also be given as a lifting problem.  A unitary
$u$ with $u^*=u$ and other symmetries gets lifted to $x$ with $-1\leq x\leq 1$
and other symmetries.  The unitary needed is then
\[
- \exp( \pi i x)
\]
which is again very close to the complex case.  Indeed, by reformulating the complex
case in terms of self-adjoint unitaries for $KU_0$ this will be the formula for
the even boundary map.  It should be noted that we are losing track of the
order structure on $KO_0$ and $KU_0$.  In principle we can recover this, but have
no present need.

As preliminary work to developing this picture of real $K$-theory, but 
of independent interest, we also present a collection of 
classifying algebras $A_i$ for $i \in \{0, 1, \dots, 8\}$. These are 
real semiprojective homotopy symmetric
\calg s that classify $K$-theory in the sense that
$$KO_i(D) \cong [A_i, \K\pr \otimes D] \cong \lim_{n \to \infty} [A_i, M_n(\R) \otimes D] \; $$
for all $i$, as we show in Theorem~\ref{thm:classify}. The algebras $A_i$ are thus real analogs of the complex \calg s
$q\C$ and $C_0(\R, \C)$ which are classifying algebras for $K$-theory in the category of complex \calg  s in the same sense. Not unexpectedly, the algebras $A_i$ will all be real forms of matrix algebras over $q\C$ and $C_0(\R,  \C)$.

In Section~\ref{sec:Ai}, we introduce the real \calg s $A_i$ for 
$0 \leq i < 8$ and we calculate their united $K$-theory, finding that 
$KO_*(A_i) \cong \Sigma^{-i} K_*(\R)$. It follows from this (or rather from the stronger statement $K\crt(A_i) \cong \Sigma^{-i} K\crt(\R)$) and the universal 
coefficient theorem that there is a real $KK$-equivalence between $A_i$ and 
$S^{-i} \R$ and that 
$KO_i(B) \cong KKO_0(A_i, B)$
for any real separable \calg~ $B$ in the UCT bootstrap category. 
Also in Section~\ref{sec:Ai}, we will show that each $A_i$ is semiprojective, 
following a short detour to prove a key semiprojectivity closure theorem. 
Then in Section~\ref{sec:Ethy} we will prove 
that each $A_i$ is homotopy symmetric.  We validate the real version of
unsuspended $E$-theory, and it then follows that 
these algebras represent $K$-theory in the strong sense that 
$KO_i(B) \cong \lim_{n \to \infty} [A_i, B \otimes M_n(\R)]$ for 
any separable real \calg~ $B$.

In Sections~\ref{sec:even} and~\ref{sec:odd} we will develop the 
unitary picture of $K$-theory, first in the even degrees and then in the 
odd degrees. We note that we are not attempting to accomplish a complete 
development of $K$-theory from scratch using the 
unitary picture -- although that would be an interesting project. 
Instead, we take it for granted that $K$-theory is an established entity 
with known properties. We will define a sequence of groups $KO_i^u(A)$ 
in terms of unitaries and will then develop its properties mainly to 
get to the point of being able to prove that there is a natural 
isomorphism $KO_i(A) \cong KO_i^u(A)$ in each case. 

Section~\ref{sec:summary} explores some examples where the generators of
the $KO$ groups can be found easily by comparing with the complex case.
Section~\ref{sec:boundary} finds and describes formulas for the eight boundary maps,
and these are applied in Section~\ref{sec:examples2} in finding more
explicit generators of real $K$-theory groups, for several examples.

\section{Preliminaries} \label{sec:prelim}

The category of interest in this paper is the category $\mathcal{R}^*$ of real \calg s (also known as $R\sp *$-algebras), with real *-algebra homomorphisms. A real \calg~ (as in Section~1 of \cite{schroderbook}) is a real Banach *-algebra satisfying the norm condition $\|a^* a \| = \|a\|^2$ and the condition that $\1 + a^* a$ is invertible (in $\widetilde{A}$) for all $a \in A$. 

The category $\mathcal{R}^*$ is equivalent to the category $\mathcal{R}^{*,\tau}$ of \ctalg s with \ctalg ~homomorphisms (see \cite{loringsorensen}). A \ctalg ~is a pair $(A, \tau)$ where $A$ is a (complex) \calg~ and $\tau$ is an involutive antiautomorphism on $A$. Given a \ctalg ~$(A, \tau)$, the corresponding real \calg~ is 
$$A^\tau = \{ a \in A \mid a^\tau = a^* \} \; .$$

Conversely, given a real \calg~ $A$ there is a unique complexification $A\sc = A \otimes\sr \C$, which as an algebra is isomorphic to $A + iA$. The formula $(a + ib) \mapsto (a^* + ib^*)$ is an antimultiplicative involution on $A\sc$. This construction gives a functor from $\mathcal{R}^*$ to $\mathcal{R}^{*, \tau}$, which is inverse (up to isomorphism) to the functor described in the previous paragraph.

We will slide back and forth easily between these two categories, as is appropriate for the situation. In particular, whereas our unitary description of $KO^u_0(-)$ and $KO^u_1(-)$ can be made in terms of a real \calg~ $A$, our description of $KO_i^u(-)$ for other values of $i$ requires the context of a \ctalg. Hence we present our unified picture of $KO_i(-)$ for all $i$ in the setting of a \ctalg ~ (see Section~\ref{sec:summary}). This approach is analogous to the development of $K$-theory for topological spaces with involution in \cite{atiyah66}.

If $(A, \tau)$ is a \ctalg , then so is $(M_n(\C) \otimes A, \tau_n)$ where $\tau_n = \tr_n \otimes \tau$ and $\tr_n$ is the transpose operation on $M_n(\C)$. We will frequently neglect the subscripts on $\tau$ and $\tr$ when we can do so without sacrificing clarity. Similarly, we will let $\tau$ also denote the involution on $\K \otimes A$ induced by $\tau_n$ through a choice of isomorphism $\lim_{n \to \infty} (M_n(\C)) \cong \K$.
These constructions correspond to the real \calg~ constructions of tensoring by $M_n(\R)$ or by $\K\pr$, the real \calg~ of compact operators on a separable real Hilbert space.

There is a related antiautomorphism $\trt$ on $M_2(\C)$ defined by
 $$ \begin{pmatrix} a & b \\ c & d \end{pmatrix}^{\trt} 
    = \begin{pmatrix} d & b \\ c & a \end{pmatrix}
 \; . $$ 
This involution is equivalent to $\tr$ in the sense that there is an isomorphism of \ctalg s,
$(M_2(\C), \tr) \cong (M_2(\C), \trt)$. Indeed, the reader can check that $(W x W^*)^{\tr} = W x^{\trt} W^*$
where
$$ W = \frac{1}{\sqrt{2}} \begin{pmatrix} i & 1 \\ 1 & i \end{pmatrix} \; .$$
More generally, we have $(M_2(\C) \otimes A, \tau) \cong (M_2(\C) \otimes A, \taut)$
where the automorphism $\taut$ is defined by
$$ \begin{pmatrix} a & b \\ c & d \end{pmatrix}^{\taut} 
    = \begin{pmatrix} d^\tau & b^\tau \\ c^\tau & a^\tau \end{pmatrix}
 \; .$$

There is yet another real structure on $M_{2n}(A)$ and on $\K \otimes A$, which is genuinely distinct from $\tr$.
Define $\sharp \colon M_2(\C) \rightarrow M_2(\C)$ by 
  $$ \begin{pmatrix} a & b \\ c & d \end{pmatrix}^\sharp 
    = \begin{pmatrix} d & -b \\ -c & a \end{pmatrix}
 \; .$$
Then $(M_2(\C), \sharp)$ corresponds to the real \calg~ $\H$ of quaternions, and 
$(M_{2n}(\C), \sharp \otimes \tr_n)$ corresponds to the real \calg~ $M_{n}(\H)$. More generally,
if $(A, \tau)$ is a \ctalg , then $(M_{2n}(\C) \otimes A, \sharp \otimes \tr_n \otimes \tau)$
is a \ctalg ~that corresponds to the real \calg~ $M_n(\H) \otimes A^\tau$.  

We will be dealing with these matrix algebras frequently in the subsequent work, and the technicalities require that we clarify the conventions for the action of $\sharp \otimes \tau$ on a matrix in $M_{2n}(A)$, since this action requires a particular choice of isomorphism $M_2(A) \otimes M_{n} (A) \cong M_{2n}(A)$.  The two obvious choices of such an isomorphism lead to two conventions for $\sharp \otimes \tau$ that we will make use of regularly.  
The first is shown by organizing the matrix $a \in M_{2n}(A)$ as an $n \times n$ matrix whose entries are $2 \times 2$ blocks, denoted by $b_{ij} \in M_2(A)$. Then
$$a^{\sharp \otimes \tau} = \begin{pmatrix} 
b_{1 \, 1} & b_{1 \, 2} & \dots &  b_{1 \, n} \\
b_{2 \, 1} & b_{2 \, 2} & \dots &  b_{2 \, n} \\
\vdots  & \vdots & \ddots  & \vdots \\
b_{n \, 1} & b_{n \, 2} & \dots & \ b_{n \, n} 
 \end{pmatrix}^{\sharp \otimes \tau} 
= \begin{pmatrix}b_{1 \, 1}^{\sharp \otimes \tau} & b_{2 \, 1}^{\sharp \otimes \tau} & \dots &  b_{n \, 1}^{\sharp \otimes \tau} \\
b_{1 \, 2}^{\sharp \otimes \tau} & b_{2 \, 2}^{\sharp \otimes \tau} & \dots &  b_{n \, 2}^{\sharp \otimes \tau} \\
\vdots  & \vdots & \ddots  & \vdots \\
b_{1 \, n}^{\sharp \otimes \tau} & b_{2 \, n}^{\sharp \otimes \tau} & \dots & \ b_{n \, n}^{\sharp \otimes \tau} \end{pmatrix} \; .$$ 
The second convention for an involution on $M_{2n}(A)$ will be denoted by $\widetilde{\sharp} \otimes \tau$ and is shown by organizing the matrix $a \in M_{2n}(A)$ as a $2 \times 2$ matrix whose entries are $n \times n$ blocks, denoted by $c_{ij} \in M_n(A)$. Then
$$a^{\widetilde{\sharp} \otimes \tau} = \begin{pmatrix} 
c_{1 \, 1} & c_{1 \, 2} \\ c_{2\, 1} & c_{2 \, 2} 
 \end{pmatrix}^{\widetilde{\sharp} \otimes \tau}
= \begin{pmatrix} c_{2 \, 2}^{\tau_n} & -c_{1 \, 2}^{\tau_n}  \\
-c_{2 \, 1}^{\tau_n} & c_{1 \, 1}^{\tau_n} \end{pmatrix} \; .$$ 
The first convention for $\sharp \otimes \tau$ will be our preferred convention.

As mentioned, we will take for granted the full development and known properties of both $K$-theory and $KK$-theory for real \calg s. The development of $KK$-theory for real \calg s goes back to \cite{kasparov80} while much what is known about both $K$-theory and $KK$-theory can be found in \cite{schroderbook}.

For a real \calg~ $A$, we will also occasionally make reference to the united $K$-theory $K\crt(A)$, as developed in \cite{boersema02}. Briefly, $K\crt(A)$ consists of the eight real $K$-theory groups $KO_i(A)$, the two complex $K$-theory groups $KU_i(A)$ (coinciding with the $K$-theory of the complexification of $A$), and the four self-conjugate $K$-theory groups $KT_i(A)$; as well as the several natural transformations among them. The main result about united $K$-theory that we will make use of is the Universal Coefficient Theorem proven in \cite{boersema04}, which implies that united $K$-theory classifies $KK$-equivalence for real \calg s that are nuclear, separable, and in the bootstrap class for the UCT.

For a final note regarding conventions, we will use $\1$ to denote the adjoined unit in $\widetilde{A}$ for any \calg~ $A$ (unital or not). Similarly $\1_n$ will denote the diagonal identity matrix in $M_n(\widetilde{A})$.

\section{Semiprojective suspension $C^*$-algebras} \label{sec:Ai}

Let 
$q\C = \{ f \in C_0((0, 1], M_2(\C) ) \mid f(1) \in \C^2 \} \; $
where we are identifying $\C^2$ with the subalgebra of diagonal elements of $M_2(\C)$.
The algebras $A_i$ for $i$ even are defined as follows. Three are real structures of $q\C$ and one is a real structure of $M_2(q\C)$.
\begin{align*}
A_0 & = \{ f \in C_0((0, 1], M_2(\R)) \mid f(1) \in \R^2 \} \\
A_2 &= \{ f \in C_0((0, 1], \H )\mid f(1) \in \C \} \\
A_4 &= \{ f \in C_0((0, 1], M_2(\H))\mid f(1) \in \H^2 \} \\
A_6 &= \{ f \in C_0((0, 1], M_2(\R)) \mid f(1) \in \C \} \\
\end{align*}

For $i$ odd, the algebras $A_i$ are defined as follows. Each is a real structure of either 
$C_0(S^1 \setminus \{1\}, \C)$
or $C_0(S^1 \setminus \{1\}, M_2(\C))$.
\begin{align*}
A_{-1} &= 
S \R = \{ f \in C(S^1, \R) \mid f(1) =0 \} \\
A_1 & = S^{-1} \R = \{ f \in C(S^1, \C) \mid 
      f(1) =0 \text{~and~} f(\overline{z}) = \overline{f(z)}  \} \\
A_3 &= S \H = \{ f \in C(S^1, \H) \mid f(1) =0 \} \\
A_5 &= S^{-1} \H  = \{ f \in C(S^1, M_2(\C)) \mid 
      f(1) =0 \text{~and~} f(\overline{z})^\sharp = \overline{f(z)}  \}\\
\end{align*}

These real \calg s have corresponding objects in the cateogry of \ctalg s as shown in Table~\ref{table:Ai}. In this table, the involution $\zeta$ denotes the involution on $C_0(S^1 \setminus \{1\}, \C)$ induced by the involution $z \mapsto \overline{z}$ on $S^1$. 

\begin{figure}[t] 
\centering
\caption{The Classifying Algebras}  \label{table:Ai}
$$\begin{tabular}{r|c|c|}
\cline{2-3}
& $~R\sp *$-algebra~ \TT \BB & \ctalg \\ \hhline{===}
\multicolumn{1}{|r|}{\multirow{4}{*}{even cases}} 
			& $A_0$ \TT \BB & $(q\C, \tr)$  \\ \cline{2-3} 
\multicolumn{1}{ |c|  }{} & $A_2$ \TT \BB & $(q\C , \sharp )$ \\ \cline{2-3}  
\multicolumn{1}{ |c|  }{} & $A_4$ \TT \BB & $(M_2(\C) \otimes q\C , \sharp \otimes \tr)$ \\ \cline{2-3}
\multicolumn{1}{ |c|  }{} & $A_6$ \TT \BB & $(q\C, \trt)$ \\ \hhline{===}
\multicolumn{1}{|r|}{\multirow{4}{*}{odd cases}} 
			& $A_{-1}$ \TT \BB & $(C_0(S^1 \setminus \{1\}, \C), \id )$ \\ \cline{2-3}
\multicolumn{1}{ |c|  }{} & $A_1$ \TT \BB & $(C_0(S^1 \setminus \{1\}, \C)), \zeta)$ \\ \cline{2-3}
\multicolumn{1}{ |c|  }{} & $A_3$ \TT \BB & $(M_2(\C) \otimes C_0(S^1 \setminus \{1\}, \C) , \sharp \otimes \id )$ \\ \cline{2-3}  
\multicolumn{1}{ |c|  }{} & $A_5$ \TT \BB & $(M_2(\C) \otimes C_0(S^1 \setminus \{1\}, \C) , \sharp \otimes \zeta)$ \\ \hhline{===}
\end{tabular}
$$
\caption*{
This table shows the real \calg s $A_i$ and the corresponding objects in the category of \ctalg s.
They classify real $K$-theory in the sense of Theorem~\ref{thm:classify}.}
\end{figure}

\begin{prop} \label{thm:KAieven}
$K\crt(A_i) \cong \Sigma^{-i} K\crt(\R)$ for all $i \in \{ 0, 2, 4, 6\}$.
\end{prop}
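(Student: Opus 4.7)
The plan is to extract the united $K$-theory of each $A_i$ by feeding the defining short exact sequence
$$0 \to SJ_i \to A_i \to F_i \to 0$$
(with ideal the suspension $SJ_i$ of the coefficient algebra and quotient $F_i$ the fibre at $t=1$) into the twenty-four-term exact sequence for $KO_*$, together with the six-term and twelve-term sequences for $KU_*$ and $KT_*$ that comprise the $K\crt$ long exact sequence. The coefficient algebras are $J_0=M_2(\R)$, $J_2=\H$, $J_4=M_2(\H)$, $J_6=M_2(\R)$, and the fibres are $F_0=\R^2$, $F_2=\C$, $F_4=\H^2$, $F_6=\C$, each sitting inside $J_i$ by the natural embedding. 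The inputs $K\crt(SJ_i)$ and $K\crt(F_i)$ are then routine: via Morita invariance and the standard identifications $KO_*(\H)\cong KO_{*-4}(\R)$ and $KO_*(\C)=KU_*(\C)$, together with the cone-suspension isomorphism $K\crt_{i-1}(SJ)\cong K\crt_i(J)$.

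The crux is to identify the connecting homomorphism $\partial\colon K\crt_i(F_i)\to K\crt_{i-1}(SJ_i)\cong K\crt_i(J_i)$: under the cone identification it is the map on $K$-theory induced (up to sign) by the inclusion $F_i\hookrightarrow J_i$. For $A_0$ and $A_4$ the two rank-one projections in the diagonal subalgebra become conjugate in $J_i$ (by a rotation matrix in $M_2$), so the boundary has the form $(m,n)\mapsto m+n$; its kernel is a diagonal copy of $K\crt(J_i)$ and its cokernel vanishes, so the exact sequence collapses to $K\crt(A_i)\cong K\crt(J_i)$, which equals $K\crt(\R)$ for $i=0$ and the shift $\Sigma^{-4}K\crt(\R)$ for $i=4$.

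For $A_2$ and $A_6$ the fibre $\C$ is embedded in $\H$ or in $M_2(\R)$, and here the action on the complexification and on $KT_*$ must be brought in as well. Working with the full united $K$-theory rather than just $KO_*$ is essential: the boundary maps are rigidly constrained by the natural transformations that $K\crt$ carries between its three components. Once those constraints are imposed the connecting maps are pinned down, and the long exact sequence again collapses to $\Sigma^{-2}K\crt(\R)$ for $A_2$ and $\Sigma^{-6}K\crt(\R)$ for $A_6$.

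The hardest part of the plan is the boundary-map analysis for $A_2$ and $A_6$: there the fibre and the ideal sit at different shift patterns relative to $K\crt(\R)$, so one must carefully interlock the $8$-periodicity of $KO_*$ with the $4$-periodicity introduced by $\H$ and the $2$-periodicity introduced by $\C$, and verify that the natural transformations of the united theory are compatible with the desired identifications. It is this compatibility that ultimately forces the connecting maps into the form required to yield the $\Sigma^{-i}$ shift.
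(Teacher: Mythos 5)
Your setup is the same as the paper's: both use the defining extensions $0\to SJ_i\to A_i\to F_i\to 0$ and the resulting long exact sequence in united $K$-theory, and your identification of the (degree-zero) connecting map with the map induced by the inclusion $F_i\hookrightarrow J_i$ is correct, since $A_i$ is the mapping cone of that inclusion. Your handling of $A_0$ and $A_4$ (boundary is the sum map, kernel the anti-diagonal copy of $K\crt(J_i)$, cokernel zero) matches what the paper gets for $A_0$ by comparing with the complexified sequence. The real divergence is in how the remaining cases are finished. The paper never computes the full boundary map for $A_2$: it first observes that $(A_i)\sc\cong q\C$ or $M_2(q\C)$ has complex $K$-theory $K_*(\C)$, so by Theorem~3.2 of \cite{bousfield90} the module $K\crt(A_i)$ is \emph{free}, and the only free \ct-modules with complex part $K_*(\C)$ are the four even suspensions of $K\crt(\R)$. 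Hence a single group suffices: for $A_2$ one only needs $KO_0(A_2)=0$, which follows from the boundary being an isomorphism on $KO_0$ (computed by naturality of complexification), and $A_4$, $A_6$ are then dispatched via $A_4\cong\H\otimes A_0$, $M_2(\R)\otimes A_2\cong\H\otimes A_6$ and the K\"unneth shift $K\crt(B)\cong\Sigma^4 K\crt(\H\otimes B)$, with no further boundary analysis at all.

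The gap in your plan sits exactly where you flag the difficulty. For $A_2$ and $A_6$ the sequence does not simply ``collapse'': the cokernel of $\partial$ is nonzero in several degrees (for $A_2$, $KO_5(\H)\cong\Z_2$ receives nothing from $KO_5(\C)=0$ and $KO_4(\C)\to KO_4(\H)$ is multiplication by $2$; for $A_6$, $KO_0(\C)\to KO_0(M_2(\R))$ is multiplication by $2$), so $K\crt(A_i)$ sits in a short exact sequence of \ct-modules with kernel $\mathrm{coker}\,\partial$ and quotient $\ker\partial$. Pinning down $\partial$ itself is feasible as you suggest ($K\crt(\C)$ is free on a $KU_0$ class, so the whole map is determined by where that class goes), but even after computing every group degreewise you have only the underlying groups of an extension: the proposition asserts an isomorphism of \ct-modules $K\crt(A_i)\cong\Sigma^{-i}K\crt(\R)$ — which is precisely what is later fed into the UCT to obtain the $KK$-equivalence with $S^{-i}\R$ — and your outline gives no argument identifying the module structure of that extension. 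You would need either to carry out this identification explicitly (determining the $c$, $r$, $\varepsilon$, $\zeta$, etc.\ operations on the extension), or to import the freeness/classification step from \cite{bousfield90}, which is the paper's key idea and removes the issue in one stroke.
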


\begin{proof}
In each case, $A_i \otimes \C \cong q\C$ or $A_i \otimes \C \cong M_2(q\C)$. So $K_*(A_i \otimes \C) \cong K_*(q\C) \cong K_*(\C)$. Thus by Theorem~3.2 of \cite{bousfield90}, $K\crt(A_i)$ is a free \ct-module. Furthermore, from Section~2.4 of \cite{bousfield90}, the only free \ct-module that has the complex part isomorphic to $K_*(\C)$ is $K\crt(\R)$ up to an even suspension. Therefore there are only four possibilities for $K\crt(A_i)$ up to isomorphism. A full description of the \ct-module $K\crt(\R)$ is in Table~1 of \cite{boersema02}, but in particular recall that the real part of it is given by $KO_*(\R)$ as shown below. In each case, this will be enough to determine which of the four possible suspensions is isomorphic to $K\crt(A_i)$.
$$\begin{array}{|c||c|c|c|c|c|c|c|c|}
\hline
i & 0 & 1 & 2 & 3 & 4 & 5 & 6 & 7  \\ \hline
KO_i(\R) & \Z & \Z_2 & \Z_2 & 0 
	    & \Z & 0 & 0 & 0 \\
\hline
\end{array}$$

We first consider $A_0$. Use the extension of real \calg s
\begin{equation} \label{qrext1}
0 \rightarrow SM_2(\R) \xrightarrow{\iota} A_0 \xrightarrow{{\ev}_1} \R^2 \rightarrow 0
\end{equation}
where ${\ev}_1$ is the evaluation map at $t= 1$.
Then we have the long exact sequence 
$$\dots \rightarrow K\crt(\R^2) \xrightarrow{\partial} K\crt(\R) \xrightarrow{\iota_*} K\crt(A_0) \xrightarrow{({\ev}_1)_*} K\crt(\R^2) \xrightarrow{\partial}  \cdots \; .$$

The map $\partial$ as written has degree 0 and can be determined
by its action on the generators of the two $K\crt(\R)$ summands, which are elements in $KO_0(\R) \cong \Z$. 
The complex part of this long exact sequence arises from the complexification of Sequence~(\ref{qrext1}), which is
$$0 \rightarrow SM_2(\C) \xrightarrow{\iota} q\C \xrightarrow{{\ev}_1} \C^2 \rightarrow 0 $$
and for which the boundary map $\partial \colon K_0(\C^2) \rightarrow K_0(M_2(\C))$ is known to be $\Z^2 \xrightarrow{\smh{1}{1}} \Z$ up to isomorphism.
In the commutative diagram below, we know that the complexification maps $c$ are both isomorphisms, so it follows that the boundary map $\partial \colon K_0(\R^2) \rightarrow K_0(M_2(\R))$ is also isomorphic to $\Z^2 \xrightarrow{\smh{1}{1}} \Z$.
$$\xymatrix{
KO_0(\R^2) \ar[r]^\partial \ar[d]_c &
KO_0(M_2(\R)) \ar[d]^c \\
K_0(\C^2) \ar[r]^\partial &
K_0(M_2(\C))
} $$
It follows that $\partial \colon K\crt(\R^2) \rightarrow K\crt(\R)$ is surjective and has kernel isomorphic to $K\crt(\R)$. Thus $K\crt(A_0) \cong K\crt(\R)$.

For $A_2$, we have the short exact sequence
\begin{equation} \label{qrext2}
0 \rightarrow S \H \rightarrow A_2 \xrightarrow{{\ev}_1} \C \rightarrow 0
\end{equation}
and the corresponding long exact sequence
$$\dots \rightarrow K\crt(\C) \xrightarrow{\partial} K\crt(\H) \rightarrow K\crt(A_2) \rightarrow K\crt(\C) \xrightarrow{\partial} K\crt(\H) \rightarrow \cdots \; .$$
The complexification of Sequence~(\ref{qrext2}) is the same as that of Sequence~(\ref{qrext1})
so again we can use the complexification map to calculate the boundary map.
The commutative diagram we obtain is as follows which shows that $\partial \colon KO_0(\C) \rightarrow KO_0(\H)$ is an isomorphism from $\Z$ to $\Z$.
$$ \xymatrix {
KO_0(\C)  \ar[d]_c \ar[r]^\partial  &
KO_0(\H) \ar[d]^c &
{\makebox[1cm]{isomorphic to}} &
\Z \ar[d]_{\smv{1}{1}} \ar[rr] & &
\Z  \ar[d]^2 \\
K_0(\C \oplus \C) \ar[r]^\partial  &
K_0(M_2(\C)) & &
\Z \oplus \Z \ar[rr]^{\smh{1}{1}} & &
\Z 
}  $$
Then the long exact sequence shows that $KO_0(A_2) \cong 0$. Of the four possibilities for
$K\crt(A_2)$, there is only one that is consistent with this fact. Thus we conclude that
$K\crt(A_2) \cong \Sigma^{-2} K\crt(\R)$.

From the K\"unneth Formula we know that $K\crt(B) \cong \Sigma^4 K\crt(\H \otimes B)$ for any real \calg .
Hence, $K\crt(A_{6})$ and $K\crt(A_{4})$ are determined by the isomorphisms 
$M_2(\R) \otimes A_2 \cong \H \otimes A_6$ and $A_4 \cong \H \otimes A_0$. 
\end{proof}

\begin{prop} \label{thm:KAiodd}
$K\crt(A_i) \cong \Sigma^{-i} K\crt(\R)$ for all $i \in \{-1, 1, 3 ,5\}$.
\end{prop}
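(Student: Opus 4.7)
The strategy parallels the proof of Proposition~\ref{thm:KAieven}. The cases $A_{-1} = S\R$ and $A_3 = S\H$ are immediate from the suspension isomorphism, yielding $K\crt(A_{-1}) \cong \Sigma K\crt(\R)$ and $K\crt(A_3) \cong \Sigma K\crt(\H) \cong \Sigma^{-3} K\crt(\R)$; the latter uses $K\crt(\H) \cong \Sigma^{-4} K\crt(\R)$, which follows from the K\"unneth identity $K\crt(B) \cong \Sigma^4 K\crt(\H \otimes B)$ derived in Proposition~\ref{thm:KAieven}. Since suspension commutes with tensor products, $A_5 \cong A_1 \otimes \H$, and a further application of K\"unneth reduces $A_5$ to $A_1$.

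Thus the principal task is $A_1$. The complexification $A_1 \otimes_\R \C \cong C_0(S^1 \setminus \{1\}, \C) \cong S\C$ has complex $K$-theory $\Sigma K_*(\C)$, a free $K_*(\C)$-module of rank one. By Theorem~3.2 of \cite{bousfield90}, $K\crt(A_1)$ is therefore a free CRT-module, and the classification in Section~2.4 of \cite{bousfield90} narrows the isomorphism class to one of the four odd suspensions $\Sigma^{1}K\crt(\R), \Sigma^{3}K\crt(\R), \Sigma^{5}K\crt(\R), \Sigma^{7}K\crt(\R)$ of $K\crt(\R)$. To isolate which, I would use the short exact sequence
\begin{equation*}
0 \to S\C \to A_1 \xrightarrow{\ev_{-1}} \R \to 0
\end{equation*}
given by evaluation at the nontrivial fixed point $z=-1$ of the involution on $S^{1}\setminus\{1\}$; the ideal here is the real \calg~ $S\C$ of $\zeta$-real functions on $S^1\setminus\{1,-1\}$, whose two arcs are interchanged by $z\mapsto\bar z$.

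The associated long exact sequence in $KO$, together with the complexified sequence $0 \to S\C\oplus S\C \to S\C \to \C \to 0$ and the natural transformation $c\colon KO_*\to KU_*$, determines the real boundary $\partial\colon KO_*(\R)\to KO_{*-1}(S\C)$, from which one reads off enough real $K$-groups of $A_1$ to pin down the isomorphism $K\crt(A_1)\cong \Sigma^{-1}K\crt(\R)$. The main technical obstacle is careful bookkeeping of the $\tau$-action on the complexified ideal: because $z\mapsto\bar z$ both swaps the two arcs of $S^1\setminus\{1,-1\}$ and reverses the orientation of each, the induced action on $K_1(S\C\oplus S\C)=\Z^2$ is the swap composed with an orientation flip. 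Handling this sign correctly is what distinguishes $\Sigma^{-1}K\crt(\R)$ from the other odd-suspended free CRT-modules, and once it is sorted out the identification for $A_1$, and hence (via the reduction above) for $A_5$, follows.
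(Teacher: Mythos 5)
Your reductions for $i=-1,3,5$ are essentially the paper's: $A_{-1}=S\R$ is handled by the suspension axiom, and $A_3$, $A_5$ by the K\"unneth identity together with $A_i\cong \H\otimes A_{i-4}$. Where you genuinely diverge is the case $i=1$: the paper disposes of $i=\pm1$ by citing Proposition~1.20 of \cite{boersema02}, whereas you propose a direct computation modelled on the proof of Proposition~\ref{thm:KAieven} (Bousfield freeness from $A_1\otimes_{\scriptscriptstyle\R}\C\cong S\C$, then the evaluation sequence $0\to I\to A_1\xrightarrow{\ev_{-1}}\R\to 0$, where $I\cong C_0((0,1),\C)$ as a real \calg~because conjugation interchanges the two arcs). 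That skeleton is sound, and it has the virtue of being self-contained in the style of the even case.

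The gap is that the decisive step is only promised, never performed: you assert that the boundary $\partial\colon KO_*(\R)\to KO_{*-1}(I)$, pinned down by ``careful bookkeeping'' of the $\tau$-action on $K_1$ of the complexified ideal, will yield ``enough real $K$-groups,'' but no group of $K\crt(A_1)$ is actually computed and the sign analysis you identify as the crux is not carried out — yet choosing among the four odd suspensions is precisely where the work lies (it is the analogue of the boundary computation that occupies most of the proof of Proposition~\ref{thm:KAieven}). Note also that the finish is easier than you fear and needs no boundary or sign analysis at all: since $I$ is a complex \calg~viewed as real, $KO_n(I)\cong KU_{n+1}(\C)$ vanishes for $n$ even and is $\Z$ for $n$ odd, so the exact segment $KO_3(\R)\to KO_2(I)\to KO_2(A_1)\to KO_2(\R)\xrightarrow{\partial} KO_1(I)$ reads $0\to 0\to KO_2(A_1)\to \Z_2\to\Z$; every homomorphism $\Z_2\to\Z$ is zero, so $KO_2(A_1)\cong\Z_2$. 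Among the four candidates $\Sigma^{j}K\crt(\R)$ with $j$ odd, only $j\equiv -1 \pmod 8$ has $\Z_2$ in real degree $2$, so freeness plus this single group already forces $K\crt(A_1)\cong\Sigma^{-1}K\crt(\R)$, and $A_5$ then follows by K\"unneth as you say. With that paragraph added, your argument closes; as written, it stops just short of the identification it is supposed to establish.
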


\begin{proof}
For $i = \pm 1$ this follows from Proposition~1.20 of \cite{boersema02}.  For $i = 3,5$, this follows from the K\"unneth Formula and the isomorphisms $A_i \cong \H \otimes A_{i-4}$.
\end{proof}

%In the rest of this section, we will prove that each of the algebras $A_i$ is semiprojective. Along the way we will also prove (or restate previously known) universal representation properties of several of these algebras. 

As in Section~1 of \cite{loring08}, consider the following relations for elements $h,x,k$ in a \calg~ $A$:
\begin{equation} \label{qc-relations} \begin{aligned} 
h^* h + x^* x &= h, \\
k^* k + x x ^* &= k, \\
kx &= xh, \\
hk &= 0 \; . 
\end{aligned} \end{equation}
The same relations can be formally encoded by 
\begin{equation} \label{qc-relations2}
\begin{aligned}
&hk = 0, \\
T(h,x,k)^2 = &T(h,x,k)^* = T(h,x,k).
\end{aligned} \end{equation}
where
$$T(h,x,k) = \begin{pmatrix} \1-h & x^* \\ x & k \end{pmatrix} \in M_2(\widetilde{A}) \; .$$

Of particular interest, we have the elements
$$h_0 = t \otimes e_{11} \; , \qquad k_0 = t \otimes e_{22} \; , \qquad
  x_0 = \sqrt{t-t^2} \otimes e_{21}$$
that satisfy \eqref{qc-relations2} in $q\C$.
Recall from Lemma~2 of \cite{loring08} that $q\C$ is the universal \calg~ generated by $h,x,k$ subject to the relations 
\eqref{qc-relations2}. The following theorem gives a version of this result for $A_0$, $A_2$, and $A_6$; characterizing \ctalg -homomorphisms from $(q\C, \tr)$, $(q\C, \sharp)$, and $(q\C, \trt)$.

\begin{prop} \label{qCuniversal1}
Let $(A, \tau)$ be a \ctalg .  
\begin{enumerate}
\item Given elements $h,k,x$ in $A$ satisfying $h^\tau = h$, $k^\tau = k$, $x^\tau = x^*$ 
and Equations~\eqref{qc-relations}, then there exists a unique homomorphism 
$$\alpha \colon (q\C, \tr) \rightarrow (A, \tau)$$ 
such that 
$\alpha(h_0) = h$, $\alpha(k_0) = k$, and $\alpha(x_0) = x$.
\item Given elements $h,k,x$ in $A$ satisfying $h^\tau = k$, $k^\tau = h$, $x^\tau = -x$ and Equations~\eqref{qc-relations}, then there exists a unique homomorphism 
$$\alpha \colon (q\C, \sharp) \rightarrow (A, \tau)$$ 
such that 
$\alpha(h_0) = h$, $\alpha(k_0) = k$, and $\alpha(x_0) = x$.
\item Given elements $h,k,x$ in $A$ satisfying $h^\tau = k$, $k^\tau = h$, $x^\tau = x$ and Equations~\eqref{qc-relations}, then there exists a unique homomorphism 
$$\alpha \colon (q\C, \trt) \rightarrow (A, \tau)$$ 
such that 
$\alpha(h_0) = h$, $\alpha(k_0) = k$, and $\alpha(x_0) = x$.
\end{enumerate}
\end{prop}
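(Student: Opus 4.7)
The plan is to reduce each of the three parts to the (complex) universal property of $q\C$ established in Lemma~2 of \cite{loring08}. That lemma provides, from $h, k, x \in A$ satisfying \eqref{qc-relations2}, a unique complex $*$-homomorphism $\alpha \colon q\C \rightarrow A$ with $\alpha(h_0) = h$, $\alpha(k_0) = k$, $\alpha(x_0) = x$, so what remains in each case is to verify that $\alpha$ intertwines the prescribed involutions on source and target. One preliminary observation, which is immediate, is that $\tr$, $\sharp$, and $\trt$ all restrict to genuine involutions on $q\C \subseteq C_0((0, 1], M_2(\C))$, since each preserves the diagonal subalgebra $\C^2 \subseteq M_2(\C)$ and hence the fiber condition $f(1) \in \C^2$.

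The main tool I would use is a uniqueness argument. Fix $\sigma \in \{\tr, \sharp, \trt\}$ according to the case and define $\beta \colon q\C \rightarrow A$ by $\beta(a) = \alpha(a^{\sigma})^\tau$. Since both $\sigma$ and $\tau$ are complex-linear, antimultiplicative, and involutive, and since they satisfy $(a^\sigma)^* = (a^*)^\sigma$ and $(a^\tau)^* = (a^*)^\tau$, a direct check shows that $\beta$ is itself a complex $*$-homomorphism. By the uniqueness half of the universal property, once $\beta$ is shown to agree with $\alpha$ on $h_0$, $k_0$, $x_0$, we obtain $\beta = \alpha$ on all of $q\C$, which is exactly the identity $\alpha(a^\sigma) = \alpha(a)^\tau$ needed to promote $\alpha$ to a \ctalg-homomorphism.

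Everything then reduces to evaluating the three involutions on the three generators. Using $e_{11}^{\tr} = e_{11}$, $e_{22}^{\tr} = e_{22}$, $e_{21}^{\tr} = e_{12} = e_{21}^*$ gives $h_0^\tr = h_0$, $k_0^\tr = k_0$, $x_0^\tr = x_0^*$, which combined with the hypotheses of part~(1) yields $\beta(h_0) = h^\tau = h$, $\beta(k_0) = k^\tau = k$, and $\beta(x_0) = (x^*)^\tau = x$. Analogous matrix-unit computations give $h_0^\sharp = k_0$, $k_0^\sharp = h_0$, $x_0^\sharp = -x_0$, matching the hypotheses of part~(2); and $h_0^{\trt} = k_0$, $k_0^{\trt} = h_0$, $x_0^{\trt} = x_0$, matching part~(3). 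In each case $\beta$ and $\alpha$ agree on generators and the argument concludes. I do not foresee any significant obstacle beyond this bookkeeping with the three $2 \times 2$ matrix-unit involutions, which is a short direct verification.
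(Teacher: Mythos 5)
Your proposal is correct and is essentially the paper's own argument: both reduce to the complex universal property of $q\C$ from \cite{loring08} and then verify compatibility of $\alpha$ with the involutions by computing $\sigma \in \{\tr, \sharp, \trt\}$ on the generators $h_0, k_0, x_0$. The only cosmetic difference is in the extension step, where you phrase it as a uniqueness argument ($\beta = \tau \circ \alpha \circ \sigma$ agrees with $\alpha$ on generators, hence everywhere), while the paper observes that the set of $a$ with $\alpha(a^\sigma) = \alpha(a)^\tau$ is a subalgebra containing the generators --- the same reasoning in different words.
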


\begin{proof}
Under the hypotheses of Part (1), Lemma~1 of \cite{loring08} gives a unique \calg~ homomorphism $\alpha \colon q\C \rightarrow A$ satisfying $\alpha(h_0) = k_0$, $\alpha(k_0) = k$, and $\alpha(x_0) = x$. It is only required here to verify that $\alpha$ respects the real structures; that is to verify that
\begin{equation} \label{alpharealstru} \alpha(a^\tr) = \alpha(a)^\tau \end{equation}
holds for all $a \in q\C$. 
In $q\C$ we have 
$$h_0^\tr = h_0, \quad k_0^\tr = k_0, \quad \text{and} \quad x_0^\tr = x_0^* \; $$ 
from which it follows that \eqref{alpharealstru} holds for $a = h_0, k_0, x_0$. But since these elements generate $q\C$ and since the set of elements that satisfy 
\eqref{alpharealstru} is a subalgebra of $q\C$, it follows that \eqref{alpharealstru} holds for on $q\C$.

The proofs in the second and third cases are the same, noting that in $q\C$ we have
$$h_0^\sharp = k_0, \quad k_0^\sharp = h_0, \quad \text{and} \quad  x_0^\sharp = -x_0$$
and
$$h_0^{\trt} = k_0, \quad k_0^{\trt} = h_0, \quad \text{and} \quad x_0^{\trt} = x_0 \; . $$
\end{proof}

The concepts of projectivity and semiprojectivity were first introduced and developed in the context of real \calg s (and \ctalg s) in Section~3 of \cite{loringsorensen}. In what follows, we extend that work by proving a significant closure theroem, namely that $A \otimes \H$ and $A \otimes M_n(\R)$ are semiprojective if $A$ is semiprojective (Theorem~\ref{Hsemiproj}). This result will subsequently be applied to show that each of the real \calg s $A_i$ is semiprojective.

The cone  $CM_2(\C) = C_0((0, 1], M_2(\C) )$ has two real structures, corresponding to the antiautomorphisms $\tr$ and $\sharp$ defined pointwise on $CM_2(\C)$. The corresponding real \calg s are 
$CM_2(\R) = C_0((0, 1], M_2(\R) ) $ and $C \H = C_0((0, 1], \H )$.
More generally $CM_n(\C)$ has one real structure for $n$ odd (corresponding to $\tr$) and two real structures for $n$ even (corresponding to $\tr$ and to $\sharp \otimes \tr$).

\begin{lemma} \label{orthogonallift}
Let $(A,\tau)$ and $(B, \tau)$ be \ctalg s and let $\pi \colon (A, \tau) \rightarrow (B, \tau)$ be a surjective \ctalg ~homomorphism. Let $h$ and $k$ be positive orthogonal elements in $B$. 
\begin{enumerate}
\item If $h^\tau = h$ and $k^\tau = k$, then there are positive orthogonal elements 
$h'$ and $k'$ in $A$ that satisfy $(h')^\tau = h'$ and $(k')^\tau = k'$.
\item If $h^\tau = k$ and $k^\tau = h$, then there are positive orthogonal elements 
$h'$ and $k'$ in $A$ that satisfy $(h')^\tau = k'$ and $(k')^\tau = h'$.
\end{enumerate}
Furthermore, $h'$ and $k'$ can be taken to satisfy $\|h' \| \leq \|h \|$ and $\|k' \| \leq \|k \|$.
\end{lemma}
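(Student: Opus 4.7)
The core idea is to package the two elements into a single self-adjoint element. Since $h$ and $k$ are orthogonal positives in $B$, the element $b := h-k$ is self-adjoint with $b_+=h$ and $b_-=k$. In case (1), $b^\tau=b$; in case (2), $b^\tau=-b$, and the isometry property of $\tau$ forces $\|h\|=\|k\|$. The plan is to lift $b$ to a self-adjoint $c\in A$ with the analogous $\tau$-symmetry and with $\|c_\pm\|$ controlled by $\|b_\pm\|$, then set $h':=c_+$ and $k':=c_-$.

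To construct $c$, first lift $b$ to any $a\in A$, replace $a$ by its self-adjoint part, and then impose the $\tau$-symmetry by averaging: set $\tilde c=\tfrac12(a+a^\tau)$ in case (1), or $\tilde c=\tfrac12(a-a^\tau)$ in case (2). Using that $\tau$ preserves $*$ (so $(a^\tau)^*=(a^*)^\tau$) and that $\pi$ intertwines $\tau$, the element $\tilde c$ is self-adjoint, lifts $b$, and satisfies $\tilde c^\tau=\tilde c$ or $\tilde c^\tau=-\tilde c$ as required. To control the norm, apply a real continuous cutoff $\phi\colon\R\to\R$: in case (1) take $\phi(t)=\min(\|h\|,t_+)-\min(\|k\|,t_-)$, and in case (2) the odd cutoff $\phi(t)=\max(-\|h\|,\min(\|h\|,t))$. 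Both functions are the identity on the spectrum of $b$, so $c:=\phi(\tilde c)$ still lifts $b$; by construction $\|\phi_+\|_\infty=\|h\|$ and $\|\phi_-\|_\infty=\|k\|$, which will supply the norm bounds on $h'$ and $k'$.

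The symmetry of $c$ follows from the identity $\tau(f(\tilde c))=f(\tilde c^\tau)$, valid for any real continuous $f$ and self-adjoint $\tilde c$ (checked on powers $\tilde c^n$ using the antimultiplicativity and $*$-preservation of $\tau$, then extended by continuity). This gives $c^\tau=c$ in case (1), and $c^\tau=\phi(-\tilde c)=-c$ in case (2), where the last equality uses that $\phi$ is odd. Applying the same identity to $f_+(t)=t_+$ completes the job: in case (1), $(c_\pm)^\tau=(c^\tau)_\pm=c_\pm$, so $(h')^\tau=h'$ and $(k')^\tau=k'$; in case (2), $(c_+)^\tau=f_+(-c)=(-c)_+=c_-$, so $(h')^\tau=k'$ and dually $(k')^\tau=h'$. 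Positivity, orthogonality, and the lifting relation $\pi(c_\pm)=b_\pm$ are automatic from functional calculus applied to $c$.

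I expect the main obstacle to be the interaction between symmetry and orthogonality. The naive move of lifting $h$ and $k$ separately, then averaging over $\tau$, destroys orthogonality, whereas starting from an arbitrary orthogonal positive lift and trying to symmetrize it breaks positivity. Packaging the problem into a single self-adjoint $b=h-k$ lets both the $\tau$-symmetry and the norm cutoff be imposed in one functional-calculus step, after which the positive and negative parts automatically recover the two desired lifts.
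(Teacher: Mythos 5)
Your proposal is correct and follows essentially the same route as the paper: lift $a=h-k$ to a self-adjoint element, symmetrize by averaging with $\pm(\cdot)^\tau$, and recover $h'$, $k'$ as the positive and negative parts via functional calculus, using $f(x)^\tau=f(x^\tau)$ for real continuous $f$. The only (harmless) difference is that you impose the norm bound by truncating the lifted self-adjoint element before splitting it, whereas the paper truncates $h'$ and $k'$ afterward with $g_K(t)=\min\{t,K\}$.
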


\begin{proof}
Let $a = h-k$. Let $a' \in A$ be a self-adjoint lift of $a$. Furthermore, in case (1) we have $a^\tau = a$ and we can take $a'$ to satisfy the same 
by replacing $a'$ with $\tfrac{1}{2} \left( a' + (a')^\tau \right)$. 
Let $f_+, f_- \colon \R \rightarrow \R$ be defined by $f_+(t)= \max\{0, t\}$ and $f_-(t) = -\min\{t, 0\}$ so that $(f_+ - f_-)(t) = t$. Let $h' = f_+(a')$ and $k' = f_-(a')$. Then $h'$ and $k'$ are positive and orthogonal.  Also,
$$ \pi(h') = \pi(f_+(a')) = f_+(\pi(a')) = f_+(a) = h  $$
and similarly, $\pi(k') = k$. Finally, 
$$ (h')^\tau = (f_+(a'))^\tau = f_+( (a')^\tau ) = f_+(a') = h'$$
and similarly, $(k')^\tau = k'$.

In case (2) we have $a^\tau = -a$ and we can take $a'$ to satisfy the same by replacing $a'$ with $\tfrac{1}{2}\left(a' - (a')^\tau \right)$. Using $h'$ and $k'$ as above, we obtain 
$$ (h')^\tau = (f_+(a'))^\tau = f_+( (a')^\tau ) = f_+(-a') = k'  $$
and similarly $(k')^\tau = h'$.

In either case, the norm condition can be obtained by truncating the elements $h'$ and $k'$ using the functions
$g_K(t) = \min\{t, K\}$ where $K = \| h \|$ and $K = \| k \|$ respectively. 
\end{proof}

\begin{prop} \label{CM_2}
The \ctalg s $(CM_n(\C), \tr)$ and $(CM_2(\C) , \sharp)$ are projective. 
\end{prop}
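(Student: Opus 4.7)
The plan is to characterize \ctalg~homomorphisms out of $(CM_n(\C),\tr)$ via a concrete system of matrix-unit-like generators in the target, lift those generators to $A$ using case~(1) of Lemma~\ref{orthogonallift} together with a functional-calculus correction, and then invoke the universal property $CM_n(\C)\cong C_0((0,1])\otimes M_n(\C)$ to reassemble the lifts into the desired \ctalg~homomorphism.

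In detail, let $\pi\colon(A,\tau)\to(B,\tau)$ be a surjective \ctalg~homomorphism and $\varphi\colon(CM_n(\C),\tr)\to(B,\tau)$ a \ctalg~homomorphism, and set $\beta_{ij}=\varphi(t\otimes e_{ij})\in B$. Since $(t\otimes e_{ij})^{\tr}=t\otimes e_{ji}=(t\otimes e_{ij})^{*}$, we have $\beta_{ij}^{\tau}=\beta_{ji}=\beta_{ij}^{*}$, so each $\beta_{ij}$ lies in the real form $B^{\tau}$. The diagonal elements $\{\beta_{ii}\}$ are pairwise orthogonal positive contractions fixed by $\tau$, and iterated application of case~(1) of Lemma~\ref{orthogonallift} (splitting off one index at a time, recursing on the remainder) produces pairwise orthogonal positive contractions $\beta'_{ii}\in A$ with $\pi(\beta'_{ii})=\beta_{ii}$ and $(\beta'_{ii})^{\tau}=\beta'_{ii}$. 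For each pair $i\neq j$, take any lift $b\in A$ of $\beta_{ij}$, form the $\tau$-symmetrization $b'=\tfrac{1}{2}(b+(b^{\tau})^{*})$, which still lifts $\beta_{ij}$ and now satisfies $(b')^{\tau}=(b')^{*}$, and then replace $b'$ by $\beta'_{ij}$ via continuous functional calculus in $A$ involving $\beta'_{ii}$, $\beta'_{jj}$, and $b'$ so that the exact relations $\beta'_{ij}(\beta'_{ij})^{*}=(\beta'_{ii})^{2}$, $(\beta'_{ij})^{*}\beta'_{ij}=(\beta'_{jj})^{2}$, and $\beta'_{ii}\beta'_{ij}=\beta'_{ij}\beta'_{jj}$ all hold in $A$. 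Because every ingredient is $\tau$-symmetric and the functions involved are real-valued, the identity $(\beta'_{ij})^{\tau}=\beta'_{ji}$ passes through the correction. The universal property of $CM_n(\C)$ then assembles the $\{\beta'_{ij}\}$ into the desired lift $\tilde\varphi$.

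For $(CM_2(\C),\sharp)$, the argument is parallel but uses case~(2) of Lemma~\ref{orthogonallift}: the diagonal pair now satisfies $\beta_{11}^{\tau}=\beta_{22}$ and is lifted with this swap, while the off-diagonal $\beta_{12}$ satisfies $\beta_{12}^{\tau}=-\beta_{12}$ (because $e_{12}^{\sharp}=-e_{12}$) and is handled via anti-symmetrization $\tfrac{1}{2}(b-b^{\tau})$ before applying the same functional-calculus correction. The principal obstacle in both cases is ensuring that the off-diagonal correction used to enforce the exact polar and commutation relations preserves the $\tau$-symmetry of the lift. This is why the correction must be built entirely from real-valued continuous functions applied to the already-symmetric diagonal lifts and the symmetrized $b'$, so that the final identities $(\beta'_{ij})^{\tau}=\beta'_{ji}$ or $(\beta'_{12})^{\tau}=-\beta'_{12}$ are forced automatically rather than requiring separate verification.
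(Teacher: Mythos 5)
There is a genuine gap, and it sits exactly where you wave your hands. For $(CM_n(\C),\tr)$ you lift each $\beta_{ij}=\varphi(t\otimes e_{ij})$ separately and then assert that a ``continuous functional calculus correction'' in $\beta'_{ii}$, $\beta'_{jj}$, $b'$ forces the exact relations $\beta'_{ij}(\beta'_{ij})^*=(\beta'_{ii})^2$, $(\beta'_{ij})^*\beta'_{ij}=(\beta'_{jj})^2$, $\beta'_{ii}\beta'_{ij}=\beta'_{ij}\beta'_{jj}$, after which ``the universal property of $CM_n(\C)$'' assembles a lift. Two problems: first, you have not exhibited (and it is not true as stated) that this family of pairwise relations is a presentation of $CM_n(\C)$ --- for $n\ge 3$ you are missing all the cross-relations among distinct off-diagonal units (products like $\beta'_{ij}\beta'_{jk}$ versus $\beta'_{ik}$, and the orthogonality $\beta'_{ij}\beta'_{kl}=0$ for $j\ne k$), without which no homomorphism out of $CM_n(\C)$ is determined by your data. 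Second, even granting a correct presentation, producing lifts that satisfy such polar-type and intertwining relations \emph{exactly} is precisely the nontrivial content of projectivity of the cone; it cannot be achieved by applying real-valued functions to the three elements you name, since they do not commute and functional calculus gives you no control over mixed products. This is the substance of the complex-case theorems the paper leans on (Theorem~10.2.1 of \cite{loringbook}, Theorem~3.5 of \cite{loringpedersen98}); the paper's point for the $\tr$ case is only that those existing proofs can be carried out $\tau$-equivariantly, not that the lifting itself is easy.

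For $(CM_2(\C),\sharp)$ your outline misses the reduction that makes the paper's proof work: $(CM_2(\C),\sharp)$ is universal on a \emph{single} generator $x$ subject to $\|x\|\le 1$, $x^2=0$, $x^\tau=-x$, so one only has to lift $x=\phi(t\otimes e_{12})$ with these three properties. The paper does this not by functional calculus but by an algebraic sandwich: lift the orthogonal pair $\phi(t^{1/3}\otimes e_{11})$, $\phi(t^{1/3}\otimes e_{22})$ to orthogonal positive $h,k$ with $h^\tau=k$ (Lemma~\ref{orthogonallift}(2)), lift $x^{1/3}$ to $y$ with $y^\tau=-y$, and set $z=kyh$; orthogonality of $h$ and $k$ gives $z^2=0$ exactly, the symmetries give $z^\tau=-z$, and the norm is then tamed by $w=zf(z^*z)$. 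Your proposal never secures exact nilpotency (or the exact relations it needs) for the off-diagonal lift, so the step from approximate data to an honest homomorphism is unproven in both cases.
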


\begin{proof}
A proof that $(CM_n(\C), \tr)$ is projective can be obtained by examining a proof that $CM_n(\C)$ is projective in the context of 
complex \calg s. For example, see Theorem~10.2.1 in \cite{loringbook} or Theorem~3.5 of \cite{loringpedersen98}.

To show that $(CM_2(\C), \sharp)$ is projective let $\phi \colon (CM_2(\C), \sharp) \rightarrow (B, \tau)$ be a \ctalg ~homomorphism and let $\pi \colon (A, \tau) \rightarrow (B, \tau)$ be a surjective \ctalg ~homomorphism. Let $x = \phi(t \otimes e_{12})$. Then $x$ satisfies $\|x\| \leq 1$, $x^2 = 0$, and $x^\tau = -x$. In fact $(CM_2(\C), \sharp)$ is universal for these relations so it suffices to show that $x$ can be lifted to an element in $A$ satisfying the same.

Using Lemma~\ref{orthogonallift}, lift $\phi(t^{1/3} \otimes e_{11})$ and $\phi(t^{1/3} \otimes e_{22})$ to elements $h,k \in A$ satisfying $0 \leq h,k \leq 1$, $hk = 0$, and $h^{\tau} = k$. Lift $\phi(t^{1/3} \otimes e_{12}) = x^{1/3}$ to an element $y \in A$ satisfying $y^\tau = -y$. Let $z = kyh$ so that $\pi(z) = x$, $z^2 = 0$, and $z^\tau = -z$.

To finish, let
$$f(t) = \begin{cases} 1 & 0 \leq t \leq 1 \\ t^{-1/2} & 1 \leq t \end{cases} \qquad
\text{and} \qquad
w = z f(z^* z)  \; .$$
Then $\|w \| \leq 1$ since $w^* w = f(z^* z) z^*z f(z^* z) = g(z^* z)$
where 
$$g(t) = \begin{cases} t & 0 \leq t \leq 1 \\ 1 & 1 \leq t. \end{cases}$$
Since $\pi(w) = x f(x^* x) = x$, we have that $w$ is still a lift of $x$.
We also have $w^2 =   f(z z^*) z z f(z^* z) = 0$.
Finally, we show that $w^{\tau} = -w$.  Check that $(z^* z)^\tau = z^\tau z^{* \tau} = -z (-z^*) = z z^*$ so that
$$w^\tau = (z f(z^* z))^\tau = f(z^* z)^\tau z^\tau =  -f(z z^*) z = -w \; .$$
\end{proof}

We remark the Proposition~\ref{CM_2} can be strengthened to state that the cone $(CM_{2n}(\C), \sharp \otimes \tr_n)$ is projective using a similar proof to the above. This however will be a direct consequence of Proposition~\ref{H1semiproj} below and the stronger result is not required for us before that point.

\begin{lemma} \label{thm:A}
Suppose $\varphi:C{M}_{n}(\mathbb{C})\rightarrow B$
is a $*$-homomorphism of \calg s. Denote by $B_{0}$ and
$B_{n}$ the hereditary subalgebras of $B$ generated by
$\varphi(C_{0}(0,1]\otimes e_{11})$
and $\varphi(C{M}_{n}(\mathbb{C}))$, respectively.
Then there is a natural isomorphism 
\[
\Phi:B_{0}\otimes {M}_{n}(\C) \rightarrow B_{n}.
\]
defined by
\[
\Phi\left(\varphi(f\otimes e_{1r})b\varphi(g\otimes e_{s1})\otimes e_{jk}\right)
=
\varphi(f\otimes e_{jr})b\varphi(g\otimes e_{sk})
\]
for $f,g \in C_0(0,1]$ and $b \in B$.

Furthermore, suppose 
$\varphi \colon (C M_n(\C), \tr) \rightarrow (B, \tau)$
is a \ctalg ~homomorphism of \ctalg s. Then there is a natural isomorphism of \ctalg s
$$\Phi \colon (B_{0} \otimes M_n(\C), \tau \otimes \tr) \rightarrow (B_{n}, \tau)$$
given by the same formula as above.
\end{lemma}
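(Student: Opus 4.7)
The plan is to construct $\Phi$ via an extension of $\varphi$ to multiplier algebras, which makes well-definedness transparent. First, $\varphi$ is nondegenerate as a map into $B_n$: if $(u_\lambda)\subset CM_n(\C)$ is an approximate unit, then $\varphi(u_\lambda)$ is an approximate unit for $B_n$, since $B_n$ is the hereditary subalgebra generated by $\varphi(CM_n(\C))$. Hence $\varphi$ extends uniquely to a strictly continuous unital ${}^*$-homomorphism $\widetilde{\varphi}\colon M(CM_n(\C))\to M(B_n)$. Since $M(CM_n(\C))\cong C_b((0,1])\otimes M_n(\C)$, we obtain ``matrix units'' $E_{jk}:=\widetilde{\varphi}(1\otimes e_{jk})\in M(B_n)$ satisfying $E_{ij}E_{kl}=\delta_{jk}E_{il}$ and $E_{ij}^{*}=E_{ji}$. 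A key point, used throughout, is that $E_{11}$ acts as a two-sided unit on $B_0$, since every generator $\varphi(f\otimes e_{11})\, b\, \varphi(g\otimes e_{11})$ absorbs $E_{11}$ on either side.

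I would then define $\Phi$ on simple tensors by $\Phi(b_0\otimes e_{jk}):=E_{j1}\, b_0\, E_{1k}$ and extend linearly. Well-definedness is automatic from this intrinsic formulation, and the formula in the lemma follows on generators because $\widetilde{\varphi}(1\otimes e_{j1})\varphi(f\otimes e_{1r})=\varphi(f\otimes e_{jr})$, and similarly on the right. Multiplicativity on the algebraic tensor product reduces to $E_{1k}E_{l1}=\delta_{kl}E_{11}$ combined with the unit property of $E_{11}$ on $B_0$; the ${}^*$-relations follow from $E_{ij}^{*}=E_{ji}$. Continuity, together with finite-dimensionality of $M_n(\C)$, then extends $\Phi$ to a ${}^*$-homomorphism on the $C^*$-tensor product $B_0\otimes M_n(\C)$.

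To show $\Phi$ is an isomorphism, I would exhibit a bounded linear inverse
$$\Psi(c):=\sum_{j,k}E_{1j}\, c\, E_{k1}\otimes e_{jk}.$$
A direct computation using $E_{1j}E_{l1}=\delta_{jl}E_{11}$ and the unit property of $E_{11}$ on $B_0$ yields $\Psi\circ\Phi=\mathrm{id}$. Conversely, $\Phi\circ\Psi(c)=\bigl(\sum_{j}E_{jj}\bigr)\, c\, \bigl(\sum_{k}E_{kk}\bigr)=\widetilde{\varphi}(1\otimes I_n)\, c\, \widetilde{\varphi}(1\otimes I_n)=c$, the last equality holding because $\widetilde{\varphi}(1\otimes I_n)$ acts as a unit on $B_n$ by nondegeneracy of $\varphi$.

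For the equivariant refinement, the strict extension $\widetilde{\varphi}$ respects involutions, and $(1\otimes e_{jk})^{\tr}=1\otimes e_{kj}$ gives $E_{jk}^{\tau}=E_{kj}$. By antimultiplicativity of $\tau$,
$$\Phi(b_0\otimes e_{jk})^\tau=(E_{j1}\, b_0\, E_{1k})^\tau=E_{1k}^\tau\, b_0^\tau\, E_{j1}^\tau=E_{k1}\, b_0^\tau\, E_{1j}=\Phi\!\left((b_0\otimes e_{jk})^{\tau\otimes\tr}\right),$$
so $\Phi$ intertwines the involutions. The principal technical hurdle is justifying the multiplier extension and verifying the unit property of $E_{11}$ on $B_0$; once those are in hand the rest is bookkeeping with matrix units. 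A more elementary alternative avoiding multipliers defines $\Phi(b_0\otimes e_{jk})$ as the norm limit $\lim_\lambda\varphi(u_\lambda^{1/2}\otimes e_{j1})\, b_0\, \varphi(u_\lambda^{1/2}\otimes e_{1k})$ along an approximate unit $(u_\lambda)$ of $C_0(0,1]$, with convergence checked on the dense subalgebra spanned by elements $\varphi(f\otimes e_{1r})\, b\, \varphi(g\otimes e_{s1})$.
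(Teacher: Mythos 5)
Your argument is correct in outline and takes a genuinely different route from the paper's. The paper works entirely inside $B$: it first identifies $B_0$ and $B_n$ as $\varphi(C_0(0,1]\otimes e_{11})B\varphi(C_0(0,1]\otimes e_{11})$ and $\varphi(C_0(0,1]\otimes 1_n)B\varphi(C_0(0,1]\otimes 1_n)$ via Pedersen's factorization theorem, and then verifies by hand, with approximate-unit and function-factorization arguments, that the defining formula is independent of the chosen representation $\varphi(f\otimes e_{1r})b\varphi(g\otimes e_{s1})$, is additive, multiplicative, $*$-preserving, injective and surjective. Your multiplier formulation --- extend the nondegenerate map $\varphi\colon CM_n(\C)\to B_n$ to $\widetilde{\varphi}\colon M(CM_n(\C))\to M(B_n)$, set $E_{jk}=\widetilde{\varphi}(1\otimes e_{jk})$, and define $\Phi(b_0\otimes e_{jk})=E_{j1}b_0E_{1k}$ --- makes well-definedness and the algebraic identities automatic, and since it reproduces the paper's formula on generators it defines the same map. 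What you buy is the conceptual picture that $B_0$ is the $(1,1)$-corner of $B_n$ cut out by a system of matrix units in $M(B_n)$; what the paper buys is a self-contained argument at the cost of bookkeeping. Your treatment of the real structure via $E_{jk}^{\tau}=E_{kj}$ is also fine (note that $\tau$ does preserve $B_n$ and $B_0$, so it extends to $M(B_n)$, and $\widetilde{\varphi}$ intertwines the extended involutions by strict continuity).

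Two points still need to be supplied. First, your inverse $\Psi(c)=\sum_{j,k}E_{1j}\,c\,E_{k1}\otimes e_{jk}$ is only well defined once you know $E_{1j}cE_{k1}\in B_0$ for all $c\in B_n$, i.e.\ the inclusion $E_{11}B_nE_{11}\subseteq B_0$; your ``key point'' (that $E_{11}$ is a unit on $B_0$) gives only the reverse containment $B_0\subseteq E_{11}B_nE_{11}$, and without the forward one both $\Psi\circ\Phi=\mathrm{id}$ and $\Phi\circ\Psi=\mathrm{id}$ (hence surjectivity) are not yet established. This is exactly where factorization enters: on the dense span of elements $\varphi(x)b\varphi(y)$ with $x,y\in CM_n(\C)$ one gets sums of terms $\varphi(f\otimes e_{1s})\,b\,\varphi(g\otimes e_{r1})$, and writing $f=f_1f_2$, $g=g_1g_2$ in $C_0(0,1]$ places each term in $\varphi(C_0(0,1]\otimes e_{11})B\varphi(C_0(0,1]\otimes e_{11})\subseteq B_0$; then pass to limits using boundedness of $c\mapsto E_{1j}cE_{k1}$. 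Second, the lemma asserts naturality in $B$, which you do not address; it follows immediately from the generator formula $\Phi\left(\varphi(f\otimes e_{1r})b\varphi(g\otimes e_{s1})\otimes e_{jk}\right)=\varphi(f\otimes e_{jr})b\varphi(g\otimes e_{sk})$, since a homomorphism $\gamma\colon B\to C$ carries these expressions for $\varphi$ to the corresponding ones for $\gamma\circ\varphi$, but it should be stated. With these additions your proof is complete.
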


\begin{proof}
We start with some nice descriptions of $B_{0}$ and $B_{n}$. Since $C(0,1] \otimes e_{11}$ is generated by 
$t \otimes e_{11}$, we have
\[
B_{0}=\overline{\varphi(t\otimes e_{11})B\varphi(t\otimes e_{11})}
\qquad \text{and} \qquad 
B_{n}=\overline{\varphi(t\otimes 1_n)B\varphi(t\otimes 1_n)}.
\]
On the other hand the nice factorization result, Corollary~4.6 of \cite{pedersen98}, implies that
\begin{align*}
B_{0} &=
\varphi(C_{0}(0,1]\otimes e_{11})B\varphi(C_{0}(0,1]\otimes e_{11})   \\
\text{and~}  B_{n} &=
\varphi(C_{0}(0,1]\otimes 1_n)B\varphi(C_{0}(0,1]\otimes 1_n) \; ,
\end{align*}
which shows that it is enough to define $\Phi$ on the elements of the form
$$x = \varphi(f \otimes e_{11}) b \varphi (g \otimes e_{11}) \; .$$

We first establish that $\Phi$ is well-defined as a map restricted
to $B_{0}\otimes e_{jk}$. Suppose
\[
\varphi(f\otimes e_{1r})b\varphi(g\otimes e_{s1})
=
\varphi(h\otimes e_{1p})b^{\prime}\varphi(k\otimes e_{q1}).
\]
Select any $\mu_{n}$ that is an approximate identity in $C_{0}(0,1]$
and calculate:
\begin{align*}
& \varphi(f\otimes e_{jr})b\varphi(g\otimes e_{sk}) \\
& \quad=\lim_{m}\lim_{n}
  \varphi(\mu_{n}\otimes e_{j1})
  \varphi(f\otimes e_{1r})
  b
  \varphi(g\otimes e_{s1})
  \varphi(\mu_{m}\otimes e_{1k})\\
&\quad =\lim_{m}\lim_{n}
  \varphi(\mu_{n}\otimes e_{j1})
  \varphi(h\otimes e_{1p})
  b^{\prime}
  \varphi(k\otimes e_{q1})
  \varphi(\mu_{m}\otimes e_{1k})\\
& \quad=\varphi(h\otimes e_{jp})
  b^{\prime}
  \varphi(k\otimes e_{qk}).
\end{align*}
To see this is additive, consider two elements in $B_{0}$, 
\[
x=\varphi(f\otimes e_{11})b\varphi(g\otimes e_{11}),
 \quad 
y=\varphi(h\otimes e_{11})b' \varphi(k\otimes e_{11}).
\]
We claim that we can rewrite these elements so that $f = h$ and $g = k$.  
Indeed, we can factor the functions as $f= \mu f_{1}$ and $h= \mu h_{1}$ where $\mu(x) = \sqrt{|f(x)| + |h(x)|}$
to get $f \otimes e_{11} = (\mu \otimes e_{11})(f_1 \otimes e_{11})$ 
and $h \otimes e_{11} = (\mu \otimes e_{11})(f_1 \otimes e_{11})$
\[
x=\varphi(\eta\otimes e_{11})b^{\prime\prime\prime}\varphi(g\otimes e_{11}),
\quad 
y=\varphi(\eta\otimes e_{11})b^{\prime\prime}\varphi(k\otimes e_{11}).
\]
Perform a similar procedure using the functions $g$ and $k$. Therefore, we can assume that we have
\[
x=\varphi(f\otimes e_{11})b\varphi(g\otimes e_{11}),
\quad 
y=\varphi(f\otimes e_{11})b^{\prime}\varphi(g\otimes e_{11}).
\]
Then we prove additivity as follows,
\begin{align*}
& \Phi\left(
 \varphi(f\otimes e_{11})b\varphi(g\otimes e_{11})\otimes e_{jk}
 +
 \varphi(f\otimes e_{11})b^{\prime}\varphi(g\otimes e_{11})\otimes e_{jk}
 \right)\\
& \quad=\Phi\left(
 \varphi(f\otimes e_{11})(b+b^{\prime})\varphi(g\otimes e_{11})\otimes e_{jk}
 \right)\\
& \quad=
  \varphi(f\otimes e_{j1})(b+b^{\prime})\varphi(g\otimes e_{1k})\\
& \quad=
  \varphi(f\otimes e_{j1})b\varphi(g\otimes e_{1k})
  +
  \varphi(f\otimes e_{j1})b^{\prime}\varphi(g\otimes e_{1k})\\
& \quad=
  \Phi\left(
  \varphi(f\otimes e_{11})b\varphi(g\otimes e_{11})\otimes e_{jk}
  \right)
  +
  \Phi\left(
  \varphi(f\otimes e_{11})b^{\prime}\varphi(g\otimes e_{11})\otimes e_{jk}
  \right).
\end{align*}
Now we easily conclude that $\Phi$ is a well-defined linear map on all
of $B\otimes M_{n}(\mathbb{C})$.

As to the product, we observe 
\begin{align*}
& \Phi\left(
 \varphi(f\otimes e_{11})b\varphi(g\otimes e_{11})\otimes e_{jk}
 \right)
 \Phi\left(
 \varphi(g\otimes e_{11})b^{\prime}\varphi(h\otimes e_{11})\otimes e_{kl}
 \right)\\
& \quad=
  \varphi(f\otimes e_{j1})b\varphi(g\otimes e_{1k})
  \varphi(h\otimes e_{k1})b\varphi(k\otimes e_{1l})\\
& \quad=
  \varphi(f\otimes e_{j1})b\varphi(gh\otimes e_{11})b\varphi(k\otimes e_{1l})\\
& \quad=
 \Phi\left(
 \varphi(f\otimes e_{11})b\varphi(gh\otimes e_{11})b\varphi(k\otimes e_{11})\otimes e_{jl}
 \right).
\end{align*}
Proving that $\Phi$ is a $*$-homomorphism is easier:
\begin{align*}
  \Phi\left(
  \left(\varphi(f\otimes e_{11})b\varphi(g\otimes e_{11})\otimes e_{jk}\right)^{*}
  \right) 
& =\Phi\left(
  \varphi(\bar{g}\otimes e_{11})b^{*}\varphi(\bar{f}\otimes e_{11})\otimes e_{kj}
  \right)\\
& =\varphi(\bar{g}\otimes e_{k1})b^{*}\varphi(\bar{f}\otimes e_{1j})\\
& =\left(\varphi(f\otimes e_{j1})b\varphi(g\otimes e_{1k})\right)^{*}\\
& =\left(\Phi\left(
  \varphi(f\otimes e_{11})b\varphi(g\otimes e_{11})\otimes e_{jk}
  \right)\right)^{*}.
\end{align*}

To prove $\Phi$ is onto, we start with an element 
\[
\varphi(f\otimes 1_n)b\varphi(g\otimes 1_n)
\]
 which we expand as
\[
\sum\varphi(f\otimes e_{jj})b\varphi(g\otimes e_{kk})
=
\sum\Phi(\varphi(f\otimes e_{1j})b\varphi(g\otimes e_{k1})\otimes e_{jk}).
\]
Injectivity is easy since $\Phi$ will be injective if and only if
its restriction to $B_{0}\otimes e_{11}$ is injective, and
\[
\Phi\left(\varphi(f\otimes e_{11})b\varphi(g\otimes e_{11})\otimes e_{11}\right)
=
\varphi(f\otimes e_{11})b\varphi(g\otimes e_{11}).
\]

For naturality, suppose that $\gamma:B\rightarrow C$ is a homomorphism of \calg s or \ctalg s. 
Then define $\psi=\gamma\circ\varphi$ and subsequently define 
$\Psi \colon C_0 \otimes M_n(\C) \rightarrow C_n$ as above. 
Check that $\gamma(B_{0})\subseteq C_{0}$
and $\gamma(B_{n})\subseteq C_{n}$. Then we show
$ \Psi(\gamma(x)\otimes e_{jk})=\gamma(\Psi(x\otimes e_{jk}))$
as follows:
\begin{align*}
\Psi(\gamma(\varphi(f\otimes e_{11})b\varphi(g\otimes e_{11}))\otimes e_{jk}) 
& =\Psi(\psi(f\otimes e_{11})\gamma(b)\psi(g\otimes e_{11}))\otimes e_{jk})\\
& =\psi(f\otimes e_{j1})\gamma(b)\psi(g\otimes e_{1k}))\\
& = \gamma(\varphi(f \otimes e_{j1})) \gamma(b) \gamma( \varphi(g \otimes e_{1k})) \\
& = \gamma( \varphi(f \otimes e_{j1}) b \varphi( g \otimes e_{1k} )) \\
&= \gamma \Phi ( \varphi (f \otimes e_{11}) b \varphi(g \otimes e_{11}) \otimes e_{jk}) \; .
\end{align*}

In the case that there is an involution $\tau$ on $B$ with $\varphi(x^{\tr})=\varphi(x)^{\tau}$ for $x \in CM_2(\C)$,
we show that $\Phi((x \otimes e_{jk})^{\tau \otimes \tr}) = \Phi(x \otimes e_{jk})^\tau$ for $x \otimes e_{jk} $ in $B_0 \otimes M_n(\C)$:
\begin{align*}
\Phi\left(\left(\varphi(f\otimes e_{11})b\varphi(g\otimes e_{11})\right)^{\tau}\otimes e_{jk}^{\tr}\right)
  & =\Phi \left( \varphi(g\otimes e_{11})b^{\tau}\varphi(f\otimes e_{11}) \otimes e_{kj} \right)\\
  & =\varphi(g\otimes e_{k1})b^{\tau}\varphi(f\otimes e_{1j})\\
 & =\varphi((g\otimes e_{1k})^{\tr})b^{\tau}\varphi((f\otimes e_{j1})^{\tr})\\
 & =\left(\varphi(f\otimes e_{j1})b\varphi(g\otimes e_{1k})\right)^{\tau}\\
 &=  \Phi\left(  \varphi(f\otimes e_{11})b\varphi(g\otimes e_{11})\otimes e_{jk}  \right)^{\tau}.
\end{align*}
\end{proof}

\begin{cor}
Let $h$ be a strictly positive element in a \calg~ $B$. There is an embedding
$C \C \hookrightarrow B$ sending the
canonical generator to $h$. Similarly, there is an embedding
$C \C \rightarrow C M_n(\C)$ by 
$f\mapsto f\otimes e_{11}$. 
Then there is an isomorphism
\[
B *_{C \C }C M_n(\C)
\cong
B \otimes M_n(\C)
\]
given by 
\[
b\mapsto b\otimes e_{11}
\qquad \text{and} \qquad
f\otimes e_{jk}\mapsto f(h)\otimes e_{jk} \;.
\]
If there is a real structure $\tau$ on $B$ and if $h$ satisfies $h^\tau = h$,
then the isomorphism is $\tau$-preserving.
\end{cor}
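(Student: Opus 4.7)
The plan is to apply Lemma~\ref{thm:A} to the natural embedding $\varphi\colon CM_n(\C) \hookrightarrow D$, where $D := B *_{C\C} CM_n(\C)$, and to identify the resulting hereditary subalgebras with $B$ and $D$ themselves using the strict positivity of $h$. First I would construct $\Psi\colon D \to B \otimes M_n(\C)$ via the universal property of the amalgamated free product: the maps $b \mapsto b \otimes e_{11}$ and $f \otimes e_{jk} \mapsto f(h) \otimes e_{jk}$ agree on $C\C$ (both send its canonical generator to $h \otimes e_{11}$). Surjectivity of $\Psi$ is immediate because $(h^{1/m} \otimes e_{j1})(b \otimes e_{11})(h^{1/m} \otimes e_{1k}) \to b \otimes e_{jk}$ in $B \otimes M_n(\C)$, with the left side lying in the image of $\Psi$.

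Next, applying Lemma~\ref{thm:A} to $\varphi$ produces an isomorphism $\Phi\colon B_0 \otimes M_n(\C) \xrightarrow{\cong} B_n$ of hereditary subalgebras of $D$, where $B_0 = \overline{hDh}$ and $B_n = \overline{(t \otimes 1_n) D (t \otimes 1_n)}$. To show $B_n = D$, I note that $(t^{1/m} \otimes 1_n)$ is an approximate identity in $D$: for $b \in B$, strict positivity of $h$ forces $(t^{1/m} \otimes e_{jj}) b = \delta_{1j} h^{1/m} b$ in $D$, so $(t^{1/m} \otimes 1_n) b = h^{1/m} b \to b$, while $(t^{1/m} \otimes 1_n)$ is manifestly an approximate identity on $CM_n(\C)$.

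The main obstacle will be showing $B_0 = B$. The inclusion $B \subseteq B_0$ is immediate. For the reverse, the crucial observation is that strict positivity of $h$ forces the vanishings $b \cdot (f \otimes e_{jk}) = 0$ when $j \ne 1$ and $(f \otimes e_{jk}) \cdot b = 0$ when $k \ne 1$ in $D$; indeed $b(f \otimes e_{jk}) = \lim_m b(t^{1/m} \otimes e_{11})(f \otimes e_{jk}) = \lim_m \delta_{1j}\, b(t^{1/m}f \otimes e_{1k})$. These relations imply that every non-vanishing alternating word in $D$ collapses to one of the canonical forms $b$, $(f \otimes e_{j1})\, b$, $b\,(g \otimes e_{1k})$, or $(f \otimes e_{j1})\, b \,(g \otimes e_{1k})$ with $b \in B$. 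A direct calculation (for instance $h(f \otimes e_{j1}) b (g \otimes e_{1k}) h = \delta_{1j}\delta_{k1}\,(tf)(h) \cdot b \cdot (gt)(h)$) shows that sandwiching any such form between two copies of $h = t \otimes e_{11}$ yields an element of $B$, so by density $\overline{hDh} \subseteq B$.

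With $B_0 = B$ and $B_n = D$, Lemma~\ref{thm:A} gives $\Phi\colon B \otimes M_n(\C) \xrightarrow{\cong} D$, and its composition with $\Psi$ is the identity on generators by the explicit formula in the Lemma, so $\Psi = \Phi^{-1}$ realizes the asserted isomorphism. For the real-structure addendum, $h^\tau = h$ together with $(t \otimes e_{11})^{\tr} = t \otimes e_{11}$ ensures that the involutions on $B$ and $CM_n(\C)$ agree on $C\C$, inducing a compatible involution on $D$; then $\Psi(f \otimes e_{jk})^{\tau \otimes \tr} = f(h)^\tau \otimes e_{kj} = f(h) \otimes e_{kj} = \Psi((f \otimes e_{jk})^{\tr})$, and similarly $\Psi(b)^{\tau \otimes \tr} = \Psi(b^\tau)$, confirming that $\Psi$ is $\tau$-preserving.
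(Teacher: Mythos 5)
Your argument is correct, and it fills in precisely the route the paper intends (the corollary is stated without proof as a consequence of Lemma~\ref{thm:A}): construct $\Psi$ by the universal property of the amalgamated free product, use strict positivity of $h$ to force the vanishing relations and collapse alternating words so that $B_0$ is the image of $B$ and $B_n$ is all of $B *_{C\C} CM_n(\C)$, and then invert via the lemma's isomorphism $\Phi$. The only point worth making explicit is that writing $B_0 = B$ tacitly assumes the canonical map $B \rightarrow B *_{C\C} CM_n(\C)$ is injective; this is immediate from your own $\Psi$, since $\Psi(b) = b \otimes e_{11}$, so the identification (and the final claim that $\Psi \circ \Phi$ is the identity up to this identification) is harmless.
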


\begin{lemma} \label{thm:B}
Suppose
$\varphi \colon (C M_2(\C), \sharp) \rightarrow (B, \tau)$
is a \ctalg ~homomorphism of \ctalg s.  Then there is a natural isomorphism of \ctalg s
$$\Phi \colon (B_0 \otimes M_2(\C), \sigma \otimes \sharp) \rightarrow (B_2, \tau)$$
where $B_0$, $B_2$, and $\Phi$ are as in Lemma~\ref{thm:A}
and where $\sigma$ is an antimultiplicative involution on $B_0$ defined by
\[
\left(\varphi(f\otimes e_{11})b\varphi(g\otimes e_{11})\right)^{\sigma}
=
\varphi(g\otimes e_{12})b^{\tau}\varphi(f\otimes e_{21}).
\]

Furthermore, the construction $\varphi \mapsto (B_0, \sigma)$ is natural.
\end{lemma}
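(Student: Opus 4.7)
The complex $*$-algebra isomorphism $\Phi\colon B_0\otimes M_2(\C) \to B_2$ underlying the claim is already supplied by the $*$-algebra part of Lemma~\ref{thm:A}. The new content is therefore to produce the antimultiplicative involution $\sigma$ on $B_0$ and to verify that $\Phi$ intertwines $\sigma\otimes\sharp$ with $\tau$; naturality will then follow formally.

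The crux will be the well-definedness of $\sigma$, since the stated formula a priori depends on the non-unique representation $b_0 = \varphi(f\otimes e_{11})b\varphi(g\otimes e_{11})$. My plan is to establish the manifestly representation-free identity
\[
  \sigma(b_0) \;=\; \lim_{\mu}\; \varphi(\mu\otimes e_{12})\, b_0^{\tau}\, \varphi(\mu\otimes e_{21}),
\]
where $\mu$ runs over a positive approximate identity of $C_0((0,1])$. Substituting $b_0^{\tau} = \varphi(g\otimes e_{22})b^{\tau}\varphi(f\otimes e_{22})$ and using $e_{12}e_{22}=e_{12}$, $e_{22}e_{21}=e_{21}$ reduces the expression under the limit to $\varphi(\mu g\otimes e_{12})\,b^{\tau}\,\varphi(f\mu\otimes e_{21})$, which converges to the stated formula as $\mu\to 1$. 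I would then verify $\sigma(b_0)\in B_0$ by factoring $g = g_1 g_2$ and $f = f_2 f_1$ in $C_0((0,1])$, rewriting $\sigma(b_0) = \varphi(g_1\otimes e_{11})\,b''\,\varphi(f_1\otimes e_{11})$ with $b'' = \varphi(g_2\otimes e_{12})b^{\tau}\varphi(f_2\otimes e_{21}) \in B$.

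With the limit formula in hand, I would check the remaining algebraic properties. Antimultiplicativity $\sigma(b_0 b_0')=\sigma(b_0')\sigma(b_0)$ will follow from antimultiplicativity of $\tau$ together with the observation that $\varphi(\mu\otimes e_{21})\varphi(\nu\otimes e_{12}) = \varphi(\mu\nu\otimes e_{22})$ acts as an approximate identity on the hereditary subalgebra $\tau(B_0)$, absorbing the extra central factor. Involutivity $\sigma^2=\mathrm{id}$ reduces to a direct calculation from the explicit formula: the two sign contributions $(e_{12})^\sharp=-e_{12}$ and $(e_{21})^\sharp=-e_{21}$ cancel, and $\tau^2=\mathrm{id}$ returns $b_0$. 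The intertwining $\Phi(y^{\sigma\otimes\sharp})=\Phi(y)^\tau$ can then be verified by linearity on the four cases $y = b_0\otimes e_{jk}$; in each, both sides simplify to $\pm\varphi(g\otimes e_{\ast})b^{\tau}\varphi(f\otimes e_{\ast})$, with signs matching because the two contributions from $(e_{1k})^\sharp$ and $(e_{j1})^\sharp$ on the $\tau$-side combine to the same overall sign as $(e_{jk})^\sharp$ on the $\sharp$-side.

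Naturality is then immediate: given a \ctalg~homomorphism $\gamma\colon (B,\tau)\to(C,\tau)$ with $\psi=\gamma\circ\varphi$, applying $\gamma$ inside the limit formula and using that $\gamma$ intertwines $\tau$ yields $\gamma\circ\sigma_B = \sigma_C\circ\gamma$, where $\sigma_C$ is the involution on $C_0$ constructed from $\psi$. The hard part throughout is the well-definedness of $\sigma$; once the approximate-identity reformulation is in place, the remaining algebraic properties and the intertwining are essentially formal verifications paralleling those in the proof of Lemma~\ref{thm:A}.
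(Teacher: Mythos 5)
Your proposal is correct, and its skeleton (define $\sigma$ by the stated formula, settle well-definedness, check $\sigma^2=\mathrm{id}$ by the sign cancellation $e_{12}^\sharp=-e_{12}$, $e_{21}^\sharp=-e_{21}$, verify the intertwining case-by-case on $b_0\otimes e_{jk}$, then naturality) matches the paper's proof. Where you genuinely diverge is the key well-definedness step. The paper does not use an approximate-identity limit; it observes the identity $x^{\sigma}\otimes e_{11}=\Phi^{-1}\bigl(\bigl(\Phi(x\otimes e_{22})\bigr)^{\tau}\bigr)$, i.e.\ it realizes $\sigma$ (up to the $e_{11}/e_{22}$ bookkeeping) as the conjugate of $\tau$ by the isomorphism $\Phi$ already built in Lemma~\ref{thm:A}. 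That one line buys well-definedness, linearity, antimultiplicativity, and compatibility with the $C^*$-adjoint simultaneously, since $\sigma$ is exhibited as a composition of isomorphisms with one antiautomorphism. Your representation-free formula $\sigma(b_0)=\lim_{\mu}\varphi(\mu\otimes e_{12})\,b_0^{\tau}\,\varphi(\mu\otimes e_{21})$ is a legitimate substitute (it needs the Pedersen factorization $B_0=\varphi(C_0(0,1]\otimes e_{11})B\varphi(C_0(0,1]\otimes e_{11})$ from Lemma~\ref{thm:A}, which you implicitly use both to evaluate the limit and, via $f=f_2f_1$, $g=g_1g_2$, to see $\sigma(b_0)\in B_0$), and it gives linearity of $\sigma$ for free; the price is that antimultiplicativity and the compatibility $\sigma(b_0^*)=\sigma(b_0)^*$ must then be checked by hand. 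The former works as you sketch (or more simply by a direct computation on two factored elements, which is essentially what the paper does for $\sigma^2=\mathrm{id}$); the latter you do not mention, but it is a one-line consequence of your formula and should be recorded, since it is needed for $(B_0\otimes M_2(\C),\sigma\otimes\sharp)$ to be a \ctalg. Your sign bookkeeping in the involutivity and intertwining steps is correct and agrees with the paper's displayed computations.
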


\begin{proof}
We already know from Lemma~\ref{thm:A} that $\Phi$ is a well defined isomorphism.
Suppose now that $\varphi \colon CM_2(\C) \rightarrow B$ satisfies $\varphi(x^\sharp) = \varphi(x)^\tau$.
We first show that $\sigma$ is well-defined and is a real structure on $B_0$. 
Since 
\begin{align*}
\left(\varphi(f\otimes e_{11})b\varphi(g\otimes e_{11})\right)^{\sigma}\otimes e_{11} 
& =\varphi(g\otimes e_{12})b^{\tau}\varphi(f\otimes e_{21})\otimes e_{11}\\
& =\Phi^{-1}\left(
  \varphi(g\otimes e_{12})b^{\tau}\varphi(f\otimes e_{21})
  \right)\\
& =\Phi^{-1}\left(
  \varphi(g\otimes e_{12}^{\sharp})b^{\tau}\varphi(f\otimes e_{21}^{\sharp})
  \right)\\
& =\Phi^{-1}\left(
  \left(\varphi(f\otimes e_{21})b\varphi(g\otimes e_{12})\right)^{\tau}
  \right)\\
& =\Phi^{-1}\left(
  \left(\Phi\left(\varphi(f\otimes e_{11})b\varphi(g\otimes e_{11})\otimes e_{22}\right)\right)^{\tau}
  \right)
\end{align*}
we see that 
\[
x^{\sigma}\otimes e_{11}
=
\Phi^{-1}\left(\left(\Phi(x\otimes e_{22})\right)^{\tau}\right)
\]
and so $\sigma$ is an anti-$*$-homomorphism, being a composition of four homomorphisms and
one anti-homomorphism. That $\sigma$ is an involution on $B_0$ is shown by:
\begin{align*}
\left(\varphi(fg\otimes e_{11})b\varphi(hk\otimes e_{11})\right)^{\sigma^{2}} 
& =\left(\varphi(kh\otimes e_{12})b^{\tau}\varphi(gf\otimes e_{21})\right)^{\sigma}\\
& =\left( \varphi(k \otimes e_{11}) \varphi(h \otimes e_{12}) b^\tau
		\varphi(g \otimes e_{21}) \varphi(f \otimes e_{11}) \right)^\sigma \\
& = \varphi(f \otimes e_{12}) \left( \varphi(h \otimes e_{12}) b^\tau
		\varphi(g \otimes e_{21}) \right)^\tau   \varphi(k \otimes e_{21})   \\
& = \varphi(f \otimes e_{12}) \varphi(g \otimes e_{21}^\sharp) b 
		\varphi(h \otimes e_{12}^\sharp)   \varphi(k \otimes e_{21})   \\
& =\varphi(f\otimes e_{12})\varphi(g\otimes e_{21})
  b
  \varphi(h\otimes e_{12})\varphi(k\otimes e_{21})\\
& =\varphi(fg\otimes e_{11})b\varphi(hk\otimes e_{11}).
\end{align*}

Now we show that $\Phi$ commutes with the appropriate real structures; that is we prove that
$\Phi((x \otimes e_{jk})^{\sigma \otimes \sharp}) = \Phi(x \otimes e_{jk})^{\tau}$ for all $x \otimes e_{jk} \in B_0 \otimes M_2(\C)$. 
Of the four cases to consider, we will show the calculations for the cases $x \otimes e_{11}$ and $x \otimes e_{12}$ since the cases for $x \otimes e_{22}$ and $x \otimes e_{21}$ are similar.
\begin{align*}
\Phi\left(\left( \varphi( f \otimes e_{11} )b\varphi(g\otimes e_{11}) \right)^\sigma \otimes 			e_{11}^{\sharp} \right) 
& =\Phi\left(\varphi(g\otimes e_{12}) b^{\tau}\varphi(f\otimes e_{21}) \otimes e_{22} \right)\\
& =\varphi(g\otimes e_{22})b^{\tau}\varphi(f\otimes e_{22})\\
& =\left(\varphi(f\otimes e_{11})b\varphi(g\otimes e_{11})\right)^{\tau}\\
& = \Phi\left(\varphi(f\otimes e_{11})b\varphi(g\otimes e_{11})\otimes e_{11}\right)^{\tau}
\end{align*}
and 
\begin{align*}
\Phi\left(\left( \varphi( f \otimes e_{11} )b\varphi(g\otimes e_{11}) \right)^\sigma \otimes 			e_{12}^{\sharp} \right) 
& =\Phi\left(\varphi(g\otimes e_{12}) b^{\tau}\varphi(f\otimes e_{21}) \otimes - e_{12} \right)\\
& = - \varphi(g\otimes e_{12})b^{\tau}\varphi(f\otimes e_{22})\\
& =\left(\varphi(f\otimes e_{11})b\varphi(g\otimes e_{12})\right)^{\tau}\\
& = \Phi\left(\varphi(f\otimes e_{11})b\varphi(g\otimes e_{11})\otimes e_{12}\right)^{\tau}.
\end{align*}

Finally, we consider the question of naturality. For a \ctalg~homomorphism
$\gamma:(B, \tau) \rightarrow (C, \tau)$ we define $\psi=\gamma\circ\varphi$.
We obtain a real stucture $\sigma$ on $C_{0}$, 
\[
\left(\psi(f\otimes e_{11})c\psi(g\otimes e_{11})\right)^{\sigma}
=
\psi(g\otimes e_{12})c^{\tau}\psi(f\otimes e_{21}).
\]
The claim of naturality is the claim that the restriction of $\gamma$ to a map
$B_{0}\rightarrow C_{0}$ is a \ctalg~homomorphism
$
\gamma_{*}:(B_{0},\sigma)\rightarrow(C_{0},\sigma) .
$
Indeed,
\begin{align*}
  \gamma(\varphi(f\otimes e_{11})b\varphi(g\otimes e_{11}))^{\sigma}
& = \left( \psi(f\otimes e_{11})\gamma(b)\psi(g\otimes e_{11}) \right) ^{\sigma}\\
& =\psi(g\otimes e_{12})\gamma(b^\tau) \psi(f\otimes e_{21})\\
& =\gamma(\varphi(g\otimes e_{12})b^{\tau}\varphi(f \otimes e_{21}))\\
& =\gamma((\varphi(f\otimes e_{11})b\varphi(g\otimes e_{11}))^{\sigma}).
\end{align*}
\end{proof}

An important special case of Lemma~\ref{thm:B} occurs when $B= C \otimes M_2(\C) $
with involution $ \tau \otimes \sharp$
and when the map
$\varphi: (CM_2(\C), \sharp)
\rightarrow
(C \otimes M_2(\C) , \tau \otimes \sharp)$
sends $f\otimes e_{jk}$ to $f(h)\otimes e_{jk}$ for some strictly
positive self-$\tau$ element $h$ in $C$. 
In that case,
$B_{2}= C \otimes M_2(\C)$
and $B_{0}=C\otimes e_{11}$. Then 
the induced real structure $\sigma$ on $B_0$, defined
by 
\[
\left(\varphi(f\otimes e_{11})b\varphi(g\otimes e_{11})\right)^{\sigma}
=
\varphi(g\otimes e_{12})b^{\tau\otimes\sharp}\varphi(f\otimes e_{21})
\]
satisfies
\begin{align*}
\left(hbh\otimes e_{11}\right)^{\sigma}
& =\left((h\otimes e_{11})(b\otimes e_{11})(h\otimes e_{11})\right)^{\sigma}\\
& =(h\otimes e_{12})(b^{\tau}\otimes e_{22})(h\otimes e_{21})\\
& =hb^{\tau}h\otimes e_{11}.
\end{align*}
Thus we find that $\sigma$ is just $\tau\otimes\mathrm{id}$, restricted to $B_0 = C \otimes e_{11}$.

\begin{prop} \label{H1semiproj}
Let $A$ be a real \calg~ . 
If $A$ is projective then $A \otimes \H$ is projective. If $A$ is semiprojective then $A \otimes \H$ is semiprojective.
\end{prop}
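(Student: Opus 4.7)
The plan is to reduce a lifting problem for $A \otimes \H$ to a lifting problem for $A$ by peeling off the $M_2(\C)$ factor using Lemma~\ref{thm:B} and the projectivity of $(CM_2(\C), \sharp)$. In the \ctalg~picture, $A \otimes \H$ corresponds to $(A\sc \otimes M_2(\C), \tau \otimes \sharp)$, and a semiprojective lifting problem amounts to lifting a homomorphism $\phi \colon (A\sc \otimes M_2(\C), \tau \otimes \sharp) \to (D/J, \tau)$, where $J = \overline{\bigcup J_n}$ is an increasing union of $\tau$-invariant ideals in $(D, \tau)$, to $D/J_n$ for some $n$. The projective case runs identically with $J_n$ replaced by $0$.

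First, choose a strictly positive self-$\tau$ element $h \in A\sc$ (averaging $h$ with $h^\tau$ as needed) and form the auxiliary homomorphism $\varphi_0 \colon (CM_2(\C), \sharp) \to (A\sc \otimes M_2(\C), \tau \otimes \sharp)$ sending $f \otimes e_{jk} \mapsto f(h) \otimes e_{jk}$. Proposition~\ref{CM_2} allows us to lift $\phi \circ \varphi_0$ to a homomorphism $\tilde\varphi \colon (CM_2(\C), \sharp) \to (D, \tau)$. Then apply Lemma~\ref{thm:B} to $\tilde\varphi$, to its composition with the quotient map $\pi \colon D \to D/J$, and to $\varphi_0$ itself, producing hereditary subalgebras $D_0 \subseteq D$ and $(D/J)_0 \subseteq D/J$ with induced involutions $\sigma$, along with isomorphisms $(D_0 \otimes M_2(\C), \sigma \otimes \sharp) \cong (D_2, \tau)$ and the analogue for $D/J$. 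The special-case calculation immediately following Lemma~\ref{thm:B}, applied to $\varphi_0$, shows that the induced involution on the corner $A\sc \otimes e_{11}$ is simply $\tau$ itself. Naturality identifies $(D/J)_0 \cong D_0/(J \cap D_0)$, and a standard approximate-unit argument in $D_0$ yields $J \cap D_0 = \overline{\bigcup (J_n \cap D_0)}$.

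The next step is to observe that, by a direct calculation from the explicit formula for $\Phi$ in Lemma~\ref{thm:B}, the map $\phi$ corresponds under these two invocations of the lemma to a map of the form $\phi_{11} \otimes \id_{M_2(\C)}$, where $\phi_{11} \colon (A\sc, \tau) \to (D_0/(J \cap D_0), \sigma)$ is the restriction of $\phi$ to the $(1,1)$-corner. Applying semiprojectivity of $(A\sc, \tau)$, which is equivalent to semiprojectivity of $A$, yields a lift $\tilde\phi_{11} \colon A\sc \to D_0/(J_n \cap D_0)$ for some $n$. Reassembling with the already-obtained $\tilde\varphi$, via Lemma~\ref{thm:B} applied to the composition of $\tilde\varphi$ with the quotient $D \to D/J_n$, produces the required lift $\tilde\phi \colon A\sc \otimes M_2(\C) \to D/J_n$.

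The main technical hurdle I expect is the bookkeeping in the naturality/restriction argument: one must verify the compatibility of the two invocations of Lemma~\ref{thm:B} with $\phi$ so that the problem really reduces to lifting the corner map $\phi_{11}$, and then confirm that the reassembly recovers $\phi$ upon projecting to $D/J$. Once these verifications are in place the result is immediate. Essentially the same argument, using projectivity of $A$ and lifting all the way to $D_0$ in place of $D_0/(J_n \cap D_0)$, handles the projective case.
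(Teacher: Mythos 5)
Your proposal is correct and follows essentially the same route as the paper's proof: a strictly positive $\tau$-invariant element feeds a copy of $(CM_2(\C),\sharp)$ into $A\sc\otimes M_2(\C)$, Proposition~\ref{CM_2} lifts it, and Lemma~\ref{thm:B} together with the remark following it peels off the $M_2(\C)$ factor so that (semi)projectivity of $(A\sc,\tau)$, identified with the corner $(A\sc\otimes e_{11},\sigma)$, finishes the argument after tensoring the lift back with $M_2(\C)$. The only difference is presentational: you carry out the semiprojective ideal-lattice bookkeeping (e.g.\ $(D/J)_0\cong D_0/(J\cap D_0)$ and $J\cap D_0=\overline{\bigcup_n(J_n\cap D_0)}$) directly, whereas the paper proves the projective case and defers the semiprojective adjustment to Section~14.2 of \cite{loringbook}.
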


\begin{proof}
We work in the category of \ctalg s. Suppose that $(A, \tau)$ is projective, that 
$(B, \tau), (C, \tau)$ are \ctalg s, and that we have \ctalg ~homomorphisms $\varphi$ and $\pi$ as in the diagram
\[
\xymatrix{
& B \ar[d] ^(0.4){\pi}\\
A \otimes M_2(\C)  \ar[r] ^(0.6){\varphi}  & C
}
\]
where $\pi$ is surjective and the involution on $A \otimes M_2(\C)$ is $\tau \otimes \sharp$. 
We select a strictly positive
element $h \in A$ satisfying $h^\tau = h$ and define
\[
\gamma: ( CM_2(\C) ,\sharp )
\rightarrow
(A \otimes M_2(\C), \tau \otimes \sharp)
\]
by $
\gamma(f\otimes e_{jk}) = f(h) \otimes e_{jk} .
$
By Proposition~\ref{CM_2} there is a homomorphism
\[
\varphi_{1}:(C M_2(\C),\sharp )
\rightarrow
(B,\tau)
\]
with $\pi\circ\varphi_{1}=\varphi \circ \gamma$. We apply
Lemma~\ref{thm:B}
to get two commutative diagrams of real \calg s. The first diagram is
\[
\xymatrix{
& B_0 \otimes M_2(\C) \ar[d] \ar[r]^(0.6){\Phi_3} 
& B_2 \ar[dd] \ar@{^{(}->}[r]
& B \ar[ddd]^\pi \\
(A\otimes e_{11}) \otimes M_2(\C) \ar[r]\ar[d]^(0.4){\Phi_1}  
& C_0 \otimes M_2(\C) \ar[rd] ^{\Phi_2} \\
A \otimes M_2(\C) \ar[rr] \ar@{=}[d]&&C_2 \ar@{^{(}->}[rd]\\
A \otimes M_2(\C) \ar[rrr]^{\varphi}&&& C
}
\]
where each $\Phi_{j}$ is an isomorphism,
and the real structures on the algebras closest to the upper
left of the diagram are all $\sigma_{j}\otimes\sharp$ where the $\sigma_{j}$
are in the second diagram:
\[
\xymatrix{
& (B_0,\sigma_3) \ar[d] \\
(A\otimes e_{11},\sigma_1) \ar[r]  & (C_0,\sigma_2)
}
\]
By the remark following Lemma~\ref{thm:B}
we know that $(A \otimes e_{11},\sigma_{1})$ is isomorphic to $(A,\tau)$
and so we get a lift in the second diagram by the hypothesis on $A$. 
Tensoring by the identity
on $M_2(\C)$ now gives a lift in the upper-left
portion of the first diagram, which then provides the desired lift of
$\varphi$.

Adjusting the given proof to the semiprojectivity case proceeds exactly as
in Section 14.2 of \cite{loringbook}.
\end{proof}

\begin{thm} \label{Hsemiproj}
If a real \calg~ $A$ is projective then $A \otimes M_n(\R)$ and $A \otimes M_n(\R) \otimes \H$ are projective for all $n$. If $A$ is semiprojective then $A \otimes M_n(\R)$ and $A \otimes M_n(\R) \otimes \H$ are semiprojective for all $n$.
\end{thm}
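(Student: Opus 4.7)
The plan is to mimic the proof of Proposition \ref{H1semiproj} exactly, replacing the role of $(CM_2(\C), \sharp)$ with $(CM_n(\C), \tr)$ and the role of Lemma \ref{thm:B} with Lemma \ref{thm:A}. Then the second statement will follow from the first by one more application of Proposition \ref{H1semiproj}.

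Concretely, to show that $A \otimes M_n(\R)$ is projective when $A$ is, I consider a lifting problem
\[
\xymatrix{
& (B,\tau) \ar[d]^\pi \\
(A \otimes M_n(\C), \tau \otimes \tr) \ar[r]^(0.7){\varphi} & (C, \tau)
}
\]
with $\pi$ a surjective \ctalg -homomorphism. Choose a strictly positive element $h \in A$ with $h^\tau = h$ (available because $A$ has a self-adjoint approximate identity respected by $\tau$) and define
\[
\gamma \colon (CM_n(\C), \tr) \to (A \otimes M_n(\C), \tau \otimes \tr), \qquad \gamma(f \otimes e_{jk}) = f(h) \otimes e_{jk}.
\]
By Proposition \ref{CM_2}, $(CM_n(\C), \tr)$ is projective, so $\varphi \circ \gamma$ lifts to some $\varphi_1 \colon (CM_n(\C), \tr) \to (B, \tau)$ with $\pi \circ \varphi_1 = \varphi \circ \gamma$.

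Now apply Lemma \ref{thm:A} to both $\varphi_1$ and $\varphi \circ \gamma$, producing \ctalg -isomorphisms
\[
\Phi_B \colon (B_0 \otimes M_n(\C), \tau|_{B_0} \otimes \tr) \xrightarrow{\cong} (B_n, \tau), \qquad
\Phi_C \colon (C_0 \otimes M_n(\C), \tau|_{C_0} \otimes \tr) \xrightarrow{\cong} (C_n, \tau)
\]
onto the hereditary subalgebras cut out by the image of $t \otimes 1_n$. The map $\varphi$ factors through $(C_n, \tau)$ and decomposes under $\Phi_C^{-1}$ as $\varphi_0 \otimes \id_{M_n(\C)}$ for some \ctalg -homomorphism $\varphi_0 \colon (A \otimes e_{11}, \tau \otimes \tr|_{A \otimes e_{11}}) \to (C_0, \tau|_{C_0})$; and since $\tr$ fixes $e_{11}$, the domain is canonically $(A, \tau)$. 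Similarly $\pi$ restricts to a surjection $B_0 \to C_0$ of \ctalg s. By the projectivity hypothesis on $A$, we lift $\varphi_0$ to $\psi_0 \colon (A, \tau) \to (B_0, \tau|_{B_0})$, and then $\Phi_B \circ (\psi_0 \otimes \id_{M_n(\C)})$ is the desired lift of $\varphi$. The semiprojective case is handled by the standard modifications described in Section~14.2 of \cite{loringbook}, which apply verbatim in the \ctalg -setting. The main potential obstacle is the bookkeeping around the induced real structure on $B_0$, but since Lemma \ref{thm:A} (with $\tr$) gives the untwisted restriction $\tau|_{B_0}$ — unlike the genuinely twisted $\sigma$ of Lemma \ref{thm:B} — this step is straightforward.

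Finally, the assertion for $A \otimes M_n(\R) \otimes \H$ requires no further work: having established that $A \otimes M_n(\R)$ is (semi)projective whenever $A$ is, apply Proposition \ref{H1semiproj} to $A \otimes M_n(\R)$ to conclude that $(A \otimes M_n(\R)) \otimes \H \cong A \otimes M_n(\R) \otimes \H$ is likewise (semi)projective.
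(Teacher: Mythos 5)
Your argument is correct, but it is not the route the paper takes for the $M_n(\R)$ part. The paper's proof of Theorem~\ref{Hsemiproj} simply asserts that $A \otimes M_n(\R)$ is (semi)projective ``exactly as in the complex case,'' citing Theorems~3.3 and~4.3 of \cite{loring93}, and then invokes Proposition~\ref{H1semiproj} for the quaternionic factor; your final step agrees with that last move. What you do differently is replace the citation by a self-contained in-paper argument: you rerun the proof of Proposition~\ref{H1semiproj} with $(CM_n(\C), \tr)$ in place of $(CM_2(\C), \sharp)$, using the $\tr$-part of Proposition~\ref{CM_2} for the lift $\varphi_1$ and the ``furthermore'' clause plus naturality of Lemma~\ref{thm:A} for the decompositions $\Phi_B$, $\Phi_C$ and for the identity $\pi \circ \Phi_B = \Phi_C \circ (\pi|_{B_0} \otimes \id)$ that makes the assembled map a lift of $\varphi$. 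This buys an explicit treatment of exactly the point the paper waves at — where the real structure enters — and, as you note, the untwisted involution $\tau|_{B_0}$ from Lemma~\ref{thm:A} makes this strictly easier than the $\sharp$ case, where the twisted $\sigma$ of Lemma~\ref{thm:B} must be identified with $\tau$. The paper's version is shorter but leans on the complex literature carrying over verbatim. Two small points you should make explicit (both equally implicit in the paper's proof of Proposition~\ref{H1semiproj}): the existence of a strictly positive $h$ with $h^\tau = h$ uses $\sigma$-unitality (separability) and the fact that $\tau$ commutes with $*$, so one may take $h = h_0 + h_0^\tau$ for $h_0$ strictly positive; and the restriction $\pi \colon B_0 \rightarrow C_0$ is surjective because $\pi \circ \varphi_1 = \varphi \circ \gamma$ and images of $C^*$-homomorphisms are closed.
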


\begin{proof}
Suppose that $A$ is projective. The statement that $A \otimes M_n(\R)$ is projective is proven exactly as in the complex case, Theorem~3.3 of \cite{loring93}. Similarly, if $A$ is semiprojective, the proof of Theorem~4.3 of \cite{loring93} applies to the case of real \calg s to show that $A \otimes M_n(\R)$ is semiprojective. Proposition~\ref{H1semiproj} completes the proof.
\end{proof}

\begin{prop} \label{A0semiproj} \label{Aevensemiproj}
$A_i$ is semiprojective for $i$ even.
\end{prop}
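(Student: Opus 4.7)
The four even cases split, since the case $i=4$ reduces to $i=0$. As \ctalg s, $A_4 = (M_2(\C) \otimes q\C, \sharp \otimes \tr)$ is isomorphic to $A_0 \otimes_\R \H$, because the complexification of $A_0 \otimes_\R \H$ is $q\C \otimes M_2(\C)$ with involution $\tr \otimes \sharp$, and swapping tensor factors identifies this with $(M_2(\C) \otimes q\C, \sharp \otimes \tr)$. Hence once $A_0$ is shown semiprojective, Theorem~\ref{Hsemiproj} delivers semiprojectivity of $A_4$. So the substantive work reduces to proving semiprojectivity of $A_0$, $A_2$, and $A_6$.

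For each of those three, the approach is to exploit the universal property from Proposition~\ref{qCuniversal1}: a \ctalg ~homomorphism out of $A_i$ is the same as a triple $(h,k,x)$ in the target satisfying the relations \eqref{qc-relations} together with the appropriate $\tau$-symmetries (namely $h^\tau = h,\ k^\tau = k,\ x^\tau = x^*$ for $A_0$; $h^\tau = k,\ k^\tau = h,\ x^\tau = -x$ for $A_2$; and $h^\tau = k,\ k^\tau = h,\ x^\tau = x$ for $A_6$). So, given a \ctalg ~$B$ with an increasing chain of $\tau$-invariant closed ideals $\{I_n\}$, closed union $I$, and a homomorphism $\varphi \colon A_i \to B/I$ corresponding to a triple $(\bar h,\bar k,\bar x)$ in $B/I$, the goal is to produce, for some $n$, a $\tau$-symmetric triple $(h',k',x') \in B/I_n$ satisfying \eqref{qc-relations} exactly and lifting $(\bar h,\bar k,\bar x)$.

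The plan has three steps. First, obtain an initial $\tau$-symmetric lift to $B$ itself: Lemma~\ref{orthogonallift} supplies positive orthogonal lifts $h_0, k_0 \in B$ with the required $\tau$-behavior, and $\bar x$ lifts to some $y \in B$, which is then averaged with $y^{*\tau}$, $-y^\tau$, or $y^\tau$ (in the three cases, respectively) to give a $\tau$-symmetric element $x_0$ lifting $\bar x$. Second, adapt the standard complex-case semiprojectivity argument for $q\C$ (as in Section~14.2 of \cite{loringbook}) to perturb $(h_0, k_0, x_0)$ modulo $I_n$, for $n$ large enough, into an exact solution of \eqref{qc-relations}. Because this perturbation is executed by continuous functional calculus of real-valued functions applied to $\tau$-symmetric positive elements such as $h_0$, $k_0$, $x_0^*x_0$, and $x_0 x_0^*$, the $\tau$-symmetries of the triple are automatically preserved. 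Third, feed the perturbed triple back into the relevant part of Proposition~\ref{qCuniversal1} to obtain the required partial lift $A_i \to B/I_n$ of $\varphi$.

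The principal obstacle is executing the complex-case perturbation $\tau$-equivariantly. In the $A_0$ case the hereditary subalgebras $\overline{h_0 B h_0}$ and $\overline{k_0 B k_0}$ are each $\tau$-invariant, so perturbations within them are $\tau$-symmetric. In the $A_2$ and $A_6$ cases the symmetry $h^\tau = k$ forces $\tau$ to exchange these two subalgebras, so the perturbation of $k_0$ must be taken as the $\tau$-image of the perturbation of $h_0$, and $x_0$ correspondingly $\tau$-symmetrized at each stage. Once this symmetric bookkeeping is in place, the complex argument for $q\C$ carries through essentially unchanged in each of the three real cases.
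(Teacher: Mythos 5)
Your overall route is the paper's: $A_4$ is disposed of via $A_4\cong A_0\otimes\H$ and Proposition~\ref{H1semiproj}, while $A_0$, $A_2$, $A_6$ are attacked through the universal description of Proposition~\ref{qCuniversal1}, with the initial $\tau$-compatible lifts of $h$, $k$, $x$ supplied by Lemma~\ref{orthogonallift} and averaging. The gap is in your second step, which is the heart of the matter and which you only assert. The relations \eqref{qc-relations} couple $h$, $k$ and $x$, and nothing in your sketch explains how ``functional calculus of real-valued functions applied to $h_0$, $k_0$, $x_0^*x_0$, $x_0x_0^*$'' restores them exactly; in particular you never address why the perturbed $h'$ and $k'$ remain orthogonal, which is the delicate point. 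The complex-case argument you invoke (Theorem~6 of \cite{loring08} is the relevant source, rather than Section~14.2 of \cite{loringbook}) is a \emph{joint} perturbation: encode \eqref{qc-relations} as ``$hk=0$ and $T(h,x,k)$ is a projection,'' choose the lift $x$ inside the corner $k^{1/8}Bh^{1/8}$ so that $T$ lies in the checkerboard hereditary subalgebra $\widehat{B}$ --- this is exactly what forces $h'_nk'_n=0$ after the perturbation --- and, once the spectrum of $T_n$ misses $1/2$, replace it by the projection $T'_n=f_{1/2}(T_n)$ and read the new triple off its entries.

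The real-structure bookkeeping then happens at the level of this $2\times 2$ matrix, and in the swapped cases it is \emph{not} of the form ``a real function applied to a $\tau$-fixed element.'' For $A_0$ one indeed has $T^{\tau\otimes\tr}=T$, but for $A_2$ and $A_6$ the hypotheses $h^\tau=k$, $k^\tau=h$, $x^\tau=\mp x$ translate into the affine symmetries $T^{\tau\otimes\trt}=\1_2-T$ and $T^{\tau\otimes\sharp}=\1_2-T$, respectively; one must then check that $f_{1/2}$ intertwines $T\mapsto\1_2-T$ (which holds because $f_{1/2}(1-t)=1-f_{1/2}(t)$ away from $t=1/2$), so that $T'_n$ enjoys the same symmetry and its entries automatically satisfy $(h'_n)^\tau=k'_n$, $(k'_n)^\tau=h'_n$ and the correct sign condition on $x'_n$. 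Your proposed bookkeeping --- take the perturbation of $k_0$ to be the $\tau$-image of the perturbation of $h_0$ and ``symmetrize $x_0$ at each stage'' --- is not meaningful until the perturbation itself is specified, and as stated it does not confront the coupled requirement that $T(h,x,k)$ become an exact projection. With the matrix-level argument supplied, your outline becomes the paper's proof; without it, the key step is missing.
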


\begin{proof}
First we consider $A_0$.
Suppose that $J_1 \subseteq J_2 \subseteq \dots$ be an increasing sequence of $\tau$-invariant ideals in a \ctalg ~$(B, \tau)$ and let $J = \overline{\cup_{n} J_n}$. We will use the same notation $\tau$ for the involution $\tau$ passing to each quotient algebra $B/J_n$ and $B/J$. Establish the following notation for the natural quotient maps, which all commute with $\tau$.
\begin{align*}
\pi_n &\colon B \rightarrow B/J_n \\
\pi_\infty &\colon B \rightarrow B/J \\
\pi_{n,m} &\colon B/J_n \rightarrow B/J_m \\
\pi_{n, \infty} & \colon B/J_n \rightarrow B/J
\end{align*}

Let $\phi \colon (q\C, \tr) \rightarrow (B/J, \tau)$ be a \ctalg ~homomorphism.
We will produce a \ctalg ~homomorphism $\psi \colon (q\C, \tr) \rightarrow (B/J_n, \tau)$ for some $n$ such that $\pi_{n, \infty} \circ \psi = \phi$. 

Let $h_\infty = \phi(h_0)$, $k_\infty = \phi(k_0)$, $x_\infty = \phi(x_0)$ in $B/J$. The elements $h_\infty$ and $k_\infty$ are positive, contractions, orthogonal, and fixed by $\tau$.
Thus by Lemma~\ref{orthogonallift}, there are elements $h, k \in B$ with the same properties such that 
$\pi_\infty(h) = h_\infty$ and $\pi_\infty(k) = k_\infty$.

We will take $x \in B$ to be a lift of $x_\infty$.  
As in the proof of Theorem~6 of \cite{loring08}, this can be arranged so that $x \in k^{1/8} B h^{1/8}$.
Furthermore, replacing $x$ by $\tfrac{1}{2}(x + x^{\tau *})$ we can assume that $x^\tau = x^*$ holds.
Then 
$T = T(h,x,k)$ is an element in the subalgebra
$$\widehat{B} = \begin{pmatrix} 
    \C \cdot \1 \oplus \overline{hBh} & 
    \overline{hBk} \\ 
    \overline{kBh} & 
    \C \cdot \1 \oplus \overline{kBk} \end{pmatrix} 
		\subseteq M_2(\widetilde{B})\; .$$
Furthermore, $T$ satisfies $T^{\tau \otimes \tr} = T^* = T$
and is a lift of 
$$T_\infty = T(h_\infty, x_\infty, k_\infty) \in M_2(\widetilde{B}/J) \; .$$
Since $\pi_{\infty}(T)$ is a projection, there is an $n$ large enough so that the spectrum of $T_n:= \pi_n(T) \in M_2(\widetilde{B}/J_n)$ does not contain $1/2$. Then $T'_n = f(T_n)$ is a projection in $M_2(\widetilde{B}/J_n)$ where 
$$f_{1/2}(t) = \begin{cases} 0 & \text{if~ $t < 1/2$} \\ 1 & \text{if~ $t > 1/2$.} 
	    \end{cases}$$
Furthermore, $T'_n$ is a lift of $T_\infty$ and the relation $T'_{n} = (T'_n)^{\tau \otimes \tr}$ holds.
Write
$$T'_n = \begin{pmatrix} \1- h'_n & (x'_n)^* \\ x'_n & k'_n \end{pmatrix} \; $$
where $h'_n$, $k'_n$, $x'_n$ are elements of $\widetilde{B}/J_n$ and are necessary lifts of $h_\infty$, $k_\infty$, and $x_\infty$ respectively.  Since we have $T'_n = (T'_n)^\tau$, it follows that $h_n' = (h_n')^\tau$, $k_n' = (k_n')^\tau$, and $(x_n')^* = (x_n')^\tau$.
We claim that
$h_n'$ and $k_n'$ are orthogonal. Indeed, we know that $h_n = \pi_n(h)$ and $k_n = \pi_n(k)$ are orthogonal and that $T_n$ (and hence $T'_n = f(T_n)$) lies in the subalgebra
$$\widehat{B_n} = \begin{pmatrix} 
    \C \cdot \1 \oplus \overline{h_nBh_n} & 
    \overline{h_nBk_n} \\ 
    \overline{k_nBh_n} & 
     \overline{k_nBk_n} \end{pmatrix} 
		\subseteq M_2(\widetilde{B}/J_n)\; ,$$
proving our claim.

Therefore, the elements $h_n'$, $x_n'$, and $k_n'$ are elements in $B/J_n$ which satisfy the universal relations for $q\C$ as in Proposition~\ref{qCuniversal1}, so there exists a homomorphism $\psi \colon (q\C, \tr) \rightarrow (B/J_n, \tau)$ that maps
$h_0$, $k_0$, and $x_0$ to $h'_n$, $k'_n$, and $x'_n$ respectively. Since 
$\pi_\infty(h'_n) = h_\infty$, $\pi_\infty(k'_n) = k_\infty$, and $\pi_\infty(x'_n) = x_\infty$; 
it follows that $\psi$ is a lift of $\phi$.  

For $A_2$ the proof is quite similar to that for $A_0$.  The initial difference is that we are using the involution $\sharp$ on $q\C$. So $\phi$ is assumed to satisfy $\phi(a^\sharp) = \phi(a)^\tau$ and we must find a lift $\psi$ which satisfies the same.  

If we let $h_\infty$, $k_\infty$, and $x_\infty$ be as in the proof above, then we have 
$h_\infty^\tau = k_\infty$, $k_\infty^\tau = h_\infty$, and $x_\infty^\tau = -x_\infty$. We use Lemma~\ref{orthogonallift} to find elements $h$ and $k$ in $B$ that satisfy 
$h^\tau = k$ and $k^\tau = h$. Lift $x_\infty$ to an element $x \in k^{1/8}Bh^{1/8}$ that satisfies $x^\tau = -x$ 
(using the adjustment $\tfrac{1}{2}\left( x - x^{\tau} \right)$).
Then $T = T(h,x,k)$ is in $\widehat{B}$ as before and satisfies $T = T^*$. Now we have
$$T =  \begin{pmatrix} \1- h & x^* \\ x & k \end{pmatrix} \quad 
\text{and} \quad T^{\tau \otimes \trt} =  \begin{pmatrix} h & -x^*  \\ -x  & \1-k \end{pmatrix}  $$
so we have $T^{\tau \otimes \trt} = \1_2 - T$.

Then as in the proof for $A_0$, find $n$ large enough so that $1/2$ is in the spectral gap for $T_n$ and let $T'_n = f_{1/2}(T_n)$. Then $T'_n$ is a projection and satisfies $(T_n')^{\tau \otimes \trt} = \1_2 - T'_n$ (since $f'(\1_2) = \1_2$). So we can write
$$T'_n =  \begin{pmatrix} \1- h_n' & (x'_n)^* \\ x_n' & k_n' \end{pmatrix}$$
where $(h_n')^\tau = k_n', (k_n')^\tau = h_n',$ and $(x_n')^\tau = - x_n'$. Then by Proposition~\ref{qCuniversal1}, there exists a homomorphism $\psi$ which is the desired lift of $\phi$.

Now we consider the case of $A_{6}$. In this case the we have elements in $B/J$ that satisfy 
$h_\infty^\tau = k_\infty$, $k_\infty^\tau = h_\infty$, and $x_\infty^\tau = x_\infty$ in $B/J$ which are lifted to elements in $B$ that satisfy
$h^\tau = k$, $k^\tau = h$, and $x^\tau = x$. 
So $T$ satisfies $T^{\tau \otimes \sharp} = \1_2 - T$. Then for $n$ large enough we obtain
$$T'_n =  \begin{pmatrix} \1- h_n' & (x'_n)^* \\ x_n' & k_n' \end{pmatrix}$$
where $(h_n')^\tau = k_n', (k_n')^\tau = h_n',$ and $(x_n')^\tau = x_n'$ and we apply Proposition~\ref{qCuniversal1} as before.

Finally, to show that $A_4$ is semiprojective we make use of the isomorphism $A_4 \cong A_0 \otimes \H$. Since $A_0$ is semiprojective, Proposition~\ref{H1semiproj} implies that $A_4$ is semiprojective.
\end{proof}

\begin{prop} \label{Aoddsemiproj}
$A_i$ is semiprojective for $i$ odd.
\end{prop}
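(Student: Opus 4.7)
The proof splits into two parts. For $i = 3$ and $i = 5$, the isomorphisms $A_3 \cong A_{-1} \otimes_\R \H$ and $A_5 \cong A_1 \otimes_\R \H$ (analogous to $S\H \cong S\R \otimes_\R \H$ and $S^{-1}\H \cong S^{-1}\R \otimes_\R \H$), combined with Proposition~\ref{H1semiproj}, reduce the claim to showing that $A_{-1}$ and $A_1$ are semiprojective. I therefore focus on these two cases, working in the \ctalg\ setting where $A_{-1}$ corresponds to $(C_0(S^1 \setminus \{1\}, \C), \id)$ and $A_1$ to $(C_0(S^1 \setminus \{1\}, \C), \zeta)$. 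Each admits a clean universal description: a \ctalg\ homomorphism $\phi \colon A_i \to (B, \tau)$ is specified by a unitary $v \in \widetilde B$ with $v - \1 \in B$ satisfying the symmetry $v^\tau = v$ (for $A_{-1}$) or $v^\tau = v^*$ (for $A_1$).

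Given an increasing chain of $\tau$-invariant ideals $J_1 \subseteq J_2 \subseteq \cdots$ in $(B, \tau)$ with $J = \overline{\bigcup_n J_n}$, let $\pi_n$ and $\pi_\infty$ denote the quotient maps, and let $\phi \colon A_i \to B/J$ correspond to a unitary $v_\infty \in \widetilde{B/J}$. Lift $v_\infty - \1$ to some $y \in B$ and set $x = y + \1 \in \widetilde B$, so $\pi_\infty(x) = v_\infty$. Symmetrize: for $A_{-1}$ replace $x$ by $(x + x^\tau)/2$, and for $A_1$ replace $x$ by $(x + x^{*\tau})/2$. Since $\1^\tau = \1$ and $v_\infty$ already carries the requisite symmetry, each average preserves both $x - \1 \in B$ and $\pi_\infty(x) = v_\infty$. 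Because $\pi_\infty(x^*x) = \pi_\infty(xx^*) = \1$ and $J = \overline{\bigcup_n J_n}$, for $n$ large the elements $\pi_n(x^*x)$ and $\pi_n(xx^*)$ are invertible in $\widetilde{B/J_n}$, hence so is $\pi_n(x)$. Then
\[
v \;:=\; \pi_n(x)\,\bigl(\pi_n(x^*x)\bigr)^{-1/2} \;\in\; \widetilde{B/J_n}
\]
is a genuine unitary whose scalar part is $1$, so $v - \1 \in B/J_n$.

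The required symmetry of $v$ follows from the intertwining identity $x\, g(x^*x) = g(xx^*)\, x$, valid for every continuous function $g$ on the joint spectrum. In the $A_{-1}$ case, with $g(t) = t^{-1/2}$ and $x^\tau = x$, one computes $(x^*x)^\tau = xx^*$, so $v^\tau = g(xx^*)\,x = x\,g(x^*x) = v$. In the $A_1$ case, $x^\tau = x^*$ makes $x^*x$ itself $\tau$-invariant, whence $(x^*x)^{-1/2}$ is $\tau$-invariant and $v^\tau = (x^*x)^{-1/2}\,x^* = v^*$. Either way, $v$ determines the desired \ctalg\ homomorphism $\psi \colon A_i \to B/J_n$ lifting $\phi$. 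The main subtlety is that a naive polar decomposition of an approximately unitary lift need not preserve the symmetry; this is precisely what the initial averaging combined with the intertwining identity is designed to handle.
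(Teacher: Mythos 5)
Your proposal is correct, and its overall structure coincides with the paper's in the cases $i=3,5$: both reduce these to $i=-1,1$ via $A_i \cong A_{i-4}\otimes_\R \H$ together with Proposition~\ref{H1semiproj}. The difference lies in the base cases $i=\pm1$: the paper simply cites Example~3.10 and Corollary~3.12 of \cite{loringsorensen}, whereas you give a self-contained argument, using the universal description of $(C_0(S^1\setminus\{1\}),\id)$ and $(C_0(S^1\setminus\{1\}),\zeta)$ by a unitary $v\in\widetilde B$ with $v-\1\in B$ and $v^\tau=v$, respectively $v^\tau=v^*$, followed by a symmetrized lift and the polar-type correction $v=\pi_n(x)\bigl(\pi_n(x^*x)\bigr)^{-1/2}$. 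Your symmetry bookkeeping checks out: the averagings $(x+x^\tau)/2$ and $(x+x^{*\tau})/2$ preserve both the congruence to $\1$ modulo $B$ and the image $v_\infty$; in the first case $(x^*x)^\tau=xx^*$ and in the second $(x^*x)^\tau=x^*x$; and the intertwining identity $x\,g(x^*x)=g(xx^*)\,x$ (legitimate here since $x$ is invertible in $\widetilde{B/J_n}$, so $g(t)=t^{-1/2}$ is continuous on the relevant spectra) yields $v^\tau=v$, respectively $v^\tau=v^*$. The one point you leave implicit --- that $\pi_{n,\infty}(v)=v_\infty$, so that $\psi$ genuinely lifts $\phi$ --- is immediate because $\pi_{n,\infty}$ commutes with functional calculus and $\pi_\infty(x^*x)=\1$. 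In short, the paper's route buys brevity by outsourcing the two base cases; yours buys a proof internal to the paper's framework, parallel in spirit to the lifting arguments given there for the even cases.
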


\begin{proof}
For $n = 1$ and $n = -1$, this is Example~3.10 and Corollary~3.12 of \cite{loringsorensen}.
Then the cases $n = 3$ and $n = 5$ follow by Proposition~\ref{H1semiproj}.
\end{proof}

\section{Unsuspended $E$-theory for real $C^*$-algebras } \label{sec:Ethy}

In this section, we develop the theory of homotopy symmetric real \calg s, along the lines of \cite{dadarlatloring94} in the complex case. Our main theoretical result states (as in the complex case) that a real \calg~ $A$ is homotopy symmetric if and only if the usual natural homomorphism 
$[[A \otimes \K\pr, B \otimes \K\pr]]_\mathcal{R} \rightarrow E(A, B)$ is an isomorphism for all real \calg s $B$. Furthermore, we prove that homotopy symmetry has permanence with respect to complexification: a real \calg~ $A$ is homotopy symmetric if and only if $A\sc$ is homotopy symmetric (in the category of \calg s). It will follow that all of the algebras $A_i$ introduced in the previous section are homotopy symmetric. We introduce a standing assumption in this section that all real \calg s are separable. This will apply to all of our discussion of $E$-theory and of homotopy symmetry. However our main result Theorem~\ref{thm:classify} will be proven in full generality for all real \calg s.

We refer the reader to Section~4 of \cite{BLR} and Section~8 of \cite{BRS} for the development of asymptotic morphisms for real \calg s. In what follows we will use the notation
$[[A, B]]_\mathcal{R}$ to denote the homotopy classes of asymptotic morphisms in the category of real \calg s and $[[A, B]]_\mathcal{C}$ to denote the same in the category of complex \calg s, unless the meaning is clear from context. In both cases, this set has the structure of a semigroup if $B$ is stable. And in both cases, as we shall see, the property of homotopy symmetry is connected to the question of whether or not this semigroup has inverses.

Let $e$ be a rank 1 projection in $\K\pr \subset \K$. Then ${\id}_A(a) = a \otimes e$ defines a homomorphism, either $A \rightarrow A \otimes\sr \K\pr$ in the category of real \calg s or $A \rightarrow A \otimes \K$ in the category of complex \calg s. If $A$ and $B$ are real \calg s, then complexification induces a natural semigroup homomorphism
$$\theta_{A, B} \colon [[A, B \otimes\sr \K\pr]]_\mathcal{R} 
      \rightarrow[[A\sc, B\sc \otimes\sc \K]]_\mathcal{C} \; .$$
In particular, we have
$\theta_{A, A}({\id}_A) = {\id}_{A\sc}$.

\begin{defn} 
[See Section~5 of \cite{dadarlatloring94}] A \calg~ $A$ is {\it homotopy symmetric} if the class $[[{\id}_{A}]]$ is invertible in $[[A, A \otimes\sc \K]]_\mathcal{C}$. A real \calg~ $A$ is {\it homotopy symmetric} if the class $[[{\id}_A]]$ is invertible
in $[[A, A \otimes\sr \K\pr]]_\mathcal{R}$.
\end{defn}

\begin{lemma} \label{lem:group}
Suppose that $A$ and $B$ are real stable \calg~  with $A$ homotopy symmetric.  Then $[[A, B]]_\mathcal{R}$ is a group. In particular the asymptotic morphism $\eta_A$ that is inverse to ${\id}_A$ is unique up to homotopy.
\end{lemma}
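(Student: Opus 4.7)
The plan is to mimic the standard argument in the complex case (Section~5 of \cite{dadarlatloring94}), checking that the key steps go through in the real category. The crucial point is that the semigroup structure on $[[A,B]]_\mathcal{R}$ (coming from stability of $B$ via a choice of orthogonal isometries $s_1,s_2\in\mathcal{M}(B)$) satisfies the following functoriality: for any asymptotic morphism $\varphi:B\to C$ in $\mathcal{R}^*$ between stable real \calg s, post-composition $\varphi_*:[[A,B]]_\mathcal{R}\to[[A,C]]_\mathcal{R}$ is a semigroup homomorphism. This is the real version of the standard check that composition distributes over the $s_1,s_2$-sum up to homotopy, and it carries over to the real setting without change because the isometries $s_i$ can be chosen in $\mathcal{M}(B)$ and all identifications $B\otimes\sr\K\pr\cong B$ are real.

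Next, I would produce inverses. Pick a real isomorphism $\iota:A\otimes\sr\K\pr\otimes\sr\K\pr\xrightarrow{\cong}A\otimes\sr\K\pr$ arising from stability. Homotopy symmetry gives an asymptotic morphism $\eta_A:A\to A\otimes\sr\K\pr$ with $[[\mathrm{id}_A]]+[[\eta_A]]=0$ in $[[A,A\otimes\sr\K\pr]]_\mathcal{R}$. Given any $\varphi:A\to B$ (with $B$ stable), fix a real isomorphism $j:B\otimes\sr\K\pr\xrightarrow{\cong}B$, and define
\[
\psi_\varphi \;=\; j\circ(\varphi\otimes\mathrm{id}_{\K\pr})\circ\eta_A \;:\;A\to B.
\]
Now apply the functoriality above with the asymptotic morphism $j\circ(\varphi\otimes\mathrm{id}_{\K\pr}):A\otimes\sr\K\pr\to B$: applying the induced semigroup map to the identity $[[\mathrm{id}_A]]+[[\eta_A]]=0$ yields $[[\varphi]]+[[\psi_\varphi]]=0$, since $j\circ(\varphi\otimes\mathrm{id}_{\K\pr})\circ\mathrm{id}_A$ is homotopic to $\varphi$ via the standard identification. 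Hence every class in $[[A,B]]_\mathcal{R}$ has an inverse, and $[[A,B]]_\mathcal{R}$ is a group.

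Uniqueness of $\eta_A$ is then the standard group-theoretic fact: if $\eta_A'$ is another inverse of $\mathrm{id}_A$ in the now-group $[[A,A\otimes\sr\K\pr]]_\mathcal{R}$, then
\[
[[\eta_A]]=[[\eta_A]]+[[\mathrm{id}_A]]+[[\eta_A']]=[[\eta_A']].
\]

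The main obstacle is the careful verification that the distributive/functoriality property of composition over the $s_1,s_2$-addition holds in the real asymptotic category, together with checking that the constructions do not depend (up to homotopy) on the auxiliary choices of stabilization isomorphisms $\iota$ and $j$. Both are routine adaptations of the complex arguments: different choices of isometries in $\mathcal{M}(B)$ are unitarily equivalent inside $\mathcal{M}(B)$ by a real unitary (since the isometries with orthogonal ranges summing to $1$ form a real contractible set in $\mathcal{M}(B)$), giving the needed homotopies in $\mathcal{R}^*$.
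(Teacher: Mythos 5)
Your proposal is correct and follows essentially the same route as the paper: the inverse of a class $[\psi]$ is $[\psi\circ\eta_A]$ (suitably stabilized), obtained by applying additivity of composition to the null-homotopy of ${\id}_A\oplus\eta_A$, with uniqueness of $\eta_A$ then a formal group-theoretic consequence. You simply spell out the stabilization identifications and the functoriality of the sum that the paper's two-sentence proof leaves implicit.
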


\begin{proof}
Suppose that $\eta_A$ is an asymptotic morphism such that $[\eta_A]$ is inverse to $[{\id}_A]$. Then ${\id}_A \oplus \eta_A$ is null-homotopic in $[[A, A]]_\mathcal{R}$, and it follows that 
$[\psi \circ \eta_A]$ is an inverse to $[\psi]$ in $[[A, B]]_\mathcal{R}$.
\end{proof}

\begin{lemma} \label{lem:1}
Let $A,B$, and $D$ be real \calg s and let $\alpha \colon A \rightarrow B$ be a homomorphism.  Then  
$$[[D, S A]]_\mathcal{R} \xrightarrow{(S \alpha)_*} [[D, S B]]_\mathcal{R} \xrightarrow{\partial} [[D, C_\alpha]]_\mathcal{R}
    \xrightarrow{\kappa_*} [[D, A]]_\mathcal{R} \xrightarrow{\alpha*} [[D, B]]_\mathcal{R}$$
is an exact sequence where $C_\alpha$ is the mapping cone of $\alpha$ and $\kappa \colon C_\alpha \rightarrow A$ is defined by $\kappa(a, f) = a$.
\end{lemma}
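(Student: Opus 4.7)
The plan is to follow the well-known template from the complex setting (as in Section~5 of \cite{dadarlatloring94}), checking that each step can be carried out real-equivariantly. I would write the mapping cone as $C_\alpha = \{ (a, f) \in A \oplus CB : f(1) = \alpha(a) \}$ with $CB = C_0((0,1], B)$ and with the real structure inherited coordinatewise from $A$ and $B$; this makes $\kappa \colon C_\alpha \to A$ and the inclusion $\iota \colon SB \hookrightarrow C_\alpha$, $g \mapsto (0,g)$, into real \calg~homomorphisms. The connecting map $\partial$ is post-composition with $\iota$.

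First I would verify that consecutive compositions are zero or nullhomotopic: $\alpha \circ \kappa = 0$ and $\kappa \circ \iota = 0$ are immediate, while $\iota \circ S\alpha$ factors through the cone $CA$ via $g \mapsto (g(1), \alpha \circ g)$, and the standard contraction $g_s(t) = g(st)$ of $CA$ is real-equivariant.

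Next I would prove exactness at each of the three interior terms. At $[[D, A]]_\mathcal{R}$: given $\phi$ with $\alpha_*[\phi] = 0$, a real asymptotic homotopy from $\alpha \circ \phi$ to $0$ assembles with $\phi$ to yield an asymptotic morphism $D \to C_\alpha$ whose image under $\kappa_*$ is $[\phi]$. At $[[D, C_\alpha]]_\mathcal{R}$: given $\psi = (\psi_1, \psi_2)$ with $[\psi_1] = 0$, a real null-homotopy of $\psi_1$ lets one reparametrize $\psi$ so that the first coordinate vanishes, producing a homotopic morphism that factors through $\iota$. At $[[D, SB]]_\mathcal{R}$: given $\phi$ with $\iota \circ \phi$ null-homotopic in $C_\alpha$, the first coordinate of the null-homotopy is a real asymptotic morphism into $SA$ whose image under $(S\alpha)_*$ agrees with $\phi$ up to the null-homotopy that the second coordinate provides.

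The main point to verify is that every lift, null-homotopy, and reparametrization used in the familiar complex argument can be chosen to respect the involution. This is not a serious obstacle: each ingredient is natural with respect to real \calg~homomorphisms, and the averaging operation $\psi \mapsto \tfrac{1}{2}(\psi + \psi^\tau)$ converts a generic null-homotopy between real asymptotic morphisms into a real one when necessary.
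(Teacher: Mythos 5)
Your route is the same as the paper's: the paper disposes of this lemma in one line, by citing Proposition~6 of \cite{dadarlat94} and observing that the mapping-cone (Puppe) argument for asymptotic morphisms carries over verbatim to the real category. That is exactly what your first three paragraphs do, and they are fine: the cone $C_\alpha$, the inclusion $\iota\colon SB\hookrightarrow C_\alpha$, the contraction $g_s(t)=g(st)$ of $CA$, and the homotopies you assemble at each of the three interior terms are all obtained from the given real data by natural constructions, so they never leave the real category.

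The one thing you should strike is the final sentence. The averaging $\psi\mapsto\tfrac12(\psi+\psi^\tau)$ is not available for (asymptotic) morphisms: the average of two $*$-homomorphisms is in general not multiplicative, so this operation does not produce a real asymptotic morphism or a real homotopy. Averaging is a legitimate trick only at the level of elements (e.g.\ symmetrizing a lift, as in Lemma~\ref{orthogonallift}), not of maps. Fortunately it is also unnecessary here: because the hypotheses are stated in $[[D,-]]_\mathcal{R}$, every null-homotopy you invoke (of $\alpha\circ\phi$, of $\psi_1$, of $\iota\circ\phi$) is by assumption already a homotopy through real asymptotic morphisms, so the equivariance that sentence was meant to guarantee is automatic. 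With that remark deleted, your proof is correct and coincides in substance with the paper's.
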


\begin{proof}
As in Proposition 6 of \cite{dadarlat94}. 
\end{proof}

\begin{lemma} \label{lem:splitexact}
For any split exact sequence
$$0 \rightarrow J \rightarrow A \xrightarrow{\pi}  B \rightarrow 0 \; $$
of real \calg s
and any real \calg~ $D$,
the exact sequence
$$0 \rightarrow [[D, J]] \rightarrow [[D,A]] \xrightarrow{\pi} [[D,B]] \rightarrow 0 \; $$
is split.
\end{lemma}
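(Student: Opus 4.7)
The plan is to apply the Puppe-type exact sequence of Lemma~\ref{lem:1} with $\alpha = \pi$ and then use the splitting $s\colon B\to A$ to collapse the long sequence to a short exact sequence. The splitting handles the easy half: the functorially induced $s_*\colon [[D,B]]\to [[D,A]]$ satisfies $\pi_*\circ s_* = \mathrm{id}$, so $\pi_*$ is surjective and, once the other pieces are in place, $s_*$ will serve as the required splitting of $0 \to [[D,J]] \to [[D,A]] \to [[D,B]] \to 0$.

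For the left-hand injectivity and the middle exactness, feeding $\alpha=\pi$ into Lemma~\ref{lem:1} yields the exact sequence
$$[[D,SA]]\xrightarrow{(S\pi)_*}[[D,SB]]\xrightarrow{\partial}[[D,C_\pi]]\xrightarrow{\kappa_*}[[D,A]]\xrightarrow{\pi_*}[[D,B]].$$
Suspending the given split exact sequence gives another split exact sequence, with $Ss$ splitting $S\pi$, so $(S\pi)_*$ is already surjective. Exactness at $[[D,SB]]$ then forces $\partial=0$, and exactness at $[[D,C_\pi]]$ says that $\kappa_*$ is injective with image exactly $\ker\pi_*$. After identifying $[[D,C_\pi]]$ with $[[D,J]]$ via the natural inclusion $J\hookrightarrow C_\pi$, $j\mapsto(j,0)$, the short exact sequence of the lemma drops out, and $s_*$ is the splitting.

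The main obstacle is this last identification. The inclusion $J\hookrightarrow C_\pi$ fits into a short exact sequence $0\to J\to C_\pi\to CB\to 0$, where $CB$ is the contractible cone on $B$, and in the complex category it is a standard part of the Connes--Higson/Dadarlat machinery that this inclusion induces a bijection on $[[D,-]]$ when $\pi$ is surjective. To transfer that argument to the real category, one has to check that the approximate units, continuous functional calculus and null-homotopies involved can all be chosen to respect the $\tau$-involution; this is routine since self-$\tau$ approximate units and continuous functional calculus on self-$\tau$ elements are available in any real \calg, but it is the one place where genuine category-of-real-algebras care is needed. Everything else in the proof is formal.
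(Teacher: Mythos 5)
The formal part of your argument (surjectivity of $\pi_*$ via $s_*$, and the bookkeeping with Lemma~\ref{lem:1}) is fine, but the step you label ``the main obstacle'' is not a checkable detail to be deferred --- it is the entire content of the lemma, and it is not supplied by the machinery you cite. The Connes--Higson/Dadarlat excision result makes the mapping-cone inclusion $J\hookrightarrow C_\pi$ invertible only after suspending and stabilizing; that is exactly why $E(A,B)$ is defined as $[[SA,SB\otimes\K\pr]]$, and why Section~\ref{sec:Ethy} of this paper must work (semiprojectivity, homotopy symmetry) to remove the suspension. There is no standard statement that $[[D,J]]\to[[D,C_\pi]]$ is bijective for an arbitrary surjection $\pi$ at the unsuspended, unstabilized level, and your proposal offers no argument for it. Worse, the statement you need is itself an instance of the lemma being proved: $J$ sits in the short exact sequence $0\to J\to C_\pi\to CB\to 0$, whose quotient, the cone $CB$, is contractible, and this sequence is again split (a splitting is induced by $s$, sending $g$ to the pair whose $A$-component is $s$ applied to the value of $g$ at the gluing endpoint); applying Lemma~\ref{lem:splitexact} to it is precisely what would give $[[D,J]]\cong[[D,C_\pi]]$. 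So, as structured, the argument is circular unless you prove this identification independently --- and the difficulty there is not the real structure at all ($\tau$-invariant approximate units and functional calculus are indeed routine), but the unsuspended excision itself.

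Two further points. First, from $\partial=0$ and exactness at $[[D,C_\pi]]$ you conclude that $\kappa_*$ is injective; but $[[D,X]]$ is in general only a pointed set (a semigroup or group only under stability hypotheses not assumed in the lemma), so ``trivial kernel'' does not yield injectivity, and the injectivity of $[[D,J]]\to[[D,A]]$ --- which is what the paper actually uses, e.g.\ in Theorem~\ref{thm:unsuspendE} --- needs its own argument. Second, for comparison: the paper's proof is simply the observation that the argument of Proposition~3.2 of \cite{dadarlatloring94} carries over to real \calg s; that is a direct proof of split exactness which exploits the splitting throughout, not a formal reduction to an excision isomorphism for the mapping cone, so your route is genuinely different from the intended one and, as it stands, incomplete at its central step.
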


\begin{proof}
As in Proposition~3.2 of \cite{dadarlatloring94}.
\end{proof}

Recall that if $A$ and $B$ are stable \calg s, then we have natural homomorphisms
$\Sigma \colon [[A, B]] \rightarrow [[SA, SB]]$
and
$\Sigma^{-1} \colon [[A, B]] \rightarrow [[S^{-1}A, S^{-1}B]] \; .$

\begin{lemma} \label{lemma:suspend-iso}
If $A$ and $B$ are real \calg s and $B$ is stable, then
$$
\Sigma \colon [[SA, SB]] \rightarrow [[S^2 A, S^2 B]] \quad \text{and} \quad
\Sigma^{-1} \colon [[SA, SB]] \rightarrow [[S^{-1} SA, S^{-1} SB]]$$
are isomorphisms.
\end{lemma}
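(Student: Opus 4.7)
The plan is to adapt the suspension isomorphism argument of Dadarlat--Loring \cite{dadarlatloring94} (cf.\ Propositions~3.2--3.3 in the complex case) to the real category, using Lemmas~\ref{lem:1} and~\ref{lem:splitexact} as the main tools. The two assertions have the same structure---one suspending by $S = S\R$ and the other by $S^{-1} = S^{-1}\R$---so it suffices to prove the $\Sigma$ statement and then observe that the proof carries over verbatim for $\Sigma^{-1}$ after substituting $S^{-1}$ for $S$ throughout, since the ``conjugate'' cone $A \otimes_\R C_0((0,1],\C)$ equipped with the corresponding involution is likewise contractible.

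The first ingredient is that for every real \calg~$X$, the cone $CX$ is contractible via the path of real \calg~endomorphisms $h_s(f)(t) = f(st)$ from $\id_{CX}$ to $0$; the involution is plainly preserved since it acts only on the values of $f$. Consequently, $[[D, CX]] = 0$ and $[[D, S \cdot CX]] = 0$ for every real \calg~$D$.

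Next I would apply the Puppe exact sequence of Lemma~\ref{lem:1} to the evaluation homomorphism $\alpha : CA \to A$ (evaluation at $t = 1$), whose kernel is $SA$ and whose mapping cone is naturally identified with $SA$ by concatenation of loops. Since the contractible terms $[[D, CA]]$ and $[[D, S \cdot CA]]$ vanish, the Puppe sequence collapses to a natural isomorphism between $[[D, SA]]$ and $[[D, C_\alpha]] \cong [[D, SA]]$; tracing through the identification shows that this isomorphism intertwines the suspension map $\Sigma$. A parallel argument applied to the sequence $0 \to S\cdot SB \to C(SB) \to SB \to 0$ on the target side, combined with stability of $SB$ (so that Lemma~\ref{lem:splitexact} supplies the needed compatibility with the semigroup structure on $[[-, SB]]$), yields the isomorphism $[[SA, SB]] \cong [[S^2 A, S^2 B]]$ and shows it is implemented by $\Sigma$.

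The principal obstacle is identifying the abstract Puppe connecting map with the concrete tensor-with-$S$ map $\Sigma$; this is a standard but somewhat tedious diagram chase that depends on the explicit form of the concatenation isomorphism $C_\alpha \cong SA$ and requires tracking the real structure $\tau$ through each step. All other components---contractibility of the real cone, compatibility of $h_s$ with the real structure, exactness of the Puppe sequence, and the semigroup structure on $[[-, Y]]$ for stable $Y$---transfer from the complex case without essential modification.
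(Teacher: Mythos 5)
There is a genuine gap: your Puppe-sequence argument never actually relates $[[SA,SB]]$ to $[[S^{2}A,S^{2}B]]$. Applying Lemma~\ref{lem:1} to the evaluation $\alpha\colon CA\to A$ and using contractibility of the cones only yields that the connecting map $[[D,SA]]\to[[D,C_\alpha]]\cong[[D,SA]]$ is an isomorphism for each fixed source $D$ --- a statement about one variable (the target algebra) with the source held fixed. The map $\Sigma$ in the lemma suspends \emph{both} variables, and the hard direction is to desuspend in the \emph{source}: given an asymptotic morphism $S^{2}A\to S^{2}B$, one must produce one $SA\to SB$, naturally up to homotopy. No cofibration/Puppe argument in the covariant variable (nor Lemma~\ref{lem:splitexact}) gives access to the suspension coordinate sitting inside the source algebra, so "tracing through the identification" cannot intertwine $\Sigma$ with anything useful; both groups appearing in your collapsed sequence are $[[D,SA]]$-type groups with the same target. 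Indeed, if the double-suspension isomorphism were a formal consequence of contractible cones, one could define $E$-theory without any periodicity input, which is false: the fact that the colimit $\lim_n [[S^{n}A,S^{n}B]]$ is attained at $n=1$ is equivalent to Bott periodicity.

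This is exactly how the paper proceeds: the first isomorphism is Lemma~4.5 of \cite{BLR}, whose proof composes with mutually inverse classes in $E(\R,S^{8}\R)$ and $E(S^{8}\R,\R)$ coming from real Bott periodicity, and the second isomorphism is proved the same way after replacing those classes by mutually inverse elements of $E(\R,S^{-1}S\R)$ and $E(S^{-1}S\R,\R)$ arising from the $KK$-equivalence between $\R$ and $S^{-1}S\R$. For the same reason your claim that the $\Sigma^{-1}$ case "carries over verbatim" by contractibility of a conjugate cone misses the real content: what is needed there is precisely this $(1,1)$-periodicity-type equivalence $\R\simeq_{E} S^{-1}S\R$, a nontrivial input on a par with Bott periodicity, not a formal cone argument. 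To repair the proof you should construct (or cite) these inverse $E$-theory elements and define the inverse of $\Sigma$ (resp.\ $\Sigma^{-1}$) by exterior tensor product with them followed by composition, checking compatibility with the real structures; stability of $B$ is then used, as in the paper, to make the relevant $[[\,\cdot\,,\cdot\,]]$ sets (semi)groups on which these compositions act.
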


\begin{proof}
The first statement is Lemma~4.5 of \cite{BLR} and the second statement can be proven in a similar way. Instead of using the elements in $E(\R, S^{8} \R)$ and $E(S^8 \R, \R)$ associated with the Bott isomorphism, we use elements in $E(\R, S^{-1} S\R)$ and $E(S^{-1} S \R, \R)$ that are inverses to each other arising from the $KK$-equivalence between $\R$ and $S^{-1} S \R$.
\end{proof}

The following definition is from Section~4 of \cite{BLR}. 

\begin{defn}
Let $A$ and $B$ be real separable \calg s. Then we define $$E(A,B) = [[SA, SB \otimes \K\pr]] \; .$$
\end{defn}

\begin{lemma} \label{lemma:alpha0} 
There exists an asymptotic morphism $\alpha_t \colon SS^{-1} \R \rightarrow \K\pr$ such that
$\alpha_* \colon KO_0(SS^{-1} \R) \rightarrow KO_0(\K\pr)$ is an isomorphism. Thus $\alpha_*$ is an isomorphism on $K\crt(-)$.
\end{lemma}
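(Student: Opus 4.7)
The plan is to invoke the UCT for real $KK$-theory of \cite{boersema04} and then realize the resulting $KKO$-equivalence as an asymptotic morphism. First I would compute the united $K$-theory of $SS^{-1}\R$: by the K\"unneth isomorphism together with Proposition~\ref{thm:KAiodd} giving $K\crt(S^{-1}\R) \cong \Sigma^{-1} K\crt(\R)$, and the standard suspension identity $K\crt(SB) \cong \Sigma K\crt(B)$, one obtains $K\crt(SS^{-1}\R) \cong \Sigma \Sigma^{-1} K\crt(\R) \cong K\crt(\R) \cong K\crt(\K\pr)$.

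Next, $SS^{-1}\R$ is nuclear and separable (being commutative, concretely isomorphic to $C_0(\R^2, \C)$ with real structure $(t,x) \mapsto (t,-x)$), and hence lies in the UCT bootstrap class. The UCT then promotes the $K\crt$-isomorphism above to an invertible class $\xi \in KKO(SS^{-1}\R, \R)$. The natural map $KKO \to E$ carries $\xi$ to an invertible element of
$$E(SS^{-1}\R, \R) = [[S \cdot SS^{-1}\R, S\R \otimes \K\pr]] \cong [[SS^{-1}\R, \K\pr]],$$
where the last isomorphism is $\Sigma^{-1}$ from Lemma~\ref{lemma:suspend-iso}, applied with the identification $S\R \otimes \K\pr \cong S(\K\pr)$ and using that $\K\pr$ is stable. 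A representative of this class is the desired asymptotic morphism $\alpha_t \colon SS^{-1}\R \to \K\pr$.

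That $\alpha_*$ is an isomorphism on $KO_0$ follows automatically, since $\alpha$ represents a $KKO$-equivalence. For the final clause of the lemma, one observes further that $K\crt(\R)$ is a free $\ct$-module of rank one generated in degree zero; natural transformations induced by asymptotic morphisms respect the $\ct$-module structure, so a $\ct$-module map $K\crt(SS^{-1}\R) \to K\crt(\K\pr)$ that is an isomorphism on $KO_0$ is automatically an isomorphism in every degree.

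The main technical subtlety I anticipate is the desuspension step: confirming that Lemma~\ref{lemma:suspend-iso} applies as above with the correct target identification, and that the natural map $KKO \to E$ behaves well enough on the bootstrap class to transport invertibility. A more hands-on alternative, if one prefers to avoid the UCT machinery, would be to construct $\alpha_t$ directly from a Dirac-operator model on the $KR$-space $\R^{1,1}$ along the lines of Connes--Higson, but this is likely more labor-intensive than the abstract route.
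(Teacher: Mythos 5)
Your computation of $K\crt(SS^{-1}\R)$ and the closing argument (a $\ct$-module map between free rank-one modules generated in real degree $0$ that is an isomorphism on $KO_0$ is an isomorphism everywhere) are fine and agree with the paper. The gap is exactly at the step you flagged: the desuspension. Lemma~\ref{lemma:suspend-iso} does \emph{not} give $[[S(SS^{-1}\R), S\K\pr]] \cong [[SS^{-1}\R, \K\pr]]$; it only says that $\Sigma$ and $\Sigma^{-1}$ applied to an \emph{already suspended} class give isomorphisms, so applying $\Sigma^{-1}$ lands you in $[[S^{-1}S(SS^{-1}\R), S^{-1}S\K\pr]]$, not in $[[SS^{-1}\R, \K\pr]]$. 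The statement you actually need, that $\Sigma \colon [[A,B]] \rightarrow E(A,B)$ is an isomorphism for $A = SS^{-1}\R$, is Theorem~\ref{thm:unsuspendE}, which requires homotopy symmetry of the source and, more importantly, is proved in this paper \emph{using} Lemma~\ref{lemma:alpha0}: the asymptotic morphism $\alpha$ is a building block of the inverse map $\Theta$ there. So within the paper's development your route is circular, and there is no external reference you could substitute, since the unsuspended $E$-theory result in the real case is being established here for the first time.

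The paper sidesteps invertibility and desuspension entirely. After computing $K\crt(SS^{-1}\R) \cong K\crt(\K\pr)$ and applying the UCT to get $KKO(SS^{-1}\R, \K\pr) \cong {\hom}_{\Z}(KO_0(\R), KO_0(\R)) \cong \Z$, it quotes the remarks preceding Theorem~5.2 of \cite{BLR} to the effect that this UCT isomorphism \emph{factors through} the group $[[SS^{-1}\R, \K\pr]]$. One then simply chooses $\alpha$ to be a homotopy class mapping to the identity of ${\hom}_{\Z}(KO_0(\R), KO_0(\R))$; since the composite records the induced map on $KO_0$, this $\alpha$ induces an isomorphism on $KO_0$, with no need for $[[SS^{-1}\R,\K\pr]] \rightarrow E(SS^{-1}\R,\R)$ to be bijective. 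To repair your argument, replace the desuspension step by this factorization (or prove such a factorization directly); as written, the key step is unsupported.
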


\begin{proof}
Note that $KO_0(SS^{-1} \R) \cong KO_0(\K\pr) \cong \Z$. In fact, 
$K\crt(SS^{-1} \R) \cong K\crt(\K\pr)$ 
is isomorphic to the free \ct-module with a generator in the real part in degree 0. So the Universal Coefficient Theorem for real \calg s implies that
\begin{align*}
KKO(SS^{-1} \R, \K\pr) \cong KKO(\R, \R) &\cong {\hom}\scrt(K\crt(\R), K\crt(\R))  \\
  & \cong {\hom}_{\Z}(KO_0(\R), KO_0(\R)) \cong \Z \; .
\end{align*}

As in the remarks preceding Theorem~5.2 of \cite{BLR}, the isomorphism
$$KKO(SS^{-1} \R, \K\pr) \rightarrow {\hom}_{\Z}(KO_0(\R), KO_0(\R))$$
factors through $E(S^{-1} \R, \K\pr) \cong [[S S^{-1} \R, \K\pr]]$,
giving the existence of $\alpha$ as desired.
\end{proof}

We now pause to establish some notation and to define several more homomorphisms that we will make use of for the rest of this section. We will use $\zeta$ to denote an involution on either $\R^n$ or on a sphere $S^{n-1}$, given by multiplication by $-1$ in exactly one coordinate (let us take it to be the $y$-coordinate).
Then for example, 
$$C_0(\R^2; \zeta) = \{ f \in C_0(\R^2, \C) \mid f(x,y) = \overline{f(x,-y)} \}
  \cong S S^{-1} \R \; . $$
More generally, $C_0(\R^n; \zeta) \cong S^{n-1} S^{-1} \R$.
There is a split exact sequence
$$0 \rightarrow C_0(\R^n; \zeta) \xrightarrow{i} C_0(S^n; \zeta)
  \xrightarrow{\ve} \R \rightarrow 0 $$
where $i$ is the standard inclusion via stereographic projection and $\ve$ is evaluation at any point fixed by $\zeta$.

Now consider the projection
$$p_0(x,y,z) = \tfrac{1}{2} \sm{1+z}{x-iy}{x+iy}{1-z} \;, $$
in $C(S^2, \C)$. 
We know that $p_0$ satisfies 
$[p_0] = (1,1) \in KO_0(C(S^2,\C)) \cong \Z \oplus \Z$
(see Example~6.2.3 of \cite{rosenbergbook}).
Since $p_0^{\zeta \otimes \tr} = p_0$, it follows that $[p_0]$ is an element in
$KO_0(C(S^2; \zeta)) \cong \Z \oplus \Z$. Since the complexification functor
$$c \colon KO_0(A) \rightarrow K_0(A\sc)$$
is known to be an isomorphism in this case where $A = C(S^2; \zeta)$, we conclude that
$$[p_0]= (1,1) \in KO_0(C(S^2; \zeta))$$ 
in the usual identification of $KO_0(C(S^2; \zeta))\cong \Z \oplus \Z$.
More precisely, this means that
 $\ve_*([p_0])$ is a generator of $KO_0(\R) \cong \Z$; and 
that $[p_0] - [\sm{0}{0}{0}{1}]$ is a generator of 
$KO_0(C(\R^2; \zeta)) \cong \ker(\ve_*) \cong \Z$. 
For future reference, we can take $\ve$ to be evaluation at the point $(0,0, -1)$ and we obtain the exact formula
$$\ve(p_0) = \begin{pmatrix} 0 & 0 \\ 0 & 1 \end{pmatrix} \; .$$

We define a *-homomorphism $\gamma_1$ and an asymptotic morphism $\gamma_2$ by
\begin{align*}
\gamma_1 &\colon A \rightarrow A \otimes C(S^2; \zeta) \otimes M_2(\R) & {\text{~by~}}
   \gamma_1(a) &= a \otimes p_0 \\
\gamma_2 &\colon A \rightarrow A \otimes C(S^2; \zeta)  &{\text{~by~}}
   \gamma_2(a) &= \eta_t(a) \otimes 1 \\
\end{align*}
For later reference we note that we have
$$(({\id}_A \otimes \ve \otimes {\id}_{M_2(\R)}) \circ \gamma_1)(a) 
  = a \otimes \sm{0}{0}{0}{1} 
  = \sm{0}{0}{0}{a}  
  \in A \otimes M_2(\R) \; .$$

\begin{prop}  \label{prop:betaA}
Suppose that $A$ is stable and homotopy symmetric. There exists an asymptotic morphism
$$\beta^A_t \colon A \rightarrow A \otimes C(S^2; \zeta) \otimes M_3(\R)$$
unique up to homotopy, so that the diagram 
\begin{equation}  \label{diagram:beta1}
\xymatrix{
A \ar[rr]^-{\beta^A} \ar[drr]_-{\gamma_1 \oplus \gamma_2}
&& A \otimes C_0(\R^2; \zeta) \otimes M_3(\R) \ar[d]^{i} \\
&& A \otimes C(S^2; \zeta) \otimes M_3(\R) 
} 
\end{equation}
commutes up to homotopy. 
Furthermore,  
$\Sigma \beta^A$ is homotopic to ${\id}_A \otimes \beta^{S\R}$ as asymptotic morphisms
from $SA$ to $SA \otimes C_0(\R^2; \zeta) \otimes M_3(\R)$,
and $\beta^{S\R}$ is an isomorphism on $K$-theory.
\end{prop}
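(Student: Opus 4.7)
The plan is to handle the proposition in three stages: existence and uniqueness of $\beta^A$, the suspension compatibility $\Sigma\beta^A \simeq \id_A \otimes \beta^{S\R}$, and the $K$-theoretic invertibility of $\beta^{S\R}$.

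For existence and uniqueness I would tensor the split short exact sequence $0 \to C_0(\R^2;\zeta) \to C(S^2;\zeta) \xrightarrow{\ve} \R \to 0$ (introduced just above the proposition) with $A \otimes M_3(\R)$, and then apply Lemma~\ref{lem:splitexact} to obtain the split short exact sequence of semigroups
\begin{equation*}
0 \to [[A,\, A\otimes C_0(\R^2;\zeta)\otimes M_3(\R)]] \xrightarrow{i_*} [[A,\, A\otimes C(S^2;\zeta)\otimes M_3(\R)]] \xrightarrow{\ve_*} [[A,\, A\otimes M_3(\R)]] \to 0.
\end{equation*}
Using the formula $\ve(p_0) = \sm{0}{0}{0}{1}$ recorded in the discussion of $\gamma_1$, I would compute $\ve_*\circ(\gamma_1\oplus\gamma_2)(a) = a\otimes e_{22} + \eta_t(a)\otimes e_{33}$. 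A unitary rotation inside $M_3(\R)$ identifies this class with $[\id_A]\oplus[\eta_A]$ in the Morita-equivalent group $[[A, A\otimes \K\pr]]_{\mathcal{R}}$, which vanishes by homotopy symmetry of $A$ and Lemma~\ref{lem:group}. The existence and uniqueness of the lift $\beta^A$ across $i_*$ then follow from exactness (and the injectivity of $i_*$).

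For the suspension compatibility I would apply the existence and uniqueness just established to $SA$, which is again stable and homotopy symmetric with $\eta_{SA} \simeq \id_{S\R}\otimes \eta_A$ by Lemma~\ref{lem:group}. Uniqueness of $\beta^{SA}$ then reduces the claim to verifying that both $\Sigma\beta^A$ and $\id_A\otimes\beta^{S\R}$ make diagram~(\ref{diagram:beta1}) commute for $SA$. For $\Sigma\beta^A$ this is immediate by applying $\Sigma$ to the diagram for $\beta^A$ together with the natural identifications $\Sigma\gamma_j^A \simeq \gamma_j^{SA}$. For $\id_A\otimes\beta^{S\R}$ it follows by tensoring the diagram for $\beta^{S\R}$ with $\id_A$ together with $\gamma_j^{SA} \simeq \id_A\otimes \gamma_j^{S\R}$, which in turn uses the uniqueness up to homotopy of the asymptotic inverse to identify $\eta_{SA}$ with $\id_A\otimes\eta_{S\R}$.

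The main obstacle I expect is showing that $\beta^{S\R}_*$ is an isomorphism on $K$-theory. Both source and target have $K\crt$ abstractly isomorphic to $\Sigma K\crt(\R)$, using $C_0(\R^2;\zeta) \cong SS^{-1}\R$ and the $K\crt$-isomorphism implicit in Lemma~\ref{lemma:alpha0}. To show $\beta^{S\R}_*$ realizes this abstract isomorphism, I would post-compose with $\id_{S\R}\otimes \alpha_t\otimes \id_{M_3}$, obtaining an asymptotic morphism $S\R \to S\R\otimes \K\pr\otimes M_3(\R)$ whose induced map on $K$-theory can be read off from the defining diagram. Since $\alpha_*$ sends the canonical generator $[p_0] - [\sm{0}{0}{0}{1}]$ of $KO_0(C_0(\R^2;\zeta))$ to a generator of $KO_0(\K\pr)$, and since $i\circ\beta^{S\R}$ is expressed via $p_0$ and $\eta_{S\R}$, tracing through and collapsing by Morita identifies the composite with the identity on $K\crt(S\R)$; this forces $\beta^{S\R}_*$ itself to be an isomorphism. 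The delicate technical point will be tracking the Morita collapses and the degree shifts carefully in the graded $CRT$-module framework so that the surviving kernel piece of $\gamma_1\oplus\gamma_2$ lands on a generator rather than a nontrivial multiple of one.
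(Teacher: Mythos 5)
Your existence--uniqueness argument and your treatment of $\Sigma\beta^A \simeq \id_A \otimes \beta^{S\R}$ are essentially the paper's proof: the paper likewise composes $\gamma_1\oplus\gamma_2$ with $\ve$, observes the result is ${\diag}(0,a,\eta_t(a))$ and hence null-homotopic by homotopy symmetry, invokes Lemma~\ref{lem:splitexact} for the unique lift across $i$, and then gets the suspension statement by comparing the two diagrams obtained from suspending~(\ref{diagram:beta1}) and from tensoring the $S\R$ diagram with $A$, using $[[\eta_{SA}]]=[[\id_{S\R}\otimes\eta_A]]=[[\eta_{S\R}\otimes\id_A]]$ and uniqueness of $\beta^{SA}$. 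Where you diverge is the $K$-theory claim for $\beta^{S\R}$: the paper argues directly, computing that $(\gamma_1^{S\R})_*$ sends the generator of $KO_{-1}(S\R)\cong\Z$ to the class of $[p_0]$ while $(\gamma_2^{S\R})_*$ contributes minus the class of the unit (since $\eta_*=-\id$ on $K$-theory), so $(i\circ\beta^{S\R})_*$ hits exactly $[p_0]-[\1]$, which the discussion preceding the proposition identifies as $i_*$ of a generator of $KO_0(C_0(\R^2;\zeta))\cong\ker\ve_*\cong\Z$; injectivity of $i_*$ then gives the isomorphism in degree $-1$, hence on $K\crt$. Your route instead post-composes with $\id_{S\R}\otimes\alpha\otimes\id_{M_3(\R)}$ and runs a two-out-of-three argument. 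That works, but it is heavier than necessary: to make it rigorous you must extend to $\widetilde\alpha$ on $C(S^2;\zeta)$ (or otherwise relate the composite to $\gamma_1\oplus\gamma_2$ on $K$-theory), justify that tensoring the $K\crt$-isomorphism $\alpha_*$ of Lemma~\ref{lemma:alpha0} with $\id_{S\R}$ still gives an isomorphism (a K\"unneth/naturality input for asymptotic morphisms that the paper's direct computation avoids), and note that the groups in the relevant degree are all $\Z$ so the factorization forces $\beta^{S\R}_*$ to be $\pm 1$. Moreover, the core computation you would be "tracing through" --- that $(\gamma_1\oplus\gamma_2)_*$ lands on $[p_0]-[\1]$, a generator of $\ker\ve_*$ --- is exactly the paper's argument, at which point the detour through $\alpha$ is superfluous; its one virtue is that it anticipates the homotopy computation reused later in the proof of Theorem~\ref{thm:unsuspendE}.
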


\begin{proof}
Composing $\ve$ and $\gamma_1 \oplus \gamma_2$ we have 
$$({\id}_A \otimes \ve \otimes {\id}_{M_2(\R)})(\gamma_1 \oplus \gamma_2)(a) = 
\begin{pmatrix} 0 & 0 & 0 \\ 0 & a & 0 \\ 0 & 0 & \eta_t(a) \end{pmatrix} $$
where $\eta_t$ is the asymptotic inverse to ${\id}_A$. Thus this composition is
null-homotopic. So from the split exact sequence
$$0 \rightarrow C_0(\R^2; \zeta) \xrightarrow{i} C(S^2; \zeta) \xrightarrow\ve \R \rightarrow 0 \; $$
(or rather from the split exact sequence obtained by tensoring the above with $A \otimes M_3(\R)$),
Lemma~\ref{lem:splitexact} implies that there is a unique asymptotic morphism $\beta^A$ making 
Diagram~(\ref{diagram:beta1}) commute.

Taking the special case $A = S\R$, we obtain the diagram
\begin{equation} \label{diagram:beta0} \xymatrix{
S\R \ar[rr]^-{\beta^{S\R}} \ar[drr]_-{\gamma_1 \oplus \gamma_2}
&& S\R \otimes C_0(\R^2; \zeta) \otimes M_3(\R) \ar[d]^{i} \\
&& S\R \otimes C(S^2; \zeta) \otimes M_3(\R) 
} \end{equation}
Now, we construct two diagrams that both look like
$$
\xymatrix{
S A \ar[rr]^-{\beta^{S A}} \ar[drr]_-{\gamma_1 \oplus \gamma_2}
&& SA \otimes C_0(\R^2; \zeta) \otimes M_3(\R) \ar[d]^{i} \\
&& SA \otimes C(S^2; \zeta) \otimes M_3(\R) }
$$
by either suspending Diagram~(\ref{diagram:beta1}) or by tensoring Diagram~(\ref{diagram:beta0}) by $A$. 
In these two diagrams, the homomorphisms $i$ and $\gamma_1$ 
are exactly the same and the homomorphism $\gamma_2$ is the same up to homotopy in since 
$[[\eta_{SA}]] = [[\eta_{S\R} \otimes {\id}_A]] = [[{\id}_{S\R} \otimes \eta_A]]$ 
(using Lemma~\ref{lem:group}). 
Therefore, by uniqueness of $\beta^{SA}$ we have, 
$[[\beta^{SA}]] = [[{\id}_A \otimes \beta^{S\R}]] = [[{\id}_{S\R} \otimes \beta^A]]$.

To prove the statement about $K$-theory, we can calculate the action of $\gamma_1$ and $\gamma_2$ on $KO_{-1}(S\R)$ as in Diagram~(\ref{diagram:beta0}).
We can write 
$$\gamma_1^{S\R} = {\id}_{S\R} \otimes \gamma_1^\R \colon S\R \otimes \R \rightarrow
	  S\R \otimes C(S^2; \zeta) \otimes M_2(\R)$$
and see that $(\gamma_1^{S\R})_*$ maps the generator of $KO_{-1}(S\R) \cong \Z$ to the class corresponding to $[p_0]$ in 
$KO_{-1}(S\R \otimes C(S^2; \zeta)) \cong KO_{0}(C(S^2; \zeta)) \cong \Z \oplus \Z$. At the same time,
$(\gamma_2^{S\R})_*$ maps the generator of $KO_{-1}(S\R)$ to the additive inverse of the class 
representing the unit in the same group. Thus, we see that $(\gamma_1 \oplus \gamma_2)_*$ maps the generator of $KO_{-1}(S\R)$ to the kernel of $\ve_*$ (which we already knew) and to the generator 
of ${\im}(i_*) \cong KO_{-1}(S\R \otimes C_0(\R^2; \zeta))
    \cong KO_0(C_0(\R^2; \zeta)) \cong \Z$.
This proves that $\beta^{S\R}$ is an isomorphism on $KO_{-1}(-)$ and hence on $K\crt(-)$.
\end{proof}

\begin{thm} \label{thm:unsuspendE}
Let $A$ be a stable homotopy symmetric real \calg . Then 
$$\Sigma \colon [[A, B]] \rightarrow E(A,B)$$ is an isomorphism
for all real stable \calg s $B$.
\end{thm}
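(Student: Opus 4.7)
The plan is to construct an explicit inverse $\Psi \colon E(A,B) \to [[A,B]]_{\mathcal{R}}$ to $\Sigma$, following the strategy of Dadarlat--Loring in the complex case \cite{dadarlatloring94} and using the Bott-type asymptotic morphism $\beta^A$ from Proposition~\ref{prop:betaA} together with the $K$-equivalence $\alpha$ from Lemma~\ref{lemma:alpha0}.

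Given $\varphi \in [[SA, SB \otimes \K\pr]]_{\mathcal{R}}$, I would define $\Psi(\varphi)$ as the homotopy class of the composition
\[
A \xrightarrow{\beta^A} A \otimes SS^{-1}\R \otimes M_3(\R) \cong SA \otimes S^{-1}\R \otimes M_3(\R) \xrightarrow{\varphi \otimes \id} SB \otimes \K\pr \otimes S^{-1}\R \otimes M_3(\R),
\]
reassociated as $B \otimes SS^{-1}\R \otimes \K\pr \otimes M_3(\R)$, followed by $\id_B \otimes \alpha \otimes \id$ to land in $B \otimes \K\pr \otimes \K\pr \otimes M_3(\R) \cong B$ (using stability of $B$ and $\K\pr \otimes \K\pr \otimes M_3(\R) \cong \K\pr$). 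This yields a semigroup homomorphism $\Psi$.

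To verify $\Psi \circ \Sigma \simeq \id_{[[A,B]]}$, for $\psi \in [[A, B]]_{\mathcal{R}}$ unraveling the definition gives $\Psi(\Sigma\psi) \simeq \psi \circ \mu_A$ where $\mu_A := (\id_A \otimes \alpha \otimes \id_{M_3(\R)}) \circ \beta^A \in [[A, A]]_{\mathcal{R}}$. The heart of the argument is the identity $\mu_A \simeq \id_A$ in the group $[[A, A]]_{\mathcal{R}}$ (a group by Lemma~\ref{lem:group}, using homotopy symmetry of $A$). I would establish this by suspending: Proposition~\ref{prop:betaA} gives $\Sigma \mu_A \simeq \id_A \otimes (\alpha \circ \beta^{S\R})$, and by Proposition~\ref{prop:betaA} together with Lemma~\ref{lemma:alpha0} the composition $\alpha \circ \beta^{S\R} \colon S\R \to \K\pr$ is an isomorphism on $K\crt$. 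Comparing $\alpha \circ \beta^{S\R}$ with the canonical rank-one inclusion $S\R \hookrightarrow S\R \otimes \K\pr$ via the uniqueness clause in Proposition~\ref{prop:betaA} applied to $A=S\R \otimes \K\pr$ yields $\alpha \circ \beta^{S\R} \simeq \id_{S\R}$ as asymptotic morphisms; one further application of Lemma~\ref{lemma:suspend-iso} then desuspends to $\mu_A \simeq \id_A$.

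For $\Sigma \circ \Psi \simeq \id_{E(A,B)}$, a parallel tensor-factor manipulation, again exploiting $\Sigma \beta^A \simeq \id_A \otimes \beta^{S\R}$, reduces to the same identity $\mu \simeq \id$. The principal obstacle throughout is promoting the $K$-theoretic equivalence $\alpha \circ \beta^{S\R}$ to an honest homotopy of asymptotic morphisms: the $K\crt$-classes match by construction, but the passage to a homotopy of asymptotic morphisms requires combining the uniqueness clause of Proposition~\ref{prop:betaA} with the group structure on $[[S\R \otimes \K\pr, S\R \otimes \K\pr]]_{\mathcal{R}}$ from Lemma~\ref{lem:group}, and this is precisely the step where the homotopy symmetry hypothesis on $A$ is essential.
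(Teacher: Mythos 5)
Your overall architecture --- define the candidate inverse out of $\beta^A$ and $\alpha$, reduce (via Yoneda/stability) to showing that $\mu_A:=(\id_A\otimes\alpha\otimes\id_{M_3(\R)})\circ\beta^A$ is homotopic to $\id_A$ --- is exactly the paper's strategy, but the way you propose to prove $\mu_A\simeq\id_A$ has a genuine gap. You suspend, write $\Sigma\mu_A\simeq \id_A\otimes\bigl((\id_{S\R}\otimes\alpha\otimes\id)\circ\beta^{S\R}\bigr)$, argue that $(\id\otimes\alpha\otimes\id)\circ\beta^{S\R}$ is homotopic to the rank-one inclusion, and then ``desuspend'' by Lemma~\ref{lemma:suspend-iso}. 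That last step is circular: Lemma~\ref{lemma:suspend-iso} only asserts that $\Sigma\colon[[SA,SB]]\to[[S^2A,S^2B]]$ (and its $S^{-1}$ analogue) is an isomorphism; it gives no injectivity for $\Sigma\colon[[A,A\otimes\K\pr\otimes M_3(\R)]]\to[[SA,SA\otimes\K\pr\otimes M_3(\R)]]$, and that injectivity is essentially the theorem being proved. Moreover, the uniqueness clause of Proposition~\ref{prop:betaA} cannot compare $(\id\otimes\alpha\otimes\id)\circ\beta^{S\R}$ with the rank-one inclusion: it only characterizes $\beta^A$ among lifts, through the split sequence $0\to C_0(\R^2;\zeta)\to C(S^2;\zeta)\to\R\to0$, of the specific map $\gamma_1\oplus\gamma_2$, so it says nothing about morphisms with values in $\K\pr$-coefficients. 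Finally, agreement on $K\crt$ cannot by itself be promoted to a homotopy of asymptotic morphisms; doing so via an $E$- or $KK$-classification of $[[S\R\otimes\K\pr,\,\cdot\,]]$ would again presuppose (suspended vs.\ unsuspended) exactly the kind of statement at issue.

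What the paper does instead is prove $\mu_A\simeq\id_A$ \emph{unsuspended and directly}. By Lemma~\ref{lem:splitexact} the map $i_*\colon[[A,A\otimes\K\pr\otimes M_3(\R)]]\to[[A,A\otimes\widetilde{\K}\pr\otimes M_3(\R)]]$ is injective, so it suffices to identify $(\id_A\otimes\widetilde{\alpha}\otimes\id)\circ(\gamma_1\oplus\gamma_2)$ with the inclusion of $\id_A$. Since $\gamma_1(a)=a\otimes p_0$ and $\gamma_2(a)=\eta_t(a)\otimes 1$, the whole issue is the value $\widetilde{\alpha}(p_0)$: it is asymptotically a projection, and the $KO_0$ computation from Lemma~\ref{lemma:alpha0} shows it is homotopic to $\mathrm{diag}(e,1)$ with $e$ rank one, so the composite is homotopic to $a\mapsto\mathrm{diag}(a\otimes e,\,a,\,\eta_A(a))$, whose $\id_A\oplus\eta_A$ block cancels precisely because $A$ is homotopy symmetric. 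So homotopy symmetry enters in the existence of $\beta^A$ (through $\gamma_2$) and in this final cancellation, not in the way you describe; to repair your outline you should replace the suspend/desuspend step by this direct computation.
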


\begin{proof}
From Lemma~\ref{lemma:suspend-iso} 
use the isomorphism $E(A, B) \cong [[SS^{-1} A, SS^{-1} B]]$ to
show that 
$$\Sigma \, \Sigma^{-1} \colon [[A, B]] \rightarrow [[SS^{-1} A, SS^{-1} B]] \; $$
is an isomorphism with inverse
$$\Theta \colon [[SS^{-1} A, SS^{-1} B]] \rightarrow [[A, B]]$$
defined by 
$$\Theta([[\varphi]]) = [[{\id}_B \otimes \alpha \otimes {\id}_{M_2(\R)}]]
    \circ [[\varphi \otimes M_3(\R)]] \circ [[\beta^A]] \; .$$

By the Yoneda Lemma, it suffices to consider the case $A = B$ and to then show that 
${\id}_A$ maps to ${\id}_A$ under the homomorphism
$$\Theta \circ \Sigma \, \Sigma^{-1} \colon [[A, A]] \rightarrow [[A, A]] \; .$$ 
We have $\Sigma \, \Sigma^{-1} ({\id}_A) = {\id}_{S S^{-1} \R}$ and we have
$\Theta({\id}_{S S^{-1} \R}) 
    = ({\id}_A \otimes \alpha \otimes {\id}_{M_3(\R)}) \circ \beta^A$.
So we need to show that 
$$({\id}_A \otimes \alpha \otimes {\id}_{M_3(\R)}) \circ \beta^A \colon A \rightarrow A \otimes \K\pr \otimes M_3(\R)$$
is homotopic to ${\id}_A$ as an asymptotic morphism.
For this we use the commutative diagram
$$ \xymatrix{
A \ar[rr]^-{\beta^{A}} \ar[drr]_-{\gamma_1 \oplus \gamma_2}
&& A \otimes C_0(\R^2; \zeta) \otimes M_3(\R) \ar[rrr]^{{\id} \otimes \alpha \otimes {\id}}   \ar[d]^{i}
&&& A \otimes \K\pr \otimes M_3(\R) \ar[d]^{i} \\
&& A \otimes C(S^2; \zeta) \otimes M_3(\R) \ar[rrr]^{{\id} \otimes \widetilde{\alpha} \otimes {\id}} 
&&& A \otimes \widetilde{\K}\pr \otimes M_3(\R)
}$$
By Lemma~\ref{lem:splitexact}, the homomorphism
$$[[A, A \otimes \K\pr \otimes M_3(\R)]] \xrightarrow{i_*}
      [[A, A \otimes \widetilde{\K}\pr \otimes M_3(\R)]] $$
is injective, so (in yet another reduction) it suffices to show that
$$( {\id}_A \otimes \widetilde{\alpha} \otimes {\id}_{M_3(\R)} ) \circ (\gamma_1 \oplus \gamma_2)
 \colon A \rightarrow A \otimes \widetilde{\K}\pr \otimes M_3(\R)$$
is homotopic to $i \circ {\id}_A$.

For any projection $p$ in a real \calg~ $B$, let $j_{p} \colon \R \rightarrow B$ be the homomorphism given by
$j_{p}(t) = t p$.
Let $q_t = \widetilde{\alpha_t}(p_0) \in \widetilde{\K}\pr \otimes M_2(\R)$. Since $q_t$ is asymptotically a projection, there exists an actual projection $q_0\in \widetilde{\K}\pr \otimes M_2(\R)$ such that 
$(\alpha_t) \circ j_{p_0}$ is homotopic to $j_{q_0}$. Furthermore,
by calculating the class $\alpha_*([p_0]) = [q_0] \in KO_0(\K\pr)$, we know that
$q_0$ is homotopic to the projection 
$q_0' = \sm{e}{0}{0}{1}$
where $e$ is a rank one projection in $\K\pr$.
So we can and do assume that $q_0 = q_0'$.
Then up to a homotopy of asymptotic morphisms we have
$$({\id}_{A} \otimes \widetilde{\alpha} \otimes {\id}_{M_3(\R)}) \circ (\gamma_1 \oplus \gamma_2)(a) 
  = \begin{pmatrix} a \otimes e & 0 & 0 \\ 0 & a & 0 \\ 0 & 0 & \eta_A(a) \end{pmatrix}$$
and thus $({\id}_{A} \otimes \widetilde{\alpha} \otimes {\id}_{M_3(\R)}) \circ (\gamma_1 \oplus \gamma_2)$
is homotopic to ${\id}_A$.

For the other direction, again by the Yoneda Lemma it suffices to compute 
$\Sigma\, \Sigma^{-1} \circ \Theta$ applied to ${\id}_A \otimes {\id}_{S S^{-1} \R}$. But 
we have just seen that
$\Theta({\id}_{S S^{-1} \R}) = {\id}_A$ and that $\Sigma\, \Sigma^{-1} ({\id}_A) = {\id}_{S S^{-1} \R}$,
which completes the proof.
\end{proof}

\begin{prop} \label{thm:homsym}
A real \calg~ $A$ is homotopy symmetric if and only if the complexification $A\sc$ is homotopy symmetric.
\end{prop}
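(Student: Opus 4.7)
For the forward direction, the plan is simply to observe that complexification gives a natural semigroup homomorphism $\theta_{A,A}$ sending $[[\mathrm{id}_A]]$ to $[[\mathrm{id}_{A_\C}]]$, so an inverse of the former immediately pushes forward to an inverse of the latter. The content of the proposition is the converse.

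For the converse, I would work in the \ctalg\ picture with $A\leftrightarrow(A_\C,\tau)$ and assume $A_\C$ is homotopy symmetric, so that I have a complex asymptotic inverse $\eta_t\colon A_\C\to A_\C\otimes\K$ together with a null-homotopy $H_s\colon A_\C\to A_\C\otimes C[0,1]\otimes M_2(\K)$ of $\mathrm{id}_{A_\C}\oplus\eta$. My plan is to symmetrize both $\eta$ and the null-homotopy using the involution $\tilde\phi(a):=\phi(a^\tau)^{\tau\otimes\tr}$. First I would verify that $\phi\mapsto\tilde\phi$ is an involution on the semigroup of complex asymptotic morphisms $A_\C\to A_\C\otimes\K$, satisfies $\widetilde{\mathrm{id}_{A_\C}}=\mathrm{id}_{A_\C}$, and commutes with direct sums; this uses only that $\tau\otimes\tr$ is an involutive antimultiplicative linear map.

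The central construction is to form $\xi_t:=\eta_t\oplus\tilde\eta_t\colon A_\C\to A_\C\otimes M_2(\K)$ and equip the outer $M_2$ with the swap involution $\trt$. A direct calculation shows $\xi$ is equivariant for $\tau\otimes\trt\otimes\tr$. Since $(M_2(\C),\trt)\cong(M_2(\C),\tr)$ by Section~\ref{sec:prelim}, the real subalgebra of $(M_2(\C)\otimes\K,\trt\otimes\tr)$ is $M_2(\R)\otimes\K\pr$, which is stably isomorphic to $\K\pr$, so $\xi$ descends to a real asymptotic morphism $\xi^\R\colon A\to A\otimes_\R\K\pr$. The same symmetrization applied to the null-homotopy yields the equivariant $H_s\oplus\tilde{H}_s$, which descends to a real null-homotopy connecting $0$ to the real morphism corresponding to $(\mathrm{id}_{A_\C}\oplus\eta)\oplus_{\mathrm{outer}}(\mathrm{id}_{A_\C}\oplus\tilde\eta)$.

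The final step is to identify this real endpoint. Reorganizing the four summands as $(\mathrm{id}_{A_\C}\oplus_{\mathrm{outer}}\mathrm{id}_{A_\C})\oplus_{\mathrm{inner}}(\eta\oplus_{\mathrm{outer}}\tilde\eta)$, the first outer-symmetric pair descends to $\mathrm{id}_A$ (since $\widetilde{\mathrm{id}}=\mathrm{id}$), while the second descends to $\xi^\R$, so the endpoint is $\mathrm{id}_A\oplus\xi^\R$. Hence $[[\mathrm{id}_A\oplus\xi^\R]]=0$ in $[[A,A\otimes_\R\K\pr]]_\mathcal{R}$, exhibiting $[[\xi^\R]]$ as the desired inverse of $[[\mathrm{id}_A]]$. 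The main technical obstacle is the careful bookkeeping of the several tensor factors and their involutions, both to verify equivariance of $\tilde\eta$ and $\tilde{H}_s$ and to confirm that the descent to the correct real subalgebras produces the claimed morphisms.
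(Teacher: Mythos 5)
Your forward direction matches the paper exactly. For the converse, your symmetrization $\eta\mapsto\eta\oplus\tilde\eta$ followed by descent to the real subalgebra of $(M_2(\C)\otimes A\sc\otimes\K,\ \trt\otimes\tau\otimes\tr)$ is, in substance, the paper's argument written out at the complexified level: the paper instead composes with two honest ($*$-preserving, exact) homomorphisms, the inclusion $c\colon A\to A\sc$ and a realification map $r\colon A\sc\otimes\sc\K\to A\otimes\sr M_2(\R)\otimes\sr\K\pr$, and observes that $r\circ(\id_{A\sc}\oplus\eta_A)\circ c$ is a null-homotopic \emph{real} asymptotic morphism. The advantage of that packaging is that all your ``careful bookkeeping'' disappears: since $r$ and $c$ are genuine homomorphisms, there is no equivariance to verify and no descent problem. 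In your version there is a real (if fixable) technical issue at the descent step: an asymptotic morphism is only asymptotically $*$-preserving, so although $\xi=\eta\oplus\tilde\eta$ is \emph{exactly} equivariant, its restriction to $A\subset A\sc$ takes values only asymptotically in the real form (for $a\in A$ one gets $\xi_t(a)^{\sigma}=\xi_t(a^\tau)=\xi_t(a^*)\approx\xi_t(a)^*$, not equality). You must therefore correct $\xi_t|_A$ by projecting onto the real subalgebra (e.g.\ $x\mapsto\tfrac12\bigl(x+(x^{\sigma})^*\bigr)$) and check this perturbation is asymptotically negligible, and likewise for the null-homotopy; this should be said rather than absorbed into ``bookkeeping.''

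There is also a genuine error in your endpoint identification. The outer-symmetrized pair $\id_{A\sc}\oplus_{\mathrm{outer}}\widetilde{\id_{A\sc}}$ sends $a$ to $1_2\otimes a\otimes e$, which under the identification of the real form with $M_2(\R)\otimes A\otimes\K\pr$ is the rank-two element $\mathrm{diag}(a\otimes e,\,a\otimes e)$; it descends to $\id_A\oplus\id_A$, \emph{not} to a single copy of $\id_A$. (This is the standard ``restriction of scalars after complexification doubles'' phenomenon, and it is exactly the computation the paper makes: $r\circ\id_{A\sc}\circ c=\id_A\oplus\id_A$; the fact that $\widetilde{\id}=\id$ does not halve anything.) Consequently the null-homotopic real endpoint is $\id_A\oplus\id_A\oplus\xi^{\R}$, and the inverse you obtain is $[[\id_A\oplus\xi^{\R}]]$ rather than $[[\xi^{\R}]]$. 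The proposition still follows, since invertibility of $[[\id_A]]$ is all that is needed, but as written your last step is incorrect and should be repaired accordingly.
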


\begin{proof}
We assume that $A\sc$ is homotopy symmetric (in the category of \calg s), 
so there is an asymptotic morphism
$\eta_A \in [[A\sc, A\sc \otimes\sc \K]]_\mathcal{C}$ such that ${\id}_{A\sc} \oplus \eta_A$ is null-homotopic through asymptotic morphisms of complex \calg s.
Let $c \colon A \rightarrow A\sc$ be the standard inclusion and let $r$ be the homomorphism 
$$r \colon A\sc \otimes\sc \K \rightarrow A \otimes\sr M_2(\R) \otimes\sr \K\pr \; .$$ 
Notice that $r \circ {\id}_{A\sc} \circ c = {\id}_A \oplus {\id}_A$, since for any $a \in A$ we have 
$$(r \circ {\id}_{A\sc}  \circ c)(a) 
  = \left( \begin{matrix} a \otimes e & 0 \\ 0 & a \otimes e \end{matrix} \right) \; .$$

By hypothesis, then, the composition $r \circ ({\id}_{A\sc} \oplus \eta_A) \circ c$ is null-homotopic.  On the other
hand, we have
\begin{align*}
r \circ ({\id}_{A\sc} \oplus \eta_A) \circ c
  &= (r \circ {\id}_{A\sc} \circ c) \oplus (r \circ \eta_A \circ c) \\
  &= {\id}_{A} \oplus {\id}_{A} \oplus (r \circ \eta_A \circ c)
\end{align*}
which shows that $[[{\id}_{A} \oplus (r \circ \eta_A \circ c)]]$ is an inverse for $[[{\id}_A]]$
in $[[A, A \otimes \K\pr]]_{\mathcal{R}}$.

For the other direction, we have a semigroup homomorphism 
$$\theta_{A,A} \colon [[A, A \otimes\sr \K\pr]]_\mathcal{R} 
      \rightarrow [[A\sc, A\sc \otimes\sc \K]]_\mathcal{C} \; . $$ 
So if $[[{\id}_A]]$ is invertible in the former, then it immediately follows that $\theta_{A,A}([[{\id}_A]]) = [[{\id}_{A\sc}]]$ is invertible in the latter.
\end{proof}

\begin{cor}
The real \calg s $A_i$ are homotopy symmetric for all $0 \leq i < 8$.
\end{cor}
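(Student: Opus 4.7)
The plan is to apply Proposition~\ref{thm:homsym}, which reduces the corollary to showing that each complexification $(A_i)\sc$ is homotopy symmetric as a complex \calg. Reading off Table~\ref{table:Ai}, in every case $(A_i)\sc$ is isomorphic to one of $q\C$, $M_2(\C) \otimes q\C$, $C_0(S^1 \setminus \{1\}, \C) \cong S\C$, or $M_2(\C) \otimes S\C$. So the task splits into two base cases, $q\C$ and $S\C$, plus a matrix-stabilization observation.

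For the matrix-stabilization step, I would note that if $D$ is a homotopy symmetric complex \calg~ with asymptotic inverse $\eta_D \in [[D, D \otimes \K]]_{\mathcal{C}}$, then tensoring on the left with $\id_{M_n(\C)}$ preserves direct sums and homotopies of asymptotic morphisms, and (after absorbing $M_n(\C) \otimes \K \cong \K$) produces $\id_{M_n(\C)} \otimes \eta_D$ as an asymptotic inverse for $[[\id_{M_n(\C) \otimes D}]]$. Hence $M_2(\C) \otimes q\C$ and $M_2(\C) \otimes S\C$ inherit homotopy symmetry from the base cases.

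For the two base cases, I would invoke the classical complex picture: for any complex \calg~ $B$, the sets $[q\C, B \otimes \K]$ and $[S\C, B \otimes \K]$ of homotopy classes of $*$-homomorphisms are naturally identified with $K_0(B)$ and $K_1(B)$ respectively, by Cuntz's picture of $K_0$ and the standard Bott picture of $K_1$. These are groups rather than mere semigroups, so there exist $*$-homomorphisms $\eta_{q\C}$ and $\eta_{S\C}$ whose classes invert $[\id_{q\C}]$ and $[\id_{S\C}]$; any such $*$-homomorphism is a fortiori an asymptotic morphism, which realizes the homotopy symmetry of $q\C$ and $S\C$. Alternatively, these statements are explicit in the unsuspended $E$-theory of \cite{dadarlatloring94}. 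The main obstacle is purely citational: confirming that the two complex base cases are available in the cited references in exactly the form needed; once that is done, Proposition~\ref{thm:homsym} immediately yields homotopy symmetry of each $A_i$.
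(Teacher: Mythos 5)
Your proposal is correct and follows essentially the same route as the paper: reduce via Proposition~\ref{thm:homsym} to the complexifications, observe from Table~\ref{table:Ai} that these are $q\C$, $S\C$, or their $M_2(\C)$-amplifications, and then use homotopy symmetry of $q\C$ and $S\C$ together with stability under tensoring by matrix algebras. The only difference is that the paper simply cites the remarks at the start of Section~5 and Lemma~5.1 of \cite{dadarlatloring94} for those two inputs, whereas you sketch proofs of them (via the Cuntz picture of $K_0$, the standard picture of $K_1$, and a direct amplification argument), which is fine.
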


\begin{proof}
For all $i$, we have that $(A_i)\sc$ is isomorphic to one of the following \calg s: 
$q\C, q\C \otimes M_2(\C), S\C$, and $S\C \otimes M_2(\C)$. 
From the comments at the beginning of Section~5 of \cite{dadarlatloring94} we know that $q\C$ and $S\C$ are homotopy symmetric; and from Lemma~5.1 of \cite{dadarlatloring94} we know that $q\C \otimes M_n(\C)$ and $S \C \otimes M_n(\C)$ are homotopy symmetric. Therefore Proposition~\ref{thm:homsym} implies that $A_i$ is homotopy symmetric for all $i$.
\end{proof}

\begin{lemma} \label{lemma:semipro}
Let $D$ and $B$ be real \calg s, with $D$ semiprojective.  Then 
\begin{enumerate}
\item[(1)] $[D, B] \cong [[D, B]]$
\item[(2)] If $B = \lim_{n \to \infty} B_n$, then $[D, B] \cong \lim_{n \to \infty} [D, B_n]$.
\end{enumerate}
\end{lemma}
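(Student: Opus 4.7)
The plan is to transpose the standard semiprojective $C^*$-algebra arguments from the complex case (as in Chapter~14 of \cite{loringbook}) to the real setting, using the definition of semiprojectivity for real \calg s from Section~3 of \cite{loringsorensen}. In both parts, the strategy is the same: realize the target algebra as a quotient $E/J$ where $J$ is the closure of an increasing union of $\tau$-invariant ideals $J_k$, apply semiprojectivity of $D$ to lift the given map to some $E/J_k$, and then extract the desired honest *-homomorphism together with a homotopy connecting it back to the original.

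For (1), represent an asymptotic morphism $\varphi$ by a *-homomorphism $\Phi \colon D \to A(B)$, where $A(B) = C_b([0,\infty), B)/C_0([0,\infty), B)$ is the asymptotic algebra with its natural pointwise real structure. The increasing family of $\tau$-invariant ideals
\[
J_n = \{f \in C_b([0,\infty), B) : f(t) = 0 \text{ for all } t \geq n\}
\]
has closed union equal to $C_0([0,\infty), B)$. Semiprojectivity of $D$ lifts $\Phi$ to a *-homomorphism
\[
\Psi \colon D \to C_b([0,\infty), B)/J_n \cong C_b([n,\infty), B)
\]
for some $n$. Then $\psi = \ev_n \circ \Psi$ is a genuine *-homomorphism $D \to B$, and the family $t \mapsto \ev_t \circ \Psi$ for $t \geq n$, after reparametrizing $[n, \infty)$ onto $[0, 1)$ and extending to $t=1$ via the original asymptotic morphism $\varphi$, provides the required homotopy between $\psi$ and $\varphi$ in $[[D, B]]$. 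Injectivity of the canonical map $[D, B] \to [[D, B]]$ is obtained by applying the same argument with $B$ replaced by $C([0,1], B)$ and keeping track of the endpoint evaluations.

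For (2), use an analogous mapping-telescope construction (cf.\ Section~14.1 of \cite{loringbook}) to write $B = \lim_n B_n$ as a quotient $T/I$, where $T$ is a real \calg~ built from bounded continuous paths through the inductive system and $I$ is the closure of an increasing family of $\tau$-invariant ideals $I_k$ corresponding to truncations below level $k$. Each quotient $T/I_k$ deformation retracts onto $B_k$ via evaluation at the appropriate endpoint, so semiprojectivity of $D$ lifts a given $\varphi \colon D \to B$ first to a *-homomorphism into $T/I_k$ and then, by composition with the retraction, to a *-homomorphism $D \to B_k$ whose composition with the canonical map $B_k \to B$ is homotopic to $\varphi$. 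Applying the same procedure to the inductive system $\{C([0,1], B_n)\}$ handles the injectivity half of the identification $\lim_n [D, B_n] \cong [D, B]$.

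The main technical obstacle is purely bookkeeping: one must verify $\tau$-invariance of the filtration ideals in the asymptotic algebra and the telescope algebra, and check that the classical complex-case proofs translate verbatim. Since all of the ideals in question are defined by support conditions (functions vanishing outside a compact set, or sequences supported up to a fixed index), they are automatically preserved by the induced pointwise real structure, so no essentially new ideas beyond the translation are needed. The definition of semiprojectivity in \cite{loringsorensen} was designed precisely to make this kind of adaptation routine.
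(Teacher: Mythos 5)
Your proposal is correct and follows essentially the same route as the paper, which simply observes that the complex-case proofs (the asymptotic-algebra argument at the start of Section~6 of \cite{dadarlatloring94} for (1) and the mapping-telescope argument of Corollary~15.1.3 of \cite{loringbook} for (2)) carry over verbatim to the real setting. Your additional remark that the filtration ideals in the asymptotic algebra and the telescope are automatically $\tau$-invariant because they are defined by support conditions is exactly the bookkeeping the paper leaves implicit.
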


\begin{proof}
Both of these results have proofs that carry over directly to the real case from the complex case. The proofs in the complex case are found at the beginning of Section~6 of \cite{dadarlatloring94} and as the proof to Corollary~15.1.3 of \cite{loringbook}, respectively.
\end{proof}

\begin{thm} \label{thm:classify}
For each integer $i$ in the range $0 \leq i < 8$ and for any real \calg~ $B$ (not necessarily separable), there is a natural isomorphism
$$KO_i(B) \cong [A_i, \K\pr \otimes B] \cong \lim_{n \to \infty} [A_i, M_n(B)] \; .$$
If $B$ is stable, then
$$KO_i(B) \cong [A_i, B] \; .$$
\end{thm}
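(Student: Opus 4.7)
The plan is to prove the first isomorphism $KO_i(B) \cong [A_i, \K\pr \otimes B]$ by chaining together the key properties of $A_i$ established above---semiprojectivity, homotopy symmetry, and the $K\crt$-computation---in the separable case, and then to bootstrap to arbitrary $B$ via continuity. The second isomorphism and the stable case will follow immediately.

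First I reduce to the case $B$ separable. Any real \calg~ $B$ is the directed union of its separable real *-subalgebras $B_\lambda$, and tensoring with $\K\pr$ or with $M_n(\R)$ preserves such inductive limits. Since each $A_i$ is semiprojective (Propositions~\ref{Aevensemiproj} and~\ref{Aoddsemiproj}), Lemma~\ref{lemma:semipro}(2) shows that $[A_i, -]$ commutes with these directed limits. Continuity of $KO_*$ handles the left-hand side, so it suffices to prove the isomorphism for separable $B$.

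For separable $B$, the required isomorphism assembles from a chain. Lemma~\ref{lemma:semipro}(1) gives $[A_i, \K\pr \otimes B] \cong [[A_i, \K\pr \otimes B]]$. The standard stabilization, using the corner embedding $a \mapsto a \otimes e$ for a minimal projection $e \in \K\pr$ together with $\K\pr \otimes \K\pr \cong \K\pr$, rewrites this as $[[A_i \otimes \K\pr, \K\pr \otimes B]]$. Now $A_i \otimes \K\pr$ is both stable and homotopy symmetric (the latter follows from the corollary to Proposition~\ref{thm:homsym} by tensoring the inverse asymptotic morphism with $\id_{\K\pr}$), so Theorem~\ref{thm:unsuspendE} identifies the previous group with $E(A_i \otimes \K\pr, \K\pr \otimes B)$, which collapses to $E(A_i, B)$ by stability of real $E$-theory. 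Finally, the computation $K\crt(A_i) \cong \Sigma^{-i} K\crt(\R)$ of Propositions~\ref{thm:KAieven} and~\ref{thm:KAiodd}, combined with the real UCT (applied to $A_i$, which lies in the bootstrap class), produces a $KK$-equivalence $A_i \sim_{KK} S^{-i}\R$; the natural comparison map $KKO(A_i, B) \rightarrow E(A_i, B)$ is an isomorphism because $A_i$ is nuclear; and the $KK$-equivalence then gives $KKO(A_i, B) \cong KKO_0(S^{-i}\R, B) \cong KO_i(B)$.

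The second isomorphism $[A_i, \K\pr \otimes B] \cong \lim_{n \to \infty} [A_i, M_n(B)]$ is immediate from Lemma~\ref{lemma:semipro}(2) applied to $\K\pr \otimes B = \lim_{n\to\infty} M_n(B)$, and the stable case $[A_i, B] \cong KO_i(B)$ follows since $\K\pr \otimes B \cong B$ when $B$ is stable. I expect the main obstacle to be the final step of the separable-case chain: confirming that the $KK$-to-$E$ comparison map is an isomorphism in the real nuclear setting and tracking suspension conventions carefully enough to pin down $KKO_0(S^{-i}\R, B) \cong KO_i(B)$ as an honest equality rather than a shift; every earlier step reduces more or less mechanically to results already in hand.
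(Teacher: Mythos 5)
Your proposal is correct and follows essentially the same route as the paper: the separable case is handled by the same chain (semiprojectivity via Lemma~\ref{lemma:semipro}, unsuspended $E$-theory via Theorem~\ref{thm:unsuspendE} and homotopy symmetry, $E\cong KKO$ for the nuclear $A_i$, and the $KK$-equivalence $A_i\sim S^{-i}\R$ from Propositions~\ref{thm:KAieven}--\ref{thm:KAiodd} plus the UCT), and the non-separable case by continuity of both functors using semiprojectivity, exactly as in the paper. The only differences are cosmetic: you run the chain in the opposite direction, make the stabilization to $A_i\otimes\K\pr$ explicit (which the paper leaves implicit when invoking Theorem~\ref{thm:unsuspendE}), and perform the reduction to separable $B$ at the start rather than via the paper's explicit natural map $\alpha_B$ at the end.
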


\begin{proof}
First consider the case that $B$ is separable. From Propositions~\ref{thm:KAieven} and \ref{thm:KAiodd}
we have $K\crt(A_i) \cong \Sigma^{-i} K\crt(\R)$ for all $i$, so the Universal Coefficient Theorem
(Corollary~4.11 of \cite{boersema04}) implies that $A_i$ is $KK$-equivalent to $S^{-i} \R$. We note that the condition for the Universal Coefficient Theorem to apply is that the complexification of $A_i$ is in the bootstrap category of separable nuclear \calg s. This is easy to check since the complexifications of these algebras are all stably isomorphic to a commutative \calg~ or to $q\C$.
Furthermore, each $A_i$ is semiprojective by Propositions~\ref{Aevensemiproj} and \ref{Aoddsemiproj}.
Therefore,
\begin{align*}
KO_i(B) &\cong KKO(S^{-i}\R, B)  \\
  &\cong KKO(A_i, B)   \\
  &\cong E(A_i, B) && \text{by Theorem~4.6 of \cite{BLR}} \\
  &\cong [[A_i, \K\pr \otimes B]]  && \text{by Theorem~\ref{thm:unsuspendE}} \\
  &\cong [A_i, \K\pr \otimes B]  && \text{by Lemma~\ref{lemma:semipro}(1)} \\
  &\cong \lim_{n \to \infty} [A_i, M_n(B)]  && \text{by Lemma~\ref{lemma:semipro}(2).}
\end{align*}

To address the general case, let 
$F_i(B) = \lim_{n \to \infty} [A_i, M_n(B)]$
and consider the natural homomorphism
$$\alpha_B \colon F_i(B) \rightarrow KO_i(B) \; $$ 
defined by $\alpha_B([\phi]) = \phi_*(\xi_i)$ where $\xi_i$ is a generator of $KO_i(A_i) \cong \Z$.
This homomorphism exists for all real \calg s and is an isomorphism when $B$ is separable.
In general, write $B$ as the inductive limit $B = \lim_{\lambda} B_\lambda$ where
$\{B_\lambda \}$ is the net of all separable subalgebras of $B$. We leave it to the reader to verify that $F_i$ is continuous with respect to inductive limits, using the fact that $A_i$ is semiprojective.
Then since both
functors $F_i$ and $KO_i$ are continuous with respect to inductive limits and since $\alpha_{B_\lambda}$ is an isomorphism
for all $\lambda$, it follows that $\alpha_B$ is an isomorphism.
\end{proof}

\section{$K$-theory via unitaries -- the even cases} \label{sec:even}

In the next two sections, we develop pictures of all eight fundamental $KO$-groups in terms of unitaries.
We use the notation $KO_i^u$ for these functors defined on the category of real \calg s or (equivalently) the category of \ctalg s.
For $i = 0,1$, we will write down the definition both in terms of a real \calg~ $A$ and in terms of a \ctalg ~$(A, \tau)$. However for $i \neq 0,1$, we will only consider $KO^u_i(A, \tau)$ in the context of a \ctalg $(A, \tau)$, since that picture gives the most direct and consistent definitions for varying values of $i$. For a real \calg~ $A$, one should consider the associated \ctalg ~$(A\sc, \tau)$. Thus $KO_i^u(A) = KO_i^u(A\sc, \tau)$.

In each case, we have a picture in terms of unitaries in matrix algebras over $A$ satisfying certain relations. In each case, we will prove that our picture is a well-defined group and then prove that it is naturally isomorphic to the standard version of $K$-theory. A reader who wishes to skip our detailed development can see the final pictures summarized in Section~\ref{sec:summary}, where we  also include a description of complex $K$-theory, $KU_i(A, \tau)$.

\subsection{$KO_0$ via unitaries}

\begin{defn} \label{defn:KO_0}
Let $A$ be a unital real \calg . Let $U_{\infty}^{(0)}(A)$ be the set of all unitaries $u$ in $\cup_{n \in \N} M_{2n}(A)$ satisfying $u^2 = 1$ (equivalently, unitaries $u$ that satisfy $u = u^*$). Let $\sim_0$ be the equivalence relation on $U^{(0)}_{\infty}(A)$, generated by 
\begin{enumerate}
\item[(1)] $u_0 \sim_0 u_1$ if $u_t \in M_{2n}(A)$ is a continuous path of self-adjoint unitaries on $[0,1]$; 
and 
\item[(2)] $u \sim_0 \iota^{(0)}_n(u)$ for $u \in M_{2n}(A)$ where 
$\iota_n^{(0)} \colon M_{2n}(A) \rightarrow M_{2n+2}(A)$ is given by
$$\iota_n^{(0)}(a) 
  = {{\diag}} \left(a, I^{(0)} \right) \; \quad \text{where $I^{(0)} = \sm{1}{0}{0}{-1}$} \;. $$
\end{enumerate}
Then we define $KO^u_0(A) = U_{\infty}^{(0)}(A)/\sim_0$, with a binary operation given by $[u] + [v] = \left[ \sm{u}{0}{0}{v} \right]$
for $u,v \in U_{\infty}^{(0)}(A)$.
\end{defn}

In addition to the notation for $I^{(0)}$ given above, we will use
$$I_n^{(0)} = {{\diag}}(I^{(0)}, I^{(0)}, \dots, I^{(0)}) \in M_{2n}(\C) \; .$$

\begin{prop} \label{K0group}
$KO_0^u(A)$ is a homotopy invariant functor from the category of all unital \calg s to the category of abelian groups. The inverse of an element $[u] \in KO_0^u(A)$ is $[-u]$.
\end{prop}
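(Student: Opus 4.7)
The plan is to verify the abelian group axioms on $KO_0^u(A)$ in order, then derive functoriality and homotopy invariance from the fact that real $*$-homomorphisms preserve the relevant symmetries. I would first check that block-diagonal sum descends to a well-defined binary operation on equivalence classes: relation (1) is preserved by block-summing paths of self-adjoint unitaries, and relation (2) is preserved via a real orthogonal permutation homotopy that moves an $I^{(0)}$ summand from the middle to the end of a block-diagonal list (such a permutation of two even-sized blocks has even sign, hence lies in the identity component $SO$ of the real orthogonal group and is path-connected to the identity). Associativity is immediate, and relation (2) says exactly that $[I^{(0)}]$ is a two-sided identity. For commutativity, a path in $SO(2n+2m)$ from the identity to the block-swap $\sm{0}{1_{2m}}{1_{2n}}{0}$ (again of even sign) gives a conjugation homotopy from $\diag(u,v)$ to $\diag(v,u)$ through self-adjoint unitaries.

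The only nontrivial step is the construction of inverses. The claim is $[u]+[-u]=0$, and the key is the explicit homotopy
\[
w_t = \begin{pmatrix} u\cos(\pi t/2) & \sin(\pi t/2)\cdot 1_{2n} \\ \sin(\pi t/2)\cdot 1_{2n} & -u\cos(\pi t/2) \end{pmatrix}, \qquad t \in [0,1],
\]
in $M_{4n}(A)$. A direct computation using $u^{*}=u$ and $u^{2}=1$ confirms $w_t^{*}=w_t$ and $w_t^{2}=1_{4n}$, and the endpoints are $w_0 = \diag(u,-u)$ and $w_1 = \sm{0}{1_{2n}}{1_{2n}}{0}$. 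Two further real orthogonal homotopies, one a similar $2\times 2$ block rotation and one a signed permutation, move $\sm{0}{1_{2n}}{1_{2n}}{0}$ first to $\diag(1_{2n},-1_{2n})$ and then to $I^{(0)}_{2n}$. Composing these yields $\diag(u,-u)\sim_0 I^{(0)}_{2n}$, establishing $[-u]$ as the inverse of $[u]$.

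For functoriality, any unital real $*$-homomorphism $\phi\colon A\to B$ preserves self-adjointness and the relation $u^{2}=1$, maps continuous paths to continuous paths, and maps $I^{(0)}$ to $I^{(0)}$, so sending $[u]\mapsto [\phi(u)]$ gives a well-defined group homomorphism $\phi_{*}\colon KO^{u}_{0}(A)\to KO^{u}_{0}(B)$; functoriality in $\phi$ is then clear. For homotopy invariance, a continuous homotopy $t\mapsto\phi_t$ from $\phi_0$ to $\phi_1$ yields, for each $u\in U^{(0)}_{\infty}(A)$, a path $t\mapsto\phi_t(u)$ of self-adjoint unitaries in $M_{2n}(B)$ witnessing $(\phi_0)_{*}[u]=(\phi_1)_{*}[u]$.

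The main obstacle I anticipate is keeping every homotopy and every conjugating unitary real: the complex rotations $\sm{\cos t}{i\sin t}{i\sin t}{\cos t}$ one would naturally reach for in the complex analog are unavailable in $M_{2n}(A)$ for a real $C^*$-algebra $A$, so one must work with real $2\times 2$ rotations $\sm{\cos t}{-\sin t}{\sin t}{\cos t}$ and real permutations throughout. The doubling convention built into the definition (unitaries live in $M_{2n}(A)$ rather than $M_{n}(A)$, and stabilization uses the $2\times 2$ block $I^{(0)}$) is exactly what keeps all required orthogonal permutations in the identity component, making each step of the argument feasible over the reals.
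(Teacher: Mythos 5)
Your proposal is correct and follows essentially the same route as the paper: commutativity via an even (hence $SO$) block permutation connected to the identity, inverses via an explicit rotation-type homotopy taking $\mathrm{diag}(u,-u)$ to a scalar self-adjoint unitary and then to $I^{(0)}_{2n}$ by an orthogonal matrix of determinant one, with functoriality and homotopy invariance being routine. The only cosmetic difference is that your inverse homotopy passes through $\left(\begin{smallmatrix}0&1\\1&0\end{smallmatrix}\right)$ and needs an extra scalar rotation, whereas the paper rotates $\mathrm{diag}(u,1)$ and multiplies by $\mathrm{diag}(1,-u)$ to land directly on $\mathrm{diag}(1_{2n},-1_{2n})$; both are the same idea.
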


\begin{proof}
The binary operation on $KO^u_0(A)$ is clearly associative and by definition the element $I^{(0)}$ represents the identity.

Consider self-adjoint unitaries $u \in M_{2n}(A)$ and $v \in M_{2m}(A)$. Let $w$ be a unitary in $M_{2n+2m}(\R)$ such that 
$w \sm{u}{0}{0}{v} w^* = \sm{v}{0}{0}{u}$ and $\det w = 1$. This can be done by taking $w$ to be a change of basis matrix corresponding to a particular even permutation of the basis elements. There is then a path of unitaries $w_t \in M_{2n+2m}(\R)$ such that $w_0 = 1_{2n+2m}$ and $w_1 = w$. Then $w_t \cdot \sm{u}{0}{0}{v} \cdot w_t^*$ is a self-adjoint unitary for all $t$ showing that $[u] + [v] = [v] + [u]$.

Let $u \in M_{2n}(A)$ be a self-adjoint unitary. 
Let 
$$r_t = 
\begin{pmatrix}{\cos((\pi/2) t) \cdot 1_{2n}}&{-\sin((\pi/2) t) \cdot 1_{2n}}\\{\sin((\pi/2) t) \cdot 1_{2n}}&{\cos((\pi/2) t)\cdot 1_{2n}} 
	\end{pmatrix}\; $$
and let $u_t = r_t \cdot
\sm{u}{0}{0}{1} \cdot r_t^*$ be the path
from
$\sm{u}{0}{0}{1_{2n}}$ to $\sm{1_{2n}}{0}{0}{u}$. Then using the relation $u^2 = 1_{2n}$, one can show that $u_t$ commutes with $\sm{1_{2n}}{0}{0}{-u}$. Hence
$u_t \cdot \sm{1_{2n}}{0}{0}{-u}$ gives a path in $U_{\infty}^{(0)}(A)$ from $\sm{u}{0}{0}{-u}$ to
$\sm{1_{2n}}{0}{0}{-u u} = \sm{1_{2n}}{0}{0}{-1_{2n}}$. This last matrix in $M_{4n}(A)$ is equivalent to $I^{(0)}_{2n}$ representing the identity element in $KO_0^u(A)$, as shown in the proof of Proposition~\ref{K0ofR} below.
\end{proof}

\begin{prop} \label{K0ofR}
$KO_0^u(\R) \cong \Z$.
\end{prop}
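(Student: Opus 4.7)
The plan is to construct an isomorphism
$\phi \colon KO_0^u(\R) \to \Z$ given on representatives by $\phi(u) = \tfrac{1}{2} \trace(u)$. Recall that any self-adjoint unitary $u \in M_{2n}(\R)$ is a symmetric orthogonal matrix with eigenvalues $\pm 1$; writing $p$ and $q$ for the dimensions of the $+1$- and $-1$-eigenspaces, we have $p + q = 2n$ and $\trace(u) = p - q$, so $\phi(u) = p - n$ is indeed an integer.

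First I would verify that $\phi$ descends to $KO_0^u(\R)$ and is additive. The trace is continuous in $u$, but on the set of self-adjoint unitaries in $M_{2n}(\R)$ it takes only the discrete values $\{-2n, -2n + 2, \dots, 2n\}$, so it is locally constant; hence $\phi$ is invariant under the homotopy relation in Definition~\ref{defn:KO_0}(1). Stabilization is immediate since $\trace(I^{(0)}) = 0$, so $\phi(\iota_n^{(0)}(u)) = \phi(u)$. Additivity under $\oplus$ is just the additivity of the trace, and by Proposition~\ref{K0group} this makes $\phi$ a well-defined homomorphism.

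Surjectivity is transparent: $\phi([1_{2n}]) = n$ and $\phi([-1_{2n}]) = -n$, so every integer is hit. For injectivity, suppose $u \in M_{2n}(\R)$ and $v \in M_{2m}(\R)$ both satisfy $\phi = k$, so $u$ has eigenspace dimensions $(n+k, n-k)$ and $v$ has $(m+k, m-k)$. After stabilizing each by enough copies of $I^{(0)}$, I may assume they lie in a common $M_{2N}(\R)$ with the same eigenspace dimensions $(N+k, N-k)$. I then need to connect them by a continuous path of self-adjoint unitaries. The space of such unitaries with fixed eigenspace dimensions is a single conjugacy orbit under $O(2N)$, isomorphic to the real Grassmannian $O(2N)/(O(N+k)\times O(N-k))$, and I would cite the standard fact that this Grassmannian is path-connected (equivalently, the map $\pi_0(O(N+k)\times O(N-k)) \to \pi_0(O(2N))$ induced by determinant is surjective whenever both factors are nontrivial, and the boundary cases where $N \pm k = 0$ are trivial). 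Conjugating $u$ to $v$ by a path in $O(2N)$ then yields a path of self-adjoint unitaries, so $[u] = [v]$ in $KO_0^u(\R)$, and finally the formula $[-u] = -[u]$ is immediate from $\phi(-u) = -\phi(u)$ plus injectivity.

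The main obstacle is really just the injectivity step, which reduces to the classical connectedness of real Grassmannians; the rest amounts to bookkeeping about the trace.
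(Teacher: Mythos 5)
Your proposal is correct and follows essentially the same route as the paper: both define the isomorphism by $[u]\mapsto\tfrac{1}{2}\trace(u)$, get surjectivity from obvious diagonal representatives, and prove injectivity by conjugating two equal-trace self-adjoint unitaries to one another along a path of orthogonal matrices. The only difference is the bookkeeping in the injectivity step: you invoke connectedness of the real Grassmannian (equivalently, adjusting the conjugator by an orientation-reversing element of the stabilizer), whereas the paper flips the determinant of the conjugating matrix by stabilizing with an extra $I^{(0)}$ block via $\iota_n^{(0)}$ --- a cosmetic rather than substantive distinction.
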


\begin{proof}
Consider the map $\phi \colon U_{\infty}^{(0)}(\k) \rightarrow \Z$ given by 
$\phi(u) = \tfrac{1}{2} {\trace}(u)$. If $u$ is a self-adjoint unitary, then $u$ is unitarily equivalent to a diagonal matrix with eigenvalues in $\{1, -1\}$. It follows that the range of $\phi$ is exactly $\Z$. Furthermore, since $\phi$ is continuous, is invariant under unitary equivalence, and satisfies $\phi(u) = \phi(\iota^{(0)}_n(u))$; it follows that $\phi$ is well-defined on $KO_0^u(\R)$.

Suppose now that $u$ and $v$ are self-adjoint unitaries with entries in $\R$ having the same trace. We may assume that $u$ and $v$ have the same dimension by perhaps replacing $u$ with $\iota^{(0)}_n(u)$ or replacing $v$ with $\iota^{(0)}_n(v)$. Let $u = xvx^*$ where $x$ is a unitary in $M_{2n}(\R)$. We can assume that $x$ is in the same component as the identity among unitaries in $M_{2n}(\R)$. For if $\det x = -1$, we can replace $u, v, x$ by $\iota_n^{(0)}(u), \iota_n^{(0)}(v), \iota_n^{(0)}(x)$ in $M_{2n+2}(\R)$ and note that 
$\det \iota_n^{(0)}(x) = - \det x$.

Now let $x_t$ be a path of unitaries from $1_{2n}$ to $x$. Then $u_t = x_t v x_t^*$ is  a path of self-adjoint unitaries from $v$ to $u$ showing that $[u] = [v]$. The result follows.
\end{proof}

\begin{defn}
Let $A$ be any unital \calg . Then we define
$KO^u_0(A) = \ker(\lambda_*)$ where $\lambda \colon \widetilde{A} \rightarrow \R$ is the natural projection from the unitization of $A$ with kernel isomorphic to $A$.
\end{defn}

We note that the formula in this definition is valid also in case $A$ is unital.
Therefore we have a picture in which any element of $KO_0^u(A)$ is represented by a self-adjoint unitary $u$ in $M_{2n}(\widetilde{A})$ such that ${\trace} (\lambda_{2n}(u)) = 0$. The following proposition makes clear the picture of $KO_0^u(A)$ that we are presenting.

\begin{prop} \label{K0unitpicture}
Let $A$ be a real \calg . Any element of $KO_0^u(A)$ can be represented as $[u]$ where $u \in M_{2n}(\widetilde{A})$ is a self-adjoint unitary satisfying 
$\lambda(u) = I^{(0)}_n$.
\end{prop}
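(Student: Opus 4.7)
The plan is to show that every class in $KO_0^u(A) = \ker(\lambda_*)$ has a representative whose image under $\lambda$ is exactly $I_n^{(0)}$, by using an inner automorphism to normalize the scalar part.

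First, I would start with an arbitrary representative: pick $u \in M_{2n}(\widetilde{A})$, a self-adjoint unitary with $\lambda_*([u]) = 0$ in $KO_0^u(\R) \cong \Z$. Applying the trace computation from the proof of Proposition~\ref{K0ofR}, the vanishing of $\lambda_*([u])$ is equivalent to $\tfrac{1}{2}\trace(\lambda_{2n}(u)) = 0$. Hence $\lambda_{2n}(u)$ is a real self-adjoint unitary in $M_{2n}(\R)$ with exactly $n$ eigenvalues equal to $+1$ and $n$ equal to $-1$, and is therefore conjugate by some $w \in O(2n)$ to $I_n^{(0)} = \diag(I^{(0)}, \ldots, I^{(0)})$.

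Next, I would arrange that the conjugating matrix lies in the identity component. If $\det w = 1$, proceed with $w$ unchanged. If $\det w = -1$, replace $u$ by $\iota_n^{(0)}(u) = \diag(u, I^{(0)}) \in M_{2n+2}(\widetilde{A})$ (which represents the same class in $KO_0^u(A)$ by relation~(2) of Definition~\ref{defn:KO_0}) and replace $w$ by $w' = \diag(w, \sigma) \in O(2n+2)$, where $\sigma = \diag(1,-1)$. Then $\sigma I^{(0)} \sigma^* = I^{(0)}$, so $w' \lambda(\iota_n^{(0)}(u)) (w')^* = I_{n+1}^{(0)}$, and $\det w' = (-1)(-1) = 1$. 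Writing $N$ for the enlarged dimension and relabeling, we have $w \in SO(2N)$ satisfying $w \lambda(u) w^* = I_N^{(0)}$.

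Finally, since $SO(2N)$ is path-connected, choose a continuous path $w_t$ in $SO(2N)$ with $w_0 = 1_{2N}$ and $w_1 = w$. Viewing each $w_t$ as a real scalar matrix inside $M_{2N}(\widetilde{A})$ via the unital inclusion $\R \hookrightarrow \widetilde{A}$, set
\[
u_t = w_t \, u \, w_t^{*} \in M_{2N}(\widetilde{A}).
\]
Each $u_t$ is a self-adjoint unitary, so by relation~(1) of Definition~\ref{defn:KO_0} we have $u \sim_0 w u w^*$. The conclusion follows because $\lambda$ is the identity on the scalars and a $*$-homomorphism, so $\lambda(wuw^*) = w\,\lambda(u)\,w^* = I_N^{(0)}$.

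The only mild technicality is the determinant bookkeeping in the middle step, i.e.\ ensuring we can take $w$ inside the identity component of the orthogonal group; everything else is direct from the definitions and from Proposition~\ref{K0ofR}.
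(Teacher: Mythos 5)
Your proof is correct and follows essentially the same route as the paper: use the trace to see that $\lambda(u)$ is orthogonally conjugate to $I_n^{(0)}$, fix the determinant of the conjugating matrix by padding with $\iota_n^{(0)}$ (exactly the trick the paper borrows from the proof of Proposition~\ref{K0ofR}), and then use connectedness of $SO$ to produce a path of self-adjoint unitaries implementing the equivalence. No gaps; your determinant bookkeeping is just a slightly more explicit version of the paper's argument.
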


\begin{proof}
Suppose $u$ is a self-adjoint unitary in $M_{2n}(\widetilde{A})$ and $\lambda_*([u]) = 0$ in $KO_0^u(\R)$. Then ${\trace}(\lambda(u)) = 0$. So there is a unitary $v \in M_{2n}(\R)$ such that 
$v \lambda(u) v^* = I^{(0)}_n$. Let $u' = v u v^*$, so that $\lambda(u') = I^{(0)}_n$. Furthermore, as in the proof of Proposition~\ref{K0ofR} we can choose $v$ so that $\det v = 1$ (possibly by increasing $n$).
Then there is a path of unitaries $v_t$ in $M_{2n}(\R)$ from $v$ to $\1_{2n}$; so $u_t = v_t u v_t^*$ is a path of self-adjoint unitaries from $u'$ to $u$ showing that $[u'] = [u]$. 
\end{proof}

\begin{thm} \label{K0iso}
Let $A$ be a real \calg . Then there is a natural isomorphism 
$\theta \colon KO_0^u(A) \rightarrow KO_0(A)$. The isomorphism $\theta$ is given by 
$$\theta([u]) = \left[\tfrac{1}{2}(u + \1_{2n}) \right] - [\1_n] \; $$
for any self-adjoint unitary $u \in M_{2n}(\widetilde{A})$.
\end{thm}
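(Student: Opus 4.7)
The plan is to exploit the canonical bijection $u \leftrightarrow p$ between self-adjoint unitaries $u \in M_{2n}(\widetilde{A})$ and projections $p \in M_{2n}(\widetilde{A})$ given by $p = \tfrac{1}{2}(u+\1_{2n})$, translating each structural feature of Definition~\ref{defn:KO_0} into the corresponding feature of the standard projection picture of $KO_0(A)$. Under this bijection a continuous path of self-adjoint unitaries becomes a continuous path of projections, and the stabilization move $u \mapsto \iota^{(0)}_n(u) = \mathrm{diag}(u, I^{(0)})$ becomes $p \mapsto \mathrm{diag}(p, e_{11})$, for which
\[
\bigl[\mathrm{diag}(p, e_{11})\bigr] - [\1_{n+1}] \;=\; [p] + [e_{11}] - [\1_n] - [e_{11}] \;=\; [p] - [\1_n],
\]
so $\theta$ is unchanged. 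Since the hypothesis $\lambda(u) = I_n^{(0)}$ forces $\lambda(p)$ to be a rank-$n$ scalar projection, $\theta([u])$ really lies in $\ker(\lambda_*) = KO_0(A)$. The homomorphism property follows directly from block-diagonal addition, and naturality in $A$ is immediate.

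For surjectivity, I would start with an arbitrary class in $KO_0(A)$, written as $[q] - [\1_m]$ for a projection $q$ in some $M_N(\widetilde{A})$ with $\lambda_*([q]) = [\1_m]$. After replacing $q$ by $q \oplus 0_k$ one may assume $N = 2n$ and $m = n$. Then $\lambda(q)$ is a rank-$n$ projection in $M_{2n}(\R)$, so there is a real unitary $v \in M_{2n}(\R)$ conjugating $\lambda(q)$ to the alternating diagonal projection $\tfrac{1}{2}(I_n^{(0)} + \1_{2n})$. By stabilizing once more if necessary one can arrange $\det v = 1$, exactly as in the proof of Proposition~\ref{K0ofR}, and then $v$ is path-connected to $\1_{2n}$ inside $U(M_{2n}(\R))$, so $vqv^*$ is homotopic to $q$ as a projection. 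Setting $u := 2vqv^* - \1_{2n}$ gives a self-adjoint unitary with $\lambda(u) = I_n^{(0)}$ whose image under $\theta$ is the given class.

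For injectivity, suppose $\theta([u]) = 0$, i.e.\ $p := \tfrac{1}{2}(u+\1_{2n})$ satisfies $[p] = [\1_n]$ in $K_0(\widetilde{A})$. The standard theory of $K_0$ yields an integer $m$ such that $p \oplus \1_m$ and $\1_n \oplus \1_m$ are homotopic as projections in some matrix algebra over $\widetilde{A}$. Translating this via $u = 2p-\1$ and noting that the stabilization by $I^{(0)}$ on the $u$-side corresponds to stabilization by $\mathrm{diag}(1,0) = \tfrac{1}{2}(I^{(0)} + \1_2)$ on the $p$-side, the homotopy lifts to a homotopy of self-adjoint unitaries between iterated $\iota^{(0)}$-stabilizations of $u$ and of $I_n^{(0)}$; the latter represents $0$ by definition. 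One further stabilization may be needed, as in the surjectivity step, to reconcile determinant parity of the conjugating unitaries.

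The main obstacle is the bookkeeping in this injectivity argument: one has to verify that the stabilization relation $u \sim \mathrm{diag}(u, I^{(0)})$ built into Definition~\ref{defn:KO_0} is precisely what is needed to match the Grothendieck stabilization in $KO_0(A)$ under the shift $\theta([u]) = [p] - [\1_n]$, so that homotopies of projections in the usual picture transfer cleanly to homotopies of self-adjoint unitaries in our picture. Once this matching is made precise, both injectivity and surjectivity fall out of the classical projection picture of $K_0$.
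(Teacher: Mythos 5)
Your proposal is correct and follows essentially the same route as the paper's proof: the affine correspondence $p = \tfrac{1}{2}(u+\1)$, the observation that $\iota^{(0)}$-stabilization becomes stabilization by $\mathrm{diag}(1,0)$ so the formula is unaffected, surjectivity by converting a projection into a self-adjoint unitary with the correct scalar part, and injectivity by transferring a stable homotopy of projections back to a homotopy of self-adjoint unitaries after matching the $\mathrm{diag}(1_m,-1_m)$ padding with copies of $I^{(0)}$ (handling the determinant parity exactly as in Proposition~\ref{K0ofR}). The only cosmetic difference is that you work in the unitization with the $\lambda$-condition throughout, whereas the paper first reduces to the unital case.
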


\begin{proof}
It suffices to consider the case where $A$ is unital and $u \in M_{2n}(A)$. 
The reader can check that if $u$ is a self-adjoint unitary, then $\tfrac{1}{2}(u + 1_{2n})$ is a projection in $M_{2n}(A)$, and that if $u_t$ is a path of self-adjoint unitaries in $M_{2n}(A)$, then $\tfrac{1}{2}(u_t + 1_{2n})$ is a path of projections in $M_{2n}(A)$. For $\theta$ to be well-defined, we also have  
\begin{align*}
\theta\left( \left[ \smd{u}{1}{-1} \right] \right) 
  &= \left[\tfrac{1}{2}  \smd{u+1_{2n}}{2 \cdot 1}{0}  \right] -[1_{n+1}] \\
  &= \left[ \smd{\tfrac{1}{2}(u + 1_{2n})}{1}{0} \right] - [1_{n+1}] \\
 &= \left[\tfrac{1}{2}(u + 1_{2n}) \right] - [1_n]  \\
  &= \theta([u]) \; .
\end{align*}

To show that $\theta$ is a group homomorphism, we check that for $u \in M_{2m}(A)$ and $v \in M_{2n}(A)$ we have
\begin{align*} \theta \left( \left[ \sm{u}{0}{0}{v} \right] \right)  
    &= \left[ \tfrac{1}{2}\left(\sm{u}{0}{0}{v} + 1_{2m+2n} \right) \right] - [1_{m+n}]  \\
    &= \left[\tfrac{1}{2}(u + 1_{2m}) \right] - [1_m] + \left[\tfrac{1}{2}(v + 1_{2n}) \right] - [1_n] \\
    &= \theta([u]) + \theta([v]).
\end{align*}

It remains to show that $\theta$ is a bijection.
To show that $\theta$ is onto it suffices to show that for any projection $p \in M_n(A)$, the element $[p] \in KO_0(A)$ is in the range of $\theta$. In fact, taking
$u = \sm{2p - 1_n}{0}{0}{1_n} \in M_{2n}(A)$, we have
$$\theta([u]) = \left[ \sm{p}{0}{0}{1_n} \right] - \left[ 1_n \right] = [p] \; .$$

To show that $\theta$ is one-to-one, suppose that $u$ and $v$ are unitaries such that $\theta([u]) = \theta([v])$. We can assume that $u,v \in M_{2n}(A)$ for some $n$. Then we have
$[\tfrac{1}{2}(u + 1_{2n})] = [\tfrac{1}{2}(v + 1_{2n})]$ in $KO_0(A)$. It follows that there is an integer $m$ such that the projections 
$p = \tfrac{1}{2}(u + 1_{2n}) \oplus 1_{m}$ and 
$q = \tfrac{1}{2}(v + 1_{2n}) \oplus 1_{m}$ are homotopic in $M_{2n+2m}(A)$. 
Up to a homotopy of projections in $M_{2n+2m}(A)$, we can now write $p$ and $q$ in the form
\begin{align*}
p &= \tfrac{1}{2} \left( {{\diag}}(u,1_m, -1_m) + 1_{2(n+m)} \right)    \\
 & \qquad \sim \tfrac{1}{2} \left( {{\diag}} \left(u,\sm{1}{0}{0}{-1}, \dots,\sm{1}{0}{0}{-1} \right) + 1_{2(n+m)} \right) \\ 
q &= \tfrac{1}{2} \left( {{\diag}}(v,1_m, -1_m) + 1_{2(n+m)} \right)    \\
 & \qquad \sim \tfrac{1}{2} \left( {{\diag}} \left(v,\sm{1}{0}{0}{-\1}, \dots,\sm{1}{0}{0}{-1} \right) + 1_{2(n+m)} \right).
\end{align*} 
Then $2p - 1_{2(n+m)}$ and $2q - 1_{2(n+m)}$ are homotopic through unitaries that satisfy $u^2 = u$, so it follows that 
$${{\diag}} \left(u,\sm{1}{0}{0}{-1}, \dots,\sm{1}{0}{0}{-1} \right) \sim
{{\diag}} \left(v,\sm{1}{0}{0}{-1}, \dots,\sm{1}{0}{0}{-1} \right)$$ 
where there are $m$ copies of the block
$\sm{1}{0}{0}{-1}$ along the diagonal. Therefore $[u] = [v]$ in $KO_0^u(A)$. 
\end{proof}

For elements $h,k,x$ in a \calg~ $A$, 
recall from Section~\ref{sec:Ai} that 
\begin{align*}
T(h,x,k) &= \begin{pmatrix} \1-h & x^*\\ x&k \end{pmatrix} \\ \text{and~}
U(h,x,k) &= 2T - \1_2 = \begin{pmatrix} \1-2h & 2x^* \\ 2x & 2k-\1 \end{pmatrix} \; ,
\end{align*}
which are both elements in $M_2(\widetilde{A})$.
In particular, let
$$u_0 = U(h_0, x_0, k_0) 
  = \left(  \begin{matrix}
      \1-2t & 0 & 0 & 2 \sqrt{t-t^2} \\
      0 & \1 & 0 & 0 \\
      0 & 0 & -\1 & 0 \\
      2 \sqrt{t-t^2} & 0 & 0 & 2t-\1 \end{matrix} \right)
    \in M_2(\widetilde{A_0}) \; $$
where $A_0$ is defined as in Section~\ref{sec:Ai}.

\begin{prop} \label{prop:u0}
The class $[u_0]$ is a generator of $KO^u_0(A_i) \cong \Z$.
\end{prop}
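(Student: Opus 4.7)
The plan is to pass from $KO^u_0(A_0)$ to $KO_0(A_0)$ via the isomorphism $\theta$ of Theorem~\ref{K0iso}, and then to show that the image of $[u_0]$ is a generator by evaluating at $t = 1$ using the extension $0 \to SM_2(\R) \to A_0 \xrightarrow{\ev_1} \R^2 \to 0$ already analyzed in the proof of Proposition~\ref{thm:KAieven}.

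First, I would unpack $\theta$. By construction $u_0 = 2T(h_0, x_0, k_0) - \1_2$ in $M_2(\widetilde{A_0})$, so $\tfrac{1}{2}(u_0 + \1_2) = T(h_0, x_0, k_0) =: T_0$, which is a projection by the relations~\eqref{qc-relations2}. Hence $\theta([u_0]) = [T_0] - [\1_1]$, and since $\lambda(T_0) = e_{11}$ and $\lambda(\1_1) = 1$ both represent the class $1 \in KO_0(\R)$, this difference genuinely lies in $KO_0(A_0) = \ker(\lambda_*)$. It therefore suffices to show that $[T_0] - [\1_1]$ generates $KO_0(A_0) \cong \Z$.

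For this, I would invoke the analysis in the proof of Proposition~\ref{thm:KAieven}: the boundary map $\partial \colon KO_0(\R^2) \to KO_0(M_2(\R))$ is the summation $\Z \oplus \Z \to \Z$, $(a,b) \mapsto a+b$, and the preceding map in the long exact sequence is surjective. Consequently $(\ev_1)_* \colon KO_0(A_0) \to KO_0(\R^2)$ is an injection with image $\ker(\partial) = \Z \cdot (1,-1)$, so I need only verify that $(\ev_1)_*([T_0] - [\1_1])$ represents $\pm(1,-1)$.

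Finally, evaluate at $t = 1$: from $h_0(1) = e_{11}$, $k_0(1) = e_{22}$, $x_0(1) = 0$ we obtain $T_0(1) = \diag(e_{22}, e_{22}) \in M_2(\R^2)$, which under $M_2(\R^2) \cong M_2(\R) \oplus M_2(\R)$ decomposes as $(0, \1_2)$ of $K$-class $(0, 2)$, while $\1_1(1) = 1_{\R^2}$ has $K$-class $(1, 1)$. The difference $(-1, 1)$ is a generator of $\ker(\partial)$, completing the argument. The only delicate point is tracking the unitization of $A_0$ when identifying $T_0(1)$ concretely inside $M_2(\R^2)$, but this bookkeeping is routine, and I foresee no real obstacle.
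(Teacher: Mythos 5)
Your argument is correct, and it leans on the same two ingredients as the paper's proof: the analysis of the extension $0 \to SM_2(\R) \to A_0 \xrightarrow{{\ev}_1} \R^2 \to 0$ carried out in Proposition~\ref{thm:KAieven}, and evaluation of the generator at $t=1$. The execution differs, though. You pass to the projection picture via $\theta$ from Theorem~\ref{K0iso}, write $\theta([u_0]) = [T_0] - [\1]$, and identify its image under $({\ev}_1)_*$ as the class $(-1,1)$, a generator of $\ker\bigl(\partial \colon KO_0(\R^2) \to KO_0(M_2(\R))\bigr)$, onto which $({\ev}_1)_*$ carries $KO_0(A_0) \cong \Z$ isomorphically; note that the injectivity you extract from surjectivity of the preceding boundary map is in fact automatic, since by exactness $({\ev}_1)_*$ is a surjection of $KO_0(A_0) \cong \Z$ onto $\ker\partial \cong \Z$ and hence injective. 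The paper instead stays entirely in the unitary picture and composes one step further with the coordinate projection $\pi_1 \colon \R^2 \to \R$: since $\phi = \pi_1 \circ {\ev}_1$ induces an isomorphism $KO^u_0(A_0) \to KO^u_0(\R)$ by the same Section~\ref{sec:Ai} computation, it suffices to observe that $\phi_2(u_0) = -1_2$, whose half-trace is $-1$, a generator of $KO_0^u(\R)$. Your route costs a little more bookkeeping (the passage through $\theta$, the unitization, and the $\Z \oplus \Z$ identification of $\ker\partial$), which you rightly flag as routine; the paper's route is shorter because its target group is $\Z$ rather than a subgroup of $\Z^2$ and it never leaves the unitary picture. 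In both cases the evaluation at $t=1$ is the heart of the matter, and yours is carried out correctly.
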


\begin{proof}
Evidently, $u_0$ is a self-adjoint unitary.
We write
$$\widetilde{A_0} = \{ f \colon [0, 1] \rightarrow M_2(\R) \mid
  f(0) = \sm{t}{0}{0}{t}, f(1) = \sm{r}{0}{0}{s}, r,s,t \in \R\} \; $$
and the map $\lambda \colon \widetilde{A_0} \rightarrow \R$ coincides with evaluation at $0$. Since we have 
$\lambda_2(u_0) = I^{(0)}$ (where $\lambda_2 \colon M_2(\widetilde{A_0} ) \rightarrow M_2(\R)$), we know $[u_0]$ is a class in $KO_0^u(A_0)$.

Now, we have the map ${\ev}_1 \colon \widetilde{A_0} \rightarrow \R^2$ which is evaluation at $t = 1$ and we have $\pi_1 \colon \R^2 \rightarrow \R$ which is projection onto the first coordinate. It follows from the calculation of $KO_*(A_0)$ in Section~\ref{sec:Ai} that the map 
$\phi = \pi_1 \circ {\ev}_1 \colon \widetilde{A_0} \rightarrow \R$ 
is an isomorphism on standard $K$-theory and hence on $KO_0^u$. We calculate that $\phi(u_0) = -1_2$, which is a generator of $KO^u_0(\R) = \Z$.
\end{proof}

\begin{prop} \label{K0class}
For any real \calg~ $A$, the map $[\phi] \mapsto \phi_*[u_0]$ defines a natural isomorphism 
$$\Theta \colon [A_0, A \otimes\sr \K\pr] \rightarrow KO_0^u(A) \; .$$
\end{prop}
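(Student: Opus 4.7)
The plan is to identify $\Theta$ with the composition of the classification isomorphism from Theorem~\ref{thm:classify} with the inverse of the natural isomorphism $\theta \colon KO_0^u \to KO_0$ of Theorem~\ref{K0iso}. First, I would check that $\Theta$ is well-defined and natural. The homotopy invariance of $KO_0^u$ given by Proposition~\ref{K0group} ensures that the formula $[\phi] \mapsto \phi_*[u_0]$ descends to homotopy classes of homomorphisms; the stability of $KO_0^u$ (which is inherited from $KO_0$ via Theorem~\ref{K0iso}) lets us view $\phi_*[u_0]$ as an element of $KO_0^u(A)$ rather than merely $KO_0^u(A \otimes\sr \K\pr)$. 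Naturality of $\Theta$ in $A$ follows immediately from the definition.

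By Proposition~\ref{prop:u0} together with Theorem~\ref{K0iso}, the element $\xi_0 := \theta([u_0])$ is a generator of $KO_0(A_0) \cong \Z$. With this choice, Theorem~\ref{thm:classify} provides a natural isomorphism $\alpha_A \colon [A_0, A \otimes\sr \K\pr] \xrightarrow{\cong} KO_0(A)$ defined by $\alpha_A([\phi]) = \phi_*(\xi_0)$. Using naturality of $\theta$, I compute
\[
\theta\bigl(\Theta([\phi])\bigr) = \theta\bigl(\phi_*[u_0]\bigr) = \phi_*\bigl(\theta[u_0]\bigr) = \phi_*(\xi_0) = \alpha_A([\phi]),
\]
so $\Theta = \theta^{-1} \circ \alpha_A$. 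Since both $\theta$ and $\alpha_A$ are natural isomorphisms, so is $\Theta$; this establishes the conclusion.

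The main point that requires care is the stability of $KO_0^u$ and the resulting identification $KO_0^u(A \otimes\sr \K\pr) \cong KO_0^u(A)$. Rather than proving this directly from the unitary picture (which would require checking that the inclusions $M_{2n}(A) \hookrightarrow M_{2n}(A \otimes\sr \K\pr)$ induce a compatible isomorphism under $\sim_0$), it is cleanest to deduce it from Theorem~\ref{K0iso} and the standard stability of $KO_0$, so no additional work beyond invoking the prior theorems is needed.
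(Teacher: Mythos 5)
Your argument is correct, but it is a genuinely different route from the paper's. You deduce the result abstractly: writing $\Theta = \theta^{-1}\circ\alpha_A$, where $\alpha_A([\phi])=\phi_*(\xi_0)$ is the classification map and $\theta$ is the natural isomorphism of Theorem~\ref{K0iso}, and using Proposition~\ref{prop:u0} to see that $\xi_0=\theta([u_0])$ generates $KO_0(A_0)$. The paper instead gives a hands-on proof: it defines $\Theta$ concretely by $[\phi]\mapsto[\phi_2(u_0)]=[U(\phi(h_0),\phi(x_0),\phi(k_0))]$ for $\phi\colon A_0\to M_n(A)$, and constructs an explicit inverse $\Phi$ by orthogonalizing the triple $(h,x,k)$ read off from a self-adjoint unitary, invoking the universal property of $(q\C,\tr)$ (Proposition~\ref{qCuniversal1}), and exhibiting explicit rotation homotopies to verify $\Theta\circ\Phi$ and $\Phi\circ\Theta$ are identities. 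The paper itself remarks that the abstract isomorphism of the two groups already follows from Theorems~\ref{thm:classify} and~\ref{K0iso}; what your route buys is brevity, while the paper's direct construction buys two things you lose: an explicit recipe producing a homomorphism $\phi_u$ from a given unitary (useful for computations), and a template that is reused essentially verbatim to prove Theorem~\ref{K2class}, where no analogue of Theorem~\ref{K0iso} is available in advance, so the shortcut you use here would not work there.

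Two small cautions. First, the bare statement of Theorem~\ref{thm:classify} only asserts existence of a natural isomorphism; that it is implemented by $[\phi]\mapsto\phi_*(\xi_0)$ is content of its proof (the map $\alpha_B$), so you are leaning on that formula-level assertion rather than the theorem statement alone. Second, when you ``view $\phi_*[u_0]$ in $KO_0^u(A)$,'' you should fix the identification via the corner embedding $a\mapsto a\otimes e$ (or, as the paper does, reduce to $\phi\colon A_0\to M_n(A)$ using $[A_0,\K\pr\otimes A]\cong\lim_n[A_0,M_n(A)]$); naturality of $\theta$ applied to that same embedding is what makes your computation $\theta(\Theta([\phi]))=\alpha_A([\phi])$ legitimate, since the identical identification is implicit on the $KO_0$ side of $\alpha_A$.
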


To be precise, the formula for $\Theta$ is
$$\Theta([\phi]) = (\phi_2)_*([u_0]) = [\phi_2(u_0)]$$
where $\phi_2(u_0) \in M_2(\widetilde{A})$.

We note that it already follows from Theorems~\ref{thm:classify} and~\ref{K0iso}
that the groups in question are isomorphic. However, we give a direct proof here since it establishes the concrete formula for the isomorphism and since it will serve as a model for the proof that $KO^u_2(A) \cong [A_2, A \otimes\sr \K\pr]$ in the next section.

\begin{proof}
Since $[A_0, \K\pr \otimes\sc A] \cong \lim_{n \to \infty} [A_0, M_n(A)]$ it suffices to define $\Theta$ for 
$\phi \colon A_0 \rightarrow M_n(A)$.
To show that the formula above gives a well-defined function $\Theta$, we first mention that $\phi_*([u_0])$ does not depend on the homotopy class of $\phi$. Here let us show more carefully that the homomorphisms $\phi \colon A_0 \rightarrow M_n(A)$ and  
$\phi' = \sm{\phi}{0}{0}{0} \colon A_0 \rightarrow M_{n+1}(A)$ will give the same element of $KO_0^u(A)$. We have
$$\phi_2(u_0) = U( \phi(h_0), \phi(x_0), \phi(k_0))
  = \begin{pmatrix} {\1_n - 2 \phi(h_0)}&{2 \phi(x_0)^*}\\{2 \phi(x_0)}&{2 \phi(k_0) -\1_n}
    \end{pmatrix}
$$
 and
\begin{align*}
\phi'_2(u_0) &= U(\phi'(h_0), \phi'(x_0), \phi'(k_0))   \\
 % &= U\left( \sm{\phi(h_0)}{0}{0}{0},  \sm{\phi(x_0)}{0}{0}{0},  \sm{\phi(k_0)}{0}{0}{0} \right)   \\
  &= \left( \begin{matrix}
      ~\1_n - 2 \phi(h_0)~ & 0 & 2 \phi(x_0)^* & 0 \\
      0 & \1 & 0 & 0 \\
      2 \phi(x_0) & 0 & ~2 \phi(k_0) - \1_n~ & 0 \\
      0 & 0 & 0 & -\1 
      \end{matrix} \right)  
\end{align*}
showing that $[\phi_2(u_0)] = [\phi'_2(u_0)]$ in $KO^u_0(A)$, hence $\Theta([\phi]) = \Theta([\phi'])$. Therefore $\Theta$ is well-defined.

Now suppose that we have an element $[u] \in KO^u_0(A)$ where 
$u \in M_{2n}(\widetilde{A})$ is a unitary that satisfies $u^* = u$ and $\lambda_*([u]) = 0$ in $KO_0^u(\R)$. 
After conjugating by a unitary in $M_{2n}(\R)$ we can assume that $\lambda(u) = \sm{\1_n}{0}{0}{-\1_n}$ and there exists 
$h, k, x \in M_n(A)$ such that 
$$u = U(h, x, k) 
  = \begin{pmatrix}{\1_n - 2h}&{2x^*}\\{2x}&{2k - \1_n} \end{pmatrix} \; .$$
Since it is not guaranteed that $h$ and $k$ are orthogonal, we define the elements
\begin{equation} \label{h''} 
h' = \begin{pmatrix}{h}&{0}\\{0}&{0}\end{pmatrix}, \qquad 
k' = \begin{pmatrix}{0}&{0}\\{0}&{k}\end{pmatrix}, \qquad \text{and} \qquad 
x' = \begin{pmatrix}{0}&{0}\\{x}&{0}\end{pmatrix} \; 
\end{equation}
 in $M_{2n}(A)$.
Then $u$ represents the same element of $KO^u_0(A)$ as
the self-adjoint unitary
$$u' := U(h', x', k') = \left( \begin{matrix}
\1_n - 2h & 0 & 0 & 2x^* \\
0 & \1_n & 0 & 0 \\
0 & 0 & -\1_n & 0 \\
2 x & 0 & 0 & 2k - \1_n
\end{matrix} \right) \in M_{4n}(\widetilde{A}).$$
Since $h'$ and $k'$ are orthogonal and $U(h', x', k')$ is a unitary, by Proposition~\ref{qCuniversal1} there is a homomorphism
$\phi_u \colon A_0 \rightarrow M_{2n}(A)$ such that
$h_0$, $k_0,$ and $x_0$ map to $h'$, $k'$, and $x'$ respectively.
Then $\Theta([\phi_u]) = [\phi_u(U(h_0, x_0, k_0))] = [u'] = [u]$ showing that $\Theta$ is surjective.

In fact, we show that the construction of $\phi_u$ in the previous paragraph defines a homomorphism 
$\Phi$ from $KO^u_0(A)$ to $[A_0, \lim_{n \to \infty} M_n(A)]$; and that $\Phi$ is  inverse to $\Theta$.
First of all, if $u$ and $v$ are self-adjoint unitaries in $M_{2n}(\widetilde{A})$ that are homotopic through self-adjoint unitaries satisfying $\lambda(u_t) = \sm{\1_n}{0}{0}{-\1_n}$, then the construction in the previous paragraph results in a homotopy between $\phi_u$ and $\phi_v$.  Now let $u \in M_{2n}(\widetilde{A})$ and let 
$v = \left( \begin{smallmatrix} u & 0 & 0 \\ 0 & \1 & 0 \\ 0 & 0 & -\1 \end{smallmatrix} \right) \in M_{2n+2}(\widetilde{A})$. We show that $\phi_u$ and $\phi_v$ are equivalent elements of $[A_0, \K\pr \otimes\sr A]$. If we write 
$u= \sm{\1_n - 2h}{2x^*}{2x}{2k - \1_n}$
then we can write
$$v = \left( \begin{matrix} 
    \1_n - 2h & 2x^* & 0 & 0 \\
    2x & 2k-\1_n & 0 & 0 \\
    0 & 0 & \1 & 0 \\
    0 & 0 & 0 & -\1 
	  \end{matrix} \right) \; .$$
In order to have $\lambda(v)$ of the right form, we conjugate $v$ by a unitary in $M_{2n+2}(\R)$ and we write instead
$$v = \left( \begin{matrix} 
    \1_n - 2h & 0 & 0 & 2x^* \\
   0 & \1 & 0 & 0 \\
    0 & 0 & -\1 & 0 \\
    2x & 0 & 0 & 2k -\1_n
	  \end{matrix} \right)
= \left( \begin{matrix} 
    \1_{n+1} - 2 \begin{pmatrix} h & 0 \\ 0 & 0 \end{pmatrix} 
    & 2 \begin{pmatrix} {0}& {x^*}\\ {0}& {0} \end{pmatrix} \\
    2 \begin{pmatrix} {0}&{0}\\{x}&{0} \end{pmatrix} & 
    2 \begin{pmatrix} {0}&{0}\\{0}&{k} \end{pmatrix}  - \1_{n+1}
	  \end{matrix} \right) \; .$$
Hence, $\phi_v$ will map the elements $h_0, k_0$, and $x_0$, respectively, to the elements
$$h_v' = \smiiii{h}{0}{0}{0} ,
\quad k_v' = \smiiii{0}{0}{0}{k}, \quad \text{and}
\quad x_v' = \smiiii{0}{0}{x}{0} \; 
$$
in $M_{2n+2}(\widetilde{A})$ and thus we see that $\phi_v$ is unitary equivalent to $\sm{\phi_u}{0}{0}{0}$.
It follows that $\Phi$ is well-defined. 

We have already seen above that $\Theta \circ \Phi$ is the identity on $KO_0^u(A)$.  To see that $\Phi \circ \Theta$ is the identity
on $[A_0, \K\pr \otimes\sr A]$, let $\phi \colon A_0 \rightarrow M_n(A)$ be a given homomorphism. Let $h = \phi(h_0)$, $x = \phi(x_0)$, and $k = \phi(k_0)$.
Then $\Theta(\phi) = [\phi_2(u_0)] = [U(h, x, k)]$.
Then the reader can verify that $(\Phi \circ \Theta)(\phi)$ carries $h_0$, $x_0$, and $k_0$ to
$h'$, $x'$, and $k'$ in $M_{2n}(A)$ as given by Equations~(\ref{h''}). We will show that $\sm{\phi}{0}{0}{0}$ and $(\Phi \circ \Theta)(\phi)$ are homotopic by producing a homotopy of triples $\{h_t, x_t, k_t\}$ from $\{h, x, k\}$ to $\{h', x', k' \}$ in $M_{2n}(A)$ that satisfy (for each $t$) the conditions that $h_t k_t = 0$ and that $U(h_t, x_t, k_t)$ is a unitary in $M_{4n}(\widetilde{A})$.
For $t \in [0,1]$, let 
$$r_t = \begin{pmatrix}{\cos(\pi t/2 )  }&{-\sin(\pi t/2 )} \\
{\sin(\pi t/2 )}&{\cos(\pi t/2 )} \end{pmatrix} \; .$$  
Then let
$h_t  = h' ,
k_t = r_t k' r_t^*$, and
$x_t = r_t x'$.
The reader can verify directly that $h_t k_t = 0$ and that $U(h_t, x_t, k_t)$ is a unitary for all $t$ since
$$U(h_t, x_t, k_t) = 
  \begin{pmatrix}{1_{2n}}&{0}\\{0}&{r_t} \end{pmatrix}
  \cdot U(h', x', k') \cdot 
  \begin{pmatrix}{1_{2n}}&{0}\\{0}&{r_t^*} \end{pmatrix} \; .$$
\end{proof}

We end this section by giving a rephrasing of the definition of $KO_0^u$ in the context of \ctalg s. 
This gives a description of $KO_0^u(A, \tau)$ that is parallel to the forthcoming descriptions of $KO_j^u(A, \tau)$ for all values of $j$.

\begin{defn}
Let $(A, \tau)$ be a unital \ctalg. Let $U_{\infty}^{(0)}(A, \tau)$ be the set of all unitaries $u$ in $\cup_{n \in \N} M_{2n}(A)$ satisfying $u^2 = 1$ and $u^\tau = u$. Let $\sim_0$ be the equivalence relation on $U^{(0)}_{\infty}(A, \tau)$, generated by 
\begin{enumerate}
\item[(1)] $u_0 \sim_0 u_1$ if $u_t \in M_{2n}(A)$ is a continuous path of self-adjoint unitaries satisfying $u_t^\tau = u_t$; 
and 
\item[(2)] $u \sim_0 \iota^{(0)}_n(u)$ for $u \in M_{2n}(A)$ where 
$\iota^{(0)}_n \colon M_{2n}(A) \rightarrow M_{2n+2}(A)$ is given by
$$\iota^{(0)}_n(a) = {{\diag}} \left(a, I^{(0)} \right) \; \quad \text{where $I^{(0)} = \sm{1}{0}{0}{-1}$} \;. $$ 
\end{enumerate}
Then we define $KO^u_0(A, \tau) = U_{\infty}^{(0)}(A, \tau)/\sim_0$, with a binary operation given by $[u] + [v] = \left[\sm{u}{0}{0}{v}\right]$
for $u,v \in U_{\infty}^{(0)}(A, \tau)$.
\end{defn}

Since the sets $U_{\infty}^{(0)}(A, \tau)$ and $U_{\infty}^{(0)}(A^\tau)$ are identical and have the same equivalence relation, the following is immediate.

\begin{prop} \label{K0iii}
Let $(A, \tau)$ be a \ctalg~ and let $A^\tau = \{a \in A \mid a^\tau = a^* \}$ be the associated real \calg .  Then there is an isomorphism
$KO_0^u(A, \tau) \cong KO_0^u(A^\tau)$.
\end{prop}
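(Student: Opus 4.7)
The plan is to make explicit the author's observation that the two sets coincide. Under the equivalence of categories between real \calg s and \ctalg s from Section~\ref{sec:prelim}, we identify $M_{2n}(A^\tau)$ with $\{a \in M_{2n}(A) \mid a^\tau = a^*\}$, where $\tau$ on $M_{2n}(A)$ denotes $\tr_n \otimes \tau$. First I would verify that $U_\infty^{(0)}(A,\tau)$ and $U_\infty^{(0)}(A^\tau)$ coincide under this identification. If $u \in U_\infty^{(0)}(A,\tau)$, then $u$ is a unitary with $u^2 = \1$, which forces $u^* = u$; combined with $u^\tau = u$ this yields $u^\tau = u^*$, so $u \in M_{2n}(A^\tau)$, and then $u$ is a self-adjoint unitary there, hence $u \in U_\infty^{(0)}(A^\tau)$. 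Conversely, any self-adjoint unitary $u$ in $M_{2n}(A^\tau)$ satisfies $u^\tau = u^* = u$ and $u^2 = \1$, placing it in $U_\infty^{(0)}(A,\tau)$.

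Next I would verify that the two defining relations for $\sim_0$ agree. A continuous path of self-adjoint unitaries in $M_{2n}(A^\tau)$ is precisely a continuous path of self-adjoint unitaries $u_t$ in $M_{2n}(A)$ with $u_t^\tau = u_t$ for all $t$, since the pointwise condition $u_t^\tau = u_t^*$ together with self-adjointness is the membership condition in $M_{2n}(A^\tau)$. The matrix $I^{(0)} = \sm{1}{0}{0}{-1}$ lies in $M_2(\R) \subseteq M_2(A^\tau)$, so the stabilization map $\iota_n^{(0)}$ is literally the same on both sides. The binary operation $[u] + [v] = \left[ \sm{u}{0}{0}{v} \right]$ is manifestly identical in either interpretation, so the quotient sets and their group structures coincide.

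Finally, naturality in $(A,\tau)$ is immediate, because a \ctalg ~homomorphism $\varphi \colon (A,\tau) \rightarrow (B,\sigma)$ restricts to a real \calg~ homomorphism $A^\tau \rightarrow B^\sigma$, and on representatives of $KO_0^u$ it is given by the same formula in either picture. There is no substantive obstacle in this proposition; its content is purely the unpacking of the two parallel definitions against the standard identification $A^\tau \leftrightarrow (A,\tau)$.
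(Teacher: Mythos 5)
Your proposal is correct and follows exactly the paper's route: the paper disposes of this proposition by observing that $U_\infty^{(0)}(A,\tau)$ and $U_\infty^{(0)}(A^\tau)$ are literally the same set with the same equivalence relation, which is precisely what you verify (your check that $u^2=\1$ forces $u^*=u$, so $u^\tau=u$ is equivalent to $u^\tau=u^*$, is the key identification, and the relations, stabilization by $I^{(0)}$, and addition visibly agree). You simply spell out the details the paper leaves implicit, including naturality, so there is nothing to correct.
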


\subsection{$KO_2$ via unitaries}

In this section, we will produce the definition of 
$KO^u_2(A, \tau)$ and we will prove that 
$KO^u_2(A, \tau) \cong [(q\C, \sharp), (\K \otimes A, \tau)  ] \cong [A_2, \K\pr \otimes A^\tau]$.  From Theorem~\ref{thm:classify}, we know that $[A_2, \K\pr \otimes A^\tau] \cong KO_2(A)$.  This will then imply that
$KO^u_2(A) \cong KO_2(A)$ (where the latter is defined in terms of projections in the double suspension).

\begin{defn} \label{K2def}
Let $(A, \tau)$ be a unital \ctalg . Let $U_{\infty}^{(2)}(A, \tau)$ be the set of all unitaries $u$ in $\cup_{n \in \N} M_{2n}(A)$ satisfying $u^2 = 1$ and $u^\tau = -u$. Let $\sim_2$ be the equivalence relation on $U^{(2)}_{\infty}(A, \tau)$, generated by 
\begin{enumerate}
\item[(1)] $u_0 \sim_2 u_1$ if $u_t \in M_{2n}(A)$ is a continuous path of self-adjoint unitaries satisfying $u_t^\tau = -u_t$; 
and 
\item[(2)] $u \sim_2 \iota^{(2)}_n(u)$ for $u \in M_{2n}(A)$ where 
$\iota^{(2)}_n \colon M_{2n}(A, \tau) \rightarrow M_{2n+2}(A)$ is given by
$$\iota^{(2)}_n(a)  
      = {{\diag}}(a, I^{(2)}) \quad \text{where $I^{(2)} = \sm{0}{i}{-i}{0} $}\; . $$
\end{enumerate}
Then we define $KO^u_2(A, \tau) = U_{\infty}^{(2)}(A, \tau)/\sim_2$, with a binary operation given by $[u] + [v] = \left[ \sm{u}{0}{0}{v} \right]$
for $u,v \in U_{\infty}^{(2)} (A, \tau)$.
\end{defn}

\begin{prop} \label{K2thm}
$KO_2^u(A, \tau)$ is a homotopy invariant functor from the category of unital 
\ctalg s to the category of abelian groups. 
\end{prop}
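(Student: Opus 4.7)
The plan is to follow the template of Proposition~\ref{K0group}, making the necessary adjustments for the twisted symmetry $u^\tau = -u$ (in place of $u^\tau = u$) and the new base point $I^{(2)}$ (in place of $I^{(0)}$). First I would verify that the binary operation $[u]+[v] = [\diag(u,v)]$ is well-defined on equivalence classes and associative. The class $[I^{(2)}_m]$ acts as a neutral element directly from relation~(2) of Definition~\ref{K2def}.

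For commutativity, I would reuse the argument from Proposition~\ref{K0group} essentially verbatim: given $u \in M_{2n}(A)$ and $v \in M_{2m}(A)$, choose an even permutation matrix $w \in M_{2n+2m}(\R)$ with $w\, \diag(u,v)\, w^* = \diag(v,u)$ and a path $w_t$ in $SO(2n+2m)$ from $\1$ to $w$. The crucial point is that because $w_t$ has real entries, $w_t^\tau = w_t^T = w_t^*$, so for any $X$ satisfying $X^\tau = -X$ one computes $(w_t X w_t^*)^\tau = w_t X^\tau w_t^* = -(w_t X w_t^*)$, and the conjugated path remains in $U^{(2)}_\infty(A,\tau)$.

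The hard part will be exhibiting inverses. Observe first that $-u \in U^{(2)}_\infty(A,\tau)$ whenever $u$ is, since $(-u)^\tau = -u^\tau = u = -(-u)$. I claim $[u] + [-u] = 0$. The key observation is that the two matrices
$$X = \diag(u,-u), \qquad J = \sm{0}{i\1_{2n}}{-i\1_{2n}}{0}$$
in $M_{4n}(\widetilde A)$ are both self-adjoint unitaries satisfying $Y^\tau = -Y$, and a short block computation shows they anticommute. Consequently the rotation
$$v_t = \cos(\pi t/2)\, X + \sin(\pi t/2)\, J$$
satisfies $v_t^* = v_t$, $v_t^2 = \1$, and $v_t^\tau = -v_t$ throughout, giving a continuous path in $U^{(2)}_\infty(A,\tau)$ from $X$ to $J$. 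Finally, a real permutation matrix $P \in M_{4n}(\R)$ conjugates $J$ to $I^{(2)}_{2n}$; after stabilizing with one extra $I^{(2)}$ summand if necessary to arrange $\det P = 1$, a path through $SO$ yields $[J] = [I^{(2)}_{2n}] = 0$. This completes the group structure.

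Functoriality is immediate from the definition: a \ctalg~homomorphism $\phi\colon (A,\tau) \to (B,\sigma)$ carries $U^{(2)}_\infty(A,\tau)$ into $U^{(2)}_\infty(B,\sigma)$ and respects both relations of Definition~\ref{K2def} as well as direct sums. Homotopy invariance follows in the same way: for a homotopy $\phi_t$ through \ctalg~homomorphisms and a fixed $u \in U^{(2)}_\infty(A,\tau)$, the path $\phi_t(u)$ witnesses $[\phi_0(u)] = [\phi_1(u)]$ in $KO_2^u(B,\sigma)$.
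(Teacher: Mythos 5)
Most of your argument runs parallel to the paper's (commutativity via conjugation by a path in $SO(2n+2m)$ of real matrices, the neutral element $[I^{(2)}]$, functoriality and homotopy invariance, and the anticommuting rotation carrying ${\diag}(u,-u)$ to $J=\sm{0}{i\1_{2n}}{-i\1_{2n}}{0}$, which is exactly the path used in Proposition~\ref{K2inverse}). The gap is in the very last step of your inverse argument: the claim that $[J]=[I^{(2)}_{2n}]=0$ after ``stabilizing with one extra $I^{(2)}$ summand to arrange $\det P=1$.'' The shuffle permutation $P$ with $PJP^*=I^{(2)}_{2n}$ has sign $(-1)^n$, and when $\det P=-1$ the stabilization does not repair it: to restore determinant $+1$ you must act on the appended $2\times 2$ summand by some $R\in O(2)$ with $\det R=-1$, but every such $R$ satisfies $R\,I^{(2)}R^*=-I^{(2)}$ (conjugation by $R$ multiplies a $2\times2$ real skew-symmetric matrix by $\det R$). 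So the stabilized conjugation lands on ${\diag}(I^{(2)}_{2n},-I^{(2)})$, i.e.\ on the class $[-I^{(2)}]$, not on the neutral element. Unlike the cases $i=-1,0,1$, conjugation by an orthogonal matrix of determinant $-1$ is \emph{not} harmless for the symmetry class $u^\tau=-u$; this is precisely the Pfaffian obstruction behind $KO^u_2(\C,{\id})\cong\Z_2$ (see Remark~\ref{rem:conj}).

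Indeed the identity $[u]+[-u]=0$ is false in general: take $(A,\tau)=(\C,{\id})$ and $u=-I^{(2)}$ (so $n=1$). Then $[-u]=[I^{(2)}]=0$, and your identity would force $[-I^{(2)}]=0$, contradicting $KO^u_2(\C,{\id})\cong\Z_2$. The correct conclusion from your rotation argument, as in Proposition~\ref{K2inverse}, is parity-dependent: $[u]+[-u]=0$ when $n$ is even, and $[u]+[-u]=[-I^{(2)}]$ when $n$ is odd. Inverses still exist, because $[-I^{(2)}]$ is $2$-torsion (${\diag}(I^{(2)},I^{(2)})\sim_2{\diag}(-I^{(2)},-I^{(2)})$ via a rotation in $SO(4)$), so the inverse of $[u]$ is $[-u]$ or $[-u]+[-I^{(2)}]$ according to the parity of $n$; but the blanket statement that $[-u]$ is always the inverse must be dropped.
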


\begin{proof}
We note that the set $U_{\infty}^{(2)}(A, \tau)$ is closed under conjugation by elements in $O(2n)$. Furthermore, since $SO(2n)$ is connected, conjugation by any element in $SO(2n)$ induces the identity automorphism on 
$KO^u_2(A, \tau)$. 
For any $u,v \in U_{\infty}^{(2)}(A, \tau)$, with $u \in M_{2m}(A)$ and $v \in M_{2n}(A)$ we have
$$\begin{pmatrix}{0}&{1_{2n}}\\{1_{2m}}&{0} \end{pmatrix}
\begin{pmatrix}{u}&{0}\\{0}&{v} \end{pmatrix}
\begin{pmatrix}{0}&{1_{2m}}\\{1_{2n}}&{0}\end{pmatrix} 
= \begin{pmatrix}{v}&{0}\\{0}&{u}\end{pmatrix}\; .$$
Since $\sm{0}{1_{2n}}{1_{2m}}{0} \in SO(2n+2m)$, it follows that $[u] + [v] = [v] + [u]$ .

By definition the element $[I^{(2)} ]$ is an identity for the semigroup. We defer the proof that there are inverses until Proposition~\ref{K2inverse} below.
\end{proof}

\begin{prop}
$KO_2^u(\R) = KO_2^u(\C, {\id}) \cong \Z_2$.
\end{prop}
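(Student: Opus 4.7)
The plan is to extract a Pfaffian invariant. An element of $U_\infty^{(2)}(\C, \id)$ is a self-adjoint unitary $u \in M_{2n}(\C)$ satisfying $u^{\tr} = -u$. Taking complex conjugate of this equation and using $u^* = u$ gives $\bar u = -u$, so $u$ is purely imaginary; writing $u = iA$ with $A \in M_{2n}(\R)$, the defining conditions become $A^{\tr} = -A$ and $A^2 = -\1_{2n}$. Thus $A$ is a complex structure on $\R^{2n}$, $\det(A) = 1$, and $\mathrm{Pf}(A) \in \{\pm 1\}$.

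I would then define $\nu \colon U_\infty^{(2)}(\C, \id) \to \Z_2 = \{\pm 1\}$ by $\nu(u) = \mathrm{Pf}(-iu)$ and check that it descends to a group homomorphism $\bar\nu \colon KO_2^u(\C, \id) \to \Z_2$. Homotopy invariance is automatic since $\mathrm{Pf}$ is continuous and the target is discrete. Compatibility with direct sums and with the stabilization maps $\iota_n^{(2)}$ both follow from the multiplicativity $\mathrm{Pf}(A \oplus B) = \mathrm{Pf}(A)\,\mathrm{Pf}(B)$ together with the computation $\mathrm{Pf}(-iI^{(2)}) = 1$. Surjectivity of $\bar\nu$ is immediate from $\nu(I^{(2)}) = 1$ and $\nu(-I^{(2)}) = -1$.

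For injectivity, given $u_0, u_1$ with $\nu(u_0) = \nu(u_1)$, I would first stabilize both via $\iota^{(2)}$ so that $u_0, u_1 \in M_{2n}(\C)$ for a common $n$, and then connect the associated complex structures $A_j = -iu_j$ by a path in the space of complex structures on $\R^{2n}$. That space is the homogeneous space $O(2n)/U(n)$: since $U(n) \subseteq SO(2n)$ and $SO(2n)/U(n)$ is connected, it has exactly two path-components. The $O(2n)$-action $J \mapsto gJg^{\tr}$ satisfies $\mathrm{Pf}(gJg^{\tr}) = \det(g)\,\mathrm{Pf}(J)$, so these two components are precisely the level sets of the Pfaffian. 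Hence $A_0$ and $A_1$ are joined by a path $A_t$ of complex structures, yielding a path $u_t = iA_t$ inside $U_{2n}^{(2)}(\C, \id)$ from $u_0$ to $u_1$.

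The main obstacle is this last structural input about $O(2n)/U(n)$: once one knows that the Pfaffian separates the two components, the rest of the argument is just routine Pfaffian bookkeeping and a continuity check. This computation also foreshadows the explicit formula for the isomorphism $KO_2(\R) \cong \Z_2$ mentioned in the introduction, namely the sign of the Pfaffian of a purely imaginary self-adjoint unitary.
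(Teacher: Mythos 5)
Your proof is correct and takes essentially the same route as the paper: both rest on transitivity of the orthogonal conjugation action on these unitaries (your $O(2n)/U(n)$ description is exactly the factorization $u = x\, I_n^{(2)} x^{\tr}$ with $x \in O(2n)$ that the paper cites from Loring's work), connectedness of $SO(2n)$, and the Pfaffian as the separating invariant. The only cosmetic difference is that you pin down the group structure via multiplicativity of the Pfaffian under block sums, whereas the paper concludes $\Z_2$ from the relation ${\diag}(I^{(2)}, I^{(2)}) \sim_2 {\diag}(-I^{(2)}, -I^{(2)})$.
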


\begin{proof}
Let $u \in M_{2n}(\C)$ be a self-adjoint, skew-symmetric unitary.
As in Section~4 of \cite{loring14}, there is a factorization 
$u = x \cdot {{\diag}}(I^{(2)}, \dots, I^{(2)}) \cdot x^\tr$
where $x \in O(2n)$. Then depending on whether $x \in SO(2n)$ or not, we can find a path from $x$ either to $1_{2n}$ or to ${{\diag}}(1_{2n-1}, -1)$.
Therefore, we either have
$u \sim_2 {{\diag}}(I^{(2)}, \dots, I^{(2)}, I^{(2)})$ or
$u \sim_2 {{\diag}}(I^{(2)}, \dots, I^{(2)}, -I^{(2)})$. 

Thus there are at most two equivalence classes of $U_\infty^{(2)}(\C, {\id})$. Furthermore, $I^{(2)} \nsim_{(2)} -I^{(2)}$ as the two elements are distinguished by the value of the Pfaffian (see Theorem~4.1 of \cite{loring14}). Since 
$${{\diag}}(I^{(2)}, I^{(2)}) \sim_2 {{\diag}}(-I^{(2)}, -I^{(2)})$$ 
it follows that $KO^u_2(\C, {\id}) \cong \Z_2$.
\end{proof}

The proof above indicates that two self-adjoint skew-symmetric unitaries in $M_{2n}(\C)$ 
represent the same element of $KO_2^u(\C, {\id})$ if and only if they have the same Pfaffian (although there are different conventions regarding how the Pfaffian is actually defined, especially when $n$ is odd).
We establish the notation $I^{(2)}_n = {{\diag}}(I^{(2)}, \dots, I^{(2)}) \in M_{2n}(\C)$ for the standard representative of the identity in $KO_2^u(\C, {\id})$.

\begin{prop} \label{K2inverse}
Every element of $KO_2^u(A, \tau)$ has an inverse. If $u \in M_{2n}(A)$ and $n$ is even, then the inverse of $[u]$ is $[-u]$.
\end{prop}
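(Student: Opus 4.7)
I will prove the more specific assertion first: when $n$ is even, $[u] + [-u]$ is represented by $\diag(u,-u) \in M_{4n}(A)$, and my task is to construct a homotopy in $U^{(2)}_\infty(A,\tau)$ from this element to $I^{(2)}_{2n}$. The general existence of inverses then follows immediately by stabilization: if $n$ is odd, replace $u$ by $u \oplus I^{(2)} \in M_{2(n+1)}(A)$, which represents the same class but now sits in even dimension, and apply the specific result.

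The homotopy proceeds in two stages. In the first stage, set
$$w_\theta = \begin{pmatrix} \cos\theta \, u & i\sin\theta \, \1_{2n} \\ -i\sin\theta \, \1_{2n} & -\cos\theta \, u \end{pmatrix} \in M_{4n}(A), \quad \theta \in [0,\pi/2].$$
Direct calculation shows that each $w_\theta$ is a self-adjoint unitary with $w_\theta^2 = \1_{4n}$ and $w_\theta^{\tau} = -w_\theta$: the cross terms in $w_\theta^2$ vanish because the scalar $i\1_{2n}$ is central, and the skew-symmetry uses only that $u^\tau = -u$ together with $\tau(i) = i$. Hence $w_0 = \diag(u,-u)$ is deformed to the scalar matrix $W := w_{\pi/2} = \sm{0}{i\1_{2n}}{-i\1_{2n}}{0}$, valid for any $n$. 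In the second stage, I observe that under the tensor decomposition $M_{4n}(\C) \cong M_2(\C) \otimes M_{2n}(\C)$, $W$ equals $I^{(2)} \otimes \1_{2n}$, while $I^{(2)}_{2n}$ equals $\1_{2n} \otimes I^{(2)}$ under the opposite identification. The associated real permutation matrix $P$ implementing the perfect shuffle $\sigma(k) = 2k-1$ for $1 \le k \le 2n$ and $\sigma(2n+k) = 2k$ for $1 \le k \le 2n$ therefore satisfies $P W P^{\tr} = I^{(2)}_{2n}$. Since conjugation by a real orthogonal matrix preserves the three defining conditions of $U^{(2)}_\infty(A,\tau)$, any continuous path from $\1_{4n}$ to $P$ in $SO(4n)$ produces the required homotopy from $W$ to $I^{(2)}_{2n}$.

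The main obstacle is verifying that $P$ actually lies in $SO(4n)$, which is exactly where the parity hypothesis enters. A direct inversion count gives $\det P = (-1)^{n(2n-1)}$, which equals $+1$ precisely when $n$ is even; in that case $P$ lies in the identity component of $O(4n)$ and the needed path exists. When $n$ is odd the sign is $-1$, no such path in $SO(4n)$ exists, and indeed $[u] + [-u]$ is in general nonzero in that case. This sign obstruction corresponds exactly to the nontrivial element of $\pi_0(O(4n)/U(2n)) \cong \Z_2$ detected by the Pfaffian, which is precisely why $[-u]$ only inverts $[u]$ in even matrix dimensions.
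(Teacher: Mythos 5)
Your proof is correct and is essentially the paper's own argument: the same rotation homotopy carrying ${\diag}(u,-u)$ to the off-diagonal scalar matrix $\left(\begin{smallmatrix}0 & i\1_{2n}\\ -i\1_{2n} & 0\end{smallmatrix}\right)$, followed by conjugation by the shuffle permutation, whose sign $(-1)^{n(2n-1)}=(-1)^n$ is exactly the parity dichotomy the paper invokes, and your preliminary stabilization $u\mapsto{\diag}(u,I^{(2)})$ for odd $n$ recovers the paper's conclusion that the inverse is then $[-u]+[-I^{(2)}]$. Incidentally, your choice of opposite signs on the two off-diagonal entries of the path is the correct one (the paper's displayed path has a sign typo, as its stated endpoint shows), and your explicit inversion count makes precise the $SO(4n)$ membership the paper leaves implicit.
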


\begin{proof}
Let $A$ be unital and let $u$ be a self-adjoint, skew-symmetric unitary in $M_{2n}(A)$. Then the matrix
$$u_t = \begin{pmatrix} {\cos (\pi t/2) \cdot u}&{i \sin (\pi t /2) \cdot 1_{2n}}\\
{i \sin (\pi t /2) \cdot 1_{2n}} & {-\cos (\pi t /2) \cdot u} \end{pmatrix} $$ 
for $t \in [0,1]$, gives a continuous path of self-adjoint skew-symmetric unitaries from $\sm{u}{0}{0}{-u}$ to $\sm{0}{i \cdot 1_{2n}}{-i \cdot 1_{2n}}{0}$. Depending on the parity of $n$, the latter matrix is similar via conjugation by a special orthogonal matrix either to $I_{2n}^{(2)}$ or to ${{\diag}}(I_{2n-1}^{(2)}, -I^{(2)})$. So in $KO_2^u(A,\tau)$, we either have $[u] + [-u] = 0$ or 
$[u] + [-u] + [-I^{(2)}] = 0$.
\end{proof}

\begin{defn}
For a \ctalg~$(A, \tau)$, we define $KO_2^u(A, \tau) = \ker(\lambda_*)$ where
$\lambda \colon \widetilde{A} \rightarrow \R$ is the natural projection on the unitization of $A$.
\end{defn}

\begin{prop} \label{K2unitpicture}
Let $A$ be a real \calg . Any element of $KO_2^u(A)$ can be represented as $[u]$ where $u \in M_{2n}(\widetilde{A})$ is a unitary satisfying $u^\tau = u$ and $\lambda(u) = I^{(2)}_n$.
\end{prop}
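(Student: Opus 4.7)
The plan is to mirror the proof of Proposition~\ref{K0unitpicture} using the orthogonal factorization exploited in the computation of $KO_2^u(\R) \cong \Z_2$. Given a class $[u] \in KO_2^u(A) = \ker(\lambda_*)$ represented by a self-adjoint unitary $u \in M_{2n}(\widetilde A)$ satisfying $u^\tau = -u$ (as in Definition~\ref{K2def}), the goal is to conjugate $u$ by a path of real orthogonal matrices so that the image at the endpoint $u'$ satisfies $\lambda(u') = I^{(2)}_n$ while remaining in $U_\infty^{(2)}(\widetilde A, \tau)$ throughout.

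First, I would apply the factorization for self-adjoint, skew-symmetric unitaries (as in Section~4 of \cite{loring14}, invoked in the proof that $KO_2^u(\R) \cong \Z_2$) to write $\lambda(u) = x \, I^{(2)}_n \, x^{\tr}$ for some $x \in O(2n)$. Since $[u] \in \ker(\lambda_*)$, the class $[\lambda(u)]$ is trivial in $KO_2^u(\R) \cong \Z_2$, an invariant detected by the Pfaffian. The multiplicativity identity $\mathrm{Pf}(yAy^{\tr}) = \det(y)\cdot\mathrm{Pf}(A)$ then forces $\det(x) = 1$, so in fact $x \in SO(2n)$.

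Second, choose a continuous path $x_t$ in $SO(2n)$ with $x_0 = 1_{2n}$ and $x_1 = x$, which exists because $SO(2n)$ is path-connected, and define $u_t = x_t^{\tr} \, u \, x_t$. A direct check—using that each $x_t$ is a real scalar matrix, so $x_t^* = x_t^{\tr} = x_t^\tau$—shows that each $u_t$ is a self-adjoint unitary with $u_t^\tau = -u_t$, hence lies in $U_\infty^{(2)}(\widetilde A, \tau)$. At the endpoint set $u' = x^{\tr} u x$; then $\lambda(u') = x^{\tr}\lambda(u)x = x^{\tr} x \, I^{(2)}_n \, x^{\tr} x = I^{(2)}_n$, and $u_t$ exhibits $[u] = [u']$ in $KO_2^u(A)$.

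The main obstacle is ensuring that the orthogonal factor $x$ lies in the identity component $SO(2n)$, which is what makes the conjugating path $x_t$ available. The Pfaffian argument handles this cleanly: the two classes of $KO_2^u(\R) \cong \Z_2$ correspond precisely to $\det(x) = \pm 1$, and the hypothesis $\lambda_*([u]) = 0$ rules out the non-identity component. Everything else is a routine check that the needed symmetries are preserved under conjugation by real orthogonal matrices.
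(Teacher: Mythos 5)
Your proposal is correct and follows essentially the same route as the paper: factor $\lambda(u) = x\, I^{(2)}_n\, x^{\tr}$, use triviality of $[\lambda(u)]$ in $KO_2^u(\C,\mathrm{id})\cong\Z_2$ (detected by the Pfaffian) to place $x$ in $SO(2n)$, and then conjugate $u$ along a path in $SO(2n)$, which preserves the symmetry class. The paper simply asserts the $SO(2n)$ factorization directly, so your Pfaffian/determinant justification and the explicit symmetry check are just a more detailed write-up of the same argument.
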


\begin{proof}
Let $u$ be a skew-symmetric self-adjoint unitary in $M_{2n}(\widetilde{A})$ such that $[\lambda(u)] = 0$ in $KO_2^u(\C, {\id})$. Then $\lambda(u) = x I_n^{(2)} x^\tr$ for some $x \in SO(2n)$. 
Let $v = x^\tr u x \in M_{2n}(\widetilde{A})$.  Then $u \sim_2 v$ and $\lambda(v) = I_n^{(2)}$ as desired.
\end{proof}

Recall from Section~\ref{sec:prelim} that there is an isomophism $(M_2(A), \tau) \cong (M_2(A), \taut)$, where $\taut$ is an alternate form of the transpose operator on matrices.
Thus one can make the following alternative definition of $KO_2^u(A)$ using $\taut$ in place of $\tau$.

\begin{defn} ~\label{DefKtd2}
Let $(A, \tau)$ be a unital \ctalg . 
Let $\Utd_{\infty}^{(2)}(A, \tau)$ be the set of all unitaries 
$u$ in $\cup_{n \in \N} M_{2n}(A)$ satisfying $u^2 = 1$ and $u^{\taut} = -u$. 
Let $\sim_2$ be the equivalence relation on 
$\Utd^{(2)}_{\infty}(A, \tau)$, generated by 
\begin{enumerate}
\item[(1)] $u_0 \sim_2 u_1$ if $u_t \in M_{2n}(A)$ is a continuous path of self-adjoint unitaries satisfying $u_t^{\taut} = -u_t$; 
and 
\item[(2)] $u \sim_2 \iota^{(2)}_n(u)$ for $u \in M_{2n}(A)$ where 
$\iota^{(2)}_n \colon M_{2n}(A) \rightarrow M_{2n+2}(A)$ is given by
$$\iota^{(2)}_n(a)  
      = {{\diag}}(a, 1, -1) . $$
\end{enumerate}
Then we define $\Ktd^u_2(A, \tau) = \Utd_{\infty}^{(2)}(A, \tau)/\sim_2$, with a binary operation given by $[u] + [v] = \left[ \sm{u}{0}{0}{v} \right]$
for $u,v \in \Utd_{\infty}^{(2)} (A, \tau)$.
\end{defn}

There is a natural isomorphism
$KO_2^u(A, \tau) \cong \Ktd_2^u(A, \tau)$
given by conjugation by the matrix $W$ introduced in Section~\ref{sec:prelim}.
This model of $KO_2^u(A, \tau)$ will be used only in this section to obtain our results.

We set a different convention for the action of the involution 
$\taut_{2n} = \trt \otimes \tau_n$ on $M_{2n}(A)$ 
(similar to our discussion of the action of $\sharp$ and $\widetilde{\sharp}$ in Section~\ref{sec:prelim}).
We use the formula
$$\begin{pmatrix} x & y \\ z & w \end{pmatrix} ^{\taut} 
      = \begin{pmatrix} w^{\tau_n} & y^{\tau_n} \\ z^{\tau_n} & x^{\tau_n}
    \end{pmatrix} \; $$
where $x,y,z,w \in M_{n}(A)$.
This convention changes the formula for 
$$\iota_n^{(2)} \colon M_{2n}({A}) \rightarrow M_{2n+2}({A})$$ 
given in Definition~\ref{DefKtd2}. 
Instead of the formula there we have
$$ \iota_n^{(2)} 
  \begin{pmatrix} x & y \\ z & w \end{pmatrix} 
  =  \begin{pmatrix} x & 0 & y & 0 \\ 0 & 1 & 0 & 0 \\ 
	      z & 0 & w & 0 \\ 0 & 0 & 0 & -1
	    \end{pmatrix}  \; $$
for $x,y,z,w \in M_n({A})$.
With this notation, if $x \in M_{2n}(A)$ satisfies
$x^{\taut} = -x$, then the element $y = \iota^{(2)}_n(x)$ also satisfies 
$y^{\taut} = -y$. With this notation, the trivial element of $\Ktd_2^u(\R) \cong \Z_2$ is represented by
$\sm{1_n}{0}{0}{-1_n}$ for any $n$.

\begin{thm} \label{K2class}
For any \ctalg~$(A, \tau)$,
there is a natural isomorphism 
$$[(q\C, \sharp), (\K \otimes A, \tau)] 
      \cong KO_2^u(A) \; .$$
\end{thm}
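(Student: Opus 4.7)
The plan is to mirror the proof of Proposition~\ref{K0class}, with $(q\C, \sharp)$ playing the role of $A_0$ and with $\Ktd_2^u(A, \tau)$ from Definition~\ref{DefKtd2} replacing $KO_2^u(A, \tau)$ for the construction. The reason to switch to $\Ktd_2^u$ is that the symmetry $u^{\taut} = -u$ is precisely the one compatible with the form $U(h, x, k)$: a direct computation in the block $\taut$-convention set up just before the statement of the theorem shows that if $h, k, x \in M_n(A)$ satisfy Equations~\eqref{qc-relations} together with $h^\tau = k$, $k^\tau = h$, and $x^\tau = -x$, then
\[
U(h, x, k)^{\taut} = -U(h, x, k).
\]
These are exactly the hypotheses of Proposition~\ref{qCuniversal1}(2), so \ctalg ~homomorphisms $(q\C, \sharp) \rightarrow (M_n(A), \tau_n)$ correspond bijectively to triples $(h, x, k)$ whose associated $U(h, x, k)$ lies in $\Utd_\infty^{(2)}(A, \tau)$.

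I would then define $\Theta([\phi]) = [\phi_2(u_0)]$ with $u_0 = U(h_0, x_0, k_0) \in M_2(\widetilde{q\C})$. Applying the key identity to $h_0, x_0, k_0$ in $(q\C, \sharp)$ shows that $u_0$ is a self-adjoint unitary satisfying $u_0^{\sharp \otimes \sharp} = -u_0$, and the evaluation-at-zero argument from Proposition~\ref{prop:u0} shows that $[u_0]$ generates $\Ktd_2^u(A_2)$. Well-definedness of $\Theta$ under homotopy and under the stabilization $\phi \mapsto \diag(\phi, 0)$ is checked by comparison with the reorganized $\iota_n^{(2)}$, exactly as in the $KO_0^u$ case. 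For surjectivity, take $u \in M_{2n}(\widetilde{A})$ normalized so that $\lambda(u) = \sm{\1_n}{0}{0}{-\1_n}$ and write $u = U(h, x, k)$; the relation $u^{\taut} = -u$ forces $h^\tau = k$, $k^\tau = h$, $x^\tau = -x$. Orthogonalize via $h' = \diag(h, 0)$, $k' = \diag(0, k)$, $x' = \sm{0}{0}{x}{0}$ in $M_{2n}(A)$, which preserves the three symmetries under the $\taut$-convention, and then Proposition~\ref{qCuniversal1}(2) produces $\phi_u$ with $\Theta([\phi_u]) = [u]$. The assignment $\Phi([u]) = [\phi_u]$ serves as the inverse, with $\Phi \circ \Theta = \id$ verified by the same rotation-by-$r_t$ homotopy used at the end of the proof of Proposition~\ref{K0class}.

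The main obstacle, and the reason for invoking $\Ktd_2^u$ rather than working directly with $KO_2^u$, is the sign bookkeeping. One must verify that the minus signs arising from $\sharp$ on $M_2$ combine with $x_0^\sharp = -x_0$ in $q\C$ to produce the net symmetry $U(h, x, k)^{\taut} = -U(h, x, k)$ rather than some mixed relation, and that the off-diagonal orthogonalization $h \mapsto \diag(h, 0)$, $k \mapsto \diag(0, k)$ preserves the symmetries $h^\tau = k$, $k^\tau = h$, $x^\tau = -x$ rather than destroying them. The $\taut$-convention distributes the action of $\tau$ on the $2 \times 2$ block structure anti-diagonally in exactly the manner needed to make both of these compatibilities hold simultaneously; under the original $\tau$-convention the form $U(h, x, k)$ does not carry the correct symmetry to begin with. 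The isomorphism $KO_2^u(A, \tau) \cong \Ktd_2^u(A, \tau)$ by conjugation with $W$ then completes the identification.
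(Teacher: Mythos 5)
Your overall strategy---pass to $\Ktd_2^u$, use $u_0=U(h_0,x_0,k_0)$, set $\Theta([\phi])=[\phi_2(u_0)]$, and invert via Proposition~\ref{qCuniversal1}(2)---is the same as the paper's, but the surjectivity step has a genuine gap. With the paper's conventions, the condition for $U(h',x',k')\in M_{4n}(\widetilde{A})$ to satisfy $U^{\taut}=-U$ (equivalently, the hypotheses needed in Proposition~\ref{qCuniversal1}(2) for $\phi_u$ to land in $(M_{2n}(A),\tau)\subset(\K\otimes A,\tau)$, which is the hom-set appearing in the statement) is that $h',k',x'$ satisfy $(h')^{\tau}=k'$, $(k')^{\tau}=h'$, $(x')^{\tau}=-x'$ with respect to the \emph{standard} involution $\tau_{2n}=\tr\otimes\tau$ on $M_{2n}(A)$, not with respect to $\taut_{2n}$. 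Your naive orthogonalization $h'={\diag}(h,0)$, $k'={\diag}(0,k)$, $x'=\sm{0}{0}{x}{0}$ satisfies only the $\taut_{2n}$-versions; for the standard involution one has ${\diag}(h,0)^{\tau_{2n}}={\diag}(h^{\tau},0)={\diag}(k,0)\neq{\diag}(0,k)$, and correspondingly $U(h',x',k')^{\taut_{4n}}\neq -U(h',x',k')$. So as written, Proposition~\ref{qCuniversal1}(2) either does not apply, or applies only with target $(M_{2n}(A),\taut)$, which is not the algebra in the theorem. This is precisely why the paper twists by $W$: its primed elements are $h'=W_{2n}{\diag}(h,0)W_{2n}^*$, $k'=W_{2n}^*{\diag}(k,0)W_{2n}$, $x'=W_{2n}^*{\diag}(x,0)W_{2n}^*$, chosen so that the standard-$\tau$ symmetries and $h'k'=0$ hold simultaneously.

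Even after repairing this, the assertion $\Theta([\phi_u])=[u]$ is not free. The repaired unitary satisfies $u'=U(h',x',k')=\sm{W_{2n}}{0}{0}{W_{2n}^*}\,\iota^{(2)}(u)\,\sm{W_{2n}^*}{0}{0}{W_{2n}}$, so one must show this conjugation does not change the class in $\Ktd_2^u$. In degree $2$ this is exactly where the $\Z_2$/Pfaffian phenomena live---conjugation by an orthogonal matrix of determinant $-1$ can change the class (Remark~\ref{rem:conj}, Proposition~\ref{K2inverse})---and the paper deals with it by proving that $\sm{W_{2n}}{0}{0}{W_{2n}^*}$ lies in the connected group $\widetilde{SO}(4n)=W_{4n}^*SO(4n)W_{4n}$, whose conjugation action preserves $\Utd^{(2)}$ and hence classes. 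Your proposal omits this verification entirely, and in your version the comparison with $\iota^{(2)}(u)$ would additionally involve a block permutation of determinant $(-1)^n$ whose harmlessness also needs an argument; deferring all of this to a final ``conjugate by $W$'' remark does not settle it. A small slip besides: the symmetry of $u_0$ is $u_0^{\sharp\otimes\trt}=-u_0$, not $u_0^{\sharp\otimes\sharp}=-u_0$.
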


\begin{proof}
Assume $A$ is unital.
We will prove that there is an isomorphism between 
$[(q\C, \sharp), (\K\pr \otimes A, \tau)]$ and 
$\Ktd_2^u(A, \tau)$ using a similar proof to that of Proposition~\ref{K0class}.
Let 
$$u_0 = U(h_0, x_0, k_0) 
    = \left( \begin{matrix} 1 - 2t & 0 & 0 & 2\sqrt{t - t^2} \\
		  0 & 1 & 0 & 0 \\ 0 & 0 & -1 & 0 \\
		  2 \sqrt{t - t^2} & 0 & 0 & 2t-1 \end{matrix} \right)
  \in M_2(\widetilde{q\C})$$ 
and note that we have $u_0^{\sharp \otimes \trt} = -u_0$; so $[u_0] \in KO_2^u(q\C, \sharp)$. If 
$\phi \colon (q \C, \sharp) \rightarrow (M_{n}(A), \tau)$ is a \ctalg~homomorphism,
then $\phi(u_0) \in  \Utd_{\infty}^{(2)}(A, \tau)$. We define $\Theta(\phi) = [\phi(u_0)] \in \Ktd_2^u(A, \tau)$. To show this is well-defined, we need to consider $\phi' = \sm{\phi}{0}{0}{0}$.

We can now check that as in the proof of Proposition~\ref{K0class}, 
\begin{align*} \phi_2(u_0) 
&= \left( \begin{matrix}{1_n - 2 \phi(h_0)}&{2 \phi(x_0)^*}\\{2 \phi(x_0)}&{2 \phi(k_0) -1_n} 
          \end{matrix} \right)  \\
\text{and} \qquad 
\phi'_2(u_0)   
  &= \left( \begin{matrix}
      ~1_n - 2 \phi(h_0)~ & 0 & 2 \phi(x_0)^* & 0 \\
      0 & 1 & 0 & 0 \\
      2 \phi(x_0) & 0 & ~2 \phi(k_0) - 1_n~ & 0 \\
      0 & 0 & 0 & -1 
      \end{matrix} \right) \; .  
\end{align*}
Thus we have $\Theta(\phi'(u_0)) = \iota^{(2)}_n(\phi(u_0))$, showing that $\Theta$ is well defined.

To construct an inverse to $\Theta$ (as in the proof of Proposition~\ref{K0class}), suppose $u \in M_{2n}(\widetilde{A})$ is a unitary satisfying $u^2 = 1$, $u^{\taut} = -u$, and 
$\lambda(u) = \sm{1_n}{0}{0}{-1_n}$. Then there exist $h,x,k \in M_n(A)$ such that
$$u = U(h, x, k) 
  = \begin{pmatrix} {1_n - 2h}&{2x^*}\\{2x}&{2k - 1_n} \end{pmatrix} \; .$$
The conditions that $u = u^*$ and $u^{\taut} = -u$
are equivalent to the conditions that $h$ and $k$ are self-adjoint, they are interchanged by $\tau$, and $x^\tau = -x$.

We now construct modified elements $h', k', x'$ that satisfy the same conditions as well as the condition $h' k' = 0$. Let 
$$W_{2n} = \tfrac{1}{\sqrt{2}} \begin{pmatrix} {i \cdot 1_n} & {1_n} \\ {1_n} & {i \cdot 1_n} \end{pmatrix} 
			\in M_{2n}(\C) $$ 
(generalizing the definition of $W$ from Section~\ref{sec:prelim})
and define
\begin{align*}
h' &= W_{2n} \sm{h}{0}{0}{0} W_{2n}^*  = \tfrac{1}{2} \sm{h}{ih}{-ih}{h} , \\ 
   k' &= W_{2n}^* \sm{k}{0}{0}{0} W_{2n} = \tfrac{1}{2} \sm{k}{-ik}{ik}{k} ,    \\
 x' &= W_{2n}^* \sm{x}{0}{0}{0} W_{2n}^* = \tfrac{1}{2} \sm{-x}{-ix}{-ix}{x}  \; 
\end{align*}
in $M_{2n}(A)$. We leave it to the reader to check that $h'$ and $k'$ are self-adjoint, that $h' k' = 0$, that $(h')^{\tau} = k'$, and that
$(x')^{\tau} = -x'$. The following calculation shows that $u' = U(h', x', k')$ is a unitary:
\begin{align*}
u' &= \sm{1_{2n} - 2h'}{2(x')^*}{2x'}{2k'-1_{2n}} \\
&= \sm{1_{2n} - 2W_{2n} \sm{h}{0}{0}{0} W_{2n}^* }  { 2W_{2n} \sm{x^*}{0}{0}{0}W_{2n} }
      {2 W_{2n}^* \sm{x}{0}{0}{0} W_{2n}^* }{ 2W_{2n}^* \sm{k}{0}{0}{0} W_{2n} - 1_{2n}  } \\
&= \sm{W_{2n}}{0}{0}{W_{2n}^*} 
  \sm{1_{2n} - 2\sm{h}{0}{0}{0}} {2 \sm{x^*}{0}{0}{0}}
  {2 \sm{x}{0}{0}{0}}  {2  \sm{k}{0}{0}{0} - 1_{2n} } 
  \sm{W_{2n}^*}{0}{0}{W_{2n}}  \\
&= \sm{W_{2n}}{0}{0}{W_{2n}^*} \cdot \iota^{(2)}(u) \cdot \sm{W_{2n}^*}{0}{0}{W_{2n}} \; .
\end{align*}
where $\iota^{(2)} \colon M_{2n}({A}) \rightarrow M_{4n}({A})$ is the composition
$\iota_{2n-1}^{(2)} \dots \iota_{n+1}^{(2)} \iota_n^{(2)}$ and
$$\iota^{(2)}(u) = \begin{pmatrix}
1_n - 2 h & 0 & 2x^* & 0 \\
0 & 1_n & 0 & 0 \\
2x & 0 & 2k - 1_n & 0 \\
0 & 0 & 0 & -1_n
\end{pmatrix} \; .
$$

Thus $u' = U(h', x', k')$ is a unitary that satisfies $u'= (u')^*$ and $(u')^{\taut} = -u'$.
By Proposition~\ref{qCuniversal1}, there is a homomorphism
$\phi_u \colon (q\C, \sharp) \rightarrow (M_{2n}(A), \tau)$ such that
$\phi_u(h_0) = h'$, $\phi_u(k_0) = k'$, and $\phi_u(x_0)= x'$.
Thus $(\phi_u)_2(u_0) = u'$.

We claim that $u \sim_2 u'$, which will imply that
$\Theta(\phi_u) = [u]$ as desired. Specifically, we will show that $u'$ is homotopic to 
$\iota^{(2)}(u)$ in $M_{4n}({A})$. 
The previous calculation shows that $u' = v \iota^{(2)}(u) v^*$ where $v = \sm{w}{0}{0}{w^*}$.
So it suffices to show that 
$vx v^* \sim_2 x$ for any self-adjoint unital $x \in M_{4n}({A})$ satisfying $x^{\taut} = -x$. Let 
$$W_{4n} 
  = \tfrac{1}{\sqrt{2}} 
  \begin{pmatrix}{i \cdot \1_{2n}}&{\1_{2n}}\\{\1_{2n}}&{i \cdot \1_{2n}}
  \end{pmatrix}
 \; .$$ 
For any $x \in M_{4n}({A})$ we have $x^{\taut} = -x$ if and only if $(W_{4n} x W_{4n}^*)^\tau = -(W_{4n} x W_{4n}^*)$. Then as $U^{(2)}({A}, \tau)$ is closed under conjugation by $SO(4n)$, so $\Utd^{(2)}({A}, \tau)$ is closed under conjugation by elements in 
$$\widetilde{SO}(4n) = W_{4n}^* SO(4n) W_{4n}\; .$$ 
Now  
$v \in \widetilde{SO}(4n)$ since 
$$W_{4n} v W_{4n}^* = \frac{1}{\sqrt{2}} \begin{pmatrix} 0 & 1 & -1 & 0 \\1 & 0 & 0 & -1 \\
  1 & 0 & 0 & 1 \\ 0 & 1 & 1 & 0 \end{pmatrix} \in SO(4n) \; .$$
Since $SO(4n)$ is connected, so is $\widetilde{SO}(4n)$. Therefore there is path in $\widetilde{SO}(4n)$ from $v$ to the identity which proves that $v x v^* \sim_2 x$.

This completes the proof that $\Theta(\phi_u) = [u]$. The rest of the proof consists in showing that the construction described is actually an inverse to $\Theta$. This proceeds as in the proof of Proposition~\ref{K0class}.
\end{proof}

\subsection{$KO_4$ via unitaries}

\begin{defn} \label{K4def}
Let $(A, \tau)$ be a unital \ctalg. Let $U_{\infty}^{(4)}(A, \tau)$ be the set of all unitaries $u$ in $\cup_{n \in \N} M_{4n}(A)$ satisfying $u^2 = 1$ and $u^{\sharp \otimes \tau} = u$. Let $\sim_4$ be the equivalence relation on $U^{(4)}_{\infty}(A, \tau)$, generated by 
\begin{enumerate}
\item[(1)] $u_0 \sim_4 u_1$ if $u_t \in M_{4n}(A)$ is a continuous path of self-adjoint unitaries satisfying $u_t^{\sharp \otimes \tau} = u_t$; 
and 
\item[(2)] $u \sim_4 \iota^{(4)}_n(u)$ for $u \in M_{4n}(A)$ where 
$\iota^{(4)}_n \colon M_{4n}(A) \rightarrow M_{4n+4}(A)$ is given by 
$$\iota^{(4)}_n(a) = {{\diag}}(a, I^{(4)}) \; \quad \text{where $I^{(4)} = {{\diag}}(1,1,-1,-1)$} \; . $$
\end{enumerate}
Then we define $KO^u_4(A, \tau) = U_{\infty}^{(4)}(A, \tau)/\sim_4$, with a binary operation given by $[u] + [v] = \left[\sm{u}{0}{0}{v}\right]$
for $u,v \in U_{\infty}^{(0)}(A, \tau)$.
\end{defn}

In the above definition, the formulas for $\iota^{(4)}_n $, for $I^{(4)}$, and for addition implicitly assume the particular convention for the action of the involution $\sharp \otimes \tau$ on $M_{4n}(A)$ as discussed in Section~\ref{sec:prelim}.
Under this convention, the addition formula and the formula for $\iota_n^{(4)}$ in Definition~\ref{K4def} preserve membership in $U_\infty^{(4)}(A)$.

\begin{prop} \label{K4iso}
If $(A, \tau)$ is a unital \ctalg, then $KO^u_4(A, \tau) \cong KO_4(A, \tau)$. In particular, $KO^u_4(\C, {\id}) = \Z$.
\end{prop}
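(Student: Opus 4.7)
The plan is to reduce the statement directly to the previously-established case of $KO_0^u$ by exploiting the isomorphism $(M_2(A),\sharp\otimes\tau)\leftrightarrow \H\otimes A^\tau$ discussed in Section~\ref{sec:prelim}. Specifically, I would establish a chain of natural isomorphisms
\[
KO_4^u(A,\tau)\;\cong\; KO_0^u(M_2(A),\sharp\otimes\tau)\;\cong\; KO_0^u(\H\otimes A^\tau)\;\cong\; KO_0(\H\otimes A^\tau)\;\cong\; KO_4(A,\tau).
\]

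First, I would identify $M_{4n}(A)$ with $M_{2n}(M_2(A))$ and verify that, under the first convention of Section~\ref{sec:prelim}, the involution $\sharp\otimes\tau$ on $M_{4n}(A)$ appearing in Definition~\ref{K4def} is exactly the standard matrix extension $\tr_{2n}\otimes(\sharp\otimes\tau)$ of the \ctalg~involution $\sharp\otimes\tau$ on $M_2(A)$ to $2n\times 2n$ matrices over it. Under this identification, the symmetry conditions $u^*=u$, $u^2=\1$, $u^{\sharp\otimes\tau}=u$ become precisely the defining conditions of $U_\infty^{(0)}(M_2(A),\sharp\otimes\tau)$. The basepoint $I^{(4)}=\diag(1,1,-1,-1)\in M_4(\C)$, regarded as $\diag(1_{M_2},-1_{M_2})\in M_2(M_2(\C))$, agrees with $I^{(0)}$ relative to the \ctalg~$(M_2(\C),\sharp)\cong\H$; so $\iota_n^{(4)}$ is $\iota_n^{(0)}$ at the level of $M_{2n}(M_2(A))$ and the block-diagonal sum is preserved. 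Hence $\sim_4$ and $\sim_0$ correspond, giving the first isomorphism.

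The second isomorphism is Proposition~\ref{K0iii} applied to the \ctalg~$(M_2(A),\sharp\otimes\tau)$, whose real form is $\H\otimes A^\tau$. The third is Theorem~\ref{K0iso}. The fourth is the standard quaternionic shift $KO_0(\H\otimes B)\cong KO_4(B)$, which follows from the K\"unneth formula exactly as invoked at the end of Proposition~\ref{thm:KAieven}. Each of these maps is manifestly natural in $(A,\tau)$. Specializing to $A=\C$, $\tau=\id$ gives $A^\tau=\R$, whence $KO_4^u(\C,\id)\cong KO_4(\R)\cong\Z$.

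The main (and essentially only) obstacle is purely notational: carefully matching the two conventions for $\sharp\otimes\tau$ so that the action on $M_{4n}(A)=M_{2n}(M_2(A))$ used in Definition~\ref{K4def} really does coincide with the canonical matrix extension from $M_2(A)$. Once that bookkeeping is in place, the proof reduces to a direct composition of results already proved in Theorem~\ref{K0iso}, Proposition~\ref{K0iii}, and the known quaternionic periodicity. As an alternative, one could mimic Theorem~\ref{K2class} directly by using the classifying algebra $A_4=(M_2(\C)\otimes q\C,\sharp\otimes\tr)$ together with Theorem~\ref{thm:classify}, constructing the inverse via the universal $\{h,k,x\}$-presentation of $q\C$ tensored with a fixed matrix unit system; but the reduction to $KO_0^u$ via quaternionification is considerably shorter.
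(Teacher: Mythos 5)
Your argument is essentially the paper's own proof: the paper likewise identifies $U_\infty^{(4)}(A,\tau)$ with $U_\infty^{(0)}(M_2(\C)\otimes A,\sharp\otimes\tau)$, applies Theorem~\ref{K0iso}, and then invokes the K\"unneth-formula shift $KO_n(A,\tau)\cong KO_{n+4}(M_2(\C)\otimes A,\sharp\otimes\tau)$. Your extra detour through the real form $\H\otimes A^\tau$ via Proposition~\ref{K0iii}, and the convention-matching you spell out for $\sharp\otimes\tau$ on $M_{4n}(A)=M_{2n}(M_2(A))$, are just more explicit versions of steps the paper treats as immediate.
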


\begin{proof}
An element of $U_\infty^{(4)}(A, \tau)$ is given by a self-adjoint unitary $u \in M_2(\C) \otimes M_{2n}(A)$ that satisfies $u^{\sharp \otimes \tau} = u$. This is the same as an element of $U_\infty^{(0)}(M_2(\C) \otimes A, \sharp \otimes \tau)$. Therefore
$U_\infty^{(4)}(A, \tau) \cong U_\infty^{(0)}(M_2(\C) \otimes A, \sharp \otimes \tau)$ and hence $KO^u_4(A, \tau) \cong KO^u_{0}(M_2(\C) \otimes A, \sharp \otimes \tau)$.

As a special case of the K\"unneth formula (the Main Theorem of \cite{boersema02}), for a real \calg~ $A$
we know that $KO_n(A) \cong KO_{n+4}(\H \otimes A)$. The same statement in terms of \ctalg s is that $KO_n(A, \tau) \cong KO_{n+4}(M_2(\C) \otimes A, \sharp \otimes \tau)$.  Combining this with Theorem~\ref{K0iso},
\begin{align*}
KO_4^u(A, \tau) &\cong KO_{0}^u(M_2(\C) \otimes A, \sharp \otimes \tau) \\
  & \cong KO_{0}(M_2(\C) \otimes A, \sharp \otimes \tau) \\
  & \cong KO_4(A, \tau)
\end{align*}
\end{proof}

The identity element of $KO^u_4(\C, {\id}) \cong \Z$ is represented by $I^{(4)} \in M_{4}(\C)$ or, more generally, by $I_n^{(4)} = {{\diag}}( I^{(4)}, \dots, I^{(4)}) \in M_{4n}(\C)$. The isomorphism 
$KO_4^u(\C, {\id}) \rightarrow \Z$ can be written as $[u] \mapsto \tfrac{1}{4} {\trace}(u)$.

\begin{thm} \label{K4group}
$KO_{4}^u(A, \tau)$ is a homotopy invariant functor from the category of unital \ctalg s to the category of abelian groups. The inverse of an element $[u]$ in $KO_{4}^u(A, \tau)$ is given by $[-u]$.
\end{thm}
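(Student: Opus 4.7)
The plan is to reduce the theorem to Proposition~\ref{K0group} via the identification already used in Proposition~\ref{K4iso}. The key observation is that the proof of that proposition exhibits
\[
U_\infty^{(4)}(A, \tau) = U_\infty^{(0)}(M_2(\C) \otimes A, \sharp \otimes \tau)
\]
as literally the same set, where on the right we use the isomorphism $M_{4n}(A) \cong M_{2n}(M_2(\C) \otimes A)$ that puts the ``internal'' $M_2(\C)$ factor inside each $2\times 2$ block. First I would verify that this identification is compatible with all the structure encoded in Definition~\ref{K4def}: the block $I^{(4)} = \diag(1,1,-1,-1) \in M_4(\C)$ corresponds to $I^{(0)} = \diag(1_{M_2(\C)}, -1_{M_2(\C)}) \in M_2(M_2(\C))$, so the stabilization maps $\iota_n^{(4)}$ and $\iota_n^{(0)}$ coincide, and the binary operations $[u] + [v] = [\diag(u,v)]$ agree on both sides. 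Consequently the two definitions produce the same semigroup.

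Once this matching is in hand, Proposition~\ref{K0group}, extended to the \ctalg~setting via Proposition~\ref{K0iii} and the equivalence of categories, applied to the \ctalg~$(M_2(\C) \otimes A, \sharp \otimes \tau)$, immediately gives the abelian group structure on $KO_4^u(A, \tau)$ together with the explicit inverse formula $[-u] = -[u]$. Functoriality with respect to a \ctalg~homomorphism $\phi \colon (A, \tau) \to (B, \tau)$ follows because the amplification $\phi \otimes \id_{M_{4n}(\C)}$ preserves self-adjointness, unitarity, and the symmetry $u^{\sharp \otimes \tau} = u$, while commuting with $\iota_n^{(4)}$ and with direct sum. Homotopy invariance is equally immediate from condition~(1) of Definition~\ref{K4def}: a continuous family $\phi_s$ of \ctalg~homomorphisms produces, for each fixed $u$, a continuous path $\phi_s(u)$ in $U_\infty^{(4)}(B, \tau)$ witnessing $[\phi_0(u)] = [\phi_1(u)]$.

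The principal, essentially bookkeeping, obstacle will be pinning down that the convention for $\sharp \otimes \tau$ on $M_{4n}(A)$ implicit in Definition~\ref{K4def} — which arises from the first convention in Section~\ref{sec:prelim} applied after regrouping $M_{4n}(A)$ as $2n \times 2n$ matrices over the $2\times 2$ $\sharp$-blocks — matches the involution $\sharp \otimes \tau$ appearing on $M_{2n}(M_2(\C) \otimes A)$ in the $KO_0^u$ picture, and in particular that $-u$ transports to $-u$. This is routine given the explicit discussion of conventions in Section~\ref{sec:prelim}, but it is the only point at which any computation is required; thereafter the result is purely formal.
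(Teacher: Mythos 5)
Your proposal is correct and follows essentially the same route as the paper: both rest on the identification $U_\infty^{(4)}(A,\tau) \cong U_\infty^{(0)}(M_2(\C)\otimes A,\sharp\otimes\tau)$ (with $I^{(4)}$ matching $I^{(0)}$ over $M_2(\C)\otimes A$) established in the proof of Proposition~\ref{K4iso}, together with Proposition~\ref{K0group} for the group structure and the inverse formula $[-u]$. The only cosmetic difference is that the paper cites Proposition~\ref{K4iso} directly for the ``homotopy invariant functor'' statement, whereas you extract it from Proposition~\ref{K0group} applied to $(M_2(\C)\otimes A,\sharp\otimes\tau)$; this is the same underlying argument, and your attention to the block convention for $\sharp\otimes\tau$ is exactly the point the paper's convention in Section~\ref{sec:prelim} is designed to settle.
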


\begin{proof}
The first statement follows immediately from Proposition~\ref{K4iso}. The statement about inverses follows from 
Proposition~\ref{K0group} and
the statement in the proof of Proposition~\ref{K4iso} that 
$U_\infty^{(4)}(A, \tau) \cong U_\infty^{(0)}(M_2(\C) \otimes A, \sharp \otimes \tau)$.
\end{proof}

\begin{defn}
Let $(A, \tau)$ be any \ctalg .  Then we define 
$$KO^u_4(A, \tau) = \ker(\lambda_*)$$
where $\lambda_* \colon KO^u_4(\widetilde{A}, \tau) \rightarrow KO^u_4(\C, {\id})$.
\end{defn}

Combining Theorem~\ref{K4group} with this definition gives the following.

\begin{thm}
If $(A, \tau)$ is any \ctalg, then $KO^u_4(A, \tau) \cong KO_4(A, \tau)$.
\end{thm}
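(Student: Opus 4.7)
The plan is to reduce the non-unital case to Proposition~\ref{K4iso} by exploiting the fact that both sides have been (or can be) defined as the kernel of the map induced by $\lambda \colon \widetilde{A} \to \C$. First I will observe that standard $K$-theory for non-unital \ctalg s has the same kernel description: $KO_4(A, \tau)$ is naturally isomorphic to $\ker\bigl(\lambda_* \colon KO_4(\widetilde{A}, \tau) \to KO_4(\C, \id)\bigr)$, coming from the split short exact sequence $0 \to A \to \widetilde{A} \to \C \to 0$ and half-exactness of $KO_4$.

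Next I would appeal to Proposition~\ref{K4iso}, applied to the two unital \ctalg s $(\widetilde{A}, \tau)$ and $(\C, \id)$, to produce isomorphisms $\Psi_{\widetilde{A}} \colon KO_4^u(\widetilde{A}, \tau) \xrightarrow{\cong} KO_4(\widetilde{A}, \tau)$ and $\Psi_{\C} \colon KO_4^u(\C, \id) \xrightarrow{\cong} KO_4(\C, \id)$. The key point is naturality of $\Psi$: the isomorphism in Proposition~\ref{K4iso} was constructed by identifying $U_\infty^{(4)}(B, \tau)$ with $U_\infty^{(0)}(M_2(\C) \otimes B, \sharp \otimes \tau)$, and then invoking the natural isomorphism $KO_0^u \cong KO_0$ from Theorem~\ref{K0iso} together with the K\"unneth identification $KO_0(M_2(\C) \otimes B, \sharp \otimes \tau) \cong KO_4(B, \tau)$. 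Each of these three ingredients is natural with respect to unital \ctalg~homomorphisms, so $\Psi$ is natural.

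Applying naturality to the unital \ctalg~homomorphism $\lambda \colon (\widetilde{A}, \tau) \to (\C, \id)$, we obtain a commutative square
\begin{equation*}
\xymatrix{
KO_4^u(\widetilde{A}, \tau) \ar[r]^-{\lambda_*} \ar[d]_{\Psi_{\widetilde{A}}} & KO_4^u(\C, \id) \ar[d]^{\Psi_{\C}} \\
KO_4(\widetilde{A}, \tau) \ar[r]^-{\lambda_*} & KO_4(\C, \id)
}
\end{equation*}
in which both vertical arrows are isomorphisms. Taking kernels of the two horizontal maps then yields a natural isomorphism between $\ker(\lambda_*) = KO_4^u(A, \tau)$ (by definition) and $\ker(\lambda_*) = KO_4(A, \tau)$ (by the first step), completing the proof.

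I do not anticipate any essential obstacle; the only point that requires care is making naturality of $\Psi_{(-)}$ fully explicit, since the isomorphism of Proposition~\ref{K4iso} was assembled from three separate natural transformations (the tensor-product identification of symmetries, Theorem~\ref{K0iso}, and the K\"unneth isomorphism), and each is natural with respect to unital \ctalg~homomorphisms. Once that is checked, the remainder is a diagram chase.
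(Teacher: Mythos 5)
Your proposal is correct and takes essentially the same route as the paper, which states the theorem as an immediate combination of the unital-case isomorphism with the kernel definition of $KO^u_4(A,\tau)$ and gives no further argument. What you add — the identification $KO_4(A,\tau) \cong \ker\bigl(\lambda_* \colon KO_4(\widetilde{A},\tau) \to KO_4(\C,\id)\bigr)$ via split exactness, and the explicit check that the isomorphism of Proposition~\ref{K4iso} is natural with respect to $\lambda$ because each of its three ingredients is natural — is exactly the detail the paper leaves implicit.
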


\begin{thm} \label{K4unitpicture}
Let $(A, \tau)$ be a \ctalg. Any element of $K^u_4(A, \tau)$ can be represented as $[u]$ where $u \in M_n(\widetilde{A})$ satisfies $u^2 = 1$, $u^{\sharp \otimes \tau} = u$, and $\lambda_*(u) = I_{n}^{(4)}$.
\end{thm}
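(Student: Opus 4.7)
The plan is to follow closely the strategy used in Propositions~\ref{K0unitpicture} and~\ref{K2unitpicture}. Suppose $[u] \in KO_4^u(A,\tau)$ is represented by a self-adjoint unitary $u \in M_{4n}(\widetilde{A})$ satisfying $u^{\sharp\otimes\tau}=u$. By the definition of $KO_4^u$ as $\ker(\lambda_*)$, we have $\lambda_*([u])=0$ in $KO_4^u(\C,\id) \cong \Z$. The goal is to exhibit an equivalent representative $u'$ with $\lambda(u')=I_n^{(4)}$.

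My approach is to conjugate $u$ by a suitable element $x \in M_{4n}(\C)$ that lies in a \emph{connected} subgroup which preserves the defining conditions of $U_\infty^{(4)}(A,\tau)$. The natural candidate is the compact symplectic group $Sp(2n)$, consisting of unitaries $x \in U(4n,\C)$ satisfying $x^{\sharp\otimes\tr}=x^{-1}$. For such $x$, conjugation $u \mapsto xux^*$ clearly preserves self-adjointness and, since $(xux^*)^{\sharp\otimes\tau} = x u^{\sharp\otimes\tau} x^* = xux^*$, also preserves the quaternionic symmetry. So conjugation by an $Sp(2n)$-element yields another element of $U_\infty^{(4)}(A,\tau)$.

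The key input will be the quaternionic spectral theorem. Under the identification $(M_{4n}(\C),\sharp\otimes\tr) \cong M_{2n}(\H)$, the self-adjoint unitary $\lambda(u)$ corresponds to a self-adjoint unitary in $M_{2n}(\H)$, whose $\pm 1$ eigenspaces are quaternionic subspaces of $\H^{2n}$. The isomorphism $KO_4^u(\C,\id)\cong \Z$ is given by $[v]\mapsto \tfrac14\trace(v)$, which equals the difference between the quaternionic dimensions of the $+1$ and $-1$ eigenspaces. Since $\lambda_*([u])=0$, both eigenspaces must have quaternionic dimension exactly $n$, and the quaternionic spectral theorem yields some $x \in Sp(2n)$ with $x\lambda(u)x^* = I_n^{(4)}$. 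Because $Sp(2n)$ is path-connected, any path $x_t$ from $1_{4n}$ to $x$ produces a continuous path of self-adjoint unitaries $u_t = x_t u x_t^*$ in $U_\infty^{(4)}(A,\tau)$, establishing $u \sim_4 xux^*$ with $\lambda(xux^*) = I_n^{(4)}$, as required.

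The main subtlety, relative to the $KO_0$ and $KO_2$ arguments, is the need to work with the quaternionic rather than merely the orthogonal structure. The payoff is that $Sp(2n)$ is already connected, so there is no analogue of the determinant-sign correction that required pre-stabilizing by $\iota_n^{(0)}$ or $\iota_n^{(2)}$ in the earlier cases. The only quantitative verification needed is that the normalization $[v]\mapsto \tfrac14\trace(v)$ matches the signed quaternionic eigenspace count, which follows from $\trace(I^{(4)})=0$ together with additivity of trace on the eigenspace decomposition.
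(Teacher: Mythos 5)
Your proof is correct, but it takes a genuinely different route from the paper. The paper does not argue directly in $M_{4n}(\widetilde{A})$: it uses the identification $U_\infty^{(4)}(A,\tau)\cong U_\infty^{(0)}(M_2(\C)\otimes A,\sharp\otimes\tau)$ from Proposition~\ref{K4iso}, sets up a commutative diagram comparing the two unitizations $(M_2(\C)\otimes A)^\sim$ and $M_2(\C)\otimes\widetilde{A}$, invokes Proposition~\ref{K0unitpicture} to get a $KO_0^u$-representative with scalar part $I_{2n}^{(0)}$, and then transports it through the isomorphism to obtain $\lambda(v)=I_n^{(4)}$. You instead argue directly, in the spirit of the proofs of Propositions~\ref{K0unitpicture} and~\ref{K2unitpicture}: the scalar part $\lambda(u)$ is a quaternionic self-adjoint unitary, the quaternionic spectral theorem together with a quaternionic permutation puts it into the form $I_n^{(4)}$ by conjugation with some $x$ in the quaternionic unitary (compact symplectic) group, conjugation by such $x$ preserves the symmetry class $u^{\sharp\otimes\tau}=u$, and connectedness of that group turns the conjugation into a $\sim_4$-homotopy. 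Your approach is more self-contained and concrete, and your observation that no determinant-type correction or pre-stabilization is needed (in contrast with the $O(2n)$ arguments for $i=0,2$) is exactly right; the price is that you import the quaternionic spectral theorem and connectivity of $Sp(2n)$ rather than reusing the already-proved $KO_0$ case, and you must quietly handle the unitization subtlety that the paper's diagram makes explicit. One small numerical slip: with $p,q$ the quaternionic dimensions of the $\pm1$ eigenspaces, the complex trace of $\lambda(u)$ is $2(p-q)$, so $\tfrac14\mathrm{trace}$ equals $\tfrac12(p-q)$ rather than $p-q$; since $p-q$ is automatically even and you only use the vanishing of this invariant to conclude $p=q=n$, the argument is unaffected.
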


\begin{proof}
Consider the following commutative diagram in which each row is a short exact sequence and $g$ is the isomorphism described in the proof of Proposition~\ref{K4iso}.
$$ \xymatrix{
KO^u_0(M_2(\C) \otimes A, \sharp \otimes \tau) \ar[r] \ar[d]^{\id} & 
  KO_0^u((M_2(\C) \otimes A)^{\sim}, \sharp \otimes \tau) \ar[r]^{ \hspace{1cm} \lambda_*} \ar[d]^{k_*} &
  KO_0^u(\C, {\id}) \ar[d]^{\iota_*}  \\
KO^u_0(M_2(\C) \otimes A, \sharp \otimes \tau) \ar[r] \ar[d]^g &
  KO_0^u(M_2(\C) \otimes \widetilde{A}, \sharp \otimes \tau) \ar[r]^{\hspace{1cm} \lambda_*} \ar[d]^g &
  KO_0^u(M_2(\C), \sharp) \ar[d]^g \\
KO_4^u(A, \tau) \ar[r] &
  KO_4^u(\widetilde{A}, \tau) \ar[r]^{\hspace{1cm} \lambda_*} &
  KO_4^u(\C, {\id})
} $$

It follows from the diagram that any element of $KO_4^u(A, \tau)$ can be written as $gk_*([u])$ where 
$[u] \in KO_0^u((M_2(\C) \otimes A)^{\sim}, \sharp \otimes \tau)$ 
and $\lambda_*([u]) = 0$. Using Proposition~\ref{K0unitpicture}, we can take
$u \in M_{2n}((M_2(\C) \otimes A)^\sim)$
such that $u^2 = 1$, $u^{\sharp \otimes \tau} = u$,
and $\lambda(u) = I_{2n}^{(0)}$.  Then $v = g k(u) \in M_{4n}(\widetilde{A})$
satisfies 
$v^2 = 1$, $v^{\sharp \otimes \tau} = v$, and
$\lambda(v) = I_n^{(4)}$ as desired.
\end{proof}

\subsection{$KO_6$ via unitaries}

\begin{defn} \label{K6def}
Let $(A, \tau)$ be a unital \ctalg. Let $U_{\infty}^{(6)}(A, \tau)$ be the set of all unitaries 
$u$ in $\cup_{n \in \N} M_{2n}(A)$ satisfying $u^2 = 1$ and $u^{\sharp \otimes \tau} = -u$. Let $\sim_6$ be the equivalence relation on $U^{(6)}_{\infty}(A, \tau)$, generated by 
\begin{enumerate}
\item[(1)] $u_0 \sim_6 u_1$ if $u_t \in M_{2n}(A)$ is a continuous path of self-adjoint unitaries satisfying $u_t^{\sharp \otimes \tau} = u_t$; 
and 
\item[(2)] $u \sim_6 \iota^{(4)}_n(u)$ for $u \in M_{2n}(A)$ where 
$\iota^{(6)}_n \colon M_{2n}(A) \rightarrow M_{2n+2}(A)$ is given by 
$$\iota^{(6)}_n(a) = {{\diag}}(a, I^{(6)}) \; \quad 
  \text{where $I^{(6)} = 
      \left( \begin{smallmatrix} 0 & i \\ -i & 0 \end{smallmatrix} \right)$} \; . $$
\end{enumerate}
Then we define $KO^u_6(A, \tau) = U_{\infty}^{(6)}(A, \tau)/\sim_6$, with a binary operation given by $[u] + [v] = \left[\sm{u}{0}{0}{v}\right]$
for $u,v \in U_{\infty}^{(0)}(A, \tau)$.
\end{defn}

If, in the definition above, we only used unitaries in $M_{4n}(A)$, then it would be clear that $KO_6^u(A, \tau)$ is isomorphic to $KO_2^u(M_2(\C) \otimes A, \sharp \otimes \tau)$. However, the allowed inclusion of unitaries in $M_{2n}(A)$ does not change the group in the limit. Therefore, the following results follow a development that is similar to that for $KO_4^u(A, \tau)$. 

\begin{prop} \label{K6iso}
If $(A, \tau)$ is a unital \ctalg, then $KO^u_6(A, \tau) \cong KO_6(A, \tau)$. In particular, 
$KO^u_6(\C, {\id}) = 0$.
\end{prop}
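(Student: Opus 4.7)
The plan is to follow the argument for $KO_4^u$ given in Proposition~\ref{K4iso}, replacing the role of $KO^u_0$ by $KO^u_2$. Specifically, I would establish a natural isomorphism
$$KO_6^u(A, \tau) \;\cong\; KO_2^u(M_2(\C) \otimes A, \sharp \otimes \tau),$$
and then invoke Theorem~\ref{K2class} (which, as in the proof of Proposition~\ref{K4iso}, gives $KO_2^u \cong KO_2$ in the unital setting) together with the K\"unneth isomorphism $KO_2(M_2(\C) \otimes A, \sharp \otimes \tau) \cong KO_6(A, \tau)$ from \cite{boersema02}.

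The first step is the identification at a fixed finite matrix level. For $u$ a self-adjoint unitary in $M_{4m}(A)$ with $u^{\sharp\otimes\tau} = -u$, viewed via the canonical identification $M_{4m}(A) \cong M_{2m}(M_2(\C) \otimes A)$, the preferred convention for $\sharp \otimes \tau$ from Section~\ref{sec:prelim} ensures that the condition $u^{\sharp \otimes \tau} = -u$ is exactly the condition for $u$ to lie in $U^{(2)}_\infty(M_2(\C) \otimes A,\, \sharp \otimes \tau) \cap M_{2m}(M_2(\C) \otimes A)$. Homotopies and direct sums correspond under this bijection, so restricted to even-sized matrix algebras the two semigroups agree term-by-term.

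The second step is to verify, as promised in the remark preceding the proposition, that restricting $KO^u_6(A, \tau)$ to representatives in $M_{4m}(A)$ does not change the group. The stabilization $\iota^{(6)}_n$ adds a $2\times 2$ block $I^{(6)} = \sm{0}{i}{-i}{0}$, so starting from a representative in $M_{4k+2}(A)$ (the "odd" case $n = 2k+1$), a single application yields a representative in $M_{4(k+1)}(A)$. Moreover, $I^{(6)} \oplus I^{(6)}$ is conjugate in $M_4(\C)$ by an element of $SO(4)$ to the image of $I^{(2)}$ under $M_2(\C) \hookrightarrow M_2(M_2(\C)) = M_4(\C)$, and conjugation by such $SO$-paths preserves the symmetry condition and yields an equivalence in $\sim_6$. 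Hence every class in $KO^u_6(A,\tau)$ has a representative in some $M_{4m}(A)$, and two such representatives are $\sim_6$-equivalent if and only if they are $\sim_2$-equivalent in $KO^u_2(M_2(\C) \otimes A,\, \sharp \otimes \tau)$.

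Chaining these isomorphisms yields
$$KO_6^u(A,\tau) \;\cong\; KO_2^u(M_2(\C) \otimes A,\, \sharp \otimes \tau) \;\cong\; KO_2(M_2(\C) \otimes A,\, \sharp \otimes \tau) \;\cong\; KO_6(A,\tau),$$
and the special case $KO^u_6(\C, \mathrm{id}) = 0$ follows from the known fact $KO_6(\R) = 0$. I expect the main obstacle to be the careful bookkeeping of the two conventions for $\sharp \otimes \tau$ on $M_{4m}(A)$, and checking that the stabilization step taking an odd-sized representative into the even-sized regime genuinely produces the equivalence relation one wants; once those conventions are tracked, the remaining content is parallel to the proof of Proposition~\ref{K4iso}.
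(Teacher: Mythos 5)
Your overall route is the same as the paper's: the paper obtains Proposition~\ref{K6iso} precisely by identifying $KO^u_6(A,\tau)$ with $KO^u_2(M_2(\C)\otimes A,\sharp\otimes\tau)$ under the preferred convention for $\sharp\otimes\tau$, citing the $KO_2^u\cong KO_2$ result and the K\"unneth isomorphism, and dismissing the $M_{2n}$-versus-$M_{4n}$ issue in a single sentence. So the only substantive content in your write-up is your second step, and there the specific justification you give does not work. Conjugation by a general element of $SO(4)$ does \emph{not} preserve the symmetry class $u^{\sharp\otimes\tau}=-u$: since $\sharp\otimes\tr$ is not the transpose on $M_4(\C)$, for real orthogonal $x$ one has $(xux^*)^{\sharp\otimes\tau}=(x^*)^{\sharp\otimes\tau}\,u^{\sharp\otimes\tau}\,x^{\sharp\otimes\tau}$, and this equals $-xux^*$ only when $x^{\sharp\otimes\tau}=x^{\tr}$, which is why Remark~\ref{rem:conj} restricts, for $3\le i\le 6$, to the block-scalar orthogonals $w\mapsto(w_{jk}\1_2)$. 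Worse, those allowed orthogonals commute with $I^{(6)}\oplus I^{(6)}=\1_2\otimes I^{(6)}$, so no admissible conjugation can carry it to $\sm{0}{i\1_2}{-i\1_2}{0}$, which is the image of $I^{(2)}$ under $M_2(\C\cdot\1)\subset M_2(M_2(\C)\otimes A)$.

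The compatibility of the two stabilizations that you need is nevertheless true; replace the conjugation by a homotopy within the symmetry class. First, in $M_2(\C)$ the class $\{u=u^*,\ u^\sharp=-u,\ u^2=1\}$ is the $2$-sphere $\bigl\{\sm{a}{b}{\bar b}{-a}: a^2+|b|^2=1\bigr\}$, so $I^{(6)}$ is joined to $-I^{(6)}$ within the class. Second, the rotation homotopy
$$u_\theta=\begin{pmatrix} \cos\theta\, I^{(6)} & i\sin\theta\,\1_2\\ -i\sin\theta\,\1_2 & -\cos\theta\, I^{(6)}\end{pmatrix},\qquad 0\le\theta\le\tfrac{\pi}{2},$$
consists of self-adjoint unitaries satisfying $u_\theta^{\sharp\otimes\tau}=-u_\theta$ and joins ${\diag}(I^{(6)},-I^{(6)})$ to $\sm{0}{i\1_2}{-i\1_2}{0}$ (this is the $\sharp\otimes\tau$ analogue of the homotopy in Proposition~\ref{K2inverse}; equivalently, $u\mapsto iu$ identifies the class in $M_4(\C)$ with the anti-self-adjoint unitaries in $M_2(\H)$, a connected space). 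With $I^{(6)}\oplus I^{(6)}\sim_6\sm{0}{i\1_2}{-i\1_2}{0}$ established this way, your identification of the two directed systems, and hence of $KO^u_6(A,\tau)$ with $KO^u_2(M_2(\C)\otimes A,\sharp\otimes\tau)$, goes through, and the rest of your argument agrees with the paper's intended proof.
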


\begin{prop} \label{K6group}
$KO_{6}^u(A, \tau)$ is a homotopy invariant functor from the category of unital \ctalg s to the category of abelian groups. The inverse of an element $[u]$ in $KO_{6}^u(A, \tau)$ is given by $[-u]$ if $u \in M_{2n}(A)$.
\end{prop}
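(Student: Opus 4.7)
The plan is to mirror the strategy used for $KO_4^u$, with an added path argument for inverses that is analogous to Proposition~\ref{K2inverse}. The first part of the statement — functoriality into abelian groups and homotopy invariance — follows immediately from Proposition~\ref{K6iso}, which furnishes a natural isomorphism $KO_6^u(A,\tau) \cong KO_6(A,\tau)$ and thus transports the group structure and natural transformations from the standard picture. Commutativity and well-definedness of the operation $[u]+[v]=[\sm{u}{0}{0}{v}]$ also follow from this isomorphism, since direct summation corresponds to the usual block-diagonal addition in $M_n(M_2(A))$.

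For the explicit inverse formula, given a self-adjoint unitary $u \in M_{2n}(A)$ with $u^{\sharp\otimes\tau}=-u$, my plan is to produce the continuous path
\begin{equation*}
  u_t \;=\; \begin{pmatrix}\cos(\pi t/2)\,u & i\sin(\pi t/2)\cdot 1_{2n} \\ -i\sin(\pi t/2)\cdot 1_{2n} & -\cos(\pi t/2)\,u\end{pmatrix}
\end{equation*}
in $M_{4n}(A)$, connecting $u_0=\sm{u}{0}{0}{-u}$ to $v:=\sm{0}{i\cdot 1_{2n}}{-i\cdot 1_{2n}}{0}$. A direct computation using $u^2 = 1_{2n}$ and $u^*=u$ shows each $u_t$ is a self-adjoint unitary. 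The condition $u_t^{\sharp\otimes\tau}=-u_t$ holds because $1_{2n}$ is fixed by $\sharp\otimes\tau$ (scalar blocks satisfy $1_2^\sharp=1_2$ and $\tau(1)=1$), so the off-diagonal entries $\pm i\cdot 1_{2n}$ are negated by $\sharp\otimes\tau$, while the diagonal entries $\pm u$ are negated by hypothesis. Hence $[u]+[-u]=[u_0]=[v]$ in $KO_6^u(A,\tau)$.

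Finally, the endpoint $v$ has scalar entries, i.e., it lies in the image of the natural unital inclusion $\C\hookrightarrow A$, so its class in $KO_6^u(A,\tau)$ factors through a class in $KO_6^u(\C,{\id})$. By Proposition~\ref{K6iso}, $KO_6^u(\C,{\id}) \cong KO_6(\C)=0$, so $[v]=0$ and $[-u]$ is the inverse of $[u]$, as claimed.

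The main subtlety — rather than a serious obstacle — lies in aligning the two conventions for $\sharp\otimes\tau$ on $M_{4n}(A)$: the block-wise first convention used in Definition~\ref{K6def} versus the outer $2\times 2$ block structure in which $u_t$ is most naturally written. One must verify that under the relevant reorganization, $u_t$ really does satisfy the symmetry condition throughout. This is a routine but notationally delicate bookkeeping, analogous to the careful tracking of $\taut$ versus $\tau$ in the proof of Theorem~\ref{K2class}. It is precisely this subtlety, together with the vanishing of $KO_6(\C)$, that makes the inverse formula cleaner here than for $KO_2^u$, where the parity obstruction in $KO_2(\R)\cong\Z_2$ forced a more involved case analysis.
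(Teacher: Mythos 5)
Your proposal is correct, and its overall shape matches the (largely implicit) argument in the paper: the first statement is obtained by citing Proposition~\ref{K6iso}, and the inverse formula comes from the same rotation homotopy that drives Proposition~\ref{K2inverse}, carrying $\mathrm{diag}(u,-u)$ to the scalar matrix $v=\sm{0}{i\cdot 1_{2n}}{-i\cdot 1_{2n}}{0}$. Where you genuinely diverge is in two places. First, you run the path argument directly in the $KO_6^u$ picture instead of transporting the problem to $U_\infty^{(2)}(M_2(\C)\otimes A,\sharp\otimes\tau)$ and quoting Proposition~\ref{K2inverse}, which is the route suggested by the paper's remark that the $KO_6^u$ development parallels that of $KO_4^u$. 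Second, and more interestingly, you dispose of the endpoint $[v]$ by functoriality through the unital inclusion $(\C,\mathrm{id})\hookrightarrow(A,\tau)$ together with $KO_6^u(\C,\mathrm{id})=0$, rather than by an explicit special-orthogonal conjugation of $v$ into standard form. This buys you something real: the $KO_2$ argument needs a parity analysis (the Pfaffian obstruction in $KO_2^u(\C,\mathrm{id})\cong\Z_2$), which is why Proposition~\ref{K2inverse} and Remark~\ref{rem:inverses} carry an ``$n$ even'' caveat, whereas your vanishing argument shows the formula $[u]^{-1}=[-u]$ with no parity restriction, exactly as the proposition is literally stated. One small point of care in writing it up: under the first convention for $\sharp\otimes\tau$ on $M_{4n}(A)$, the two off-diagonal $2n\times 2n$ blocks of $u_t$ are not negated in place; they are each fixed by the block-level involution and \emph{swapped}, and the symmetry $u_t^{\sharp\otimes\tau}=-u_t$ holds because those blocks are negatives of each other. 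Your phrasing glosses over the swap, but the computation does check out (and the top/bottom halves have even size $2n$, so the inner $2\times 2$ block structure is respected), so this is a presentational fix rather than a gap.
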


\begin{defn}
Let $(A, \tau)$ be any \ctalg .  Then we define 
$$KO^u_6(A, \tau) = \ker(\lambda_*)$$
where $\lambda_* \colon KO^u_6(\widetilde{A}, \tau) \rightarrow KO^u_6(\C, {\id})$.
\end{defn}

\begin{prop}
If $(A, \tau)$ is any \ctalg, then 
$$KO^u_6(A, \tau) \cong KO_6(A, \tau) \; .$$
\end{prop}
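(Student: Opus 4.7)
The plan is to reduce the assertion to the unital case established in Proposition~\ref{K6iso}, exploiting that both $KO^u_6$ and $KO_6$ vanish on $(\C,\id)$. By the definition of $KO_6^u$ for arbitrary \ctalg s given immediately before the statement, $KO^u_6(A,\tau)$ is the kernel of $\lambda_* \colon KO^u_6(\widetilde{A},\tau)\to KO^u_6(\C,\id)$, and Proposition~\ref{K6iso} tells us that this target is zero. Hence the inclusion $KO^u_6(A,\tau)\hookrightarrow KO^u_6(\widetilde{A},\tau)$ is an isomorphism.

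The same vanishing holds on the standard side: $KO_6(\C,\id)\cong KO_6(\R)=0$ from the table of $KO_j(\R)$ recorded in the proof of Proposition~\ref{thm:KAieven}, so likewise $KO_6(A,\tau)\cong KO_6(\widetilde{A},\tau)$. Combining this with the isomorphism $KO^u_6(\widetilde{A},\tau)\cong KO_6(\widetilde{A},\tau)$ obtained by applying Proposition~\ref{K6iso} to the unital \ctalg $(\widetilde{A},\tau)$ yields the desired $KO^u_6(A,\tau)\cong KO_6(A,\tau)$.

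There is essentially no obstacle: the argument directly mirrors the unstated proof of the analogous non-unital theorem for $KO_4^u$ stated immediately after the corresponding definition. If one wishes to produce a natural isomorphism rather than just an abstract one, the point to verify is that the identification in Proposition~\ref{K6iso} is functorial in $(A,\tau)$, which follows from its construction as a composite of the canonical isomorphism $U_\infty^{(6)}(B,\tau)\cong U_\infty^{(2)}(M_2(\C)\otimes B,\sharp\otimes\tau)$ on unitaries of size $4n$ together with the cofinality observation that restricting to such unitaries does not change the limit group.
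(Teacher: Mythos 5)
Your argument is correct and matches the paper's (implicit) proof, which likewise reduces the non-unital case to Proposition~\ref{K6iso} via the definition $KO^u_6(A,\tau)=\ker(\lambda_*)$ and the corresponding identification of $KO_6(A,\tau)$ inside $KO_6(\widetilde{A},\tau)$; your use of the vanishing of $KO^u_6(\C,\id)$ and $KO_6(\R)$ is just a slight streamlining of that same reduction. Your closing remarks on naturality and cofinality are consistent with how the paper treats the parallel $KO_4$ case.
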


\begin{prop} \label{K6unitpicture}
Let $(A, \tau)$ be a \ctalg. Any element of $K^u_6(A, \tau)$ can be represented as $[u]$ where $u \in M_{4n}(\widetilde{A})$ satisfies $u^2 = 1$, $u^{\sharp \otimes \tau} = u$, and $\lambda_*(u) = I_{n}^{(6)}$.
\end{prop}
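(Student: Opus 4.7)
The plan is to mimic the proof of Theorem~\ref{K4unitpicture}, replacing the pair $(KO_0^u, KO_4^u)$ by $(KO_2^u, KO_6^u)$ throughout. As noted in the proof of Proposition~\ref{K6iso}, the canonical identification $M_{2n}(M_2(\C) \otimes A) \cong M_{4n}(A)$ from Section~\ref{sec:prelim} matches self-adjoint unitaries $u \in M_{2n}(M_2(\C) \otimes A)$ satisfying $u^{\sharp \otimes \tau} = -u$ with self-adjoint unitaries $v \in M_{4n}(A)$ satisfying $v^{\sharp \otimes \tau} = -v$, and so induces an isomorphism $g \colon KO_2^u(M_2(\C) \otimes A, \sharp \otimes \tau) \to KO_6^u(A, \tau)$ upon passing to unitizations.

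Next I would assemble the commutative diagram
\[
\xymatrix{
KO_2^u(M_2(\C) \otimes A, \sharp \otimes \tau) \ar[r] \ar[d]^{\id} &
KO_2^u((M_2(\C) \otimes A)^\sim, \sharp \otimes \tau) \ar[r]^(0.7){\lambda_*} \ar[d]^{k_*} &
KO_2^u(\C, \id) \ar[d]^{\iota_*} \\
KO_2^u(M_2(\C) \otimes A, \sharp \otimes \tau) \ar[r] \ar[d]^{g} &
KO_2^u(M_2(\C) \otimes \widetilde A, \sharp \otimes \tau) \ar[r]^(0.7){\lambda_*} \ar[d]^{g} &
KO_2^u(M_2(\C), \sharp) \ar[d]^{g} \\
KO_6^u(A, \tau) \ar[r] &
KO_6^u(\widetilde{A}, \tau) \ar[r]^{\lambda_*} &
KO_6^u(\C, \id)
}
\]
with exact rows, where the top row comes from the split sequence $0 \to M_2(\C) \otimes A \to (M_2(\C) \otimes A)^\sim \to \C \to 0$, the middle row from tensoring $0 \to A \to \widetilde A \to \C \to 0$ by $M_2(\C)$, and $k_*$ from the canonical inclusion $(M_2(\C) \otimes A)^\sim \hookrightarrow M_2(\C) \otimes \widetilde A$. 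Given any $[v] \in KO_6^u(A, \tau) = \ker(\lambda_*)$, a diagram chase produces a class $[u] \in KO_2^u((M_2(\C) \otimes A)^\sim, \sharp \otimes \tau)$ with $\lambda_*([u]) = 0$ and $g k_*([u]) = [v]$. By Proposition~\ref{K2unitpicture} applied to the \ctalg~$(M_2(\C) \otimes A, \sharp \otimes \tau)$, I may represent $[u]$ by a self-adjoint unitary $u \in M_{2n}((M_2(\C) \otimes A)^\sim)$ with $u^{\sharp \otimes \tau} = -u$ and $\lambda(u) = I_n^{(2)}$, viewed inside $M_{2n}(M_2(\C))$ via the unital embedding $\C \hookrightarrow M_2(\C)$.

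Pushing $u$ through the identification $M_{2n}(M_2(\C) \otimes \widetilde A) \cong M_{4n}(\widetilde A)$ yields the desired $v \in M_{4n}(\widetilde A)$ with $v^2 = 1$ and $v^{\sharp \otimes \tau} = -v$. The main step to verify is that $\lambda(v)$ can be normalized to equal $I_n^{(6)}$: under the above identification, the image of the scalar $I_n^{(2)}$ becomes a block-diagonal self-adjoint element of $M_{4n}(\C)$ skew under $\sharp \otimes \tau$ which, after conjugation by a permutation in $SO(4n)$ commuting with $\sharp \otimes \tau$, equals $I_n^{(6)}$; since $KO_6^u(\C, \id) = 0$, any such conjugation does not affect $[v]$. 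The main obstacle is this bookkeeping between the two conventions for the action of $\sharp \otimes \tau$ on $M_{4n}$, but it is handled exactly as the analogous matching of $I_{2n}^{(0)}$ with $I_n^{(4)}$ in the proof of Theorem~\ref{K4unitpicture}.
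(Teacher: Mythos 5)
Your proposal is correct and takes essentially the same route the paper intends: the paper omits a proof for this case, saying the development parallels that of $KO_4^u$, and your three-row diagram together with Proposition~\ref{K2unitpicture} is exactly the $KO_2^u/KO_6^u$ analogue of the proof of Theorem~\ref{K4unitpicture}, with the only extra ingredient being the (correctly identified) permutation conjugation needed to match the image of $I_n^{(2)}$ with $I_n^{(6)}$ under the two block conventions for $\sharp\otimes\tau$. Note that you prove the representative satisfies $u^{\sharp\otimes\tau}=-u$, which is the symmetry actually required by Definition~\ref{K6def}; the relation ``$u^{\sharp\otimes\tau}=u$'' appearing in the statement (and in condition (1) of Definition~\ref{K6def}) is a sign typo in the paper.
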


\section{$K$-theory via unitaries -- the odd cases} \label{sec:odd}

\subsection{$KO_1$ via unitaries}

The following definitions and theorems represent the standard development of $KO_1(A)$ as in Chapter 8 of \cite{rordambookblue} for the complex case. They are included here for reference and terminology; and the proofs will be omitted as appropriate. 

\begin{defn} \label{defn:KO_1}
Let $A$ be a real unital \calg . Let $U_{\infty}^{(1)}(A)$ be the set of all unitaries $u$ in $\cup_{n \in \N} M_{n}(A)$. Let $\sim_1$ the equivalence relation on $U^{(1)}_{\infty}(A)$, generated by 
\begin{enumerate}
\item[(1)] $u_0 \sim_1 u_1$ if $u_t \in M_{n}(A)$ is a continuous path of unitaries for $t \in [0,1]$; 
and 
\item[(2)] $u \sim_1 \iota^{(1)}_n(u)$ for $u \in M_{n}(A)$ where 
$\iota_n^{(1)} \colon M_{n}(A) \rightarrow M_{n+1}(A)$ is given by
$$\iota_n^{(1)}(a) =  \sm{a}{0}{0}{1}
  = {{\diag}} (a,1) \; . $$
\end{enumerate}
Then we define $KO^u_1(A) = U_{\infty}^{(1)}(A)/\sim_1$, with a binary operation given by $[u] + [v] = \left[ \sm{u}{0}{0}{v} \right]$
for $u,v \in U_{\infty}^{(1)}(A)$.
\end{defn}

\begin{prop}
$KO_1^u(A)$ is a homotopy invariant functor from the category of unital \calg s to the category of abelian groups. The inverse of an element $[u]$ in $KO^u_1(A)$ is given by $[u^*]$.
\end{prop}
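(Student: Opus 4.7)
The plan is to adapt the standard complex-case argument (cf.\ \cite[Ch.~8]{rordambookblue}); the only point to watch is that every connecting path must lie in the real orthogonal group, so that the argument transfers verbatim from $\mathcal{C}$ to $\mathcal{R}^*$.

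First I would check that $+$ is well-defined on $\sim_1$-classes: the block-diagonal sum of two homotopies is a homotopy, and the stabilization move $u \sim_1 \iota_n^{(1)}(u)$ is compatible with $\oplus$ up to a real permutation matrix, which can be taken in $SO$ after harmless additional stabilization. Associativity of $+$ is immediate from the associativity of block sum, and $[\1]$ is a two-sided identity directly from the definition of $\iota_n^{(1)}$.

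The two remaining points -- commutativity and the inverse formula -- both follow from the real rotation
\[
R_t = \begin{pmatrix} \cos(\pi t/2)\,\1_n & -\sin(\pi t/2)\,\1_n \\ \sin(\pi t/2)\,\1_n & \cos(\pi t/2)\,\1_n \end{pmatrix} \in M_{2n}(\R),
\]
a continuous path of unitaries in $M_{2n}(\R)$ from $R_0 = \1_{2n}$ to a block ``swap'' rotation $R_1 \in SO(2n,\R)$. For commutativity, after stabilizing so that $u,v \in M_n(A)$, conjugation by $R_t$ gives a homotopy of unitaries from $\sm{u}{0}{0}{v}$ to $\sm{v}{0}{0}{u}$. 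For the inverse formula, the path
\[
w_t = R_t \, \sm{u}{0}{0}{\1_n} \, R_t^* \, \sm{\1_n}{0}{0}{u^*}
\]
is a product of unitaries (hence a path of unitaries) in $M_{2n}(A)$, and satisfies $w_0 = \sm{u}{0}{0}{u^*}$ and $w_1 = \1_{2n}$. This yields $[u] + [u^*] = [\1]$ in $KO^u_1(A)$.

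Finally, functoriality and homotopy invariance are routine. A unital $\R^*$-algebra homomorphism $\phi \colon A \to B$ sends unitaries in $M_n(A)$ to unitaries in $M_n(B)$ and preserves the generating relations for $\sim_1$, hence descends to a group homomorphism $\phi_* \colon KO_1^u(A) \to KO_1^u(B)$; functoriality in $\phi$ is transparent. A continuous family $\phi_t \colon A \to B$ sends any unitary $u$ along the continuous path of unitaries $t \mapsto \phi_t(u)$, so $(\phi_0)_* = (\phi_1)_*$ on classes. The only mildly subtle issue will be verifying that all interpolating matrices can be kept real, but the explicit formula for $R_t$ already accomplishes this, so no genuine obstacle arises.
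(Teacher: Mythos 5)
Your proof is correct, and it is exactly the argument the paper has in mind: for this proposition the paper explicitly omits the proof, citing the standard complex-case development in Chapter~8 of \cite{rordambookblue}, and your write-up is that standard argument (identity from stabilization, commutativity and the Whitehead-type inverse trick via the rotation $R_t$), together with the correct observation that all interpolating matrices can be taken real, so nothing new is needed in the category $\mathcal{R}^*$.
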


\begin{prop}
$KO^u_1(\R) \cong \Z_2$. The isomorphism $KO^u_1(\R) \rightarrow \Z_2$ is given by $[u] \mapsto  \det(u)$.
\end{prop}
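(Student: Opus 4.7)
The plan is to show directly that $\det$ descends to a well-defined surjective group homomorphism $KO_1^u(\R) \to \Z_2 = \{\pm 1\}$ and that it is injective. Throughout, a unitary in $M_n(\R)$ is simply an element of $O(n)$.

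First I would verify that $\det$ is well-defined on $KO_1^u(\R)$. The determinant is a continuous homomorphism $O(n) \to \{\pm 1\}$, so it is constant on any continuous path of orthogonal matrices, handling relation (1) of Definition~\ref{defn:KO_1}. For relation (2), note that $\det(\iota_n^{(1)}(u)) = \det({{\diag}}(u,1)) = \det(u)$, so stabilization preserves the determinant. The multiplicativity $\det({{\diag}}(u,v)) = \det(u)\det(v)$ then makes the induced map $\det \colon KO_1^u(\R) \to \Z_2$ a group homomorphism.

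Next I would check surjectivity, which is immediate: the $1 \times 1$ unitary $-1 \in M_1(\R)$ has determinant $-1$, so its class maps to the nontrivial element of $\Z_2$. In particular, this class is not the identity class, which already shows $KO_1^u(\R)$ is nontrivial.

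The main content is injectivity. Suppose $u \in M_n(\R)$ is an orthogonal matrix with $\det(u) = 1$, so $u \in SO(n)$. The classical fact that $SO(n)$ is path-connected for every $n \geq 1$ produces a continuous path of orthogonal matrices from $u$ to $\1_n$, showing $[u] = [\1_n] = 0$ in $KO_1^u(\R)$ by relation (1). Since any class in $KO_1^u(\R)$ is represented by some $u \in O(n)$, and the two possibilities $\det(u) = \pm 1$ yield at most two classes (one of which is the identity by the above, and the other the class of $-1$), we conclude that $\det$ is an isomorphism. The only potential obstacle is the path-connectedness of $SO(n)$, but this is standard (e.g., using that every rotation is a product of plane rotations, each of which is connected to the identity through $SO(2)$-rotations).
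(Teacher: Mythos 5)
Your proof is correct. The paper in fact gives no argument for this proposition: it is part of the block of statements imported as the ``standard development'' of $KO_1$ (modelled on Chapter~8 of \cite{rordambookblue}), with proofs explicitly omitted, and your argument is exactly the standard one that is being taken for granted --- $\det$ is locally constant on $O(n)$ and unaffected by the stabilization $u\mapsto{\diag}(u,1)$, hence descends to a homomorphism, and path-connectedness of $SO(n)$ kills the kernel. One small point worth making explicit: your claim that the $\det=-1$ unitaries give a single class needs either the observation that the non-identity component of $O(n)$ is also path-connected (it is a coset of $SO(n)$, so any $u$ with $\det u=-1$ is joined by a path in $O(n)$ to ${\diag}(-1,1,\dots,1)\sim_1 -1$), or simply the remark that, since $KO_1^u(\R)$ is a group by the preceding proposition, a homomorphism onto $\Z_2$ with trivial kernel is automatically an isomorphism; either one-line addition closes the gap.
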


\begin{defn}
Let $A$ be any unital \calg . Then we define
$KO^u_1(A) = \ker(\lambda_*)$ where $\lambda \colon \widetilde{A} \rightarrow \R$ is the natural projection from the unitization of $A$ with kernel isomorphic to $A$.
\end{defn}

\begin{prop} \label{K1unitpicture}
Let $A$ be a real \calg . Any element of $KO_1^u(A)$ can be represented as $[u]$ where $u \in M_n(\widetilde{A})$ is a unitary satisfying $\lambda(u) = 1_n$.
\end{prop}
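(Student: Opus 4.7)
The plan is to mimic the strategy of Proposition~\ref{K0unitpicture}: starting from an arbitrary representative $u \in M_n(\widetilde{A})$ of a class in $KO_1^u(A)$, I will produce a homotopy of unitaries from $u$ to a unitary $u'$ with $\lambda(u') = 1_n$.

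First, since $[u] \in KO_1^u(A) = \ker(\lambda_*)$, the image $[\lambda(u)]$ vanishes in $KO_1^u(\R)$. The preceding proposition identifies $KO_1^u(\R) \cong \Z_2$ via the determinant, so this vanishing gives $\det \lambda(u) = 1$, i.e.\ $\lambda(u) \in SO(n) \subseteq M_n(\R)$. If $n = 0$ or $n = 1$ there is nothing to prove, since then $\lambda(u) = 1_n$ automatically; otherwise we proceed with $n \geq 2$, and if desired we may stabilize first by passing from $u$ to $\iota_n^{(1)}(u) = \sm{u}{0}{0}{1}$, which does not alter the determinant of $\lambda$.

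Next, use that $SO(n)$ is path-connected for $n \geq 2$ to choose a continuous path $v_t$ in $SO(n)$ from $1_n$ to $\lambda(u)^{-1}$. Regarding $v_t$ as a path in $M_n(\widetilde{A})$ via the unital scalar inclusion $\R \hookrightarrow \widetilde{A}$, the product
\[
w_t \;:=\; u \cdot v_t
\]
is a continuous path of unitaries in $M_n(\widetilde{A})$ from $w_0 = u$ to $w_1 = u \cdot \lambda(u)^{-1}$. Setting $u' := u \cdot \lambda(u)^{-1}$, we compute
\[
\lambda(u') \;=\; \lambda(u)\,\lambda(u)^{-1} \;=\; 1_n,
\]
and by clause (1) of Definition~\ref{defn:KO_1} we have $[u] = [u']$ in $KO_1^u(A)$.

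There is essentially no main obstacle here; the only point requiring care is handling the low-dimensional cases where $SO(n)$-connectedness is vacuous or trivial, which is dispatched by the stabilization remark above. The argument parallels the treatment of $KO_0^u$ in Proposition~\ref{K0unitpicture}, with the role of ``change-of-basis by a matrix in $SO(n)$ with trivial determinant-class'' played directly by multiplication by $\lambda(u)^{-1} \in SO(n)$.
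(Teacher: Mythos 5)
Your argument is correct and is essentially the paper's own proof: the paper likewise deduces $\det\lambda(u)=1$ from $\lambda_*[u]=0$, picks a path in the (special) orthogonal group from $1_n$ to $\lambda(u)$, and moves $u$ along it (via $v_t = u(u_t)^*$, the same device as your $w_t = u\,v_t$) to reach a unitary with scalar part $1_n$. The only difference is your explicit remark about the low-dimensional cases, which the paper leaves implicit.
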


\begin{proof}
Let $u \in M_n(\widetilde{A})$ be a unitary element satisfying $\lambda_*([u]) = 0 \in KO^u_1(\R)$.  This implies that $\det(\lambda(u)) = 1$ so there is a path $u_t$ of unitaries in $M_n(\R)$ such that $u_0 = 1_n$ and $u_1 = \lambda(u)$. Then $v_t = u (u_t)^*$ is a path of unitaries in $M_n(\widetilde{A})$ such that $v_0 = u$ and $\lambda(v_1) = \lambda(u) u_1^* = 1_n$.
\end{proof}

\begin{prop}
For any real \calg~ there is an isomorphism 
$$\Gamma \colon KO^u_1(A) \rightarrow KO^u_0(SA)$$ 
given as follows.
Suppose that $u \in M_n(\widetilde{A})$ is a unitary, satisfying $\lambda(u) = 1_n$. Let $v_t \in M_{2n}(\widetilde{A})$ be a continuous path of unitaries such that $v_0 = \1_{2n}$, $v_1 = {{\diag}}(u, u^*)$, and $\lambda(v_t) = 1_{2n}$ for all $t \in [0,1]$.
Then
$\Gamma([u]) = \left[ 2 v_t \sm{\1_n}{0}{0}{0} v_t^* - \1_{2n} \right]$.
\end{prop}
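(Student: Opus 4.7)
The plan is to adapt the classical Whitehead-lemma construction of the suspension isomorphism $K_1(A)\cong K_0(SA)$, carried out here entirely in our unitary pictures of both groups. First I would establish the existence of the path $v_t$: by the standard Whitehead rotation, the element ${{\diag}}(u,u^*)$ is always connected to $\1_{2n}$ through a continuous path of unitaries in $M_{2n}(\widetilde A)$ with $\lambda$-value $\1_{2n}$, obtained by a real-scalar rotation in $SO(2n)$ that interchanges the $u$-block with the $u^*$-block. Direct computation then shows that $w_t := 2 v_t \sm{\1_n}{0}{0}{0} v_t^* - \1_{2n}$ is a self-adjoint unitary at each $t$ with $w_0 = w_1 = I_n^{(0)}$ (since $v_1 \sm{\1_n}{0}{0}{0} v_1^* = \sm{\1_n}{0}{0}{0}$), so by Proposition~\ref{K0unitpicture} its class lies in $KO_0^u(SA)$.

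Next I would verify that $\Gamma$ is well-defined on equivalence classes. Stability under $u \mapsto \iota_n^{(1)}(u)$ is immediate because the formula respects direct sums with $\1$, and a continuous deformation of $u$ through unitaries with $\lambda = \1_n$ lifts to a continuous deformation of $v_t$ and hence of $w_t$ with constant boundary $I_n^{(0)}$. The most delicate point, and the main obstacle I foresee, is independence from the choice of $v_t$: two such paths $v_t,v_t'$ differ by a loop $\gamma_t = (v_t')^* v_t$ of unitaries in $M_{2n}(\widetilde A)$ based at $\1_{2n}$, and one must show that the loop of self-adjoint unitaries obtained by conjugating $I_n^{(0)}$ by $\gamma_t$ represents zero in $KO_0^u(SA)$. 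I would handle this by stabilising to $M_{4n}$ and using the Whitehead lemma to deform $\gamma_t \oplus \1_{2n}$ into a path of block-diagonal form ${{\diag}}(\alpha_t,\1_{2n})$; such a path leaves the stabilised reflection essentially invariant under conjugation, producing the required null-homotopy in $U_\infty^{(0)}(\widetilde{SA})$.

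Additivity of $\Gamma$ follows by taking $v_t \oplus v_t'$ as the path for $u \oplus u'$ and then applying a permutation in $SO(4n)$ to sort the result into the canonical block form, which alters the representative only within the $\sim_0$-class. For the bijection, the cleanest approach is to combine Theorem~\ref{K0iso} with the classical identification $KO_1^u(A) \cong KO_1(A) \cong KO_0(SA)$ (see \cite{rordambookblue}): under the translation $p \leftrightarrow 2p-\1$ between projections and self-adjoint unitaries, $\Gamma$ reproduces exactly the standard suspension isomorphism, sending $[u]$ to the class of the projection path $v_t \sm{\1_n}{0}{0}{0} v_t^*$ modulo the constant projection $\sm{\1_n}{0}{0}{0}$. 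Alternatively, one can construct an explicit inverse by solving the standard lifting problem for any self-adjoint unitary loop at $I_n^{(0)}$, reading off the unitary $u$ from the associated projection path.
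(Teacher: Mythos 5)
Your proposal is essentially the paper's proof: the paper simply observes that the projection-valued formula $\left[v\,\sm{\1_n}{0}{0}{0}\,v^*\right]-[\1_n]$ is the classical suspension isomorphism $KO_1^u(A)\cong KO_0(SA)$ (Theorem~10.1.3 of R{\o}rdam et al., carrying over verbatim to the real case) and then composes with $\Theta$ from Theorem~\ref{K0iso}, which is exactly your concluding step. Your additional direct verifications are harmless and are subsumed by that citation, though for independence of the path the natural conjugating element is $\delta_t=v_t(v_t')^*$, a loop based at $\1_{2n}$ satisfying $w_t=\delta_t w_t'\delta_t^*$, rather than $\gamma_t=(v_t')^*v_t$ as you wrote, since only $\delta_t$ defines a unitary in $M_{2n}(\widetilde{SA})$ implementing the equivalence.
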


\begin{proof}
There is an isomorphism $\gamma \colon KO^u_1(A) \rightarrow KO_0(SA)$ 
given by 
$$\gamma([u]) = \left[ v \sm{\1_n}{0}{0}{0} v^* \right] - \left[ \1_n \right]$$ where $v$ is as in the statement of the theorem above. This is well-known in the complex case (see for example Theorem~10.1.3 of \cite{rordambookblue}) and works the same in the real case. Then the isomorphism $\Gamma$ is given by $\Gamma = \Theta \circ \gamma$ where $\Theta$ is from Theorem~\ref{K0iso}.
\end{proof}

\begin{cor}
For any real \calg~ $A$, there is a natural isomorphism $KO^u_1(A) \cong KO_1(A)$.
\end{cor}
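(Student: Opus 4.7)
The plan is to obtain the isomorphism by composing two already-established isomorphisms. The immediately preceding proposition constructs a natural isomorphism
$$\Gamma \colon KO^u_1(A) \xrightarrow{\cong} KO^u_0(SA),$$
so it suffices to produce a natural isomorphism $KO^u_0(SA) \cong KO_1(A)$ and then compose.

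For the second isomorphism, I would invoke Theorem~\ref{K0iso}, which gives the natural isomorphism $\theta \colon KO^u_0(SA) \xrightarrow{\cong} KO_0(SA)$, and then use the standard suspension isomorphism $KO_0(SA) \cong KO_1(A)$ from real $K$-theory (which we are taking for granted per the remarks in Section~\ref{sec:prelim}). The composite
$$KO^u_1(A) \xrightarrow{\Gamma} KO^u_0(SA) \xrightarrow{\theta} KO_0(SA) \xrightarrow{\cong} KO_1(A)$$
is then the desired isomorphism, and it is natural in $A$ because each factor is natural in $A$.

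There is no real obstacle here since all the hard work has been done; the only thing worth double-checking is that the composition respects the usual identifications. In particular, if $u \in M_n(\widetilde{A})$ is a unitary with $\lambda(u) = 1_n$, tracking $[u]$ through the composite yields the class in $KO_1(A)$ represented by $u$ in the usual sense, confirming that the isomorphism is indeed the expected one and that it is compatible with the forgetful identifications on those elements for which both sides have an a priori meaning.
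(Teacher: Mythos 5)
Your proposal is correct and matches the paper's (implicit) argument: the corollary is stated without proof precisely because it follows by composing the isomorphism of the preceding proposition (equivalently, the map $\gamma$ constructed in its proof) with the isomorphism of Theorem~\ref{K0iso} and the standard suspension isomorphism $KO_0(SA) \cong KO_1(A)$, exactly as you do. Your remark on naturality and on tracking a representative $u$ through the composite is a reasonable sanity check but adds nothing beyond what the paper intends.
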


We end this section by giving a rephrase of the definition of $KO_0^u$ in the context of a \ctalg. This gives a description of $KO_1^u(A, \tau)$ that is parallel to the forthcoming descriptions of $KO_j^u(A, \tau)$ for all values of $j$.

\begin{defn}
Let $(A, \tau)$ be a unital \ctalg. Let $U_{\infty}^{(1)}(A, \tau)$ be the set of all unitaries $u$ in $\cup_{n \in \N} M_{n}(A)$ satisfying $u^\tau = u^*$. Let $\sim_1$ be the equivalence relation on $U^{(1)}_{\infty}(A, \tau)$, generated by 
\begin{enumerate}
\item[(1)] $u_0 \sim_1 u_1$ if $u_t \in M_{n}(A)$ is a continuous path of unitaries satisfying $u_t^\tau = u_t^*$; 
and 
\item[(2)] $u \sim_1 \iota^{(1)}_n(u)$ for $u \in M_{n}(A)$ where 
$\iota^{(1)}_n \colon M_{n}(A) \rightarrow M_{n+1}(A)$ is given by
$$\iota^{(1)}_n(a) = {{\diag}} \left(a, 1  \right) \; . $$
\end{enumerate}
Then we define $KO^u_1(A, \tau) = U_{\infty}^{(1)}(A, \tau)/\sim_1$, with a binary operation given by $[u] + [v] = \left[\sm{u}{0}{0}{v}\right]$
for $u,v \in U_{\infty}^{(1)}(A, \tau)$.
\end{defn}

As in Proposition~\ref{K0iii}, the sets $U_\infty^{(1)}(A, \tau)$ and $U_\infty^{(1)}(A^\tau)$ are easily seen to be identical.

\begin{prop}
Let $(A, \tau)$ be a \ctalg~ and let $A^\tau = \{a \in A \mid a^\tau = a^* \}$ be the associated real \calg .  Then there is an isomorphism
$KO_1^u(A, \tau) \cong KO_1^u(A^\tau)$.
\end{prop}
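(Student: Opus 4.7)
The plan is to observe that the two semigroups $U_\infty^{(1)}(A,\tau)$ and $U_\infty^{(1)}(A^\tau)$ are literally identical as sets, with identical equivalence relations and identical binary operations, so the asserted isomorphism is simply the identity map. The proof runs exactly parallel to Proposition~\ref{K0iii}, and amounts to unpacking definitions.

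First I would verify the key pointwise identification: for an element $u = (a_{ij}) \in M_n(A)$, the formula $\tau_n = \tr_n \otimes \tau$ gives $u^{\tau_n} = (a_{ji}^\tau)$, while $u^* = (a_{ji}^*)$. Therefore the symmetry condition $u^{\tau_n} = u^*$ is equivalent to requiring $a_{ij}^\tau = a_{ij}^*$ for each entry, i.e.\ $a_{ij} \in A^\tau$ for all $i,j$. This gives the identification $M_n(A^\tau) = \{x \in M_n(A) \mid x^{\tau_n} = x^*\}$ already noted in Section~\ref{sec:prelim}, under which a unitary $u \in M_n(A)$ satisfying $u^\tau = u^*$ is exactly a unitary in $M_n(A^\tau)$. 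Consequently $U_\infty^{(1)}(A,\tau) = U_\infty^{(1)}(A^\tau)$ as sets.

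Next I would check that the generating relations of $\sim_1$ are the same in both pictures. A continuous path $u_t \in M_n(A)$ of unitaries with $u_t^\tau = u_t^*$ for each $t$ is, by the same pointwise identification applied at each $t$, nothing other than a continuous path of unitaries in $M_n(A^\tau)$. The stabilization map $\iota_n^{(1)}(a) = \diag(a,1)$ is defined by the same formula in both frameworks, and $1 \in A^\tau$ so there is no compatibility issue. Finally, the semigroup operation $[u] + [v] = [\diag(u,v)]$ is given by the same formula in both settings.

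Since the underlying sets, the generating relations for $\sim_1$, and the binary operation all coincide, the semigroups $KO_1^u(A,\tau)$ and $KO_1^u(A^\tau)$ are literally the same, and the identity map is the asserted natural isomorphism. I do not foresee any obstacle; the statement is a formal consequence of the standing identification between $M_n(A^\tau)$ and the $\tau_n$-fixed real form of $M_n(A)$ established in Section~\ref{sec:prelim}.
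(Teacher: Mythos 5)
Your proposal is correct and is essentially the paper's own argument: the paper likewise notes (as in Proposition~\ref{K0iii}) that $U_\infty^{(1)}(A,\tau)$ and $U_\infty^{(1)}(A^\tau)$ are identical sets with the same equivalence relation and operation, so the isomorphism is immediate. Your explicit entrywise check that $u^{\tau_n}=u^*$ means all entries lie in $A^\tau$ just makes the standing identification $M_n(A^\tau)=\{x\in M_n(A)\mid x^{\tau_n}=x^*\}$ explicit.
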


\subsection{$KO_{-1}$ via unitaries}

\begin{defn}
Let $(A, \tau)$ be a unital \ctalg. Let $U_{\infty}^{(-1)}(A, \tau)$ be the set of all unitaries $u$ in $\cup_{n \in \N} M_{n}(A)$ that satisfy $u^\tau = u$. Let $\sim_{(-1)}$ be the equivalence relation on $U^{(-1)}_{\infty}(A, \tau)$, generated by 
\begin{enumerate}
\item[(1)] $u_0 \sim_{(-1)} u_1$ if $u_t \in M_{n}(A)$ is a continuous path of unitaries satisfying $u_t^\tau = u_t$; 
and 
\item[(2)] $u \sim_{(-1)} \iota^{(-1)}_n(u)$ for $u \in M_{n}(A)$ where 
$\iota_n^{(-1)} \colon M_{n}(A) \rightarrow M_{n+1}(A)$ is given by
$$\iota_n^{(-1)}(a) 
  = {{\diag}} (a,1) \; . $$
\end{enumerate}
Then we define $KO^u_{-1}(A, \tau) = U_{\infty}^{(-1)}(A, \tau)/\sim_{(-1)}$, with a binary operation given by $[u] + [v] = \left[ \sm{u}{0}{0}{v} \right]$
for $u,v \in U_{\infty}^{(-1)}(A, \tau)$.
\end{defn}

\begin{prop} \label{K-1r}
$KO^u_{-1}(\C, {\id}) = 0$.
\end{prop}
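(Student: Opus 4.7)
The plan is to show directly that every symmetric unitary $u \in M_n(\C)$ is path-connected to $1_n$ through symmetric unitaries, which will imply $[u] = 0$ in $KO^u_{-1}(\C, \mathrm{id})$ for every representative.

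First, I would unpack what $U^{(-1)}_\infty(\C, \mathrm{id})$ consists of. Since $\tau = \mathrm{id}$ on $\C$, the induced involution on $M_n(\C)$ is just the ordinary transpose $\mathrm{tr}_n$. Hence $U^{(-1)}_\infty(\C, \mathrm{id})$ is the union, over $n$, of the sets of complex symmetric unitaries $u \in M_n(\C)$, and the equivalence relation is generated by paths of symmetric unitaries and by stabilization with the identity.

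The key ingredient is the Autonne--Takagi factorization: any symmetric matrix $u \in M_n(\C)$ admits a decomposition $u = v D v^{\mathrm{tr}}$ with $v \in U(n)$ and $D$ diagonal with non-negative real entries equal to the singular values of $u$. When $u$ is unitary all singular values equal $1$, so $D = 1_n$ and we obtain $u = v v^{\mathrm{tr}}$ for some unitary $v$. I would cite this (or sketch the short argument: diagonalize the self-adjoint unitary $\bar{u} u$, transport the eigenspaces to the standard basis by a unitary $v$, and observe the resulting matrix is real and orthogonal, hence a product $v v^{\mathrm{tr}}$).

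With this in hand, the conclusion is immediate: since $U(n)$ is path-connected, pick a continuous path $v_t$ of unitaries with $v_0 = 1_n$ and $v_1 = v$, and set $u_t = v_t v_t^{\mathrm{tr}}$. Then $u_t^{\mathrm{tr}} = u_t$ and $u_t$ is unitary for each $t$, so this is a path in $U^{(-1)}_\infty(\C,\mathrm{id})$ from $1_n$ to $u$. Thus every class in $KO^u_{-1}(\C,\mathrm{id})$ agrees with $[1_n]$, which is the identity element, and the group is trivial. I do not anticipate any obstacles beyond invoking Autonne--Takagi; everything else is formal.
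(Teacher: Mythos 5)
Your proof is correct and is essentially the paper's own argument: the paper also writes every symmetric unitary as $u = v^{\tr}v$ (citing Corollary~4.4.4 of Horn--Johnson, which is precisely the Autonne--Takagi factorization you invoke) and then contracts along $u_t = v_t^{\tr}v_t$ using path-connectedness of $U(n)$. The only cosmetic difference is $vv^{\tr}$ versus $v^{\tr}v$, which is immaterial.
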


\begin{proof}
Let $u \in M_n(\C)$ be a unitary element such that $u^{\tr} = u$. By Corollary~4.4.4 of \cite{hornbook}, there exists a unitary $v$ such that $u = v^{\tr} v$. Since the group of unitaries in $M_n(\C)$ in path connected, we can find a path $v_t$ from $v$ to $1_n$ and let $u_t = v_t^{\tr} v_t$ be the path of unitaries from $u$ to $\1_n$ satisfying $u_t^{\tr} = u_t$. 
\end{proof}

\begin{prop} \label{K-1group}
$KO_{-1}^u(A, \tau)$ is a homotopy invariant functor from the category of unital \ctalg s to the category of abelian groups. The inverse of an element $[u]$ in $KO_{-1}^u(A, \tau)$ is given by $[u^*]$.
\end{prop}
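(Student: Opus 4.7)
The plan is to verify the group axioms in turn, following the template set for $KO_0^u$ in Proposition~\ref{K0group} and the standard treatment of $KU_1$. Associativity is immediate from the block-diagonal formula, and the class $[\1]$ serves as the identity by virtue of the inclusions $\iota_n^{(-1)}$. Functoriality is clear since \ctalg~homomorphisms preserve the relation $u^\tau = u$, and homotopy invariance follows because a homotopy $\varphi_s$ of \ctalg~homomorphisms sends a $\tau$-symmetric unitary to a continuous path of the same.

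For commutativity of $[u] + [v]$ and $[v] + [u]$, after equalizing dimensions by padding with $\1$ so that both lie in $M_n(A)$, I would conjugate $\mathrm{diag}(u,v)$ by the real rotation $r_t = \sm{\cos(\pi t/2)\,\1_n}{-\sin(\pi t/2)\,\1_n}{\sin(\pi t/2)\,\1_n}{\cos(\pi t/2)\,\1_n}$. Since $r_t$ has real entries, $r_t^\tau = r_t^{\tr} = r_t^*$, so $r_t \, \mathrm{diag}(u,v)\, r_t^*$ remains $\tau$-symmetric for every $t$; at $t=1$ it specializes to $\mathrm{diag}(v,u)$.

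The main obstacle is establishing inverses, namely that $[u] + [u^*] = 0$ for a $\tau$-symmetric unitary $u \in M_n(A)$. The usual rotation trick from complex $K_1$-theory factors $\mathrm{diag}(u, u^*)$ as a product of three terms rather than as a single conjugation, and so fails to preserve the $\tau$-symmetry along the homotopy. In its place I would use the explicit one-parameter family
\[
v_t = \begin{pmatrix} \cos(\pi t/2) \, u & i \sin(\pi t/2) \, \1_n \\ i \sin(\pi t/2) \, \1_n & \cos(\pi t/2) \, u^* \end{pmatrix}, \qquad t \in [0,1].
\]
A direct calculation gives $v_t v_t^* = \1_{2n}$, and $v_t^\tau = v_t$ because $\tau$ is $\C$-linear (so fixes $i$), $u^\tau = u$, and $(u^*)^\tau = (u^\tau)^* = u^*$. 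This path runs from $v_0 = \mathrm{diag}(u, u^*)$ to the $\tau$-symmetric unitary $v_1 = \sm{0}{i\1_n}{i\1_n}{0}$. A second path of the same shape, with $u$ replaced by $\1_n$ and traversed in reverse, connects $\sm{0}{i\1_n}{i\1_n}{0}$ to $\1_{2n}$ through $\tau$-symmetric unitaries, yielding $[u]+[u^*] = [\1_{2n}] = 0$.
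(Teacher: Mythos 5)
Your argument is correct, and its backbone — rotation homotopies kept inside the $\tau$-symmetric unitaries — is the same as the paper's; the differences are in the endgame and are worth comparing. For inverses, the paper uses the path $\sm{\cos(\pi t/2)\,u}{\sin(\pi t/2)\,\1_n}{\sin(\pi t/2)\,\1_n}{-\cos(\pi t/2)\,u^*}$, which runs from ${\diag}(u,-u^*)$ to $\sm{0}{\1_n}{\1_n}{0}$; it therefore needs two auxiliary steps, namely the phase rotation ${\diag}(u,\lambda u^*)$ to pass from ${\diag}(u,u^*)$ to ${\diag}(u,-u^*)$, and an appeal to the factorization $u=v^{\tr}v$ from Proposition~\ref{K-1r} to contract $\sm{0}{\1_n}{\1_n}{0}$ to $\1_{2n}$. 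Your variant, with $i\sin(\pi t/2)$ off the diagonal and $+\cos(\pi t/2)\,u^*$ in the corner, goes straight from ${\diag}(u,u^*)$ to $\sm{0}{i\1_n}{i\1_n}{0}$, which contracts to $\1_{2n}$ by the same formula with $u=\1_n$; your symmetry check (that $\tau$ fixes $i$ and commutes with $*$) is exactly what is needed, so this is a modest but genuine streamlining. For commutativity, the paper treats unequal block sizes directly: it conjugates by the swap permutation and, when both sizes are odd, restores determinant $+1$ by multiplying with ${\diag}(\1_n,-\1_m)$, which acts trivially on ${\diag}(u,v)$. Your route — pad with units to equalize sizes, then rotate — works too, but note that the relation $u\sim{\diag}(u,1)$ only appends units at the lower right, so identifying ${\diag}(u,\1_k,v)$ with ${\diag}(u,v,\1_k)$ still requires moving an identity block past $v$, i.e.\ a conjugation by a permutation whose determinant may be $-1$; the same sign trick, with the compensating $-1$ placed inside the identity block, repairs this, and you should either say so or simply adopt the paper's signed-permutation argument.
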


\begin{proof}
We leave the question of functoriality and homotopy invariance to the reader, and show that the binary operation is commutative. 

Let $u \in M_n(A)$ and $v \in M_m(A)$ be unitaries satisfying $u^\tau = u$ and $v^\tau = v$. 
First we claim that there exists a unitary $w$ in $M_{n+m}(\R)$ such that $w \sm{u}{0}{0}{v} w^* = \sm{v}{0}{0}{u}$ and $\det w = 1$. If either $m$ or $n$ is even, then $w$ can be taken to be the obvious change of basis matrix corresponding to an even permutation of the basis elements. On the other hand if $m$ and $n$ are both odd then let $w'$ be the odd permutation matrix that satisfies $w' \sm{u}{0}{0}{v} (w')^* = \sm{v}{0}{0}{u}$. Then let $w = w' \sm{1_n}{0}{0}{-1_m}$. This proves the claim. Then let $w_t$ be a path of special orthogonal matrices from $1_{n+m}$ to $w$ and consider the path $x_t = w_t \sm{u}{0}{0}{v} w_t^*$. Verifying that this satisfies $x_t^\tau = x_t$ completes the proof.

For the statement about inverses, we claim that if $u$ is a unitary in $M_n(A)$ that satisfies $u^\tau = u$, then $\sm{u}{0}{0}{u^*} \sim_{(-1)} \sm{1_n}{0}{0}{1_n}$. Indeed,
$$v_t = \begin{pmatrix}{\cos \left((\pi/2) t  \right) \cdot u}&{\sin \left((\pi/2) t  \right) \cdot 1_n }\\
{\sin \left((\pi/2) t  \right) \cdot 1_n}&{-\cos \left((\pi/2) t  \right) \cdot u^*}
\end{pmatrix} $$
is a homotopy from $\sm{u}{0}{0}{-u^*}$ to $\sm{0}{1_n}{1_n}{0}$ satisfying $v_t^\tau = v_t$.
Since any matrix of the form $w = \sm{u}{0}{0}{\lambda u^*}$ for $|\lambda| = 1$ satisfies $w^\tau = w$, we have $\sm{u}{0}{0}{u^*} \sim_{(-1)} \sm{u}{0}{0}{-u^*}$. The argument given in the proof of Proposition~\ref{K-1r} above implies that $\sm{0}{1_n}{1_n}{0} \sim_{(-1)} \sm{1_n}{0}{0}{1_n}$.
\end{proof}

\begin{defn} 
Let $(A, \tau)$ be any \ctalg. Then we define
$$KO^u_{-1}(A, \tau) = \ker(\lambda_*) \; .$$
\end{defn}

Since $KO^u_{-1}(\C, {\id}) = 0$ it follows that $KO^u_{-1}(A, \tau) = KO^u_{-1}(\widetilde{A}, \tau)$. 

\begin{prop} \label{K-1unitpicture}
Let $(A, \tau)$ be a \ctalg~and let $\widetilde{A}$ be the unitization. Any element of $KO^u_{-1}(A, \tau)$ can be represented as $[u]$ where $u \in M_n(\widetilde{A})$ is a unitary satisfying $u^\tau = u$ and $\lambda(u) = 1_n$.
\end{prop}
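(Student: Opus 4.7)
The plan is to mimic the structure of Proposition~\ref{K1unitpicture}, replacing the determinant/path-connectedness argument in $U(n,\R)$ with the Autonne-Takagi factorization of complex symmetric unitaries, which is already invoked in the proof of Proposition~\ref{K-1r}. Given a unitary $u \in M_n(\widetilde{A})$ with $u^\tau = u$, the image $\lambda(u) \in M_n(\C)$ is a symmetric unitary, since $\tau$ restricted to the canonical copy of $\C$ in $\widetilde{A}$ is the identity, so the induced involution on $M_n(\C)$ is just the transpose $\tr$. Note that the class $[u] \in KO^u_{-1}(\widetilde{A}, \tau)$ automatically lies in $\ker \lambda_*$ since $KO^u_{-1}(\C, \id) = 0$ by Proposition~\ref{K-1r}, so there is no additional hypothesis to check.

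The key algebraic step is to find a unitary $w \in M_n(\C)$ such that $w \lambda(u) w^\tr = 1_n$. By Autonne-Takagi (Corollary~4.4.4 of \cite{hornbook}), write $\lambda(u) = v^\tr v$ for some unitary $v \in M_n(\C)$; then $w := v^{-\tr}$ satisfies $w \lambda(u) w^\tr = v^{-\tr} v^\tr v v^{-1} = 1_n$. Since the unitary group of $M_n(\C)$ is path-connected, I can select a continuous path $w_t$ of unitaries in $M_n(\C)$ with $w_0 = 1_n$ and $w_1 = w$, which I then view as lying in $M_n(\widetilde{A})$ via the canonical inclusion $\C \hookrightarrow \widetilde{A}$.

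Define $u_t = w_t u w_t^\tau$. The essential observation is that conjugation of this particular form preserves the relation $u^\tau = u$: indeed, $(w_t u w_t^\tau)^\tau = w_t^{\tau\tau} u^\tau w_t^\tau = w_t u w_t^\tau$, so $u_t$ is a homotopy in $U_\infty^{(-1)}(\widetilde{A}, \tau)$ from $u_0 = u$ to $u_1$, giving $u \sim_{(-1)} u_1$. Applying $\lambda$ at $t=1$ yields $\lambda(u_1) = \lambda(w) \lambda(u) \lambda(w)^\tr = 1_n$ by construction, producing the required representative.

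There is no serious obstacle here. The only point requiring a bit of care is bookkeeping with conventions: verifying that $\tau$ on $M_n(\widetilde{A})$ restricts to $\tr$ on the canonical $M_n(\C) \subseteq M_n(\widetilde{A})$, and confirming that the map $u \mapsto w u w^\tau$ (rather than the more usual $u \mapsto w u w^*$) is the correct conjugation operation to preserve the relation $u^\tau = u$. Once those conventions are pinned down, the argument is a direct analog of the $KO^u_1$ case.
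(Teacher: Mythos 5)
Your proposal is correct and takes essentially the same route as the paper: both invoke the Autonne--Takagi factorization of the symmetric unitary $\lambda(u)$ (as in Proposition~\ref{K-1r}), choose a path of complex unitaries to the resulting conjugator, and apply the congruence $u \mapsto w_t u w_t^{\tau}$, which preserves the symmetry $u^{\tau}=u$ and moves $\lambda(u)$ to $1_n$. The paper writes the conjugator as $(y^*)^{\tr}$ where $\lambda(u)=y^{\tr}y$, which coincides with your $w=v^{-\tr}$, so the two arguments are the same up to notation.
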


\begin{proof}
Let $u$ be a unitary in $M_n(\widetilde{A})$ such that $u^\tau = u$. Let $x = \lambda(u) \in M_n(\C)$. As in the proof of Proposition~\ref{K-1r}, write $x = y^{\tr} y$ where $y$ is a unitary in $M_n(\C)$ and let $y_t$ be a path of unitaries from $1_n$ to $y$. Then $z_t = (y_t^*)^{\tr} u y_t^*$ is a path from $u$ to a unitary $z_1$ in $M_n(\widetilde{A})$ that satisfies $\lambda(z_1) = 1_n$.  Also, note that we have $z_t^\tau = z_t$ for all $t$. 
\end{proof}

\begin{prop}
For any \ctalg~$(A, \tau)$, there is a natural isomorphism 
$KO^u_{-1}(A, \tau) \cong [(C_0(S^1), {\id}), (\K\pr \otimes\sr A, \tau)] $.
\end{prop}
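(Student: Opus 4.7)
The plan is to exploit the fact that $\widetilde{A_{-1}} \cong (C(S^1, \mathbb{C}), \mathrm{id})$ is the universal \ctalg~generated by a single unitary $z$ with $z^{\mathrm{id}} = z$, so that \ctalg-homomorphisms out of $A_{-1}$ are parametrized by symmetric unitaries. First I would use semiprojectivity of $A_{-1}$ (Proposition~\ref{Aoddsemiproj}) together with Lemma~\ref{lemma:semipro} to rewrite
$$[(A_{-1}, \mathrm{id}), (\K\pr \otimes A, \tau)] \cong \lim_{n \to \infty} [(A_{-1}, \mathrm{id}), (M_n(A), \tau)] \; .$$

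Next I would define the candidate natural transformation $\Theta$ at each stage: given a \ctalg-homomorphism $\phi \colon A_{-1} \to M_n(A)$, its unitization $\widetilde\phi \colon \widetilde{A_{-1}} \to M_n(\widetilde A)$ sends $z$ to a unitary $u = \widetilde\phi(z) \in M_n(\widetilde A)$ satisfying $u^\tau = u$ (since $z^{\mathrm{id}} = z$) and $\lambda(u) = 1_n$ (since $\phi$ is non-unital, forcing $\lambda \circ \widetilde\phi$ to be evaluation at the basepoint $1 \in S^1$). Set $\Theta([\phi]) = [u] \in KO_{-1}^u(A, \tau)$. Homotopies of homomorphisms give continuous paths of symmetric unitaries, and the stabilization $M_n(A) \hookrightarrow M_{n+1}(A)$, $a \mapsto \mathrm{diag}(a, 0)$, induces on the unitization precisely the map $\iota^{(-1)}_n$ appearing in the definition of $KO_{-1}^u$, so $\Theta$ is well defined on the inductive limit and respects block-sum addition.

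For the inverse, given a symmetric unitary $u \in M_n(\widetilde A)$ with $\lambda(u) = 1_n$, the universal property of $C(S^1, \mathbb{C})$ (as the universal \calg~generated by a single unitary) produces a unique unital $*$-homomorphism $\widetilde\phi \colon C(S^1, \mathbb{C}) \to M_n(\widetilde A)$ with $\widetilde\phi(z) = u$. The condition $\lambda(u) = 1_n$ forces $\widetilde\phi$ to restrict to a non-unital map $\phi \colon A_{-1} \to M_n(A)$, and by Proposition~\ref{K-1unitpicture} every class in $KO_{-1}^u(A, \tau)$ has such a representative. One checks this inverse assignment respects the equivalence relation $\sim_{(-1)}$ and the stabilization $\iota^{(-1)}_n(u) = \mathrm{diag}(u, 1)$, which corresponds on the homomorphism side to $\mathrm{diag}(\phi, 0)$ composed with inclusion into $M_{n+1}(A)$.

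Naturality in $(A, \tau)$ is immediate from the functoriality of each ingredient. The step I expect to demand the most care is confirming that the $*$-homomorphism $\widetilde\phi$ obtained from the universal property is automatically a \ctalg-homomorphism; this reduces to observing that the set $\{a \in C(S^1, \mathbb{C}) : \widetilde\phi(a^{\mathrm{id}}) = \widetilde\phi(a)^\tau\}$ is a closed $*$-subalgebra containing the generator $z$ and hence equals all of $C(S^1, \mathbb{C})$, a technique already used implicitly in the proof of Proposition~\ref{qCuniversal1}.
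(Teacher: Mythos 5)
Your proposal is correct and follows essentially the same route as the paper: the forward map is evaluation of the unitized homomorphism at the canonical unitary $u_0(z)=z$, and the inverse uses the universal property of $C(S^1)$ together with Proposition~\ref{K-1unitpicture} to produce a homomorphism $(C_0(S^1\setminus\{1\}),\mathrm{id})\to (M_n(A),\tau)$ from a symmetric unitary with $\lambda(u)=\1_n$. The only presentational difference is that you route the reduction to $\lim_n[A_{-1},M_n(A)]$ explicitly through semiprojectivity and Lemma~\ref{lemma:semipro}, and you spell out the closed-$*$-subalgebra check of $\tau$-compatibility that the paper dismisses as "easily seen"; both are consistent with the paper's toolkit.
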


\begin{proof}
Let $u_0$ be the identity function in $C(S^1)$.  That is, $u_0(z) = z$ for all $z \in S^1$.  Then $[u_0] \in KO^u_{-1}(C(S^1), \tau) \cong \Z$. For any $\phi \colon (C_0(S^1), {\id}) \rightarrow ( M_n(A), \tau)$, we can extend to the unitization to obtain
$\phi \colon (C(S^1), {\id}) \rightarrow ( M_n(\widetilde{A}), \tau)$
and write $\theta([\phi]) = [\phi(u_0)] \in KO^u_{-1}(A)$.
For $\phi' = \sm{\phi}{0}{0}{0}$ we have 
$\phi'(u_0) = \sm{\phi(u_0)}{0}{0}{\1}$, so 
$\theta([\phi]) = \theta([\phi'])$.
This gives us a well defined natural transformation
$$\Theta \colon 
[(C_0(S^1), {\id}), (\K\pr \otimes A, \tau)] \rightarrow KO^u_{-1}(A, \tau) \; .$$

Conversely suppose $u \in M_n(\widetilde{A})$ is a unitary satisfying $u^\tau = u$. There is a unique unital homomorphism $\phi \colon C(S^1, \C) \rightarrow M_n(\widetilde{A})$ such that $\phi(u_0) = u$,
and it is easily seen that $\phi$ satisfies $\phi(x^{{\id}}) = \phi(x)^\tau$ for all $x$. Thus $\phi$ is a \ctalg ~homomorphism
$\phi \colon (C(S^1), {\id}) \rightarrow (M_n(\widetilde{A}), \tau)$. The restriction yields a homomorphism 
$\phi \colon (C_0(S^1), {\id}) 
  \rightarrow (M_n(A), \tau) \subset (\K\pr \otimes\sr A, \tau)$. 
This construction gives an inverse to $\Theta$.
\end{proof}

\begin{cor}
For any \ctalg~$(A, \tau)$, there is a natural isomorphism $$KO^u_{-1}(A, \tau) \cong KO_{-1}(A, \tau) \; .$$
\end{cor}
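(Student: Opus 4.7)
The plan is to chain together the immediately preceding proposition with Theorem~\ref{thm:classify}. The preceding proposition supplies a natural isomorphism
$$KO^u_{-1}(A, \tau) \cong [(C_0(S^1), \id), (\K\pr \otimes\sr A, \tau)],$$
and the \ctalg~$(C_0(S^1), \id)$ appearing on the right (read as $(C_0(S^1 \setminus \{1\}, \C), \id)$) is precisely the classifying algebra $A_{-1}$ listed in Table~\ref{table:Ai}. Passing through the equivalence between the categories $\mathcal{R}^*$ and $\mathcal{R}^{*, \tau}$, this hom-set becomes $[A_{-1}, \K\pr \otimes\sr A^\tau]$ computed in the real \calg~ category.

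Next I would apply Theorem~\ref{thm:classify}, which identifies $[A_i, \K\pr \otimes B]$ with $KO_i(B)$ for any real \calg~ $B$. Taking $B = A^\tau$ and $i = -1$, this furnishes $[A_{-1}, \K\pr \otimes\sr A^\tau] \cong KO_{-1}(A^\tau) = KO_{-1}(A, \tau)$. Composing with the isomorphism from the previous paragraph produces the claimed natural isomorphism.

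A brief bookkeeping remark is warranted, since Theorem~\ref{thm:classify} is stated in the range $0 \leq i < 8$ rather than for $i = -1$. This is handled by eight-fold Bott periodicity: Proposition~\ref{thm:KAiodd} yields $K\crt(A_{-1}) \cong \Sigma K\crt(\R) \cong K\crt(A_7)$, so the real UCT produces a $KK$-equivalence $A_{-1} \simeq A_7$, and $KO_{-1}$ coincides with $KO_7$. Naturality in $(A, \tau)$ is inherited from the naturality of each of the two factors. I do not see any substantive obstacle here; the corollary is essentially a packaging of the previous proposition together with the general classification theorem already established.
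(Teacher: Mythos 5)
Your argument is exactly the paper's: the corollary is obtained by composing the preceding proposition's isomorphism $KO^u_{-1}(A,\tau) \cong [(C_0(S^1),\id),(\K\pr \otimes\sr A,\tau)]$ with Theorem~\ref{thm:classify}, using that this hom-set is $[A_{-1}, \K\pr \otimes\sr A^\tau]$. Your extra remark reconciling the index $-1$ with the range $0 \leq i < 8$ via periodicity is a harmless clarification of the paper's implicit convention and does not change the route.
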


\begin{proof}
This follows from Proposition~\ref{K-1unitpicture} above and Theorem~\ref{thm:classify} (remembering that 
$A_{-1} = (C_0(S^1 \setminus \{1\}), {\id})$).
\end{proof}

\subsection{$KO_{3}$ via unitaries}

\begin{defn} \label{K3def}
Let $(A, \tau)$ be a unital \ctalg. Let $U_{\infty}^{(3)}(A, \tau)$ be the set of all unitaries $u$ in $\cup_{n \in \N} M_{2n}(A)$ satisfying $u^{\sharp \otimes \tau} = u$. Let $\sim_3$ be the equivalence relation on $U^{(3)}_{\infty}(A, \tau)$, generated by 
\begin{enumerate}
\item[(1)] $u_0 \sim_3 u_1$ if $u_t \in M_{2n}(A)$ is a continuous path of unitaries satisfying $u_t^{\sharp \otimes \tau} = u_t$; 
and 
\item[(2)] $u \sim_3 \iota^{(3)}_n(u)$ for $u \in M_{2n}(A)$ where 
$\iota_n^{(3)} \colon M_{2n}(A) \rightarrow M_{2n+2}(A)$ is given by
$$\iota_n^{(3)} = {{\diag}} \left(a, 1_2  \right) \; . $$
\end{enumerate}
Then we define $KO^u_3(A, \tau) = U_{\infty}^{(3)}(A, \tau)/\sim_3$, with a binary operation given by $[u] + [v] = \left[\sm{u}{0}{0}{v}\right]$
for $u,v \in U_{\infty}^{(3)}(A, \tau)$.
\end{defn}

Here we are using the same convention on the involution $\sharp \otimes \tau$ as we did in Definition~\ref{K4def} for $KO^u_4(A, \tau)$.

\begin{prop} \label{K3iso}
If $(A, \tau)$ is a unital \ctalg, then $KO^u_3(A, \tau) \cong KO_3(A, \tau)$. In particular, $KO^u_3(\C, {\id}) = 0$.
\end{prop}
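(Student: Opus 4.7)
The plan is to mimic the proof of Proposition~\ref{K4iso} essentially verbatim, reducing the odd case here to the $KO^u_{-1}$ picture already established. The key observation is that an element of $U^{(3)}_\infty(A,\tau)$ is a unitary $u \in M_{2n}(A)$ satisfying $u^{\sharp \otimes \tau} = u$, which under the identification $M_{2n}(A) = M_n(M_2(\C) \otimes A)$ (using the preferred convention of Section~\ref{sec:prelim}, in which $\sharp \otimes \tau$ acts entry-wise on the $n \times n$ block structure) is exactly a unitary in $M_n$ of the \ctalg ~$(M_2(\C) \otimes A, \sharp \otimes \tau)$ satisfying the symmetry defining membership in $U^{(-1)}_\infty(M_2(\C) \otimes A, \sharp \otimes \tau)$. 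Thus at the level of sets there is a natural bijection
\[
U^{(3)}_\infty(A, \tau) \;\cong\; U^{(-1)}_\infty(M_2(\C) \otimes A, \sharp \otimes \tau).
\]

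Next I would verify that the two equivalence relations match up under this identification. Continuous paths of symmetric unitaries clearly correspond, and the stabilization map $\iota^{(3)}_n$, which adds a $1_2$ block to a matrix in $M_{2n}(A)$, corresponds exactly to the map $\iota^{(-1)}_n$ that adds a single $1$ in $M_n(M_2(\C) \otimes A)$. Therefore the bijection descends to a semigroup isomorphism
\[
KO^u_3(A, \tau) \;\cong\; KO^u_{-1}(M_2(\C) \otimes A, \sharp \otimes \tau).
\]

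Combining this with the isomorphism $KO^u_{-1}(B,\tau) \cong KO_{-1}(B,\tau)$ established in the previous subsection (applied to the \ctalg~$(M_2(\C) \otimes A, \sharp \otimes \tau)$) and with the Künneth-type identification $KO_{-1}(M_2(\C) \otimes A, \sharp \otimes \tau) \cong KO_3(A, \tau)$ coming from the degree-shift by $4$ on tensoring with the quaternions (as used already in Proposition~\ref{K4iso}), we obtain the desired natural isomorphism $KO^u_3(A, \tau) \cong KO_3(A, \tau)$. The specific case $KO^u_3(\C, {\id}) = 0$ then follows from $KO_3(\R) = 0$, as recorded in the table in the proof of Proposition~\ref{thm:KAieven}.

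The main obstacle is making sure that the two inclusion maps $\iota^{(3)}_n$ and $\iota^{(-1)}_n$ agree under the identification $M_{2n}(A) = M_n(M_2(\C) \otimes A)$; this is a bookkeeping matter about how block conventions align, but it is the step where the argument could go wrong if one mistakenly uses the $\widetilde{\sharp}\otimes\tau$ convention rather than the preferred $\sharp\otimes\tau$ convention. Once that identification is confirmed, the rest of the argument is formal, and the treatment of inverses and functoriality that would be needed to verify $KO^u_3(A, \tau)$ is a group (analogous to Proposition~\ref{K4group}) comes for free from the isomorphism with $KO^u_{-1}$.
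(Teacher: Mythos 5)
Your proposal is correct and follows essentially the same route as the paper: the identification $U^{(3)}_\infty(A,\tau)\cong U^{(-1)}_\infty(M_2(\C)\otimes A,\sharp\otimes\tau)$, the already-established isomorphism $KO^u_{-1}\cong KO_{-1}$, and the K\"unneth degree-$4$ shift for tensoring with $\H$. Your extra care in checking that $\iota^{(3)}_n$ matches $\iota^{(-1)}_n$ under the block identification is a point the paper passes over silently, but it is exactly the right thing to verify and causes no difficulty here.
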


\begin{proof}
An element of $U_\infty^{(3)}(A, \tau)$ is given by a unitary $u \in M_2(\C) \otimes M_n(A)$ that satisfies $u^{\sharp \otimes \tau} = u$. This is the same as an element of $U_\infty^{(-1)}(M_2(\C) \otimes A, \sharp \otimes \tau)$. Therefore
$U_\infty^{(3)}(A, \tau) \cong U_\infty^{(-1)}(M_2(\C) \otimes A, \sharp \otimes \tau)$; and hence $KO^u_3(A, \tau) = KO^u_{-1}(M_2(\C) \otimes A, \sharp \otimes \tau)$.

As a special case of the K\"unneth formula (the Main Theorem of \cite{boersema02}), for a real \calg~ $A$
we know that $KO_n(A) \cong KO_{n+4}(\H \otimes A)$. In terms of \ctalg s, this is the statement that $KO_n(A, \tau) \cong KO_{n+4}(M_2(\C) \otimes A, \sharp \otimes \tau)$.  Therefore
\begin{align*}
KO_3^u(A, \tau) &\cong KO_{-1}^u(M_2(\C) \otimes A, \sharp \otimes \tau) \\
  & \cong KO_{-1}(M_2(\C) \otimes A, \sharp \otimes \tau) \\
  & \cong KO_3(A, \tau) .
\end{align*}
\end{proof}

\begin{prop} \label{K3group}
$KO_{3}^u(A, \tau)$ is a homotopy invariant functor from the category of unital \ctalg s to the category of abelian groups. The inverse of an element $[u]$ in $KO_{3}^u(A, \tau)$ is given by $[u^*]$.
\end{prop}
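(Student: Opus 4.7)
The plan is to follow the strategy already used for Proposition~\ref{K3iso}, namely to reduce everything to the corresponding facts for $KO^u_{-1}$ via the identification
$$U^{(3)}_\infty(A,\tau) \;\cong\; U^{(-1)}_\infty(M_2(\C)\otimes A,\,\sharp\otimes\tau).$$
An element $u\in M_{2n}(A)$ with $u^{\sharp\otimes\tau}=u$ is, under the preferred block convention described in Section~\ref{sec:prelim}, the same data as an element of $M_n(M_2(\C)\otimes A)$ whose image under $\sharp\otimes\tau$ equals itself, which is exactly the defining condition for a class in $U^{(-1)}_\infty(M_2(\C)\otimes A,\sharp\otimes\tau)$.

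First I would verify functoriality and homotopy invariance directly from the definition: a \ctalg-homomorphism $\phi\colon (A,\tau)\to(B,\tau)$ carries $M_{2n}(A)$ into $M_{2n}(B)$, preserves unitarity, and commutes with $\sharp\otimes\tau$, hence induces a well-defined semigroup homomorphism on $KO^u_3$; homotopy invariance is the usual argument using $C([0,1],B)$. Next I would check that the identification above is compatible with all the semigroup structure: the block sum $\sm{u}{0}{0}{v}$ in $M_{2n+2m}(A)$ corresponds to the block sum in $M_{n+m}(M_2(\C)\otimes A)$ under the preferred convention; the stabilizing element $1_2\in M_2(A)$ added by $\iota^{(3)}_n$ matches the unit $1\in M_2(\C)\otimes A$ added by $\iota^{(-1)}_n$; and the equivalence relations $\sim_3$ and $\sim_{(-1)}$ are generated by the same paths and stabilizations under the identification. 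This produces a natural semigroup isomorphism
$$KO^u_3(A,\tau)\;\cong\;KO^u_{-1}(M_2(\C)\otimes A,\sharp\otimes\tau).$$

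The conclusions now follow from Proposition~\ref{K-1group} applied to the \ctalg\ $(M_2(\C)\otimes A,\sharp\otimes\tau)$: the right-hand side is an abelian group, and the inverse of a class $[u]$ is represented by $[u^*]$. Transporting back along the identification yields the statement that $KO^u_3(A,\tau)$ is an abelian group with $[u]+[u^*]=0$.

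The main (modest) obstacle will be the bookkeeping around the block conventions, because the two natural conventions for $\sharp\otimes\tau$ on $M_{2n}(A)$ differ, and the stabilization $\iota^{(3)}_n$ is defined using ${\diag}(a,1_2)$ rather than a block that naturally sits inside the $M_2(\C)$ factor. One has to check that, in the limit as $n\to\infty$, the two stabilization procedures give cofinal systems, so that the induced map on equivalence classes is a bijection. Once this matching is carried out cleanly, the proof reduces to citing Proposition~\ref{K-1group}, and no new arguments are required.
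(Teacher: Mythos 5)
Your plan is correct and follows essentially the same route as the paper: the paper's proof also rests on the identification $U^{(3)}_\infty(A,\tau)\cong U^{(-1)}_\infty(M_2(\C)\otimes A,\sharp\otimes\tau)$ (established in the proof of Proposition~\ref{K3iso}) together with Proposition~\ref{K-1group} for the group structure and the formula $[u]^{-1}=[u^*]$. The only cosmetic difference is that the paper gets functoriality and homotopy invariance by citing Proposition~\ref{K3iso} directly, while you verify them from the definition, which is fine.
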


\begin{proof}
This follows immediately from Proposition~\ref{K3iso}. The statement about inverses follows from 
Proposition~\ref{K-1group} and
the statement in the proof of Proposition~\ref{K3iso} that $U_\infty^{(3)}(A, \tau) 
\cong U_\infty^{(-1)}(M_2(\C) \otimes A, \sharp \otimes \tau)$.
\end{proof}

\begin{defn}
Let $(A, \tau)$ be any \ctalg . Then we define 
$$KO^u_3(A, \tau) = \ker(\lambda_*)$$
where $\lambda_* \colon KO^u_3(\widetilde{A}, \tau) \rightarrow KO^u_3(\C, {\id})$.
\end{defn}

Combining Proposition~\ref{K3group} with this definition gives the following.

\begin{prop}
If $(A, \tau)$ is any \ctalg, then 
$$KO^u_3(A, \tau) \cong KO_3(A, \tau) \; .$$
\end{prop}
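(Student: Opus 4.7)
The plan is to reduce to the unital case (Proposition~\ref{K3iso}) via the standard unitization trick. First I would verify that the isomorphism furnished by Proposition~\ref{K3iso} is natural with respect to unital \ctalg~homomorphisms. Unwinding its construction makes this transparent: the identification of defining sets $U_\infty^{(3)}(B,\tau) = U_\infty^{(-1)}(M_2(\C) \otimes B, \sharp \otimes \tau)$ is tautologically natural; the isomorphism $KO^u_{-1}(D, \sigma) \cong KO_{-1}(D, \sigma)$ was already asserted to be natural (and in fact holds for all \ctalg s, not just unital ones) in the corollary above; and the K\"unneth isomorphism $KO_{-1}(M_2(\C) \otimes B, \sharp \otimes \tau) \cong KO_3(B, \tau)$ is well-known to be natural.

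Next I would apply this natural isomorphism to the split short exact sequence
\[
0 \to (A,\tau) \to (\widetilde{A}, \tau) \xrightarrow{\lambda} (\C, {\id}) \to 0.
\]
This yields a commutative square whose horizontal maps are $\lambda_*$ on $KO^u_3$ and on $KO_3$ respectively, and whose vertical maps are isomorphisms by the unital case. By definition, the kernel of the top row is $KO^u_3(A,\tau)$. On the other hand, split-exactness of standard $KO_3$ on the unitization sequence guarantees that the kernel of the bottom row is precisely $KO_3(A,\tau)$ (realized as a natural direct summand of $KO_3(\widetilde{A}, \tau)$).

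Taking kernels of the two rows and using naturality of the vertical isomorphisms (a trivial application of the five lemma, or directly the observation that kernels of naturally isomorphic homomorphisms are naturally isomorphic) produces the desired isomorphism $KO^u_3(A, \tau) \cong KO_3(A, \tau)$. I do not anticipate any genuine obstacle here; the only point requiring honest verification is the naturality of the unital isomorphism in Proposition~\ref{K3iso}, which is immediate from its step-by-step construction. Indeed the authors' own remark "Combining Proposition~\ref{K3group} with this definition gives the following" signals that the argument is meant to be read as essentially formal.
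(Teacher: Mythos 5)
Your proposal is correct and follows essentially the same route the paper intends: the unital case (Proposition~\ref{K3iso}), whose constituent identifications ($U_\infty^{(3)} = U_\infty^{(-1)}$ of $M_2(\C)\otimes A$, the natural isomorphism $KO^u_{-1}\cong KO_{-1}$, and the K\"unneth shift) are all natural, combined with the definition of $KO^u_3(A,\tau)$ as $\ker(\lambda_*)$ and split-exactness of $KO_3$ on the unitization sequence. The paper leaves exactly this reduction implicit in its one-line proof, so your write-up simply makes the same argument explicit.
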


\begin{prop} \label{K3unitpicture}
Let $(A, \tau)$ be a \ctalg. Any element of $KO^u_3(A, \tau)$ can be represented as $[u]$ where $u \in M_n(\widetilde{A})$ satisfies $u^{\sharp \otimes \tau} = u$ and $\lambda_*(u) = 1_n$.
\end{prop}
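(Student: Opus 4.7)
The plan is to reduce Proposition~\ref{K3unitpicture} to Proposition~\ref{K-1unitpicture} via the isomorphism $KO_3^u(A,\tau) \cong KO_{-1}^u(M_2(\C) \otimes A, \sharp \otimes \tau)$ established in the proof of Proposition~\ref{K3iso}, in the same spirit as the proof of Theorem~\ref{K4unitpicture} reduces the $KO_4$ case to Proposition~\ref{K0unitpicture}.

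The first observation I would make is that the isomorphism in question arises at the level of unitary representatives from the natural bijection $U^{(3)}_\infty(A, \tau) \cong U^{(-1)}_\infty(M_2(\C) \otimes A, \sharp \otimes \tau)$, which simply reinterprets a unitary $u \in M_{2n}(A)$ satisfying $u^{\sharp \otimes \tau} = u$ as a unitary in $M_n(M_2(\C) \otimes A)$ with the same symmetry relative to the involution $\sharp \otimes \tau$ on $M_2(\C) \otimes A$. This bijection preserves both the homotopy and the stabilization relations.

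Next, I would set up a three-by-three commutative diagram analogous to the one appearing in the proof of Theorem~\ref{K4unitpicture}: the top row is the defining short exact sequence for $KO^u_{-1}(M_2(\C) \otimes A, \sharp \otimes \tau)$ coming from the unitization $M_2(\C) \otimes A \hookrightarrow (M_2(\C) \otimes A)^{\sim}$, the middle row is the corresponding sequence but using $M_2(\C) \otimes \widetilde{A}$ in place of $(M_2(\C) \otimes A)^{\sim}$ (linked to the top by the natural map $k$ between the two unitizations), and the bottom row is the short exact sequence defining $KO_3^u(A, \tau)$, with the isomorphisms $g$ from Proposition~\ref{K3iso} as the vertical maps. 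A diagram chase then shows that every element of $KO^u_3(A, \tau)$ is of the form $g k_*(\xi)$ for some $\xi \in KO^u_{-1}((M_2(\C) \otimes A)^{\sim}, \sharp \otimes \tau)$ satisfying $\lambda_*(\xi) = 0$.

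Applying Proposition~\ref{K-1unitpicture} to the \ctalg~$(M_2(\C) \otimes A, \sharp \otimes \tau)$, such a class $\xi$ is represented by a unitary $v \in M_n((M_2(\C) \otimes A)^{\sim})$ satisfying $v^{\sharp \otimes \tau} = v$ and $\lambda(v) = 1_n$. Passing this $v$ through the identification $M_n(M_2(\C) \otimes \widetilde{A}) \cong M_{2n}(\widetilde{A})$ then produces the required unitary $u \in M_{2n}(\widetilde{A})$ with $u^{\sharp \otimes \tau} = u$ and $\lambda(u) = 1_{2n}$. I do not anticipate a genuine obstacle here; the only point requiring a little attention is the comparison of the two natural unitizations $(M_2(\C) \otimes A)^{\sim}$ and $M_2(\C) \otimes \widetilde{A}$ to confirm that the scalar condition $\lambda(v) = 1_n$ transports to $\lambda(u) = 1_{2n}$, which is routine.
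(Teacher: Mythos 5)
Your proposal is correct and takes essentially the same route as the paper: identify $U^{(3)}_\infty(A,\tau)$ with $U^{(-1)}_\infty(M_2(\C)\otimes A,\sharp\otimes\tau)$, apply Proposition~\ref{K-1unitpicture}, and transport the representative through the comparison of unitizations $(M_2(\C)\otimes A)^{\sim}\hookrightarrow M_2(\C)\otimes\widetilde{A}$. The only cosmetic difference is that the paper dispenses with the full three-by-three diagram used in the $KO_4$ case, since $KO^u_{-1}(\C,\mathrm{id})=0$ makes the condition $\lambda_*(\xi)=0$ vacuous, so it suffices to note directly that the inclusion of unitizations induces an isomorphism on $KO^u_{-1}$.
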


\begin{proof}
Let $u \in U_3(\widetilde{A}, \tau) = U_{-1}(M_2(\C) \otimes \widetilde{A}, \sharp \otimes \tau)$. The unital homomorphism $(M_2(\C) \otimes A)^{\sim} \hookrightarrow M_2(\C) \otimes \widetilde{A}$ induces an isomorphism on $KO^u_{-1}(-)$. Therefore, using Proposition~\ref{K-1unitpicture}, we can replace $u$ by an equivalent unitary $v$ in $M_n(M_2(\C) \otimes \widetilde{A}) \subset U_{-1}(M_2(\C) \otimes \widetilde{A}, \sharp \otimes \tau)$ that satisfies $\lambda_n(v) = \1_{2n}$ where 
$\lambda \colon (M_2(\C) \otimes A)^\sim \rightarrow \C$ and 
$\lambda_n \colon M_n \left( (M_2(\C) \otimes A)^\sim \right) \rightarrow M_n(\C)$.

Now, we consider the same unitary $v$ as an element in $U_3(\widetilde{A}, \tau)$. In that context it is a unitary in 
$M_{2n}(\widetilde{A})$ and it
satisfies $\lambda_{2n}(v) = \1_{2n}$ where 
$\lambda \colon \widetilde{A} \rightarrow \C$ and
$\lambda_{2n} \colon M_{2n}(\widetilde{A}) \rightarrow M_{2n}(\C)$.
\end{proof}

\subsection{$KO_{5}$ via unitaries}

\begin{defn}
Let $(A, \tau)$ be a unital \ctalg. Let $U_{\infty}^{(5)}(A, \tau)$ be the set of all unitaries $u$ in $\cup_{n \in \N} M_{2n}(A)$ satisfying $u^{\sharp \otimes \tau} = u^*$. Let $\sim_5$ be the equivalence relation on $U^{(5)}_{\infty}(A, \tau)$, generated by 
\begin{enumerate}
\item[(1)] $u_0 \sim_5 u_1$ if $u_t \in M_{2n}(A)$ is a continuous path of unitaries satisfying $u_t^{\sharp \otimes \tau} = u_t^*$; 
and 
\item[(2)] $u \sim_5 \iota^{(5)}_n(u)$ for $u \in M_{2n}(A)$ where 
$\iota^{(5)}_n \colon M_{2n}(A) \rightarrow M_{2n+2}(A)$ is given by
$$\iota^{(5)}_n(a) = {\diag} \left(a, 1_2  \right) \; . $$
\end{enumerate}
Then we define $KO^u_5(A, \tau) = U_{\infty}^{(5)}(A, \tau)/\sim_5$, with a binary operation given by $[u] + [v] = \left[\sm{u}{0}{0}{v}\right]$
for $u,v \in U_{\infty}^{(5)}(A, \tau)$.
\end{defn}

In this definition we use the same convention for $\sharp \otimes \tau$ as discussed for Definition~\ref{K3def}.

\begin{prop} \label{K5iso}
If $(A, \tau)$ is a unital \ctalg, then $KO^u_5(A, \tau) \cong KO_5(A, \tau)$.
In particular, $KO_5^u(\C, {\id}) = 0$.
\end{prop}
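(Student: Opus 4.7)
The plan is to mimic the argument already given for $KO_3^u$ in Proposition~\ref{K3iso}, reducing the $KO_5^u$ case to the $KO_1^u$ case via an identification of underlying unitary sets, and then invoking the K\"unneth formula to shift degrees by four.

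First I would observe the bijection
\[
U_\infty^{(5)}(A, \tau) \longleftrightarrow U_\infty^{(1)}(M_2(\C) \otimes A, \sharp \otimes \tau)
\]
obtained from the convention that identifies $M_{2n}(A)$ with $M_n(M_2(\C) \otimes A)$ by grouping entries into $2\times 2$ blocks. Under this identification, the condition $u^{\sharp \otimes \tau} = u^*$ on a unitary in $M_{2n}(A)$ is precisely the condition $u^{\sharp \otimes \tau} = u^*$ on a unitary in $M_n(M_2(\C) \otimes A)$ where the second coordinate now carries the involution $\sharp \otimes \tau$. I would then check that the stabilization map $\iota_n^{(5)}$, which appends the block $1_2$, matches the corresponding map $\iota_n^{(1)}$ on $U_\infty^{(1)}(M_2(\C) \otimes A, \sharp \otimes \tau)$, which appends the scalar $1$. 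Homotopies of unitaries on one side correspond precisely to homotopies on the other side, and the direct-sum operation is preserved. Hence the bijection descends to a natural isomorphism
\[
KO_5^u(A, \tau) \cong KO_1^u(M_2(\C) \otimes A, \sharp \otimes \tau).
\]

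Next, applying the isomorphism $KO_1^u(-) \cong KO_1(-)$ established earlier in this section, together with the K\"unneth-type identity $KO_n(A, \tau) \cong KO_{n+4}(M_2(\C) \otimes A, \sharp \otimes \tau)$ (used also in the proofs of Propositions~\ref{K4iso} and~\ref{K3iso}), I obtain the chain
\[
KO_5^u(A, \tau) \cong KO_1^u(M_2(\C) \otimes A, \sharp \otimes \tau) \cong KO_1(M_2(\C) \otimes A, \sharp \otimes \tau) \cong KO_5(A, \tau).
\]
The particular case $A = \C$, $\tau = \id$ reduces to $KO_1(M_2(\C), \sharp) \cong KO_1(\H) \cong KO_5(\R) = 0$.

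The main obstacle, though a minor one, is verifying that the two identifications of matrix algebras are genuinely compatible with both the star, the involution $\sharp \otimes \tau$ (with its chosen convention from Section~\ref{sec:prelim}), and the stabilization maps simultaneously. Once this bookkeeping is confirmed, nothing further is required: the result is entirely formal, exactly parallel to Propositions~\ref{K4iso} and~\ref{K3iso}, with the role of $KO_0^u$ (resp.\ $KO_{-1}^u$) in those arguments played here by $KO_1^u$.
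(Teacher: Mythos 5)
Your proposal is correct and follows essentially the same route as the paper: the paper's proof of Proposition~\ref{K5iso} simply says it is the same as that of Proposition~\ref{K3iso}, using the identification $KO_5^u(A,\tau) = KO_1^u(M_2(\C)\otimes A,\sharp\otimes\tau)$ together with the K\"unneth degree shift, which is exactly your argument. The only addition you make is spelling out the compatibility of the stabilization maps, which the paper leaves implicit.
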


\begin{proof}
The proof is the same as the proof of Proposition~\ref{K3iso}, using 
$$KO_5^u(A, \tau) = KO_1^u(M_2(\C) \otimes A, \sharp \otimes \tau) \; .$$
\end{proof}

\begin{prop} \label{K5group}
$KO_{5}^u(A, \tau)$ is a homotopy invariant functor from the category of unital \ctalg s to the category of abelian groups. The inverse of an element $[u]$ in $KO_{5}^u(A, \tau)$ is given by $[u^*]$.
\end{prop}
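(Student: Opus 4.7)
The plan is to follow exactly the template established in the proofs of Propositions~\ref{K1group} and~\ref{K3group}, reducing everything to the already-established case of $KO_1^u$ via the bijection
\[
U_\infty^{(5)}(A, \tau) \;\cong\; U_\infty^{(1)}\bigl(M_2(\C) \otimes A,\, \sharp \otimes \tau\bigr)
\]
implicit in the proof of Proposition~\ref{K5iso}. Indeed, a unitary $u \in M_{2n}(A)$ with $u^{\sharp \otimes \tau} = u^*$ is the same data as a unitary in $M_n\bigl(M_2(\C) \otimes A\bigr)$ satisfying $u^{(\sharp \otimes \tau)^\vee} = u^*$, where the latter superscript denotes the induced involution on $M_2(\C) \otimes A$. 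Moreover, the homotopy relation $\sim_5$ and the stabilization maps $\iota_n^{(5)}$ translate, under this identification, into the homotopy relation $\sim_1$ and the stabilization maps $\iota_n^{(1)}$ for $(M_2(\C) \otimes A, \sharp \otimes \tau)$.

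First I would verify that this identification is compatible with addition; since both sides use block-diagonal sum and the bijection is natural in the blocks, this is immediate. Then functoriality and homotopy invariance of $KO_5^u$ follow at once from functoriality and homotopy invariance of $KO_1^u$ (the latter being the first statement of the proposition about $KO_1^u(A,\tau)$ in the previous subsection), applied to the functor $A \mapsto M_2(\C) \otimes A$ in the category of \ctalg s, which is itself functorial and preserves homotopies.

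For the inverse formula, I would observe that under the bijection above, the element $[u] \in KO_5^u(A,\tau)$ corresponds to a class $[\widehat{u}] \in KO_1^u(M_2(\C) \otimes A, \sharp \otimes \tau)$, and that the operation $u \mapsto u^*$ is preserved by the bijection. By the corresponding statement for $KO_1^u$, the inverse of $[\widehat{u}]$ is $[\widehat{u}^{\,*}]$, and transporting back gives that the inverse of $[u]$ is $[u^*]$.

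The only step that requires any real care — though still routine — is verifying that the bijection $U_\infty^{(5)}(A,\tau) \cong U_\infty^{(1)}(M_2(\C) \otimes A, \sharp \otimes \tau)$ is compatible with the stabilization $\iota^{(5)}_n$ on one side and $\iota^{(1)}_n$ on the other, given the conventions on $\sharp \otimes \tau$ adopted in Section~\ref{sec:prelim} and in Definition~\ref{K3def}. Once this is pinned down (as it already was implicitly in the proofs of Propositions~\ref{K3iso} and~\ref{K5iso}), the entire proposition reduces to invoking the corresponding results for $KO_1^u$, so there is essentially no obstacle — the proof is a one-sentence reduction.
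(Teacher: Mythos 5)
Your reduction to $KO_1^u(M_2(\C)\otimes A,\sharp\otimes\tau)$ via the identification of symmetry classes is exactly the paper's route: its proof simply cites Proposition~\ref{K5iso}, whose proof rests on the same equality $KO_5^u(A,\tau)=KO_1^u(M_2(\C)\otimes A,\sharp\otimes\tau)$, with functoriality, homotopy invariance, and the inverse formula $[u^*]$ all inherited from the $KO_1^u$ case. Your proposal is correct and just makes explicit the compatibility of $\sim_5$, $\iota_n^{(5)}$, and block sums with their $KO_1^u$ counterparts, which the paper leaves implicit.
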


\begin{proof}
This follows from Proposition~\ref{K5iso}.
\end{proof}

\begin{defn}
Let $(A, \tau)$ be any \ctalg . Then we define 
$$KO^u_5(A, \tau) = \ker(\lambda_*)$$
where $\lambda_* \colon KO^u_5(\widetilde{A}, \tau) \rightarrow KO^u_5(\C, {\id})$.
\end{defn}

The following result is immediate from our development so far.
\begin{prop}
If $(A, \tau)$ is any \ctalg, then 
$$KO^u_5(A, \tau) \cong KO_5(A, \tau) \; .$$
\end{prop}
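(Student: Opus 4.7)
The plan is to reduce immediately to the unital version of the statement already established in Proposition~\ref{K5iso}. The definition $KO_5^u(A,\tau) = \ker(\lambda_*)$, where $\lambda \colon (\widetilde{A},\tau) \to (\C,\id)$ is the canonical split augmentation, mirrors the standard split-exact description $KO_5(A,\tau) = \ker(\lambda_*)$ of the ordinary $K$-theory of a non-unital \ctalg. So the only content to verify is that the unital isomorphism $KO_5^u \cong KO_5$ intertwines these two copies of $\lambda_*$, after which the conclusion is a formal consequence of passing to kernels.

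First I would confirm that the isomorphism $KO_5^u(B,\tau) \cong KO_5(B,\tau)$ established for unital $(B,\tau)$ in Proposition~\ref{K5iso} is natural in $B$. It was constructed as a composition of three steps: the tautological identification $U_\infty^{(5)}(B,\tau) \cong U_\infty^{(1)}(M_2(\C) \otimes B, \sharp \otimes \tau)$, the already-natural isomorphism $KO_1^u \cong KO_1$ developed earlier in this section, and the K\"unneth-type isomorphism $KO_1(M_2(\C) \otimes B, \sharp \otimes \tau) \cong KO_5(B,\tau)$ coming from the \ctalg~version of the K\"unneth formula (the Main Theorem of \cite{boersema02}). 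Each of these three isomorphisms is evidently functorial in $B$.

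Second, applying this naturality to the morphism $\lambda \colon (\widetilde{A},\tau) \to (\C,\id)$ yields the commutative square
\[
\xymatrix{
KO_5^u(\widetilde{A},\tau) \ar[r]^{\lambda_*} \ar[d]_{\cong} & KO_5^u(\C,\id) \ar[d]^{\cong} \\
KO_5(\widetilde{A},\tau) \ar[r]^{\lambda_*} & KO_5(\C,\id)
}
\]
in which the vertical arrows are isomorphisms. Taking kernels across the horizontal arrows produces the desired natural isomorphism $KO_5^u(A,\tau) \cong KO_5(A,\tau)$. I expect no genuine obstacle: the only thing requiring attention is bookkeeping to confirm that the chain of isomorphisms in Proposition~\ref{K5iso} was set up functorially rather than depending on ad hoc choices, which is clear from the explicit formulas.
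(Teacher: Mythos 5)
Your argument is correct and is exactly the reasoning the paper leaves implicit: it states the result as an immediate consequence of the unital case (Proposition~\ref{K5iso}) together with the definition of $KO^u_5$ of a non-unital algebra as $\ker(\lambda_*)$, which is precisely your reduction via naturality applied to $\lambda\colon(\widetilde{A},\tau)\to(\C,\mathrm{id})$ and passage to kernels.
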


\begin{prop} \label{K5unitpicture}
Let $(A, \tau)$ be a \ctalg. Any element of $KO^u_5(A, \tau)$ can be represented as $[u]$ where $u \in M_n(\widetilde{A})$ satisfies $u^{\sharp \otimes \tau} = u^*$ and $\lambda_*(u) = \1_n$.
\end{prop}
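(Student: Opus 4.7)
The plan is to mirror the proof of Proposition~\ref{K-1unitpicture}, invoking the vanishing $KO^u_5(\C, \id) = 0$ established in Proposition~\ref{K5iso}. Starting with a representative $u \in M_{2n}(\widetilde{A})$ of a class in $KO^u_5(A, \tau) = KO^u_5(\widetilde{A}, \tau)$, satisfying $u^{\sharp \otimes \tau} = u^*$, I will set $x = \lambda_{2n}(u) \in M_{2n}(\C)$ and observe that $x$ is a unitary satisfying $x^{\sharp \otimes \id} = x^*$, so $x \in U^{(5)}_{2n}(\C, \id)$.

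The key observation I will use is that the group of such unitaries in $M_{2n}(\C)$ is path-connected: under the identification $(M_{2n}(\C), \sharp \otimes \tr_n)^{\sharp \otimes \tr_n} \cong M_n(\H)$, it is the compact symplectic group, whose connectedness is in any case equivalent to (and follows from) the vanishing of $KO^u_5(\C, \id)$. Thus there exists a continuous path $y_t$ in $M_{2n}(\C)$ of unitaries satisfying $y_t^{\sharp \otimes \id} = y_t^*$ with $y_0 = \1_{2n}$ and $y_1 = x^*$.

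I will then define $v_t := u \cdot y_t \in M_{2n}(\widetilde{A})$, using the unital embedding $M_{2n}(\C) \hookrightarrow M_{2n}(\widetilde{A})$. A direct computation yields
\[
v_t^{\sharp \otimes \tau} \;=\; y_t^{\sharp \otimes \id} \cdot u^{\sharp \otimes \tau} \;=\; y_t^* \cdot u^* \;=\; v_t^*,
\]
so each $v_t$ lies in $U^{(5)}_{2n}(\widetilde{A}, \tau)$. Since $v_0 = u$ and $\lambda_{2n}(v_1) = x x^* = \1_{2n}$, the path witnesses $u \sim_5 v_1$, and $v_1$ is the desired representative.

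The main obstacle is the path-connectedness assertion for $U^{(5)}_{2n}(\C, \id)$; everything else is a routine manipulation with the symmetry of products, identical in structure to the proof of Proposition~\ref{K-1unitpicture}. One could alternatively attempt to parallel the proof of Proposition~\ref{K3unitpicture} through the isomorphism $KO_5^u \cong KO_1^u$ of Proposition~\ref{K5iso}, but this would require additional care: the inclusion $(M_2(\C) \otimes A)^\sim \hookrightarrow M_2(\C) \otimes \widetilde{A}$ does not induce an isomorphism on the full $KO_1^u$ (since $KO_1^u(\C, \id) = \Z_2 \neq 0$), only on the augmentation kernels, so the direct path-theoretic approach above is cleaner.
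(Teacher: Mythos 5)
Your proposal is correct and is essentially the paper's own proof: the paper likewise fixes $\lambda(u)$ by multiplying $u$ against a scalar path of unitaries satisfying $v_t^{\sharp} = v_t^*$, citing Theorem~1 of \cite{wiegmann58} for the connectedness of the quaternionic unitary group (your compact symplectic group). One small caution: your parenthetical claim that this connectedness \emph{follows from} $KO^u_5(\C,{\id})=0$ is loose, since vanishing of the $K$-group only yields connectedness after stabilization by $\iota^{(5)}_n$ --- harmless here because $\sim_5$ permits stabilization, but the clean justification is the classical connectedness of the unitary group of $M_n(\H)$, exactly as the paper cites.
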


\begin{proof}
There is a proof similar to that of Proposition~\ref{K3unitpicture}, but instead we give the following slightly more constructive proof.

Let $u \in M_{2n}(\widetilde{A}, \tau)$ be a unitary satisfying $u^{\sharp \otimes \tau} = u^*$.
Then $\lambda(u)$ is a unitary in $M_{2n}(\C)$ satisfying $u^{\sharp} = u^*$, which is to say that $u$ is a unitary in $M_n(\H)$. Since the unitary group of $M_n(\H)$ is connected (this follows for example from Theorem~1 of \cite{wiegmann58}), there exists a path $v_t$ from $\1_{2n}$ to $\lambda(u)$ in $M_{2n}(\C)$ satisfying $v_t^{\sharp} = v_t^*$. Then $u v_t^*$ is the desired path from $u$ to a unitary $w = u v_1^*$ satisfying $\lambda(w) = \1_n$.
\end{proof}

\section{Summary and examples} \label{sec:summary}

The following theorem Figure~\ref{table:summary} summarize the unitary description of $KO$-theory from the previous two sections. The statements about $KU$ will be clarified later in this section.

\begin{figure}[t]
\caption{Unitary Picture of $K$-theory --- The Ten-Fold Way} \label{table:summary} 
\begin{tabular}{c|c|c|c|c|} 
\hhline{~----}
& K-group \TT \BB & $n_i$ & $\mathscr{S}_i$ & $I^{(i)}$ \\ \hhline{=====}
\multicolumn{1}{|c|}{\multirow{2}{*}{complex}}  & $KU^u_0(A, \tau)$ \TT \BB & 2 & $u = u^*$ & $\sm{\1}{0}{0}{-\1}$ \\ \hhline{~----}
\multicolumn{1}{|c|}{} & $KU^u_1(A,\tau)$ \TT \BB & 1 & -- & $\1$ \\ \hhline{=====}
\multicolumn{1}{|c|}{\multirow{8}{*}{real}}  & $KO^u_{-1}(A,\tau)$ \TT \BB & 1 & $u^\tau = u $ & $\1$ \\ \hhline{~----}
\multicolumn{1}{|c|}{} & $KO^u_0(A,\tau)$ \TT \BB & 2 &  $u = u^*$, $u^{\tau} = u^* $ & $\sm{\1}{0}{0}{-\1}$  \\ \hhline{~----}
\multicolumn{1}{|c|}{} & $KO^u_1(A,\tau)$ \TT \BB & 1 &  $u^{\tau} = u^* $ & $\1$ \\ \hhline{~----}
\multicolumn{1}{|c|}{} & $KO^u_2(A,\tau)$ \TT \BB & 2 &  $u = u^*$, $u^\tau = -u$ & $\sm{0}{i \cdot \1}{-i \cdot \1}{0}$ \\\hhline{~----}
\multicolumn{1}{|c|}{} & $KO^u_{3}(A,\tau)$ \TT \BB & 2 & $u^{\sharp \otimes \tau} = u$ & $ \1_2 $ \\ \hhline{~----}
\multicolumn{1}{|c|}{} & $KO^u_4(A,\tau)$ \TT \BB & 4 &  $u = u^*$, $u^{\sharp \otimes \tau} = u^*$ & ${\diag}(\1_2,-\1_2)$ \\ \hhline{~----}
\multicolumn{1}{|c|}{} & $KO^u_5(A,\tau)$ \TT \BB & 2 &  $u^{\sharp \otimes \tau} = u^*$ & $\1_2$ \\ \hhline{~----} 
\multicolumn{1}{|c|}{} & $KO^u_6(A,\tau)$ \TT \BB & 2 &  $u = u^*$, $u^{\sharp \otimes \tau} = -u$ & $\sm{0}{i \cdot \1}{-i \cdot \1}{0}$ \\ \hhline{=====}
\end{tabular}
\caption*{
In the unitary picture, the $K$-theory of a \ctalg ~$(A, \tau)$ consists of unitaries in matrix algebras over $A$ satisfying the symmetry $\mathscr{S}_i$. See Theorem~\ref{thm:summary}.}
\end{figure}

\begin{thm} \label{thm:summary}
Let $(A, \tau)$ be a \ctalg, not necessarily unital. 
Let $n_i$ be the positive integer, $\mathscr{S}_i$ be the symmetry relation, and $I^{(i)} \in M_{n_i}(\C)$ be the neutral element as specified in Figure~\ref{table:summary}, for $i \in \{-1, 0, \dots, 6\}$.

Then there exist natural isomorphisms
$KO_i^u(A, \tau) \cong KO_i(A^\tau)$ for all $i$,
where $KO^u_i(A, \tau)$ is defined to be group of equivalence classes of unitaries $u$ in
$\cup_{n \in \N} M_{n_i \cdot n}(\widetilde{A})$ that satisfy $\mathscr{S}_i$ and satisfy
$\lambda(u) = {\diag}(I^{(i)}, \dots, I^{(i)})$. The equivalence relation is generated by path homotopy (within unitaries satisfying 
$\mathscr{S}_i$) and by the relation
$u \sim {\diag}(u, I^{(i)})$.
The binary operation is defined by $[u] + [v] = [{\diag}(u,v)]$.

Similar statements are made for $KU^u_i(A, \tau)$.
\end{thm}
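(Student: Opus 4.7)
The plan is to recognize that this theorem is essentially an assembly/synthesis of the case-by-case results already proven through Sections~\ref{sec:even} and~\ref{sec:odd}, together with a bookkeeping step to reconcile three slightly different normalizations used along the way (the unital $U^{(i)}_\infty(A,\tau)$ description, the reduced $\ker(\lambda_*)$ description, and the ``representative with $\lambda(u) = \mathrm{diag}(I^{(i)},\dots,I^{(i)})$'' description). For each $i$, the isomorphism $KO^u_i(A,\tau) \cong KO_i(A^\tau)$ and the concrete representative form are already in hand: $i=0$ follows from Theorem~\ref{K0iso} and Proposition~\ref{K0unitpicture} (combined with Proposition~\ref{K0iii} to cross between the $R^*$- and \ctalg-formulations); $i=\pm 1$ follows from the explicit $\Gamma$-isomorphism together with Proposition~\ref{K1unitpicture} and Proposition~\ref{K-1unitpicture}; $i=2$ follows from Theorem~\ref{K2class} and Proposition~\ref{K2unitpicture} chained with Theorem~\ref{thm:classify} for $A_2$; and $i=3,4,5,6$ follow from Propositions~\ref{K3iso}, \ref{K4iso}, \ref{K5iso}, \ref{K6iso} with the corresponding unitary-picture propositions.

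First I would state the case-by-case correspondence in a single table, then reduce all eight statements to a uniform assertion by the following observation: in each row of Figure~\ref{table:summary}, the neutral element $I^{(i)}$ is a self-adjoint unitary in $M_{n_i}(\C)$ satisfying $\mathscr{S}_i$, and the stabilization map $u \mapsto \mathrm{diag}(u,I^{(i)})$ preserves $\mathscr{S}_i$. This is what lets the reduced definition $KO^u_i(A,\tau) = \ker(\lambda_*)$ coincide with the subset of $U^{(i)}_\infty(\widetilde A, \tau)/\!\sim_i$ consisting of classes represented by $u$ with $\lambda(u) = \mathrm{diag}(I^{(i)},\dots,I^{(i)})$: surjectivity of this representation is exactly the content of the unit-picture propositions cited above, and injectivity is built into the definition of the equivalence relation.

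Next, I would verify naturality of the isomorphism $KO^u_i(-,\tau) \cong KO_i((-)^\tau)$. Functoriality of $KO^u_i$ in $(A,\tau)$ is obvious: a \ctalg-homomorphism $\varphi\colon(A,\tau)\to(B,\tau)$ extends to $\widetilde{\varphi}$, sends unitaries satisfying $\mathscr{S}_i$ to unitaries satisfying $\mathscr{S}_i$, respects $\lambda$, and commutes with stabilization and direct sum. Naturality of the isomorphism then follows because in each degree the comparison map is given by a manifestly natural recipe (either the projection formula $[u]\mapsto[\tfrac12(u+1)]-[1]$, or the ``plug into the generator'' formula $[\phi]\mapsto[\phi(u_0)]$ via the classifying algebra $A_i$). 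For $KU^u_0$ and $KU^u_1$ there is no $\tau$-constraint, and the identifications with $KU_*(A)$ reduce to the standard complex \calg\ picture (Chapter~8 and \S10.1 of \cite{rordambookblue}).

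The main obstacle is not conceptual but organizational: the \ctalg-formulation forces one to track, uniformly across all ten indices, three compatibilities --- that the neutral element $I^{(i)}$ is fixed by $\mathscr{S}_i$, that $\mathrm{diag}(u,v)$ remains in $U^{(i)}_\infty(A,\tau)$ (which for $i=4,6$ depends on the $n\times n$-block convention for $\sharp\otimes\tau$ fixed in Section~\ref{sec:prelim}), and that path-homotopy within $U^{(i)}_\infty$ agrees with the various ad-hoc equivalences used in individual sections. Once these are checked row by row in Figure~\ref{table:summary}, the theorem follows. I would close by remarking that the $KO$-statements for $i\geq 7$ and $i\leq -2$ are obtained by Bott periodicity $KO_{i+8}\cong KO_i$, and similarly $KU_{i+2}\cong KU_i$, so that the eight rows suffice.
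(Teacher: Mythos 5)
Your proposal is correct and matches the paper's approach: the paper gives no separate argument for Theorem~\ref{thm:summary}, treating it exactly as you do --- a compilation of the case-by-case isomorphisms and representative-normalization propositions from Sections~\ref{sec:even} and~\ref{sec:odd} (together with Theorem~\ref{thm:classify} and the complex-case remarks for $KU^u_i$), with the routine checks that $I^{(i)}$, stabilization, and direct sums respect $\mathscr{S}_i$ under the block conventions of Section~\ref{sec:prelim}. Your handling of the $\lambda$-normalization is also consistent with the paper, which in Remark~\ref{rem:lambda} only claims existence of representatives with $\lambda(u)=I^{(i)}_n$ while leaving homotopies constrained merely by $\mathscr{S}_i$, exactly as you assume.
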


\begin{remark} \label{rem:inverses} The inverse of an element $[u] \in KO_i^u(A, \tau)$ is given by $[u^*]$ if $i$ is odd.  In the even 
case, the inverse of $[u] \in KO_i^u(A, \tau)$ is $[-u]$ when $i = 0, 4$; or when $i=2, 6$ and 
$u \in M_{n_i \cdot n}(\widetilde{A})$ with $n$ even.
\end{remark}  

\begin{remark} \label{rem:lambda} The restriction that $\lambda(u) = I^{(i)}_n = {\diag}(I^{(i)}, \dots, I^{(i)})$ could be replaced by the weaker condition that $[\lambda(u)] = 0 \in KO^u_I(\C, {\id}) = KO^u_i(\R)$. We have shown in each case that a representative of $KO_i^u(A, \tau)$ can always be found that satisfies the stronger $\lambda$ condition. We have not proven, but we believe to be true in each of the ten cases, that the equivalence relation can be taken to be path homotopy not only within unitaries satisfying the appropriate symmetry, but also within unitaries satisfying the stronger condition $\lambda(u) = {\diag}(I^{(i)}, \dots, I^{(i)})$.
\end{remark}

\begin{remark} \label{rem:unit}
If $(A, \tau)$ is already a unital \ctalg , it is not necessary to work in the unitization $\widetilde{A}$. 
We can realize $KO_i^u(A, \tau)$ using unitaries in $M_{n_i}(A)$ satisfying the correct symmetries (and without any $\lambda$ restriction). The isomorphism between the picture of 
$KO_i^u(A)$ given by unitaries in $M_{n_i}(A)$ and that given by unitaries in $M_{n_i}(\widetilde{A})$ is given by $[u] \mapsto [u-I_n^{(i)} \cdot (1_A)_n + I_n^{(i)} \cdot \1_n]$.
\end{remark}

\begin{remark} \label{rem:matrix}
In the cases where the matrices are required to be of dimensions that are multiples of 2 or 4, it is possible to write down a picture of $K$-theory so that any unitary (satisfying the symmetry) in any dimension of square matrices represents a $K$-class. However, this would require a carefully specified and consistent choice of each of the embeddings $M_n(A) \hookrightarrow M_{n+1}(A)$. 
These choices would not be canonical and the designated ``neutral element'' would look different for different values of $n$.
\end{remark}

\begin{remark} \label{rem:conj}
Let $u$ be a unitary in $M_n(\widetilde{A})$ and let $x \in O(n) \subset M_n(\R)$. If $u$ satisfies any symmetry 
$\mathscr{S}_i$ for $-1 \leq i \leq 2$, then so does $xux^*$. Furthermore, $[u] = [xux^*] \in KO^u_i(A)$ if $x \in SO(n)$ (since $SO(n)$ is connected). The equality $[u] = [xux^*]$ also holds if $x \in O(n)$ and $-1 \leq i \leq 1$. Indeed, if $\det x = -1$, then 
${\diag}(x, \1, -\1) \in SO(n+2)$ and ${\diag}(x, -\1) \in SO(n+1)$ so we have 
\begin{align*}
[u] = [{\diag}(u, \1, -\1)] &= [{\diag}(x, \1, -\1) \cdot {\diag}(u, \1, -\1) \cdot {\diag}(x^*, \1, -\1)] \\
	&= [{\diag}(xux^*, \1, -\1)] = [xux^*]  \qquad \text{(for $i = 0$)} \; . \\
[u] = [{\diag}(u, \1)] &= [{\diag}(x, -\1) \cdot {\diag}(u, \1) \cdot {\diag}(x^*, -\1)] \\
	&= [{\diag}(xux^*, \1)] = [xux^*]  \qquad \text{(for $i = \pm 1$).}
\end{align*}
However, for $i = 2$, we may have $[u] \neq [xux^*] \in KO^u_2(A)$ if $x \in O(n)$.

Similar comments hold for $3 \leq i \leq 6$, with respect to conjugation by elements in the image of the injective homomorphism
$O(n) \hookrightarrow O(2n)$ or $SO(n) \hookrightarrow SO(2n)$ given by
$$\begin{pmatrix}  
x_{1\,1} & x_{1\, 2} & \dots & x_{1 \, n} \\
x_{2 \, 1} & x_{2 \, 2} & \dots & x_{2 \, n} \\
\vdots & \vdots & \ddots & \vdots \\
x_{n \, 1} & x_{n \, 2} & \dots & x_{n \, n } \\
\end{pmatrix} 
\mapsto
\begin{pmatrix}  
x_{1\,1} \1_2 & x_{1\, 2} \1_2  & \dots & x_{1 \, n} \1_2  \\
x_{2 \, 1} \1_2  & x_{2 \, 2} \1_2  & \dots & x_{2 \, n} \1_2  \\
\vdots & \vdots & \ddots & \vdots \\
x_{n \, 1} \1_2  & x_{n \, 2} \1_2  & \dots & x_{n \, n } \1_2  \\
\end{pmatrix} 
$$
\end{remark}

\subsection{$KU_i^u(A)$ for real \calg s}
First note that Definitions~\ref{defn:KO_0} and \ref{defn:KO_1} carry over to the complex setting and give pictures of the $K$-theory groups $K^u_0(A)$ and $K^u_1(A)$ for any complex \calg~ $A$. Specifically, $K^u_0(A)$ is given in terms of self-adjoint unitaries in $M_{2n}(A)$ and $K^u_1(A)$ is given in terms of unitaries in $M_n(A)$. The same proofs carry over to show that $K^u_0(A) \cong K_0(A)$ and $K^u_1(A) \cong K_1(A)$ for any complex \calg~ $A$.

Following the convention in \cite{boersema02}, \cite{boersema04}, and later papers; we define $KU_i(A)$ as a functor on the category of real \calg~ via complexification as follows:
\begin{defn}
For a real \calg~$A$, define $KU_i(A) = K_i(A\sc) \cong K^u_i(A\sc)$ for $i = 0,1$. 
\end{defn}

Alternatively, if $(A, \tau)$ is a \ctalg , then we have $KU_i(A, \tau) \cong K_i(A)$ since $A$ is exactly the complexification of the real \calg~$A^\tau$ corresponding to $(A, \tau)$. Thus we end up with unitary pictures $KU_0^u(A, \tau)$ and $KU_1^u(A, \tau)$ in terms of self-adjoint unitaries and unitaries in matrix algebras over $A$, exactly as described in Theorem~\ref{thm:summary} referring to the first two lines of Table~\ref{table:summary}. The fact that the symmetry relations for $KU_i^u(A, \tau)$ do not actually involve 
$\tau$ reflects the fact that these groups depend only on the underlying \calg~ $A$ and not on the real structure imposed on $A$.

In fact, for each $i$ there is a natural transformation
$$c_i^u \colon KO_i^u(A, \tau) \rightarrow 
\begin{cases} KU_0^u(A, \tau) & \text{for $i$ even} \\
	      KU_1^u(A, \tau) & \text{for $i$ odd.} \end{cases}
$$
defined simply by $[u] \mapsto [u]$. These are clearly well-defined and natural, since in each case we are forgetting the extra symmetry requirement involving $\tau$. To simplify notation, we define $KU_i^u(A, \tau)$ for all $i$ by 
$$KU_i^u(A, \tau) = \begin{cases} KU_0^u(A, \tau) & \text{for $i$ even} \\
	      KU_1^u(A, \tau) & \text{for $i$ odd} \end{cases}$$
so that we can simply write 
$c_i^u \colon KO_i^u(A, \tau) \rightarrow KU_i^u(A,\tau)$ in all cases. 

We will verify in Proposition~\ref{prop:c=c} below that, for any real \calg~$B$, the homomorphism $c_i^u$ coincides with the frequently used homomorphism
$c_i \colon KO_i(B) \rightarrow KU_i(B)$ induced by the injective *-algebra homomorphism $c \colon B \rightarrow B\sc$,
This natural transformation appears for example in Section~1.4 of \cite{schroderbook} and Section~1.2 of \cite{boersema02}
and forms one of the maps of the crucial long exact sequence relating real and complex $K$-theory.

First, note that the complexification functor $B \rightsquigarrow B\sc$, rephrased in terms of \ctalg s, is equivalent to the functor 
$(A, \tau) \rightsquigarrow (A \oplus A, \sigma)$ where $(a_1, a_2)^\sigma = (a_2^\tau, a_1^\tau)$. The *-homomorphism $c \colon B \rightarrow B\sc$, rephrased in terms of \ctalg s, is the injective $(*, \tau)$-homomorphism 
$\widetilde{c} \colon (A, \tau) \rightarrow (A \oplus A, \sigma)$ given by
$\widetilde{c}(a) = (a,a)$.  
To verify these claims, one can verify that the restricted map $\widetilde{c} \colon A^\tau \rightarrow (A \oplus A)^{\sigma}$ is the same, up to isomorphism, as the canonical inclusion of $A^\tau$ into its complexification $A$. Indeed,
$$A^\tau = \{a \in A \mid a^* = a^\tau \}$$ 
and 
$$	(A \oplus A)^\sigma = \{(a_1, a_2) \in A \oplus A \mid (a_1, a_2)^\sigma = (a_1,a_2)^* \} 
			= \{(a_1, a_1^{* \tau}) \mid d \in A \}  \cong A \; .
$$

\begin{lemma} \label{lem:KOKU}
Let $(A, \tau)$ be a \ctalg. Consider the \ctalg ~$(A \oplus A, \sigma)$. Then there is an isomorphism
$$\Gamma \colon KU^u_i(A, \tau) \rightarrow KO^u_i(A \oplus A, \sigma) \; .$$
\end{lemma}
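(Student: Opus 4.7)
The plan is to construct $\Gamma$ by explicit doubling and to produce a direct inverse. The key observation is that the $\sigma$-symmetry on a pair $v = (v_1, v_2) \in M_{n_i \cdot n}(A \oplus A)$ representing a class in $KO_i^u(A \oplus A, \sigma)$ completely determines $v_2$ from $v_1$: when $\mathscr{S}_i$ requires $v^\sigma = v$ we must have $v_2 = v_1^\tau$; when $v^\sigma = v^*$, $v_2 = v_1^{*\tau}$; and when $v^\sigma = -v$, $v_2 = -v_1^\tau$ (replacing $\tau$ by $\sharp \otimes \tau$ in the cases $i \in \{3,4,5,6\}$). Thus a representative of $KO_i^u(A \oplus A, \sigma)$ is the same data as a unitary $v_1$ satisfying only the Hermitian-type half of $\mathscr{S}_i$ --- exactly the data of a representative of $KU_i^u(A, \tau)$.

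Using this observation, define $\Gamma([u]) := [(u, u')]$, where $u'$ is the transform of $u$ dictated above, and define $\Gamma^{-1}([(v_1, v_2)]) := [v_1]$. By construction $(u, u')$ satisfies the $\sigma$-part of $\mathscr{S}_i$, and the Hermitian-type part is inherited from the corresponding condition on $u$. The scalar matching needed for $(u, u')$ to lie in $M_{n_i \cdot n}((A \oplus A)^\sim)$ rather than merely in $M_{n_i \cdot n}(\widetilde{A} \oplus \widetilde{A})$ follows from a direct inspection: each matrix $I^{(i)}$ in Table~\ref{table:summary} itself satisfies $\mathscr{S}_i$, so $\lambda(u) = I_n^{(i)}$ forces $\lambda(u')$ to equal the same matrix, and the pair has a single well-defined scalar part. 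Continuous paths, stabilization $u \mapsto \iota_n^{(i)}(u)$, and direct sums in the source correspond to the analogous operations in the target under $\Gamma$, so $\Gamma$ and $\Gamma^{-1}$ are well-defined semigroup homomorphisms inverse to each other.

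The main obstacle is the uniform case-by-case verification across the eight values of $i$ together with the distinction between the involutions $\tau$ and $\sharp \otimes \tau$. However, in every case the algebraic identities collapse to the same pattern --- the $\sigma$-condition mechanically recovers $v_2$ from $v_1$ --- so the argument reduces to a single conceptual step, with the case analysis amounting to routine sign and involution tracking.
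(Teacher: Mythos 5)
Your proposal is correct and is essentially the paper's own argument: the paper defines $\Gamma$ by exactly these doubling formulas ($[x]\mapsto[(x,x^{*\tau})]$ for $i=0,1$, $[x]\mapsto[(x,x^{\tau})]$ for $i=-1$, $[x]\mapsto[(x,-x^{\tau})]$ for $i=2$, with $\sharp\otimes\tau$ in place of $\tau$ for $i=3,\dots,6$), the point being precisely that the $\sigma$-symmetry determines the second coordinate from the first, so the map is a bijection on symmetry classes. The only detail to make explicit is that one first normalizes the $KU$-representative so its scalar part is $I_n^{(i)}$ (a conjugation by a complex scalar unitary, which does not change the $KU^u$ class), which is the content of your scalar-matching remark.
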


\begin{proof}
For $i = 0,1$, define a homomorphism by $[x] \mapsto [(x, x^{* \tau})]$, and check that it is well-defined and is a bijection on the appropriate symmetry classes of unitaries.  For $i= -1$ use $[x] \mapsto [(x, x^{\tau})]$, and for $i = 2$ 
use $[x] \mapsto [(x, -x^\tau)]$.  For $i = 3, 4, 5, 6$, use the same formulas replacing $\tau$ with $\sharp \otimes \tau$.
\end{proof}

\begin{lemma} \label{lem:diagram}
The diagram below commutes. 
\end{lemma}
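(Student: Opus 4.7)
The plan is to verify commutativity by a direct case-by-case check on representative unitaries. The diagram in question presumably relates the forgetful map $c_i^u \colon KO_i^u(A, \tau) \to KU_i^u(A, \tau)$ to the map $\widetilde{c}_*$ induced by $\widetilde{c} \colon (A, \tau) \to (A \oplus A, \sigma)$, $\widetilde{c}(a) = (a,a)$, mediated by the isomorphism $\Gamma$ of Lemma~\ref{lem:KOKU}; equivalently, the assertion should amount to $\Gamma \circ c_i^u = \widetilde{c}_*$ as homomorphisms $KO_i^u(A, \tau) \to KO_i^u(A \oplus A, \sigma)$.

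First I would fix a class $[u] \in KO_i^u(A, \tau)$ represented by a unitary $u$ satisfying the symmetry $\mathscr{S}_i$ listed in Table~\ref{table:summary}. Along one side of the diagram, $c_i^u([u])$ is just the class of $u$ in $KU_i^u(A, \tau)$ (the same unitary, now forgetting the symmetry involving $\tau$), and then $\Gamma$ sends this to $[(u, u^{*\tau})]$ for $i \in \{0, 1, 4, 5\}$, to $[(u, u^\tau)]$ for $i \in \{-1, 3\}$, and to $[(u, -u^\tau)]$ for $i \in \{2, 6\}$ (with $\tau$ replaced by $\sharp \otimes \tau$ in the four quaternionic cases). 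Along the other side, $\widetilde{c}_*([u]) = [(u, u)]$ in $KO_i^u(A \oplus A, \sigma)$.

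The key observation is that each symmetry $\mathscr{S}_i$ is exactly what forces these two representatives to agree on the nose. For $i = 0$, the relations $u = u^*$ and $u^\tau = u^*$ give $u^{*\tau} = u$; for $i = 1$, $u^\tau = u^*$ gives $u^{*\tau} = u$; for $i = -1$, $u^\tau = u$ is already the needed identity; for $i = 2$, $u = u^*$ and $u^\tau = -u$ give $-u^\tau = u$; and the four cases $i = 3, 4, 5, 6$ are identical with $\sharp \otimes \tau$ in place of $\tau$. The two complex rows are automatic, since there $c_i^u$ acts trivially on representative unitaries and $\Gamma$ matches by construction.

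The hard part, such as it is, will be purely bookkeeping: matching the defining formula for $\Gamma$ in each of the ten cases (as extracted from the proof of Lemma~\ref{lem:KOKU}) to the corresponding symmetry relation, and keeping the conventions for the action of $\sharp \otimes \tau$ on $M_{2n}(A)$ consistent with those fixed in Section~\ref{sec:prelim}. No homotopy or lifting argument is required, because in each case the two candidate representatives are literally the same unitary in $M_{n_i \cdot n}(\widetilde{A \oplus A})$.
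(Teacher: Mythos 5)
Your proposal is correct and is essentially the paper's own argument: the paper likewise takes a representative $x$, uses $c_i^u([x])=[x]$, $\widetilde{c}_*([x])=[(x,x)]$, and the case-by-case formula for $\Gamma$ from Lemma~\ref{lem:KOKU}, and observes that the symmetry $\mathscr{S}_i$ makes the two representatives literally equal to $(x,x)$. You simply spell out the symmetry bookkeeping (e.g.\ $u^{\tau}=u^{*}$ forcing $u^{*\tau}=u$, and $\sharp\otimes\tau$ in the quaternionic cases) that the paper leaves to the reader.
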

$$\xymatrix{
KO_i^u(A, \tau) \ar[r]^{c_i^u}  \ar[dr]_{\widetilde{c}_*}
& KU_i^u(A, \tau) \ar[d]^\Gamma \\
& KO_i^u(A \oplus A, \sigma)
}$$

\begin{proof}
Let $[x] \in KO_i^u(A, \tau)$. For each $i$, we use the formulas $\widetilde{c}_i([x]) = [(x,x)]$ and $c_i([x]) = [x]$, and the formula for $\theta([x])$ given in the proof of Lemma~\ref{lem:KOKU}. Combining these formulas with the symmetries that $x$ is assumed to satisfy, it follows that the diagram commutes for each $i$.
\end{proof}

\begin{prop} \label{prop:c=c}
Let $(A, \tau)$ be a \ctalg ~and let $A^\tau$ be the corresponding real \calg . Then 
$c_i^u \colon KO_i^u(A, \tau) \rightarrow KU_i^u(A, \tau)$ corresponds to the natural transformation 
$c_i \colon KO_i(A^\tau) \rightarrow KU_i(A^\tau)$ via the identifications 
$KO_i^u(A, \tau) \cong KO_i(A^\tau)$ and $KU_i^u(A, \tau) \cong KU_i(A^\tau)$.
\end{prop}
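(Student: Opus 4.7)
The plan is to chase the commutative triangle of Lemma~\ref{lem:diagram} through two natural identifications. The first is that $(A \oplus A, \sigma)$ is the \ctalg~corresponding to the complexification of $A^\tau$: as noted in the paragraph preceding Lemma~\ref{lem:KOKU}, the real \calg~$(A \oplus A)^\sigma = \{(a_1, a_1^{*\tau}) : a_1 \in A\}$ is isomorphic to $A$ via projection to the first coordinate, and this is precisely the complexification $(A^\tau)\sc$. Under this isomorphism, the $(*,\tau)$-homomorphism $\widetilde c(a) = (a,a)$ restricts to the standard inclusion $c \colon A^\tau \hookrightarrow (A^\tau)\sc$. By naturality of the isomorphisms $KO_i^u(-) \cong KO_i(-)$ established in sections~\ref{sec:even} and~\ref{sec:odd}, $\widetilde c_*$ therefore corresponds on the classical side to $c_i \colon KO_i(A^\tau) \to KU_i(A^\tau)$ (using that $A$ is a complex \calg, so $KO_i(A) = K_i(A) = KU_i(A^\tau)$).

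The second identification is that $\Gamma$, under these same passages, becomes the canonical isomorphism $KU_i^u(A, \tau) \cong KU_i(A^\tau)$. In each case $\Gamma([x]) = [(x, f(x))]$ for a function $f$ determined by the required $\sigma$-symmetry on the target, namely $f(x) \in \{x^{*\tau}, x^\tau, -x^\tau\}$ or the analogous $\sharp \otimes \tau$ versions for $i \geq 3$. For each $i$ I would trace the standard isomorphism from $KO_i^u$ to the classical $KO_i$ (via projections in the even cases of Theorem~\ref{K0iso} and its analogues, and via the standard passages in the odd cases) applied to $(x, f(x))$, and then apply the projection $(A \oplus A)^\sigma \to A$ onto the first coordinate. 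The illustrative case $i = 0$ reads: $\Gamma([x]) = [(x, x^{*\tau})]$ maps under Theorem~\ref{K0iso} to $[\tfrac12(x+\1, x^{*\tau}+\1)]$, whose first-coordinate projection $[\tfrac12(x+\1)]$ is the standard $K_0$-representative of $[x] \in K_0(A) = KU_0^u(A, \tau)$.

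Combining these with the identity $\Gamma \circ c_i^u = \widetilde c_*$ of Lemma~\ref{lem:diagram} yields that $c_i^u = \Gamma^{-1} \circ \widetilde c_*$ corresponds to $c_i$ under the identifications $KO_i^u(A, \tau) \cong KO_i(A^\tau)$ and $KU_i^u(A, \tau) \cong KU_i(A^\tau)$. The main obstacle is executing the case-by-case check in the second step uniformly across all ten values of $i$: the distinct symmetry formulas and the different classical pictures (projections in even degree, paths of unitaries in odd degree, $\sharp \otimes \tau$-twists above degree two) each require separate bookkeeping. However, in every case the second coordinate $f(x)$ of $\Gamma([x])$ is a function of the first and becomes invisible after projection, so the verification reduces to tracking how each unitary symmetry picture converts to the standard $K$-theory picture.
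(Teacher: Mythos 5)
Your proposal is correct and takes essentially the same route as the paper: both rest on the identity $\Gamma \circ c_i^u = \widetilde{c}_*$ from Lemma~\ref{lem:diagram}, the observation that $\widetilde{c}$ is the unitary-picture form of the complexification inclusion $A^\tau \hookrightarrow (A^\tau)\sc$, and naturality of the isomorphisms $KO_i^u(-) \cong KO_i(-)$. The case-by-case verification you outline (that $\Gamma$ implements the canonical identification $KU_i^u(A,\tau) \cong KU_i(A^\tau)$) is detail the paper leaves implicit in its appeal to naturality, not a different argument.
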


\begin{proof}
The claim is that for any \ctalg ~$(A, \tau)$ the diagram
$$\xymatrix{
KO_i^u(A, \tau) \ar[r]^{c_i^u}  \ar[d] 
& KU_i^u(A, \tau) \ar[d] \\
KO_i(A, \tau) \ar[r]^{c_i} \ar@{=}[d]
 & KU_i(A, \tau)  \ar@{=}[d] \\
 KO_i(A^\tau) \ar[r]^{c_i} 
 & KU_i(A^\tau)  \\
}$$
commutes, where the vertical arrows represent the appropriate natural isomorphisms from the previous sections.

By Lemma~\ref{lem:diagram}, it follows that $c_i^u$ is equivalent to the natural homomorphism induced by the homomorphism $A \hookrightarrow A\sc$. Since $c_i$ is induced by the same homomorphism, and since the isomorphisms $KO_i^u(A, \tau) \cong KO_i(A^\tau)$ and $KU_i^u(A, \tau) \cong KU_i(A^\tau)$ are natural with respect to real *-algebra homomorphisms, the result follows.
\end{proof}

\subsection{Examples}

The following theorem will identify for each $i$ a specific unitary that generates the $K$-theory for each of the corresponding classifying algebra $A_i$.
Let
$$Q  = \tfrac{1}{\sqrt{2}} \left( \begin{matrix} 
	1 & 0 & 0 & -i \\ 0 & 1 & i & 0 \\ 0 & i & 1 & 0 \\ -i & 0 & 0 & 1  
					\end{matrix} \right)
\in M_4(\C)$$
be the unitary matrix (from Lemma~1.3 of \cite{hastingsloring}) which facilitates an equivalence between the involutions $\widetilde{\sharp} \otimes \sharp$ and $\tr_4$. That is, 
the equations
$Q x^\tr Q^* = (Q x Q^*)^{\widetilde{\sharp} \otimes \sharp}$
and $Q^* x^{\widetilde{\sharp} \otimes \sharp} Q = (Q^* x Q)^\tr$
hold for all $x \in M_4(A)$ for any \calg~$A$.

\begin{example} \label{ex:Aigenerators}
For each $i$, the class of a generator of $KO^u_i(A_i) \cong \Z$ is given by a unitary element $x_i$ as described below.

\begin{itemize} 
\item $[x_{-1}] \in KO^u_{-1}(A_{-1})$. The associated \ctalg ~is $(C_0(S^1 \setminus \{1\}, \C), {\id})$.
The unitary is
$$x_{-1} = z \in C(S^1, \C)$$
which satsifies $(x_{-1})^{\id} = x_{-1}$.

\item $[x_0] \in KO^u_0(A_0)$. The associated \ctalg ~is, $(q\C, \tr)$.
The unitary is
$$x_0 =
      \left( \begin{smallmatrix}
  1 - 2t & 0 & 0 & 2 \sqrt{t-t^2} \\
  0 & 1 & 0 & 0 \\
  0 & 0 & -1 & 0 \\
  2\sqrt{t-t^2} & 0 & 0 & 2t - 1
    \end{smallmatrix} \right)
\in M_2(\widetilde{q\C}) \; $$
which satisfies $x_0 = x_0^*$ and $x_0^\tr = x_0^*$.

\item $[x_1] \in KO^u_1(A_1)$.   The associated \ctalg ~is $(C_0(S^1 \setminus \{1\}, \C), \zeta)$.
The unitary is
$$x_{1} = z \in C(S^1, \C)$$
which satsifies $(x_{1})^\zeta = x_{1}^*$.

\item $[x_2] \in KO^u_2(A_2)$. The associated \ctalg ~is, $(q\C, \sharp)$.
The unitary is
$$x_2 =  Wx_0 W^*
\in M_2(\widetilde{q\C}) \; $$
which satisfies $x_2 = x_2^*$ and $x_2^\sharp = -x_2$ (where $W$ is as in Section~\ref{sec:prelim}).  

\item $[x_3] \in KO^u_3(A_3)$. The associated \ctalg ~is 
  $(M_2(\C) \otimes C_0(S^1 \setminus \{1\}, \C), \sharp \otimes {\id})$.
  The unitary is 
$$x_3 = Q {\diag}(z, 1, 1, 1) Q^* \in M_2(\C) \otimes M_2(\C) \otimes C_0(S^1, \C) $$
which satisfies $(x_3)^{\widetilde{\sharp} \otimes \sharp \otimes {\id}} = x_3$.   

\item $[x_4] \in KO^u_4(A_4)$. The associated \ctalg ~is, $(M_2(\C) \otimes q\C, \sharp \otimes \tr)$.
The unitary is
$$x_4 = Q {\diag}(x_0, 1, 1, -1, -1) Q^* \in M_2(\C) \otimes M_2(\C) \otimes \widetilde{q\C}$$
which satisfies $(x_4)^{\widetilde{\sharp} \otimes \sharp \otimes \tr} = x_4^*$.

\item $[x_5] \in KO^u_5(A_5)$. The associated \ctalg ~is 
   $(M_2(\C) \otimes C_0(S^1 \setminus \{1\}, \C), \sharp \otimes \zeta)$.
The unitary is 
$$x_5 = Q {\diag}(z, 1, 1, 1) Q^* \in M_2(\C) \otimes M_2(\C) \otimes C_0(S^1, \C) $$
which satisfies $(x_5)^{\widetilde{\sharp} \otimes \sharp \otimes \zeta} = x_5^*$.   
   
\item $[x_6] \in KO^u_6(A_6)$. The associated \ctalg ~is, $(q\C, \trt)$. 
The unitary is
$$x_6 = x_0
\in M_2(\widetilde{q\C}) \; $$
which satisfies $x_6 = x_6^*$ and $(x_6)^{\sharp \otimes \trt} = -x_6$.
\end{itemize}

\begin{proof}
For each $i$, we know from Propositions~\ref{thm:KAieven} and~\ref{thm:KAiodd} that $KO_i^u(A_i) \cong \Z$ and that $c_i \colon KO_i^u(A_i) \rightarrow KU_i^u(A_i)$ is an isomorphism.

The statement that $[x_0]$ generates $KO_0^u(A_0) \cong \Z$ is Proposition~\ref{prop:u0}. It follows that 
$c^u_0[x_0] = [x_0]$ generates $KU_0^u(A_0) \cong \Z$. (This fact can also be derived from the fact that $[\tfrac{1}{2}(x_0+\1)] - [\1]$ is the generator of $KU_0(q\C)$ as in Section~3 of \cite{loring08}.)  Now for $i = 2, 4, 6$, we also have $c^u_i([x_i]) = [x_0] \in KU_i^u(A_i) \cong KU_0^u(q\C)$. Since $c^u_i$ is an isomorphism on $KO^u_i(A_i)$ it follows that $[x_i]$ must be generator of $KO^u_i(A_i)$.

For $i$ odd, let $z$ be the identity function on $S^1$. It is known that $[z]$ is a generator of $KU_1^u(C_0(S^1 \setminus \{1\} ))$. Again in each case, we have $c^u_i([x_i]) = [z]$ so it follows that $[x_i]$ is a generator of $KO_i(A_i) \cong \Z$.
\end{proof}
\end{example}

\begin{example} \label{ex:Rgenerators}
Recall that the groups of $KO_*(\R) \cong KO_*(\C, {\id})$, are given by
$$KO_*(\R) = \begin{cases} \Z & i = 0,4 \\ \Z_2 & i = 1,2 \\ 0 & i = 3,5,6,7 \end{cases}$$
Summarizing from discussions in the previous sections, we identify explicit generators for the non-zero groups, with our unitary picture. 

\begin{enumerate}
\item The generator of $KO^u_0(\C, {\id}) \cong \Z$ is $[1_2]$.
\item The generator of $KO^u_1(\C, {\id}) \cong \Z_2$ is $[-1]$.
\item The generator of $KO^u_2(\C, {\id}) \cong \Z_2$ is 
    $[\left( \begin{smallmatrix} 0 & -i \\ i & 0 \end{smallmatrix} \right)]$.
\item The generator of $KO^u_4(\C, {\id}) \cong \Z$ is $[1_4]$.
\end{enumerate}

Notice also that $KU^u_0(\C, {\id}) \cong \Z$ is generated by $[1_2]$. In terms of these generators, it is easy to verify that the homomorphisms $c_i \colon KO^u_i(\C, {\id}) \rightarrow KU^u_i(\C, {\id})$ agree with their known behavior.  For example $c_0 \colon \Z \rightarrow \Z$ is an isomorphism. The class of $[-1]$ is non-trivial in $KO_2^u(\C, {\id})$ but is trivial in $KU_2^u(\C, {\id})$; this corresponds to the fact that $c_2 \colon \Z_2 \rightarrow \Z$ is trivial (as it must be of course). Also, we have that $c_4 \colon \Z \rightarrow \Z$ is multiplication by $2$, since $[1_4]$ is twice the generator of $KU_0^u(\C, {\id})$.

\end{example}

\begin{example} \label{ex:spheregenerators}
Let $\zeta$ be the reflection on $C(S^1, \C)$ or $C(S^{2}, \C)$ corresponding to negation of the $y$ coordinate.
$(x,y,z) \mapsto (x,-y,z)$. Let $1$ denote the point $(1, 0, \dots, 0)$ in $S^{n-1}$ (for $n = 2,3,4$).
\begin{enumerate}
\item The generator of $KO_{1}^u \left(C_{0}(S^{1} \setminus \{1\}), \zeta \right)\cong \Z$
is the class of the unitary $u$, where 
\[
u(x+iy)=x+iy.
\]

\item The generator of $KO_{-1}^{u} \left(C_{0}(S^{1}\setminus \{1\}),{\id} \right)\cong\mathbb{Z}$
is the class of the unitary $u$, where 
\[
u(x,y)=x+iy.
\]

\item The generator of $KO_{0}^u\left(C_{0}(S^{2}\setminus \{1\} ,\zeta\right)\cong\mathbb{Z}$
is the class of the unitary $u$, where 
\[
u(x,y,z)=\left( \begin{array}{cc}
z & x-iy\\
x+iy & -z
\end{array}\right) \; .
\]

\item The generator of $KO_{-2}^{u}\left(C_{0}(S^{2}\setminus  \{1\}) ,{\id} \right) \cong\mathbb{Z}$
is the class of the unitary $u$, where 
\[
u(x,y,z)=\left( \begin{array}{cc}
z & x-iy\\
x+iy & -z
\end{array}\right) \; .
\]

\item The generator of $KO{}_{-3}^{\mathrm{u}}\left(C_{0}(S^{3}\setminus \{1\}) ,{\id} \right)\cong\mathbb{Z}$
is the class of the unitary $u$, where 
\[
u(x,y,z,w)=\left( \begin{array}{cc}
iz-w & ix+y\\
ix-y & -iz-w
\end{array}\right)  \; .
\]

\end{enumerate}

\begin{proof}
Results (1) and (2) are restatements from Example~\ref{ex:Aigenerators}

For $(3)$, first check that $u = u^\zeta = u^*$. The equation $\lambda(u) = I^{(0)}$ does not hold exactly, but it does hold on the level of $KO^u_0(\C)$. Using the isomorphism $\theta$ from Theorem~\ref{K0iso}, 
we have $\theta([u]) = [\tfrac{1}{2}(u + 1_2)] - [1] = [p_0] - [1]$
where $p_0 = \tfrac{1}{2} \sm{1+z}{x-iy}{x+iy}{1-z}$. But we know from the 
discussion preceding Proposition~\ref{prop:betaA} that $KO_0(C_0(S^2 \setminus \{1\}), \zeta) \cong \Z$ is generated by 
$[p_0] - [1]$. This proves (3).

Since $c_0^u \colon KO_0(C_0(S^2 \setminus \{1\}), \zeta) \rightarrow 
	KU_0(C_0(S^2 \setminus \{1\}), \zeta)$ is an isomorphism, it follows that
$[u]$ also generates $KU_0(C_0(S^2 \setminus \{1\}), \zeta) = KU_0(C_0(S^2 \setminus \{1\}), {\id})$.

For (4), check that  $u^{\sharp \otimes {\id}} = -u$. Now we also know that  
$$c_{-2} \colon KO_{-2}^u \left(C_{0}(S^{2}\setminus \{1\} ;{\id} \right) \rightarrow 
	KU_{-2}^u\left(C_{0}(S^{2}\setminus \{1\} ;{\id} \right) $$
is an isomorphism and $[u]$ is the same generator of 
$KU_{-2}^u\left(C_{0}(S^{2}\setminus \{1\} ;{\id} \right)  =KU_0 (C_0(S^2 \setminus \{1 \}); {\id} )$
identified in the previous paragraph. So $[u]$ is also a generator of 
$KO_{-2}(C_{0}(S^{2}\setminus \{1\} ), {\id} ) $.

For (5), check that $u$ is a unitary and that $u^{\sharp \otimes {\id}} = u^*$.
The transformation
\[
c_{-3} :KO_{-3}^u \left(C_{0}(S^{3}\setminus\{1\}),{\id} \right)\rightarrow 
	KU_{1}^u \left(C_{0}(S^{3}\setminus\{1\}),{\id} \right)
\]
is known to be an isomorphism and sends $u$ to the known generator of $\pi_{3}(SO(2))$ so also
to a generator of $KU_{1}\left(C_{0}(S^{3}\setminus\{1\}),{\id} \right)$.
\end{proof}

\end{example}

We will return to specific computations of $KO(-)$ groups and elements of $KO(-)$ represented by unitaries in Section~\ref{sec:examples2}.

\section{The boundary map} \label{sec:boundary}

There exist known natural boundary maps 
$\partial_i \colon KO_i(B, \tau) \rightarrow KO_{i-1}(I, \tau) \; $
for a short exact sequence 
\begin{equation} 0 \rightarrow (I, \tau) \rightarrow (A, \tau) \rightarrow (B, \tau) \rightarrow 0 \; .
\label{eq:SESct} \end{equation} 

In this section,
we will derive concrete formulas for these boundary maps described in term of the unitary pictures of $K$-theory.
The approach we will take is to first write down specific formulas for maps
\[
\clubsuit_i :KO_{i}^{u}(B,\tau)\rightarrow KO_{i-1}^{u}(I,\tau) \; ,
\]
then prove that those formulas give well defined and natural homomorphisms, and finally prove that the homomorphisms coincide with $\partial_i$ via the natural isomorphisms $KO_{i}^u(-) \cong KO_{i}(-)$. We start with the odd cases $i = -1, 1, 3, 5$. For each of these the basic formula will be the same, but we will have to conjugate by a different unitary $Y^{(i)}$ in each case in order to obtain a unitary $v$ that is in the correct symmetry class and satisfies $\lambda(u) = I^{(i-1)}_n$. Easier formulas would be possible if we relaxed the $\lambda$ condition (see Remark~\ref{rem:lambda}).

We must introduce notation for several classes of unitaries that will be used for conjugation in these definitions.
First let us recall from Section~\ref{sec:even} that we have the matrix
$$W_{2n} = \tfrac{1}{\sqrt{2}} \begin{pmatrix}
i \cdot 1_{n} & 1_{n} \\ 1_{n} & i \cdot 1_{n}
\end{pmatrix} \in M_{2n}(\C) \; ,$$
which satisfies 
$$W_{2n} \begin{pmatrix} 1_{n} & 0 \\ 0 & -1_{n} \end{pmatrix} W_{2n}^*
	= \begin{pmatrix} 0 & i \cdot 1_{n} \\ -i \cdot 1_{n} & 0 \end{pmatrix} \; .$$
Generalizing the matrix $Q$ used in Section~\ref{sec:summary}, we define
$$Q_{4n} = \tfrac{1}{\sqrt{2}} \left( \begin{matrix} 1_{2n} & -I^{(2)}_n \\ I^{(2)}_n & 1_{2n} \end{matrix} \right) 
    \in M_{4n}(\C) \; .$$
The key property, as in Lemma~1.3 of \cite{hastingsloring}, is that  for all $x \in M_{4n}(A)$ we have
$Q_{4n} x^\tau Q_{4n}^* = (Q_{4n} x Q_{4n}^*)^{\widetilde{\sharp} \otimes \sharp \otimes \tau}$ 
where $\widetilde{\sharp}$ and $\sharp$ are as defined in Section~\ref{sec:prelim}.

Let $V_{2n} \in M_{2n}(\R)$ be the unique permutation matrix such that (for diagonal matrices) we have
$$V_{2n} {\diag}(\lambda_1, \dots, \lambda_{2n} ) V_{2n}^*
      = {\diag}(\lambda_1, \lambda_{n+1}, \lambda_2, \lambda_{n+2}, \dots, \lambda_{n}, \lambda_{2n}) \; .$$
Then for all matrices $x \in M_{2n}(A)$ we have
$$V_{2n} x^{\widetilde{\sharp} \otimes \tau} V_{2n}^* = (V_{2n} xV_{2n}^*)^{\sharp \otimes \tau} \; $$
where $\sharp$ and $\widetilde{\sharp}$ are defined as in Section~\ref{sec:prelim}.
Similarly, let
$X_{4n} \in M_{4n}(\R)$ be the unique permutation matrix such that (for diagonal matrices) we have
\begin{align*}
& X_{4n} {\diag}(\lambda_1, \dots, \lambda_{4n} ) X_{4n}^* \\
   & \quad  = {\diag}(\lambda_1, \lambda_2, \lambda_{2n+1}, \lambda_{2n+2}, \lambda_3, \lambda_4, 
  \lambda_{2n+3}, \lambda_{2n+4}, \dots, 
, \lambda_{4n}) \; .
\end{align*}

\begin{defn}
Suppose we have an exact sequence as in Sequence~(\ref{eq:SESct}). 
Let $\pi$ denote both the quotient map
$\pi: A\rightarrow B$ and its extension to 
$M_{n}(\widetilde{A}) \rightarrow M_{n}(\widetilde{B})$
for every $n$. Furthermore, we assume $I=\ker(\pi)$ and we identify
the unit in $\widetilde{I}$ with that of $\widetilde{A}$. 

\begin{enumerate}
\item Suppose $[u] \in KO_{1}^u(B, \tau)$ where 
$u \in M_{n}(\widetilde{B})$ is a unitary with $u^{\tau}=u^{*}$
and $\lambda(u)=I^{(1)}_n$. Then define 
\[  
\clubsuit_{1}([u])=\left[ Y_{2n}^{(1)} \left(\begin{array}{cc}
2aa^{*}- \1_n & 2a\sqrt{ \1_n -a^{*}a} \\
2a^{*}\sqrt{\1_n-aa^{*}} & \1_n-2a^{*}a
\end{array}\right)  Y_{2n}^{(1)*}  \right]
\in KO_{0}^u(I, \tau)
\]
where $a$ in $M_{n}(\widetilde{A})$ is any lift of $u$ with
$\|a\|\leq1$ and $a^{\tau}=a^{*}$; and $Y_{2n}^{(1)} = V_{2n}$. 

\item Suppose $[u] \in KO_{-1}^u(B, \tau)$ where $u \in M_{n}(\widetilde{B})$
is a unitary with $u^{\tau}=u$
and $\lambda_{n}(u)=I^{(-1)}_n$. 
Then define 
\[
\clubsuit_{-1}([u])=
  \left[ Y_{2n}^{(-1)} \left(\begin{array}{cc}
2aa^{*}-\1_{n} & 2a\sqrt{\1_{n}-a^{*}a}\\
2a^{*}\sqrt{\1_{n}-aa^{*}} & \1_{n}-2a^{*}a
\end{array}\right) Y_{2n}^{(-1)*} \right]
\in KO_6^u(I, \tau)
\]
 where $a$ in $M_{n}(\widetilde{A})$ is any lift of $u$ with
$\|a\|\leq1$ and $a^{\tau}=a$; and
$Y_{2n}^{(-1)} = V_{2n} W_{2n}$.

\item Suppose that $[u] \in KO_{5}^{u}(B, \tau)$ where $u \in M_{2n}(\widetilde{B})$,
is a unitary with $u^{\sharp \otimes \tau}=u^{*}$ and $\lambda_{2n}(u)=I^{(5)}_n$.
Then define 
\[
\clubsuit_{5}([u])=\left[ Y_{4n}^{(5)} \left(\begin{array}{cc}
2aa^{*}-\1_n & 2a\sqrt{\1_n-a^{*}a}\\
2a^{*}\sqrt{\1_n-aa^{*}} & \1_n -2a^{*}a
\end{array}\right)  Y_{4n}^{(5)*} \right]
\in KO^u_4(I, \tau)
\]
 where $a$ in $M_{2n}(\widetilde{A})$ is any lift of $u$ with
$\|a\|\leq1$ and $a^{\sharp \otimes \tau}=a^{*}$;
and $Y_{4n}^{(5)} = X_{4n}$.

\item Suppose that $[u] \in KO_{3}^{u}(B, \tau)$ where $u \in M_{2n}(\widetilde{B})$
is a unitary with $u^{\sharp \otimes \tau}=u$
and $\lambda_{2n}(u)=I^{(3)}_n$. Then define 
\[
\clubsuit_{3}([u])=\left[ Y_{4n}^{(3)} \left(\begin{array}{cc}
2aa^{*}-\1_n & 2a\sqrt{\1_n-a^{*}a}\\
2a^{*}\sqrt{\1_n-aa^{*}} & \1_n-2a^{*}a
\end{array}\right)  Y_{4n}^{(3)*}  \right]
\]
where $a$ in $M_{2n}(\widetilde{A})$ is any lift of $u$ with
$\|a\|\leq1$ and $a^{\sharp \otimes \tau}=a$;
and $Y_{4n}^{(3)} = V_{4n} Q_{4n} W_{4n}$.

\end{enumerate}
\end{defn}

\begin{lemma} \label{lem:clubdefined1}
The maps $\clubsuit_{i}$ are well-defined group homomorphisms, for $i$ odd.
\end{lemma}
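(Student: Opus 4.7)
The plan is to handle all four odd cases in parallel, following the usual template for the complex boundary map while tracking the additional symmetry through each conjugation. First, for each $i \in \{-1, 1, 3, 5\}$ and each representative $u$ of a class in $KO^u_i(B, \tau)$, I would produce a contractive lift $a$ of $u$ in $M_n(\widetilde A)$ (or $M_{2n}(\widetilde A)$ for $i = 3, 5$) carrying the prescribed symmetry. Starting from any norm-$\leq 1$ lift $b$, a standard averaging yields a symmetric contractive lift: $a := \tfrac{1}{2}(b + b^{\tau *})$ for $i = 1$ and $a := \tfrac{1}{2}(b + b^\tau)$ for $i = -1$, with $\sharp \otimes \tau$ in place of $\tau$ for $i = 3, 5$. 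The equation $\pi(a) = u$ survives averaging because $u$ itself carries the corresponding symmetry.

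Second, for any such $a$ the matrix
\[
M = \begin{pmatrix} 2aa^* - \1 & 2a\sqrt{\1 - a^*a} \\ 2a^*\sqrt{\1 - aa^*} & \1 - 2a^*a \end{pmatrix}
\]
equals $2vdv^* - \1$, where $d = \diag(\1_n, 0)$ and $v$ is the R\o rdam lifting unitary of $\diag(u, u^*)$ from the introduction; hence $M$ is a self-adjoint unitary. Since $\pi(aa^*) = \pi(a^*a) = \1_n$ and $\pi(\sqrt{\1_n - a^*a}) = 0$, the matrix $M$ lifts $\diag(\1_n, -\1_n)$ to an element of $M_{2n}(\widetilde I)$.

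Third---the central computation---I would verify case by case that $v^{(i)} := Y^{(i)} M Y^{(i)*}$ satisfies the symmetry $\mathscr{S}_{i-1}$ from Table~\ref{table:summary} and that $\lambda(v^{(i)})$ is the prescribed neutral element $I^{(i-1)}_{n_{i-1} \cdot n}$. The key tools are the commutation of $\tau$ (respectively $\sharp \otimes \tau$) with $*$ and with continuous functional calculus on self-adjoint fixed elements, together with the intertwining identity $f(a^*a) a^* = a^* f(aa^*)$. For $i = 1$ one obtains $M^\tau = M$ directly, and since $V_{2n}$ is a real orthogonal matrix (so $V_{2n}^\tau = V_{2n}^*$), one gets immediately $v^{(1) \tau} = v^{(1)} = v^{(1)*}$. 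For $i = -1$ the symmetry of $a$ yields the identity $M^\tau = -(I^{(2)} \otimes \1_n) M (I^{(2)} \otimes \1_n)$; the conjugation by $W_{2n}$ then uses the intertwining $(W_{2n} x W_{2n}^*)^{\tr} = W_{2n} x^{\trt} W_{2n}^*$ recorded in Section~\ref{sec:prelim} to reshape this into the $KO^u_6$ symmetry, and $V_{2n}$ places $\lambda(v^{(-1)})$ in the correct interleaved form. The cases $i = 3, 5$ are reduced to the previous two by the intertwining $Q_{4n} x^\tau Q_{4n}^* = (Q_{4n} x Q_{4n}^*)^{\widetilde{\sharp} \otimes \sharp \otimes \tau}$ of~\cite{hastingsloring} together with the $V$-identity between $\sharp \otimes \tau$ and $\widetilde{\sharp} \otimes \tau$ from Section~\ref{sec:prelim}.

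Fourth, well-definedness and additivity are essentially formal consequences of the previous steps. Any two contractive symmetric lifts $a_0, a_1$ of the same $u$ are joined by the convex combination $a_t = (1-t) a_0 + t a_1$, which remains a contractive symmetric lift; the associated path $v_t^{(i)}$ is then a homotopy in the correct symmetry class, so $[v^{(i)}]$ is independent of the chosen lift. The same argument applied to a homotopy $u_t$ in the source (after continuous lifting and symmetrization) and to the stabilization $u \rightsquigarrow \diag(u, I^{(i)})$ (which lifts canonically to $a \rightsquigarrow \diag(a, I^{(i)})$, appending a fixed neutral block to $M$) shows that the construction descends to $KO^u_i(B, \tau)$. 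Additivity is immediate from the direct-sum lift $\diag(u, u') \leftarrow \diag(a, a')$, up to a permutation conjugation which, after a harmless stabilization, lies in $SO(\cdot)$ and thus preserves the class (cf.\ Remark~\ref{rem:conj}). The main obstacle will be the third step for $i = 3, 5$, where the combined action of $\widetilde{\sharp}$, $\sharp$, $\tau$, and the multi-factor conjugation $V_{4n} Q_{4n} W_{4n}$ requires careful tracking of block structures and signs; my strategy is to apply the $Q$- and $V$-intertwinings first in order to reduce the entire calculation to the simpler $\tau$-setting already handled in the $i = \pm 1$ cases.
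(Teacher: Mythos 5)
Your plan follows the paper's proof in all essentials: produce a contractive symmetric lift by averaging, observe that $B(a)$ is a self-adjoint unitary (your identity $B(a)=2vdv^*-\1$ with the R{\o}rdam unitary is a clean substitute for squaring $B(a)$ directly), check the symmetries of $Y^{(i)}B(a)Y^{(i)*}$ via intertwining identities (your relation $M^\tau=-(I^{(2)}\otimes\1_n)M(I^{(2)}\otimes\1_n)$ is equivalent to the identity $M^{\widetilde{\sharp}\otimes\tau}=-M$ used in the paper), compute $\lambda$, use straight-line homotopies between lifts, and lift homotopies through $M_{2n}(C[0,1],\widetilde{A})\to M_{2n}(C[0,1],\widetilde{B})$. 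One step, however, is not adequately justified and one identity is misattributed.

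The substantive issue is additivity and stabilization. You dismiss these by saying that $\diag(B(a),B(a'))$ and $B(\diag(a,a'))$ differ by "a permutation conjugation which, after a harmless stabilization, lies in $SO(\cdot)$ and thus preserves the class (cf.\ Remark~\ref{rem:conj})." That remark does not give this for the targets of $\clubsuit_{-1}$, $\clubsuit_{5}$ and $\clubsuit_{3}$: for the classes in degrees $3$--$6$ only conjugation by the blockwise-embedded copy of $O(n)$ in $O(2n)$ is covered, and indeed conjugation by a general permutation or special orthogonal matrix need not even preserve a symmetry of the form $u^{\sharp\otimes\tau}=\pm u$ or $u^{\sharp\otimes\tau}=u^*$, since $\sharp$ refers to the fixed $2\times 2$ block decomposition; so there is no invariance statement available to quote. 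What actually makes the definitions additive is that the conjugators $Y^{(i)}$ (the interleavings $V_{2n}$, $X_{4n}$, $V_{4n}Q_{4n}W_{4n}$) were chosen precisely so that $Y^{(i)}B(\diag(a,b))Y^{(i)*}=\diag\bigl(Y^{(i)}B(a)Y^{(i)*},\,Y^{(i)}B(b)Y^{(i)*}\bigr)$ holds exactly; you need to verify this block identity (a short computation, carried out case by case in the paper), and then stabilization is the special case $b=\1$. A secondary, minor point: for $i=-1$ the identity you cite, $(W x W^*)^{\tr}=Wx^{\trt}W^*$, is not the relevant one; what is used is $W_{2n}^{\widetilde{\sharp}}=-W_{2n}^*$ together with the relation $V_{2n}x^{\widetilde{\sharp}\otimes\tau}V_{2n}^*=(V_{2n}xV_{2n}^*)^{\sharp\otimes\tau}$, so $V_{2n}$ is doing symmetry-conversion work and not merely normalizing $\lambda$.
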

\begin{proof}

We need to show that the unitaries constructed all satisfy the correct
relations, that the choice of lift is not important, that some lift
is always available, that homotopy is respected, that embedding into
larger matrices via $\iota_n^{(i)}$ does not effect the outcome, and that
the addition is respected at the level of $K$-theory. 

For convenience, we define 
\[
B(a)=\left(\begin{array}{cc}
2aa^{*}-\1_n & 2a\sqrt{\1_n-a^{*}a}\\
2a^{*}\sqrt{\1_n-aa^{*}} & \1_n-2a^{*}a
\end{array}\right).
\]
Making repeated use of the equality
$2a\sqrt{\mathbb{1}-a^{*}a} = 2\sqrt{\mathbb{1}-aa^{*}}a$,
we find that $B(a)^* = B(a)$ and that 
\begin{small}
\begin{align*}
& B(a)^{2}= \\
&\left(\begin{array}{cc}
\left(2aa^{*}-\1_n \right)^{2}+4a\left(\1_n-a^{*}a\right)a^{*} & 4aa^{*}a\sqrt{\1_n-a^{*}a}-4a\sqrt{\1_n-a^{*}a}a^{*}a\!\! \\
\!\! 4a^{*}\sqrt{\1_n-aa^{*}}aa^{*}-4a^{*}aa^{*}\sqrt{\1_n-aa^{*}} \!\! \!\!
& \left(\1_n-2a^{*}a\right)^{2}+4a^{*}\left(\1_n-aa^{*}\right)a
\end{array}\right)\\
 &   = \1_{2n}
\end{align*}
\end{small}
so that $B(a)$ is always self-adjoint unitary. In each case, we will check that $B(a)$ satisfies the appropriate symmetry based on the symmetry that $a$ satisfies.

(1) First we consider the map for $i = 1$,
\[
\clubsuit_{1}:KO_{1}^{u}(B,\tau)\rightarrow KO_{0}^{u}(I,\tau) \; .
\]
We start with $u$ in $M_{n}(\widetilde{B})$, a unitary
with $u^{\tau}=u^{*}$ and $\lambda_{n}(u)= \1_n$. There exists a lift
$x$ in $M_n(\widetilde{A})$ such that $\pi(x) = u$ and then we necessarily have
$\lambda_{n}(x)=\1_{n}$. Set 
$
y=\tfrac{1}{2}\left(x+x^{\tau *}\right)
$
to obtain the relation $y^{\tau}=y^{*}$. We utilize the
usual function $f(\lambda)=\min(\sqrt{1/\lambda},1)$ and set $a=yf(y^{*}y)$. 
Then $a$ satisfies $\|a\| \leq 1$, $a^\tau = a^*$, and $\pi(a) = u$. The condition $\lambda(a) = \1_n$ follows automatically since $a$ is a lift of $u$. This shows that an appropriate lift exists. 
Now, suppose that $a$ is any suitable lift of $u$ and let $B'(a) = Y_{2n}^{(1)} B(a) Y_{2n}^{(1)*}$.
Then
\[ \lambda(B'(a)) =  
Y_{2n}^{(1)}
\lambda( B(a) )
Y_{2n}^{(1)*} 
=
 Y_{2n}^{(1)} \left(\begin{array}{cc}
\1_n & 0\\
0 & -\1_n
\end{array}\right) Y_{2n}^{(1)*} 
= I^{(0)}_n \; .
\]
This shows that $B'(a) \in M_{2n}(\widetilde{I})$.

Using $\left(a^{*}a\right)^{\tau}=a^{*}a$
and $\left(a a^{*}\right)^{\tau}=aa^{*}$, we have
\begin{align*}
B(a)^{\tau} 
 & =\left(\begin{array}{cc}
\left(2aa^{*}-\mathbb{1}\right)^{\tau} & \left(2a^{*}\sqrt{\mathbb{1}-aa^{*}}\right)^{\tau}\\
\left(2a\sqrt{\mathbb{1}-a^{*}a}\right)^{\tau} & \left(\mathbb{1}-2a^{*}a\right)^{\tau}
\end{array}\right)\\
  & =\left(\begin{array}{cc}
2aa^{*}-\mathbb{1} & 2a\sqrt{\mathbb{1}-a^{*}a}\\
2a^{*}\sqrt{\mathbb{1}-aa^{*}} & \mathbb{1}-2a^{*}a
\end{array}\right)  =B(a) \; .
\end{align*}
Thus $B(a)^\tau = B(a)$. Since $Y_{2n}^{(1)}$ is a real orthogonal matrix, $B'(a) = Y_{2n}^{(1)} B(a) Y_{2n}^{(1)*}$ satisfies the same relation, which means that $B'(a)$ is the right sort of unitary to define an element of $KO_{0}^{u}(I,\tau)$. (In fact, in this case we have $[B(a)] = [B'(a)] \in KO_0^u(I, \tau)$. We used $B'(a)$ in our definition because it satisfies $\lambda(B'(a)) = I_n^{(0)}$.)

If we have two lifts $a_{0}$ and $a_{1}$ with the required relations,
then the straight line 
\[
a_{t}=(1-t)a_{0}+ta_{1}
\]
satisfies the relations at every point, and so $B(a_{t})$ provides
the needed homotopy showing that 
$\left[ B'(a_1) \right]
 = \left[ B'(a_2) \right]$.

Suppose now that we have a homotopy $u_{t}$ of unitaries
in $M_{n}(\widetilde{B})$ satisfying $u_t^\tau = u^*$. 
By working with the surjection
\[
M_{2n}((C[0,1],\widetilde{A}))\rightarrow M_{2n}((C[0,1],\widetilde{B}))
\]
induced by $\pi$, the techniques of the first paragraph of this proof show that there is a lift
$a_t$ of the homotopy $u_t$, which then is a homotopy between a lift of $u_0$ and one for $u_1$. 

To complete the proof that $\clubsuit_1$ is well defined, we need to show that the results of this construction for $u$ and for
\[
v= \iota_n^{(1)}(u) = \left(\begin{array}{cc}
u & 0\\
0 & \1
\end{array}\right)
\]
are the same element of $KO_0^u(I)$.
In fact, this result follows from a special case (taking $v = \1$) of the argument below that $\clubsuit_1$ is additive.

Suppose that 
$u \in M_m(\widetilde{B})$ and $v \in M_n(\widetilde{B})$ are unitaries representing elements in $KO_1^u(B, \tau)$; and let $a$ and $b$ be self-adjoint unitary lifts of $u$ and $v$, respectively, such that $a^\tau = a^*$ and $b^\tau = b^*$. 
Then ${\diag}(a,b)$ is a lift of ${\diag}(u,v)$.
Consider the two matrices
\begin{small}
\begin{align*}
B \left( \begin{matrix} a & 0 \\ 0 & b \end{matrix} \right)
  &=  \left(\begin{array}{cccc}
2aa^* - \1_m & 0 & \!\!2a \sqrt{\1_m - a^* a} & 0 \\
0 & 2bb^* - \1_n & 0 & \!\!2b \sqrt{\1_n - b^* b}  \\
\!\!2a^* \sqrt{\1_m - a a^*} & 0 & \1_m - 2a^* a & 0 \\
0 &\!\! 2b^* \sqrt{\1_n - b b^*}\!\! & 0 &  \1_n - 2b^* b \end{array} \right)   \\
 \text{~and~~~~~~~~~} &  \\
 \left( \begin{matrix} \! B(a) & 0 \\ 0 & \!\!\!\!\!\!B(b) \end{matrix} \right)
  &=  \left(\begin{array}{cccc}
2aa^* - \1_m &\!\! 2a \sqrt{\1_m - a^* a} & 0 & 0 \\
\!\!2a^* \sqrt{\1_m - a a^*} & \1_m - 2a^* a & 0 & 0 \\
0 & 0 & 2bb^* - \1_n &\!\!2b \sqrt{\1_n - b^* b} \!\! \\
0 & 0 &\!\! 2b^* \sqrt{\1_n - b b^*} & \1_n - 2b^* b \\
 \end{array} \right)
\end{align*}
\end{small}
in $M_{2m+2n}(\widetilde{I})$ and observe that
$$V_{2m+2n} B\left( \begin{pmatrix} a & 0 \\ 0 & b \end{pmatrix} \right) V_{2m+2n}^* 
= \begin{pmatrix} V_{2m} B(a) V_{2m}^* & 0 \\ 0 & V_{2n} B(b) V_{2n}^* \end{pmatrix} \; $$
showing that $\clubsuit_1([u]+[v]) = \clubsuit_1([u]) + \clubsuit_1([v])$.

(2) Now we consider
$$
\clubsuit_{-1}:KO_{-1}^{u}(B,\tau)\rightarrow KO_{6}^{u}(I,\tau).
$$
This time, we start with a unitary $u \in M_{n}(\widetilde{B})$ that satisfies
$u^{\tau} = u$ and $\lambda(u) = \1_{n}$. Using a similar construction as in the previous case, we find an element $a \in M_{n}(\widetilde{A})$ that is a lift of $u$, has norm at most $1$, and satisfies $a^\tau = a$.
For any such $a$, define $B'(a) = Y_{2n}^{(-1)} B(a) Y_{2n}^{(-1)*}$.
Then we have 
$$\lambda(B'(a)) = V_{2n} W_{2n} \begin{pmatrix} \1_{n} & 0 \\ 0 & -\1_{n} \end{pmatrix} 											W_{2n}^* V_{2n}^*
  = V_{2n} \begin{pmatrix} 0 & i \1_{n} \\ -i \1_{n} & 0 \end{pmatrix} V_{2n}^*
  = I_{n}^{(6)} \; .$$

We have $B(a)^2 = \1_{2n}$ as before, so $B'(a)^2 = \1_{2n}$. We show that 
$$(W_{2n} B(a) W_{2n}^*)^{\widetilde{\sharp} \otimes \tau} = -W_{2n} B(a) W_{2n}^* $$
 from which it follows that
$(B')^{\sharp \otimes \tau} = -B'$. Indeed,
\begin{align*}
(W_{2n} B(a) W_{2n}^*)^{\widetilde{\sharp} \otimes \tau} 
&= W_{2n} B(a)^{\widetilde{\sharp} \otimes \tau} W_{2n}^*
		    &&  \text{(since $W_{2n}^{\widetilde{\sharp}} = -W_{2n}^*$)} \\
&= W_{2n} (-B(a)) W_{2n}^* &&\text{(using $a^\tau = a$)}  \\
&= -W_{2n} B(a) W_{2n}^*  \; .
\end{align*}
Then the formua $V_{2n} x^{\widetilde{\sharp} \otimes \tau} V_{2n}^* = (V_{2n} xV_{2n}^*)^{\sharp \otimes \tau} \; $ implies that
$B'(a)^{\sharp \otimes \tau} = -B'(a)$. So $[B'(a)]$ is an element of $KO_6(I, \tau)$ as desired.

The proof that $\clubsuit_{-1}$ is independent of the choice of lift and of the homotopy class of $[u]$
is similar to that in the previous case. To show that 
$\clubsuit_{-1}([u]) = \clubsuit_{-1}([{\diag}(u, \1)])$,
we again appeal to a special case of the additivity argument in the next paragraph.

Let $a \in M_{m}(\widetilde{A})$ and $b \in M_{n}(\widetilde{B})$ be lifts of unitaries $u$ and $v$, satisfying $a^\tau = a$ and $b^\tau = b$.
Then check that
\begin{align*}
  B'\begin{pmatrix} a & 0 \\ 0 & b \end{pmatrix}  
    &= V_{2m+2n} W_{2m+2n} B \left( \begin{matrix} a & 0 \\ 0 & b \end{matrix} \right)
      W_{2m+2n}^* V_{2m+2n}^* \\
  &= \begin{pmatrix} V_{2m}W_{2m} B(a) W_{2m}^* V_{2m}^* & 0 \\ 
	      0 & V_{2n}W_{2n} B(b) W_{2n}^* V_{2n}^* \end{pmatrix} \\
  &= \begin{pmatrix} B'(a) & 0 \\ 0 &  B'(b) \end{pmatrix}  \, .
\end{align*}

(3) For 
$$\clubsuit_5 \colon KO_5^u(B, \tau) \rightarrow KO_{4}^u(I, \tau)\; ,$$ 
we will focus on the two crucial aspects: that the proposed element satisfies the symmetries required to be an element of $KO_{4}^u(I, \tau)$ and that it is respects addition. The other aspects are similar to the previous cases.

Start with a unitary $u \in M_{2n}(\widetilde{B})$ satisfying $u^{\sharp \otimes \tau} = u^*$
and $\lambda(u) = I^{(5)}_n = \1_{2n}$ and suppose that $a \in M_{2n}(\widetilde{A})$ satisfies $a^{\sharp \otimes \tau} = a^*$, $\|a\| \leq 1$, 
and $\lambda(a) = \1_{2n}$.
Let $B'(a) = Y_{4n}^{(5)} B(a)  Y_{4n}^{(5)*}$. Then we have
$$\lambda (B'(a)) = Y_{4n}^{(5)} 
    \begin{pmatrix} \1_{2n} & 0 \\ 0 & -\1_{2n} \end{pmatrix} 
	    Y_{4n}^{(5)*} 
	    = {\diag}(\1_2, -\1_2, \dots \1_2, -\1_2)
	    = I_n^{(4)} \; .$$
Using the fact that
$a^{\sharp \otimes \tau} = a^*$, we can show that $B(a)^{\sharp \otimes \tau} = B(a)^*$ 
just as in the proof of $(1)$ (with the involution $\sharp \otimes \tau$ in place of $\tau$).
Conjugation by $Y_{4n}^{(5)}$ rearranges the $2 \times 2$ blocks of the matrix and the action of $\sharp$ is contained within each such block, so we have
$$Y_{4n}^{(5)} x^{\sharp \otimes \tau} Y_{4n}^{(5)*} 
  = \left(Y_{4n}^{(5)} x Y_{4n}^{(5)*} \right) ^{\sharp \otimes \tau} \; .$$
Thus $B'(a)^{\sharp \otimes \tau} = B'(a)^*$. 

Let $a \in M_{2m}(\widetilde{A})$ and $b \in M_{2n}(\widetilde{B})$ be lifts of unitaries $u$ and $v$, satisfying $a^{\sharp \otimes \tau} = a^*$ and $b^{\sharp \otimes \tau} = b^*$. Then
\begin{align*}
  B'\begin{pmatrix} a & 0 \\ 0 & b \end{pmatrix}   
    &= X_{4m+4n} B \begin{pmatrix} a & 0 \\ 0 & b \end{pmatrix}  X_{4m+4n}^* \\
 &= \begin{pmatrix} X_{4m} B(a) X_{4m}^* & 0 \\ 
	      0 & X_{4n} B(b) X_{4n}^* \end{pmatrix} \\
  &= \begin{pmatrix} B'(a) & 0 \\ 0 &  B'(b) \end{pmatrix}  \, .
\end{align*}

(4) Show that $\clubsuit_3 \colon KO_3^u(B, \tau) \rightarrow KO_{2}^u(I, \tau)$ is well-defined.
We will prove that the proposed element satisfies the symmetries required to be an 
element of $KO_{2}^u(I, \tau)$ and that it is additive.

Suppose that $u$ is a unitary in $M_{2n}(\widetilde{B})$ satisfying $u^{\sharp \otimes \tau} = u$ and 
$\lambda(u) = I_n^{(3)} = \1_{2n}$, and that $a \in M_{2n}(\widetilde{A})$ is lift of norm not more than $1$ satisfying the same symmetry.  Let $B(a)$ be as before and let
$B'(a) = Y_{4n}^{(3)} B(a) Y_{4n}^{(3)*}$ where $Y^{(3)}_{4n} = V_{4n} Q_{4n} W_{4n}$. Then 
we have
\begin{align*}
  \lambda (B'(a)) &= Y_{4n}^{(3)} \lambda(B(a)) Y_{4n}^{(3)*}  \\
  &= V_{4n} Q_{4n} W_{4n}{\diag}(\1_{2n}, -\1_{2n}) W_{4n}^* Q_{4n}^* V_{4n}^* \\
  &= \tfrac{1}{2} V_{4n} 
  \begin{pmatrix} \1_{2n} & -I^{(2)}_n \\ I^{(2)}_n & \1_{2n} \end{pmatrix}
   \begin{pmatrix} 0 & i \cdot \1_{2n}   \\ -i \cdot \1_{2n} & 0 \end{pmatrix} 
	 \begin{pmatrix} \1_{2n} & I^{(2)}_n \\ -I^{(2)}_n & \1_{2n} \end{pmatrix}
	 	  V_{4n}^* \\
  &= V_{4n}  
   \begin{pmatrix} 0 & i \cdot \1_{2n} \\ -i \cdot \1_{2n} & 0 \end{pmatrix}
  V_{4n}^*  \\
  &= {\diag}( I^{(2)}, \dots, I^{(2)} )
   = I_{2n}^{(2)} \; .
\end{align*}

Now $a^{\sharp \otimes \tau} = a^*$ implies that
$B(a)^{\widetilde{\sharp} \otimes \sharp \otimes \tau} = -B(a)$ similar to case $(2)$.
Also $W_{4n}^{\widetilde{\sharp} \otimes \sharp} = -W_{4n}^*$ so
$$(W_{4n} B(a) W_{4n}^*)^{\widetilde{\sharp} \otimes \sharp \otimes \tau}
  = W_{4n} B(a)^{\widetilde{\sharp} \otimes \sharp \otimes \tau} W_{4n}^*
  = -W_{4n} B(a) W_{4n}^* \; .$$
Now we have $Q_{4n} x^\tau Q_{4n}^* = (Q_{4n} x Q_{4n}^*)^{\widetilde{\sharp} \otimes \sharp \otimes \tau}$ for all $x$, from it which it follows that
$Q_{4n}^* x^{\widetilde{\sharp} \otimes \sharp \otimes \tau} Q_{4n} = (Q_{4n} x Q_{4n}^*)^{\tau}$.
Thus 
$$(Q_{4n} W_{4n} B(a) W_{4n}^* Q_{4n}^*)^\tau = -Q_{4n} W_{4n} B(a) W_{4n}^* Q_{4n}^* \; . $$
Since $V_{4n}$ is special orthogonal, the same formula holds for 
$$B'(a) = V_{4n} Q_{4n} W_{4n} B(a) W_{4n}^* Q_{4n}^* V_{4n}^* \; .$$
showing that $B'(a)$ represents an element in $KO^u_2(I, \tau)$.

For additivity,
let $a \in M_{2m}(\widetilde{A})$ and $b \in M_{2n}(\widetilde{B})$ be lifts of unitaries $u$ and $v$, satisfying $a^{\sharp \otimes \tau} = a$ and $b^{\sharp \otimes \tau} = b$. Then
\begin{small}
$$
  B\begin{pmatrix} a & 0 \\ 0 & b \end{pmatrix} = \left(\begin{array}{cccc}
2aa^* - \1_{2m} & 0 & \!\!2a \sqrt{\1_{2m} - a^* a} & 0 \\
0 & 2bb^* - \1_{2n} & 0 & \!\!2b \sqrt{\1_{2n} - b^* b}\!\! \\
\!\!2a^* \sqrt{\1_{2m} - a a^*} & 0 & \1_{2m} - 2a^* a & 0 \\
0 & \!\!2b^* \sqrt{\1_{2n} - b b^*} & 0 &  \1_{2n} - 2b^* b \end{array} \right)  
$$  
\end{small}
as before.  Taking advantage of the block matrix structure of $Q_{4n}$ and $W_{4n}$ and $B\begin{pmatrix} a & 0 \\ 0 & b \end{pmatrix}$, we have
$$Q_{4m+4n} W_{4m+4n} B\begin{pmatrix} a & 0 \\ 0 & b \end{pmatrix} W_{4m+4n}^* Q_{4m+4n}^*
			= \left(\begin{array}{cccc}
A_{11} & 0 & A_{12} & 0 \\
0 & B_{11} & 0 & B_{12}  \\
A_{21} & 0 & A_{22} & 0 \\
0 & B_{21} & 0 &  B_{22} \end{array} \right)  $$
where 
\begin{align*}
\begin{pmatrix} A_{11} & A_{12} \\ A_{21} & A_{22} \end{pmatrix}  &= Q_{4m} W_{4m} B(a) W_{4m}^* Q_{4m}^*  \\
\text{and~}  
 \begin{pmatrix} B_{11} & B_{12} \\ B_{21} & B_{22} \end{pmatrix}  &= Q_{4n} W_{4n} B(b) W_{4n}^* Q_{4n}^* \; . 
 \end{align*}
Hence,
\begin{align*}
V_{4m+4n} Q_{4m+4n} &W_{4m+4n} B\begin{pmatrix} a & 0 \\ 0 & b \end{pmatrix} W_{4m+4n}^* Q_{4m+4n}^*  V_{4m+4n}^* \\
			&= \begin{pmatrix} Q_{4m}W_{4m} B(a) W_{4m}^* Q_{4m}^* & 0 \\ 
								0 & Q_{4n} W_{4n} B(b) W_{4n}^* Q_{4n}^* \end{pmatrix} \; ,
\end{align*}
which shows that $B'\begin{pmatrix} a & 0 \\ 0 & b \end{pmatrix} = \begin{pmatrix} B'(a) & 0 \\ 0 & B'(b) \end{pmatrix}$.
\end{proof}

\begin{defn}
Suppose we have an exact sequence in Sequence~(\ref{eq:SESct}). 
Let $\pi$ denote both the quotient map
$\pi: A\rightarrow B$ and its extension to 
$M_{n}(\widetilde{A}) \rightarrow M_{n}(\widetilde{B})$
for every $n$. Furthermore, we assume $I=\ker(\pi)$ and we identify
the unit in $\widetilde{I}$ with that of $\widetilde{A}$. 

\begin{enumerate}
\item Suppose $[u] \in KO_{2}^u(B, \tau)$ where 
$u \in M_{2n}(\widetilde{B})$ is a unitary with $u^{\tau}=-u$, $u^* = u$,
and $\lambda(u)=I^{(2)}_n$. Then define 
\[  
\clubsuit_{2}([u])=\left[  -\exp(\pi i a)   \right]
\in KO_{1}^u(I, \tau)
\]
 where $a$ in $M_{2n}(\widetilde{A})$ is any lift of $u$ with
$-1 \leq a \leq 1$ and $a^{\tau}=-a$. 

\item Suppose $[u] \in KO_{0}^u(B, \tau)$ where 
$u \in M_{2n}(\widetilde{B})$ is a unitary with $u^{\tau}= u^* = u$,
and $\lambda(u)=I^{(0)}_n$. Then define 
\[  
\clubsuit_{0}([u])=\left[ -\exp(\pi i a)  \right]
\in KO_{-1}^u(I, \tau)
\]
 where $a$ in $M_{2n}(\widetilde{A})$ is any lift of $u$ with
$-1 \leq a \leq 1$ and $a^{\tau}=a$. 

\item Suppose $[u] \in KO_{6}^u(B, \tau)$ where 
$u \in M_{2n}(\widetilde{B})$ is a unitary with $u^{\sharp \otimes \tau}=-u$, $u^* = u$,
and $\lambda(u)=I^{(6)}_n$. Then define 
\[  
\clubsuit_{6}([u])=\left[  -\exp(\pi i a)   \right]
\in KO_{5}^u(I, \tau)
\]
 where $a$ in $M_{2n}(\widetilde{A})$ is any lift of $u$ with
$-1 \leq a \leq 1$ and $a^{\sharp \otimes \tau}=-a$. 

\item Suppose $[u] \in KO_{4}^u(B, \tau)$ where 
$u \in M_{4n}(\widetilde{B})$ is a unitary with $u^{\sharp \otimes \tau}= u^* = u$
and $\lambda(u)=I^{(4)}_n$. Then define 
\[  
\clubsuit_{4}([u])=\left[ -\exp(\pi i a)  \right]
\in KO_{3}^u(I, \tau)
\]
 where $a$ in $M_{4n}(\widetilde{A})$ is any lift of $u$ with
$-1 \leq a \leq 1$ and $a^{\sharp \otimes \tau}=a$. 

\end{enumerate}

\end{defn}

\begin{lemma} \label{lem:clubdefined2}
The maps $\clubsuit_{i}$ are well-defined group homomorphisms, for $i$ even.
\end{lemma}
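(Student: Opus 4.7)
The plan is to handle all four cases $i \in \{0, 2, 4, 6\}$ in parallel, since the argument has the same structure and only the prescribed symmetry changes. Two computational identities drive everything: first, $(\exp(\pi i a))^{\tau} = \exp(\pi i a^{\tau})$, valid because $\tau$ is a $\C$-\emph{linear} antimultiplicative involution and so commutes with holomorphic functional calculus; second, $(\exp(\pi i a))^{*} = \exp(-\pi i a)$ for self-adjoint $a$, using $\C$-antilinearity of $*$. In cases $i = 4, 6$ the same identities hold with $\sharp \otimes \tau$ in place of $\tau$.

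First I would produce an appropriate lift $a \in M_{n_{i} n}(\widetilde{A})$: start with any lift $x$, form $y = \tfrac{1}{2}(x + x^{*})$ to get a self-adjoint lift, then symmetrize as $\tfrac{1}{2}(y \pm y^{\tau})$ with the sign chosen to match the prescribed relation (using that $\tau$ and $*$ commute in any \ctalg), and finally apply the odd truncating function $f(t) = t/\max(|t|,1)$ via functional calculus to enforce $-1 \leq a \leq 1$; oddness of $f$ preserves $a^{\tau} = -a$ as well as $a^{\tau} = a$. Then I would verify four properties of $v := -\exp(\pi i a)$: (i) $v$ is a unitary because $a^{*} = a$; (ii) $v \in M_{n_{i} n}(\widetilde{I})$, because $\pi(v) = -\exp(\pi i u) = 1$ by functional calculus applied to $u^{2} = 1$ (spectrum in $\{\pm 1\}$ gives $\exp(\pm \pi i) = -1$); (iii) $v$ carries the correct $KO^{u}_{i-1}$ symmetry --- for example, for $i = 2$, $v^{\tau} = -\exp(\pi i a^{\tau}) = -\exp(-\pi i a) = v^{*}$, matching the $KO^{u}_{1}$ condition; and (iv) $\lambda(v) = -\exp(\pi i\, I^{(i)}_{n}) = I^{(i-1)}_{n}$, using $(I^{(i)}_{n})^{2} = 1$ and the same scalar identity.

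Well-definedness and functoriality then follow from standard arguments. Any two lifts $a_{0}, a_{1}$ are joined by the straight-line homotopy $a_{t} = (1-t) a_{0} + t a_{1}$, which remains in the required symmetry class (the symmetries are $\R$-linear) and in the unit ball (by convexity of $[-1,1]$), so $-\exp(\pi i a_{t})$ provides a continuous path of unitaries in $\widetilde{I}$ witnessing that $[v]$ depends only on $[u]$. Homotopy invariance in $u$ follows by lifting the path through the cylinder extension $0 \to C([0,1], I) \to C([0,1], A) \to C([0,1], B) \to 0$, exactly as in the proof of Lemma~\ref{lem:clubdefined1}. Stabilization $\clubsuit_{i}([\iota_{n}^{(i)}(u)]) = \clubsuit_{i}([u])$ is immediate from taking the block lift ${\diag}(a, I^{(i)})$ and the identity $-\exp(\pi i\, I^{(i)}) = I^{(i-1)}$ (again because $(I^{(i)})^{2} = 1$). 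Additivity $\clubsuit_{i}([u] + [w]) = \clubsuit_{i}([u]) + \clubsuit_{i}([w])$ then follows from the analogous block-diagonal functional calculus applied to ${\diag}(a, b)$.

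The hard part is not any single step but the bookkeeping across the four cases: one must check that in each case the prescribed symmetry of $a$ together with the $\C$-linearity of $\tau$ (or $\sharp \otimes \tau$) and $\C$-antilinearity of $*$ produces exactly the target symmetry defining $KO^{u}_{i-1}$. The general pattern is that $a^{\tau} = +a$ produces $v^{\tau} = v$ (targeting $KO^{u}_{-1}$), while $a^{\tau} = -a$ produces $v^{\tau} = v^{*}$ (targeting $KO^{u}_{1}$), and analogously $\sharp \otimes \tau$-relations yield the targets $KO^{u}_{3}$ and $KO^{u}_{5}$. Once this dictionary is set up, everything else is mechanical verification parallel to the proof of Lemma~\ref{lem:clubdefined1}.
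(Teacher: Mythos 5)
Your proposal is correct and follows essentially the same route as the paper's proof: the identities $E(a)^\tau = E(a^\tau)$ and $E(-a)=E(a)^*$ for $E(a)=-\exp(\pi i a)$, the straight-line homotopy between lifts, the cylinder argument for homotopy invariance, and the exact block-diagonal identity $E({\diag}(a,b))={\diag}(E(a),E(b))$ for stabilization and additivity. The only differences are cosmetic (your odd-function truncation to produce the contractive symmetric lift, and your explicit check that $\pi(E(a))=\1$ places the unitary in $M(\widetilde{I})$, which the paper leaves implicit).
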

\begin{proof}
This involves proving all the same assertions as in the proof of Lemma~\ref{lem:clubdefined1} above.

(1) First we consider $\clubsuit_2 \colon KO^u_2(B, \tau) \rightarrow KO^u_1(I, \tau)$.
Suppose $u$ is a unitary in $M_{2n}(\widetilde{B})$
with $u^{\tau}=-u$ and $u^{*}=u$ and 
$\lambda(u)= I_n^{(2)}$.
We can lift by standard methods to an element $a \in M_{n}(\widetilde{A})$
with $-1\leq a\leq1$ and are guaranteed $\lambda(a)=I_n^{(2)}$ automatically. 
Replacing with the element $\tfrac{1}{2}(a - a^\tau)$,
we can assume the relation $a^{\tau}=-a$. This shows that an appropriate lift exists.

Now consider any lift $a$ of $u$ satisfying $-1\leq a\leq1$, $a^{\tau}=-a$,
and $\lambda(a)=I_n^{(2)}$. Let $E(a) = - \exp(\pi i a)$, which is of course a unitary when $a$ is self-adjoint;
and satisfies $E(a^\tau) = E(a)^\tau$ and $E(-a) = E(a)^*$.
Since $I^{(2)}$ has eigenvalues $\pm1$ we find $\lambda(E(a))=\1_{2n}$
and we have also the familiar fact that $E(a)$ is a unitary when $a$ is self-adjoint.
As to the real structure, we check
$$E(a)^\tau = E(a^\tau) = E(-a) = E(a)^* \; ,$$
so we have indeed obtained a representative of an element in $KO_{1}^u(I, \tau)$. 

If we have two lifts $a_{0}$ and $a_{1}$ with the required relations,
then the straight line 
\[
a_{t}=(1-t)a_{0}+ta_{1}
\]
satisfies the relations at every point, and so $E(a_t)$
provides the needed homotopy showing $[E(a_1)] = [E(a_2)]$.
To deal with a homotopy from $u_{t}$ in $M_{n}(\widetilde{B})$,
we again use the surjection 
\[
M_{2n}((C[0,1],\widetilde{A})) \rightarrow M_{2n}((C[0,1],\widetilde{B}))
\]
induced by $\pi$ to get $a_{t}$, a homotopy of lifts covering the
$u_{t}$. Then $E(a_t)$ is the needed homotopy.

Next we compare the results of this construction for $u$ and that for
\[
v= {\diag}(u, I^{(2)}) = \left(\begin{array}{ccc}
u & 0 & 0\\
 0 & 0 & i\\
 0 & -i & 0
\end{array}\right).
\]
Take $a$ to be a lift of $u$ with $-1\leq a\leq1$ and $a^{\tau}=-a$,
and take
$b= {\diag}(a, I^{(2)})$
as an appropriate lift of $v$. Then
$E(b) = {\diag}(E(a), E(I^{(2)}) ) = {\diag}(E(a), \1_2)$
showing $[E(a)]=[E(b)]$.

Finally, showing that $\clubsuit_i$ is a group homomorphism is straightforward in each even case, since we have
$E( {\diag}(a,b) ) = {\diag}(E(a), E(b))$ exactly (rather than just up to homotopy as in the odd cases).

(2) Next we show that $\clubsuit_0 \colon KO_0^u(B, \tau) \rightarrow KO_{-1}^u(I, \tau)$ is well defined.
Suppose $u$ in $M_{2n}(\widetilde{B})$ is unitary with
$u^{\tau}=u=u^{*}$ and 
$\lambda_{2n}(u)= I^{(0)}_n$.
Lift to an element $a \in M_{n}(\widetilde{A})$
with $-1\leq a\leq1$ and the condition $\lambda_{2n}(a)=I^{(0)}_n$ is immediate. Make the appropriate replacement to obtain $a^{\tau}=a$. 

Now for any lift $a$ of $u$ with $-1\leq a\leq1$, $a^{\tau}=a$,
and $\lambda(a) = I^{(0)}_n$, 
we see even more easily this time that $\lambda(E(a))=I_{2n}$
and that $E(a)$ is unitary. This time the real structure calculation
is
$$
E(a)^\tau = E(a^\tau) = E(a) \; ,$$
so we have $[E(a)] \in KO_{-1}^u(I, \tau)$ as desired.

Dealing with different lifts and dealing with a homotopy of $u_{t}$, is
accomplished just as in (1). 

Finally, we need to compare the results of this construction for $u$ and for
$v= {\diag}(u, I^{(0)})$. Let $a$ be a lift of $u$ and $b = {\diag}(a, I^{(0)})$ be a lift of $v$.
Then $E(b) = {\diag}( E(a), E(I^{(0)})) = {\diag}(E(a), \1_2)$, showing that  
$\clubsuit_0([u]) = \clubsuit_0([v])$.

(3), (4) The proofs that $\clubsuit_4$ and $\clubsuit_6$ are well defined are the same as the proofs that $\clubsuit_0$ and $\clubsuit_2$ are well defined, using $\sharp \otimes \tau$ instead of $\tau$ everywhere.
\end{proof}

\begin{lemma}
Each $\clubsuit_i$ is natural with respect to morphisms of short exact sequences of real \calg s.
\end{lemma}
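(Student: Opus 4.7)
The plan is to verify naturality by tracking a representative unitary through the defining formula and observing that each step commutes with morphisms of real \calg s. Consider a commutative diagram of short exact sequences
\[
\xymatrix{
0 \ar[r] & (I, \tau) \ar[r] \ar[d]_{\alpha} & (A, \tau) \ar[r]^\pi \ar[d]_{\beta}
  & (B, \tau) \ar[r] \ar[d]_{\gamma} & 0 \\
0 \ar[r] & (I', \tau) \ar[r] & (A', \tau) \ar[r]^{\pi'} & (B', \tau) \ar[r] & 0
}
\]
with $\alpha$, $\beta$, $\gamma$ \ctalg~homomorphisms. It suffices to show that
$\alpha_*(\clubsuit_i([u])) = \clubsuit_i(\gamma_*([u]))$ for any representative unitary $u \in M_k(\widetilde{B})$ of a class in $KO_i^u(B, \tau)$, where $k$ is whatever dimension is appropriate for index $i$.

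First I would fix a representative $u$ with $\lambda(u) = I^{(i)}_n$ and a lift $a \in M_k(\widetilde{A})$ of $u$ satisfying $\|a\| \leq 1$ (or $-\1 \leq a \leq \1$ in the even cases) and the symmetry relation required by the definition of $\clubsuit_i$. Then $\widetilde{\beta}(a) \in M_k(\widetilde{A'})$ is a lift of $\widetilde{\gamma}(u)$, and since $\beta$ is a $(*, \tau)$-homomorphism the relations $\|\widetilde{\beta}(a)\| \leq 1$ and $\widetilde{\beta}(a)^\tau = \pm \widetilde{\beta}(a)^{(*)}$ (or the $\sharp \otimes \tau$ versions) pass automatically to $\widetilde{\beta}(a)$. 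Since $\clubsuit_i$ is independent of the choice of lift (Lemmas~\ref{lem:clubdefined1} and~\ref{lem:clubdefined2}), we may compute $\clubsuit_i(\gamma_*([u]))$ using the specific lift $\widetilde{\beta}(a)$.

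The defining formulas are then manifestly natural. In the odd cases the formula involves $B(a) = \sm{2aa^* - \1}{2a\sqrt{\1-a^*a}}{2a^*\sqrt{\1-aa^*}}{\1 - 2a^*a}$, which is a continuous functional calculus expression in $a$ and $a^*$; hence $B(\widetilde{\beta}(a)) = \widetilde{\beta}_{2k}(B(a))$. In the even cases the formula is $-\exp(\pi i a)$, which is similarly preserved by $*$-homomorphisms. The subsequent conjugation by the real (or complex) scalar matrix $Y^{(i)}_{*}$ commutes with $\widetilde{\beta}_{2k}$ because $Y^{(i)}_{*}$ lies in $M_{*}(\C)$ and thus sits inside both $M_*(\widetilde{A})$ and $M_*(\widetilde{A'})$ as a central constant. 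Restricting everything to $M_*(\widetilde{I})$ and $M_*(\widetilde{I'})$ respectively, one obtains
\[
\alpha_*(\clubsuit_i([u])) = \clubsuit_i(\gamma_*([u])) \; ,
\]
which is the desired naturality.

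The only point requiring any care is bookkeeping: one should check that when $\widetilde{\beta}$ is applied, the $\lambda$-normalization is preserved, which follows from the fact that $\beta$ sends the adjoined unit to the adjoined unit so that $\lambda_{A'} \circ \widetilde{\beta} = \lambda_A$. I do not anticipate any genuine obstacle beyond this routine verification.
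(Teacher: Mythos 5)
Your proposal is correct and follows essentially the same route as the paper: push a chosen admissible lift through the middle map of the diagram, note that the symmetry, norm, and $\lambda$-conditions are preserved, and use that $B(\cdot)$, $-\exp(\pi i\,\cdot)$, and conjugation by the scalar matrices $Y^{(i)}$ all commute with (unitized) $*$-homomorphisms. No further comment is needed.
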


\begin{proof}
Suppose we have a commutative diagram 
$$\xymatrix{
0 \ar[r]
& (I_1, \tau) \ar[r] \ar[d]^\iota
& (A_1, \tau) \ar[r]^{\pi_1} \ar[d]^\alpha
& (B_1, \tau) \ar[r] \ar[d]^\beta
& 0  \\
0 \ar[r]
& (I_2, \tau) \ar[r] 
& (A_2, \tau) \ar[r]^{\pi_2} 
& (B_2, \tau) \ar[r]
& 0
}$$
of real
\ctalg s, with exact rows.
We show that $\iota_{*} \circ \clubsuit_{i} = \clubsuit_{i} \circ \beta_{*}$ for all $i$.

Suppose that $[u_1] \in KO_i^u(B_1, \tau)$ is given by a unitary $u_1 \in M_{n}(\widetilde{B_1})$ satisfying the specific symmetry relations and $\lambda$ requirement. 
Let $u_2 = \beta(u_1) \in M_{n}(\widetilde{B_2})$.
Then select an element $a_1 \in M_n(\widetilde{A_1})$ such that $\pi_1(a_1) = u$ and $a_1$ satisfies the requirements for the lift described in the definition of $\clubsuit_i$. 
Then $a_2 = \alpha(a_1)$ satisfies $\pi_2(a_2) = u_2$ and 
$a_2$ satisfies the requirements to be an appropriate lift of $u_2$.
In the case that $i$ is odd, since $B(\alpha(a_1)) = \alpha(B(a_1))$, we have
\begin{align*}
\alpha_* \clubsuit_i([u_1]) 
  &= \alpha_*( [Y^{(i)} B(a_1) Y^{(i)*}  ])  \\
  &=  [Y^{(i)} \alpha(B(a_1) )  Y^{(i)*}  ]  \\
  &=   [Y^{(i)} B(a_2 )  Y^{(i)*}  ]  \\
  &= \clubsuit_i([u_2]) \\
  &= \clubsuit_i \alpha_*([u_1]) \; .
\end{align*}
A similar calculation using $E(a_1)$ instead of $B(a_1)$ addresses the even cases.
\end{proof}

For reference, we include a parallel definition of the index maps in the complex case.

\begin{defn}
Suppose we have a exact sequence of real \calg s.
Let $\pi$ denote both the quotient map
$\pi: A\rightarrow B$ and its extension to 
$M_{n}(\widetilde{A}) \rightarrow M_{n}(\widetilde{B})$
for every $n$. Furthermore, we assume $I=\ker(\pi)$ and we identify
the unit in $\widetilde{I}$ with that of $\widetilde{A}$. 

\begin{enumerate}
\item Suppose $[u] \in KU_{1}^u(B)$ where $u \in M_{n}(\widetilde{B})$ 
is a unitary with $\lambda(u)=I^{(1)}_n$. Then define 
\[
\clubsuit_{1}([u])
=\left[\left(\begin{array}{cc}
2aa^{*}- \1_n & 2a\sqrt{ \1_n -a^{*}a} \\
2a^{*}\sqrt{\1_n-aa^{*}} & \1_n-2a^{*}a
\end{array}\right)\right] \in KU_0^u(I) \]
where $a$ in $M_{n}(\widetilde{A})$ is any lift of $u$ with
$\|a\|\leq1$.

\item Suppose $[u] \in KU_{0}^u(B)$ where $u \in M_{n}(\widetilde{B})$ 
is a unitary with $u = u^*$ and $\lambda(u)=I^{(0)}_n$. Then define 
\[
\clubsuit_0([u]) 
= \left[ -e^{\pi ia} \right]
 \in KU_1^u(I) \]
where $a$ in $M_{n}(\widetilde{A})$ is any lift of $u$ with
$-1 \leq a \leq 1$.
\end{enumerate}
\end{defn}

These homomorphisms are well-defined and natural, as can be shown by proofs similar to those we have just performed in the real case. In a sense made precise by the following lemma, these definitions of the index map are equivalent to the standard definitions found in the literature.

\begin{lemma} \label{lemma:partialclubsuitcommute}
Let $0 \rightarrow I \rightarrow A \rightarrow B \rightarrow 0$ be a short exact sequence of real \calg s.
\begin{enumerate}
\item For all $0 \leq i < 8$, the following diagram commutes
$$ \xymatrix{
KO_i^u(B) \ar[r]^{\clubsuit_i} \ar[d]^{c_i}
& KO^u_{i-1}(I) \ar[d]^{c_{i-1}}  \\
KU_i^u(B) \ar[r]^{\clubsuit_i} 
& KU^u_{i-1}(I) 
}$$
\item For all $0 \leq i < 2$, the following diagrams commute up to sign
$$ \xymatrix{
KU_1^u(B) \ar[r]^{\clubsuit_1} \ar[rd]_{\partial_1}
& KU^u_{0}(I) \ar[d]^{\Theta}  
& KU_0^u(B) \ar[r]^{\clubsuit_0} \ar[d]_{\Theta}
& KU_1^u(I) \\
& KU_{0}(I) 
& KU_0(B) \ar[ur]_{\partial_0}
& 
}$$
where $\Theta$ is the isomorphism of Theorem~\ref{K0iso}
and $\partial_i$ is the boundary map of the literature in the complex setting.
\end{enumerate}
\end{lemma}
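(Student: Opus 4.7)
My plan is to verify both parts of the lemma by unraveling the definitions of $\clubsuit_i$ and comparing with the standard formulas for the complex boundary maps.

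For part (1), I would observe that in every case, a lift $a \in M_n(\widetilde{A})$ satisfying the real $\tau$-symmetry required by the real-case definition of $\clubsuit_i$ is automatically a valid lift in the sense of the complex definition, which only imposes $\|a\| \leq 1$ (or $-1 \leq a \leq 1$ in the self-adjoint case). For the four even values of $i$, both real and complex definitions produce exactly $[-\exp(\pi i a)]$, and since $c_{i-1}$ is essentially the ``forget the $\tau$-symmetry'' map, commutativity of the square is immediate. For the four odd values of $i$, the real definition yields $[Y^{(i)} B(a) Y^{(i)*}]$ while the complex definition yields $[B(a)]$; it therefore suffices to prove that conjugation by the unitary $Y^{(i)}$ is trivial on $KU_0^u(I)$. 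This follows because each $Y^{(i)}$ is a unitary matrix (real orthogonal for $i = 1, 5$, complex unitary for $i = -1, 3$) and the complex unitary group is path-connected, so $Y^{(i)}$ can be homotoped to the identity through unitaries, producing a path of self-adjoint unitaries from $Y^{(i)} B(a) Y^{(i)*}$ to $B(a)$.

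For part (2), I would directly compare $\clubsuit_1$ and $\clubsuit_0$ with the standard formulas for the index and exponential boundary maps as recalled in equation~\eqref{eq:complex_delta_1} and its surrounding discussion in the introduction. For the $\partial_1$ square: given $[u] \in KU_1^u(B)$ with lift $a$ satisfying $\|a\| \leq 1$, the standard formula gives $\partial_1([u]) = [p] - [\1_n]$ where $p = \tfrac{1}{2}(B(a) + \1_{2n})$, while the explicit form of $\Theta$ from Theorem~\ref{K0iso} gives $\Theta([B(a)]) = [\tfrac{1}{2}(B(a)+\1_{2n})] - [\1_n] = [p] - [\1_n]$, so $\Theta \circ \clubsuit_1 = \partial_1$ on the nose. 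For the $\partial_0$ square: given $[u] \in KU_0^u(B)$ with self-adjoint lift $a$ satisfying $-1 \leq a \leq 1$, the element $b = \tfrac{1}{2}(a + \1_{2n})$ is a self-adjoint lift of $p = \tfrac{1}{2}(u + \1_{2n})$, and the standard exponential formula yields $\partial_0 \Theta([u]) = \partial_0([p] - [\1_n]) = [\exp(2\pi i b)] = [e^{\pi i} \exp(\pi i a)] = [-\exp(\pi i a)] = \clubsuit_0([u])$.

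The main obstacle is modest and concerns sign conventions: different sources define the exponential boundary map using either $\exp(2\pi i b)$ or $\exp(-2\pi i b)$, and since $[u^*] = -[u]$ in $KU_1^u$, this is the source of the ``up to sign'' qualification in the statement. A small additional sanity check is required in part (1) to verify that, after forgetting the real symmetry and the precise $\lambda$-condition $\lambda(u) = I^{(i-1)}_n$, the resulting representative still represents the correct class in $KU_{i-1}^u$; this follows easily since any trace-zero self-adjoint unitary over $\C$ is homotopic to $I^{(0)}_n$ through self-adjoint unitaries, with an analogous statement in $KU_1^u$.
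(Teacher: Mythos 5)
Your proposal is correct and follows essentially the same route as the paper: part (1) by noting that a symmetric lift is in particular a complex lift and that $KU^u$-classes are invariant under conjugation by scalar unitaries (which absorbs the $Y^{(i)}$ and the discrepancy in the $\lambda$-normalization), and part (2) by direct comparison of $\Theta\circ\clubsuit_1$ and $\partial_0\circ\Theta$ with the standard projection/exponential formulas. The only cosmetic difference is where the sign ambiguity is located: the paper's proof gets a factor of $-1$ from R{\o}rdam's convention for the index map $\partial_1$ (giving $[\1_n]-[p]$ rather than $[p]-[\1_n]$), whereas you attribute it to the orientation of the exponential map, but both fall within the ``up to sign'' allowance of the statement.
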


\begin{proof}
Statement (1) is immediate from the definitions of $\clubsuit_i$ for each $i$, noting that $KU^u_i(-)$ classes are unchanged by conjugation by any unitary in $M_n(\C)$.

For statement (2), first let $[u] \in KU_1^u(B)$ where $u \in M_n(\widetilde{B})$ is a unitary with $\lambda(u) = \1_n$. Find a lift $a \in M_n(\widetilde{A})$ of norm at most 1.
Then 
\begin{align*}
(\Theta \circ \clubsuit_1)([u]) 
  &= \Theta([B(a)])   \\
  &= \left[ \tfrac{1}{2}(B(a) + \1_{2n}) \right] - \left[ \1_n \right]  \\
  &= \left[ \begin{pmatrix} aa^* + \1_n & a \sqrt{\1_n - a^* a} \\
			    a^* \sqrt{\1_n - a a^*} & \1_n - a^* a
		\end{pmatrix} \right] 
			- \left[ \1_n \right]   \; . 
\end{align*}
On the other hand, using the description of $\partial_1$ from Proposition~9.2.2 of \cite{rordambookblue}, it is defined in terms of the same lift $a$ and works out to
$$\partial_1([u]) = [\1_n] - \left[ \begin{pmatrix} aa^* + \1_n & a \sqrt{\1_n - a^* a} \\
			    a^* \sqrt{\1_n - a a^*} & \1_n - a^* a
		\end{pmatrix} \right] \; .$$
Hence the diagram commutes after adjusting by a factor of $-1$.

Now let $[u] \in KU_0^u(B)$ where $u \in M_{2n}(\widetilde{B})$ is a unitary with 
$u = u^*$ and $\lambda(u) = I^{(0)}_n$. 
Then $\Theta([u]) = \left[ \tfrac{1}{2}(u + \1_{2n}) \right] - \left[ \1_n \right]$. 
Let $a \in M_{2n}(\widetilde{A})$ be a lift of $u$ satisfying $-1 \leq a \leq 1$.
Then $a' = \tfrac{1}{2}(a + \1_{2n})$ is a self-adjoint lift of the projection 
$p = \tfrac{1}{2}(u + \1_{2n})$ so
using the formulas for $\partial_0$ from Proposition~12.2.2 of \cite{rordambookblue} or~9.3.2 of \cite{blackadarbook},
\begin{align*}
(\partial_0 \circ \Theta)([u])
  &= \partial_0([p] - [\1_n])  \\
  &= [\exp(2\pi ia')]    \\
  &= [\exp(\pi i (a + \1_{2n}))]     \\
  &= [-\exp(\pi ia)]       \\
  &= \clubsuit_0([u]) \; .
\end{align*}
\end{proof}

Our goal for the rest of this section is to prove that in the real case the homomorphisms
$\clubsuit_i$ and $\partial_i$ are the same, up to the same sign adjustment necessary in Lemma~\ref{lemma:partialclubsuitcommute} above. Since any convention of the index map can be adjusted by a sign, we will henceforth assume that the diagrams in Lemma~\ref{lemma:partialclubsuitcommute} commute exactly and prove that $\clubsuit_i = \partial_i$ exactly for all $i$.

For each $i \in \{0, 1, \dots, 7\}$, the algebra $A_i$ is generated by a finite number of elements. By Theorem~5.1.5 of \cite{sorensenthesis}, which is the real \calg~ counterpart of Theorem~2.10 of \cite{loring10}, 
there exist universal real \calg s on a given set of generators subject to the relation that these generators are bounded in norm by $1$. 
Thus, for each $i$, there is such a real \calg~ $P_i$ and a surjective homomorphism 
$$\rho_i \colon P_i \rightarrow A_i \; .$$
Furthermore, since the relation is liftable, the algebras $P_i$ are projective.
From this we obtain the short exact sequences
$$0 \rightarrow J_i \rightarrow P_i \xrightarrow{\rho_i} A_i \rightarrow 0 \; $$
which are universal for the boundary map in a sense that we will take advantage of in the proof of Theorem~\ref{thm:partial=clubsuit} below.

\begin{lemma}
For all $i$ we have 
$\partial_i = \clubsuit_i \colon KO^u_i(A_i) \rightarrow KO^u_{i-1}(J_i)$.
\end{lemma}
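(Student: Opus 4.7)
The plan is to reduce the eight claimed equalities to a small number of explicit calculations by exploiting the group structure and quaternionic periodicity. Since both $\partial_i$ and $\clubsuit_i$ are group homomorphisms out of $KO^u_i(A_i)\cong\Z$, it suffices to check they agree on the generator $[x_i]$ from Example~\ref{ex:Aigenerators}, and the projectivity of $P_i$ supplies canonical lifts through $\rho_i$ of the data used in either formula.

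First I would treat the low-index cases $i\in\{-1,0,1,2\}$ by directly identifying the $\clubsuit_i$ formula with the folk formula for the real boundary map mentioned in the introduction. For $i=1$, $B(a)$ is the standard Bott projection (doubled and translated), and the real $\partial_1$ is computed by exactly this formula when the lift $a$ is chosen with $a^\tau=a^*$; the conjugation by $Y^{(1)}_{2n}=V_{2n}\in SO(2n)$ does not alter the $KO_0$-class by Remark~\ref{rem:conj}. For $i=0$, the identity $-\exp(\pi i a)=\exp\bigl(2\pi i\cdot\tfrac{1}{2}(a+\1)\bigr)$ combined with Lemma~\ref{lemma:partialclubsuitcommute}(2) shows $\clubsuit_0$ is the exponential map on a self-$\tau$ lift, again the known $\partial_0$. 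For $i=-1$ and $i=2$ the algebraic formulas are identical; only the output symmetry differs, and the additional factor $W_{2n}$ in $Y^{(-1)}_{2n}$ is precisely what sends a $\tau$-symmetric self-adjoint unitary to the $\sharp\otimes\tau$-skew form in $KO^u_6$. The needed symmetry checks have already been carried out in Lemmas~\ref{lem:clubdefined1} and~\ref{lem:clubdefined2}.

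Next I would handle the high-index cases $i\in\{3,4,5,6\}$ by applying the quaternionic periodicity isomorphism
\[
KO^u_i(A,\tau)\ \xrightarrow{\ \cong\ }\ KO^u_{i-4}(M_2(\C)\otimes A,\sharp\otimes\tau)
\]
from Propositions~\ref{K3iso}--\ref{K6iso}. Tensoring $0\to J_i\to P_i\to A_i\to 0$ by $(M_2(\C),\sharp)$ gives another short exact sequence, and $\partial_i$ corresponds under this identification to $\partial_{i-4}$ on the tensored sequence. The key point is that the matrices $Q_{4n}$, $W_{4n}$, $V_{4n}$ and $X_{4n}$ assembled into $Y^{(i)}_n$ are chosen precisely so that conjugation by them implements the same identification at the level of unitaries: they convert the $\widetilde\sharp\otimes\sharp\otimes\tau$-symmetric $B(a)$ or $-\exp(\pi i a)$ into the lower-degree symmetry class without changing the $K$-theory class. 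Once this translation is checked, $\partial_i=\clubsuit_i$ reduces to the already-established low-index cases.

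The hard part will be the matrix bookkeeping for $Y^{(i)}_n$, especially $Y^{(3)}_{4n}=V_{4n}Q_{4n}W_{4n}$, where all three auxiliary unitaries appear. I will need to verify that each of the three factors (the $\widetilde\sharp\otimes\sharp\leftrightarrow\tr_4$ intertwiner $Q$, the Clifford-type $W$, and the permutation $V$) has its effect on the output symmetry exactly matched by the corresponding step in the periodicity isomorphism, and that none of them introduces a spurious sign or component obstruction. Since each of these matrices factors through $SO$ or through a connected unitary group already exploited in the paper, the calculation should reduce to a fixed finite checklist of conjugation identities that can be verified by direct computation case by case.
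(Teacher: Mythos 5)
Your plan takes a genuinely different route from the paper, but as written it has a gap at its core. In the low-index cases you identify $\clubsuit_i$ with $\partial_i$ by asserting that ``the real $\partial_1$ is computed by exactly this formula'' and that $\clubsuit_0$ is ``again the known $\partial_0$.'' That identification is precisely what the lemma (and then Theorem~\ref{thm:partial=clubsuit}) is supposed to establish: the introduction's folk-theorem covers only $\partial_1$ and $\partial_5$, and nothing of the sort is available for $\partial_{-1},\partial_0,\partial_2,\partial_6$. Moreover, Lemma~\ref{lemma:partialclubsuitcommute}(2) is a statement purely about $KU$; it says nothing about the real boundary map unless you add an argument that agreement after complexification forces agreement in $KO$. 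The symmetry verifications you cite from Lemmas~\ref{lem:clubdefined1} and~\ref{lem:clubdefined2} only show that $\clubsuit_i$ is well defined and lands in the right group --- they do not connect it to the abstract $\partial_i$. The same circularity then infects the reduction of $i\in\{3,4,5,6\}$ to the low cases, so the careful $Y^{(i)}$ bookkeeping you flag as the hard part would be effort spent on the wrong difficulty.

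The missing ingredient is exactly the one the paper uses, and once you add it your generator computations and periodicity reduction become unnecessary. Since $K\crt(A_i)\cong\Sigma^{-i}K\crt(\R)$ and $K\crt(P_i)\cong 0$, all four groups $KO^u_i(A_i)$, $KO^u_{i-1}(J_i)$, $KU^u_i(A_i)$, $KU^u_{i-1}(J_i)$ are $\Z$ and the complexification maps $c_i$, $c_{i-1}$ are isomorphisms. Naturality of the index map under $c$ gives $c_{i-1}\circ\partial_i=\partial_i\circ c_i$, while Lemma~\ref{lemma:partialclubsuitcommute} (parts (1) and (2), with the sign convention fixed there) gives $c_{i-1}\circ\clubsuit_i=\partial_i\circ c_i$ as well; injectivity of $c_{i-1}$ then forces $\partial_i=\clubsuit_i$ with no computation on generators and no conjugation identities for $Q$, $W$, $V$, $X$. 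If you prefer your computational route, you must replace the appeal to folk formulas by an actual evaluation of the abstract $\partial_i$ on $[x_i]$ (e.g.\ through the suspension or mapping-cone definition), which is substantially harder than the formal argument above.
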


\begin{proof}
Since $P_i$ is projective, we have $K\crt(P_i) \cong 0$, so
$\partial_i \colon KO^u_i(A_i) \rightarrow K^u_{i-1}(J_i)$ is an isomorphism of degree $-1$. By Theorems~\ref{thm:KAieven} and~\ref{thm:KAiodd}, both of these groups are isomorphic to $\Z$.
In fact, since $K\crt(A_i) \cong \Sigma^{-i} K\crt(\R)$, the structure of this \ct-module also implies that all four groups in the diagram below are isomorphic to $\Z$, and that the vertical maps $c_i$ and $c_{i-1}$ are isomorphisms.
$$\xymatrix{
KO^u_i(A_i) \ar[d]^{c_i} \ar[r]^{\partial_i}
&  KO^u_{i-1}(J_i) \ar[d]^{c_{i-1}}  \\
KU^u_i(A_i) \ar[r]^{\partial_i}
& KU^u_{i-1}(J_{i})
}$$
This diagram commutes by the naturality of the index map and the naturality of the complexification map. By Lemma~\ref{lemma:partialclubsuitcommute}, the diagram also commutes if we replace $\partial_i$ in the upper horizontal arrow with $\clubsuit_i$. 
It follows that these two homomorphisms must coincide.
\end{proof}

\begin{thm} \label{thm:partial=clubsuit}
For all $i$, $\partial_i = \clubsuit_i$.
\end{thm}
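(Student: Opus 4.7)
The plan is to leverage the universality of the short exact sequences $0\to J_i \to P_i \to A_i \to 0$ together with the representability of $KO_i^u$ established in Theorem~\ref{thm:classify}. The preceding lemma handles the universal case on the nose; we propagate the equality to an arbitrary short exact sequence by naturality.

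First I would fix a short exact sequence $0 \to I \to A \xrightarrow{\pi} B \to 0$ and a class $[u] \in KO_i^u(B)$. Using that $A_i$ classifies $KO_i$, choose a homomorphism $\phi \colon A_i \to \K\pr \otimes B$ with $\phi_*(\xi_i) = [u]$, where $\xi_i$ is the generator of $KO_i^u(A_i)$ corresponding to the distinguished unitary $x_i$ from Example~\ref{ex:Aigenerators}. Since $P_i$ is projective and the map $\K\pr \otimes A \to \K\pr \otimes B$ remains surjective after tensoring, lift the composition $\phi \circ \rho_i \colon P_i \to \K\pr \otimes B$ to a homomorphism $\psi \colon P_i \to \K\pr \otimes A$. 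Restricting $\psi$ to $J_i$ produces a morphism of short exact sequences from $0 \to J_i \to P_i \to A_i \to 0$ into the stabilized sequence $0 \to \K\pr \otimes I \to \K\pr \otimes A \to \K\pr \otimes B \to 0$.

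Both $\partial_i$ and $\clubsuit_i$ are natural with respect to such morphisms of short exact sequences (naturality of $\clubsuit_i$ was the preceding lemma), so
$$
\partial_i([u]) = \partial_i \phi_*(\xi_i) = (\psi|_{J_i})_* \partial_i(\xi_i) = (\psi|_{J_i})_* \clubsuit_i(\xi_i) = \clubsuit_i \phi_*(\xi_i) = \clubsuit_i([u]),
$$
where the middle equality is the preceding lemma identifying $\partial_i$ and $\clubsuit_i$ on $KO_i^u(A_i)$. After silently using the stability isomorphism $KO_{i-1}^u(I) \cong KO_{i-1}^u(\K\pr \otimes I)$ to land the output in $KO_{i-1}^u(I)$, this is the desired equality.

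The main obstacle I foresee is verifying that $\clubsuit_i$ intertwines the stability isomorphism properly, so that the computation performed in $\K\pr \otimes I$ corresponds to the intended element of $KO_{i-1}^u(I)$. This should reduce to the observation that the explicit formulas defining $\clubsuit_i$ (via $B(a)$ and $-\exp(\pi i a)$, up to the conjugating matrices $Y^{(i)}_n$) commute with the standard embedding of $M_n(\widetilde{I})$ into a larger matrix algebra via direct sum with the designated neutral element $I^{(i-1)}$, which was already used implicitly throughout Sections~\ref{sec:even} and~\ref{sec:odd} to define addition and stabilization in the unitary picture.
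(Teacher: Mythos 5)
Your proposal is correct and follows essentially the same route as the paper: represent the class via a map from $A_i$ (Theorem~\ref{thm:classify}), lift through the projective $P_i$ to get a morphism of short exact sequences, apply the preceding lemma on the universal sequence, and transport by naturality of both $\partial_i$ and $\clubsuit_i$. The stabilization issue you flag at the end is handled in the paper exactly by this same naturality, applied to the corner-embedding morphism of the original sequence into its matrix-stabilized version (whose induced maps on $K$-theory are isomorphisms), so no separate verification of the formulas for $B(a)$ or $-\exp(\pi i a)$ is needed.
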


\begin{proof}
Let 
$$0 \rightarrow I \rightarrow A \rightarrow B \rightarrow 0$$
be a short exact sequence of \ctalg s. Let $\xi \in KO^u_i(B)$. Then by Theorem~\ref{thm:classify} for some integer $n$ there exists a homomorphism $\phi \colon A_i \rightarrow M_n(B)$,
such that $\phi_*([x_i]) = \xi$. Since $P_i$ is projective, there exists a homomorphism $\psi$ and we obtain a homomorphism of short exact sequences,
$$\xymatrix{
0 \ar[r]
& J_i \ar[r] \ar[d]
& P_i \ar[r]^{\rho_i} \ar[d]^\psi
& A_i \ar[r] \ar[d]^\phi
& 0 \\
0 \ar[r]
& M_n(I) \ar[r] 
& M_n(A) \ar[r]
& M_n(B) \ar[r]
& 0  \\
}$$
which then induces a commutative diagram on $K$-theory
$$\xymatrix{
KO^u_i(A_i) \ar[d]^{\phi_*} \ar[rr]^{\text{$\partial_i$ or $\clubsuit_i$}}
&&  KO^u_{i-1}(J_i) \ar[d]  \\
KO^u_i(M_n(B)) \ar[rr]^{\text{$\partial_i$ or $\clubsuit_i$}}
&& KO^u_{i-1}(M_n(I_{i}))
}$$
This diagram commutes if we take the horizontal homomorphisms to be both $\partial_i$ or both $\clubsuit_i$. Since these two choices coincide for the upper arrow, they must coincide for the lower arrow on $\xi$.

Finally, consider the commutative diagrams below arising from the morphism of the original short exact sequence into the same one tensored by $M_n$.
$$\xymatrix{
KO^u_i(B) \ar[d] \ar[rr]^{\text{$\partial_i$ or $\clubsuit_i$}}
&&  KO^u_{i-1}(I_i) \ar[d]  \\
KO^u_i(M_n(B)) \ar[rr]^{\text{$\partial_i$ or $\clubsuit_i$}}
&& KO^u_{i-1}(M_n(I_{i}))
}$$
Since the vertical arrows are isomorphisms, the result of the previous paragraph shows that 
$\partial_i = \clubsuit_i \colon KO_i^u(B) \rightarrow KO_{i-1}^u(I)$.
\end{proof}

\section{Boundary map examples: spheres and Calkin algebras} \label{sec:examples2}

\begin{example}
Let $\sigma$ be the involution on $C(S^1)$ given by $f^\sigma(z) = f(-z)$. The corresponding real \calg~ is
$\{ f \in C(S^1, \C) \mid f(-z) = \overline{f(z)} \} $
which is isomorphic to the real \calg~ $T$ associated with self-conjugate $K$-theory discussed in \cite{boersema02}. The groups $KO_i(T)$ are calculated in Corollary~1.6 of \cite{boersema02}, but we will present a self-contained calculation of $KO_*(C(S^1), \sigma)$ and also find unitary elements representing generators of the non-trivial $KO$-classes.

Let $\sigma$ also denote the involution on $\C \oplus \C$ given by $(z,w)^\sigma = (w, z)$. Then there is a short exact sequence
$$0 \rightarrow (C_0(S^1 \setminus \{\pm 1\}), \sigma)
  \rightarrow (C(S^1), \sigma)
  \xrightarrow{\pi} (\C \oplus \C, \sigma) \rightarrow 0 \; $$
where $\pi = ({\ev}_1, {\ev}_{-1})$ and we will describe the boundary maps
$$\partial_i \colon KO_i^u(\C \oplus \C, \sigma) \rightarrow
      KO_{i-1}^u(C_0(S^1 \setminus \{\pm 1 \}), \sigma) \; .$$

By Lemma~\ref{lem:KOKU}, we have 
$$KO_i^u(\C \oplus \C, \sigma) \cong KU_i(\C) 
    \cong \begin{cases} \Z & \text{$i$ even} \\ 0 & \text{$i$ odd.} \end{cases} $$
Similarly, since there is an isomorphism 
$(C_0(S^1 \setminus \{\pm 1\}), \sigma) \cong ( C_0(0,1) \oplus C_0(0,1), \sigma)$, we have
$$KO_i^u (C_0(S^1 \setminus \{\pm 1\}), \sigma) \cong 
  \begin{cases} 0 & \text{$i$ even} \\ \Z & \text{$i$ odd.} \end{cases} $$
Thus for $i$ even we have $\partial_i \colon \Z \rightarrow \Z$ and we claim that
$$\partial_i = \begin{cases} 0 & i = 0,4 \\  2 & i = 2,6 \end{cases}$$
(up to sign determined by the choices of isomorphism).

From Example~\ref{ex:Rgenerators} and Lemma~\ref{lem:KOKU}, the generator of 
$KO^u_0(\C \oplus \C, \sigma)$ is $[w]$ where $w = (1_2, 1_2)$. This lifts to
$a = 1_2 \in M_2(C(S^1, \C))$, which still satisfies $a^* = a$ and $a^\sigma = a^*$. Then $\partial_0([w]) = [-\exp(\pi i 1_2)] = [1_2]$, so
$\partial_0 =0$.

The generator of $KO^u_2(\C \oplus \C, \sigma)$ is $[w]$ where $w = (1_2, -1_2)$. One lift of $w$ is $a \in M_2(C(S^1, \C))$ defined by
$$a(e^{2\pi i t}) = \begin{pmatrix} f(t) & 0 \\ 0 & f(t) \end{pmatrix} \text{~where~} 
	f(t) = \begin{cases} 1-4t & 0 \leq t \leq 1/2 \\-3 + 4t & 1/2 \leq t \leq 1. \end{cases}$$
Check that $a^* = a$ and $a^\sigma = -a$.
Then $\partial_2([w]) = [E(a)] = [-\exp(\pi i a)]$. We have
$$ E(a)(e^{2 \pi i t}) = \begin{pmatrix} -\exp(\pi i f(t)) & 0 \\ 0 & -\exp(\pi i f(t)) \end{pmatrix}$$
so
$$E(a)
=  \begin{pmatrix} v & 0 \\ 0 & v \end{pmatrix} \text{~where~} 
		v(z) = \begin{cases} \overline{z}^2 & \Im(z) \geq 0 \\ z^2 & \Im(z) < 0. \end{cases}$$
Using the natural isomorphism 
$$(C_0(S^1 \setminus \{\pm 1\}), \sigma) \cong 
  (C_0(0,1) \oplus C_0(0,1), \sigma)$$
and combining Example~\ref{ex:Rgenerators} and Lemma~\ref{lem:KOKU}, we see that
$[v]$ is a generator of $KO_{1}^u (C_0(S^1 \setminus \{\pm 1\}), \sigma)$. Thus $[E(a)]$ is two times a generator.

For $i = 4$, the generator of $KO^u_4(\C \oplus \C, \sigma)$ is $[w]$ where 
$$w = \left( {\diag}(1, 1, 1, -1),{\diag}(1, 1, -1, 1) \right) \in M_4(\C) \oplus M_4(\C) \; .$$
To verify this, note that $w^{\sharp \otimes \sigma} = w$ and that $[{\diag}(1,1,1,-1)]$ is a generator of $KU_0(\C, {\id})$. Then a lift of $w$ is $a$ where
$$a(x,y) = {\diag}\left(1_2 , \sm{x}{y}{y}{-x} \right) \in M_4(C(S^1)) \; .$$ 
which satisfies $a^{\sharp \otimes \sigma} = a$. Since $a$ in fact is a self-adjoint unitary,
then $[E(a)]$ is the trivial class.

Finally, for $i = 6$, the generator of $KO^u_6(\C \oplus \C, \sigma)$ is $[w]$ where
$$w = \left( 1_2, -1_2 \right) \; .$$
This satisfies $w^{\sharp \otimes \sigma} = -w$ and a lift $a$ that satisfies 
$a^{\sharp \otimes \sigma} = -a$ is
$$a(e^{2 \pi i t}) = {\diag}(f(t), f(t) ) \; .$$
Then $E(a) = {\diag}(v, v)$. Again this implies that $[E(a)]$ is two times a generator.

Therefore, we have
$$\begin{array}{|c||c|c|c|c|c|c|c|c|}
\hline
i & 0 & 1 & 2 & 3 & 4 & 5 & 6 & 7  \\ \hline
KO_i(C(S^1), \sigma) & \Z & \Z_2 & 0 & \Z
	    & \Z & \Z_2 & 0 & \Z\\
\hline
\end{array}$$

We will show furthermore that the non-trivial $KO$-groups have generators represented by the following unitaries.
\begin{itemize}
\item $KO_{-1}^u(C(S^1), \sigma) \cong \Z$ generated by $[w_{-1}]$ where $w_{-1}(z) = z^2$.
\item $KO_0^u(C(S^1), \sigma) \cong \Z$ generated by $[w_0]$ where $w_0 = 1_2$.
\item $KO_1^u(C(S^1), \sigma) \cong \Z_2$ generated by $[w_1]$ where 
      $w_1(z) = -1$
\item $KO_3^u(C(S^1), \sigma) \cong \Z$ generated by $[w_3]$ where
       $w_3(z) = {\diag}(z, -z).$
\item $KO_4^u(C(S^1), \sigma) \cong \Z$ generated by $[w_4]$ where 
$w_4(x,y) = {\diag}\left(1_2 , \sm{x}{y}{y}{-x} \right) $.
\item $KO_5^u(C(S^1), \sigma) \cong \Z_2$ generated by $[w_5]$ where 
      $w_5(z) = {\diag}(z, \overline{z}).$
\end{itemize} 

For $i=0,4$, we know that $\pi_*$ is surjective, so it is just a matter of checking that the induced class $[\pi(w_i)]$ is a known generator of $KO_i^u(\C \oplus \C, \sigma)$. For $i$ odd, in each case we start with a known generator $[x_i]$ of 
$KO_i^u (C_0(S^1 \setminus \{\pm 1\}), \sigma)$ and find the induced element in $KO_i^u (C_0(S^1), \sigma)$.

For example, for $i = 1$, consider the unitary 
	$$x_1 = \begin{cases} z^2 & \Im(z) \geq 0 \\ \overline{z}^2 & \Im(z) < 0 \end{cases}$$
which represents the generating class of $KO_1^u (C_0(S^1 \setminus \{\pm 1\}), \sigma)$.  
Note that ${\ev}_{-1}(x_1) = {\ev}_1(x_1) = 1$.  However, as a class of
$KO_1^u(C(S^1), \sigma)) \cong \Z$ we have $[x_1] = [-1]$ since there is a homotopy from $x_1$ to $w_1$ unitaries $w_t$ satisfying $(w_t)^{\sharp \otimes \sigma} = w_t$.  Indeed, note that $x_1$ restricted to the right half of the circle is a unitary-valued path from $-1$ to $-1$ which is homotopic to a constant through such paths. Also note that any such path on the right half of the circle can be extended to a function on the whole circle satisfying the proper symmetry.  (For unitaries in $(C(S^1), \sigma)$ there is no requirement that ${\ev}_1 = 1$.).

For $i = 3$, we start with the unitary
$$x_3 = \begin{cases} {\diag}(z^2, 1) & \Im(z) \geq 0 \\ 
		  {\diag}(1, z^2) & \Im(z) < 0. \end{cases}$$
representing a class in $KO_1^u (C_0(S^1 \setminus \{\pm 1\}), \sigma)$.
In $KO_3^u(C(S^1, \sigma)) \cong \Z$ we have $[x_3] = [w_3]$ where $w_3$ is as above.
Indeed, if $f(z)$ is any unitary-valued function on the circle such that $f(1) = 1$, then
$\sm{f(z)}{0}{0}{f(-z)}$ 
is in the appropriate symmetry class for $KO_3^u(C(S^1), \sigma)$.  Now, the two choices
$f(z) = z$ and $f(z) = \begin{cases} z^2 & \Im(z) \geq 0 \\ 1 & \Im(z) < 0 \end{cases}$ yield the two unitaries $x_3$ and $w_3$ under consideration.  Since these two choices of $f$ are themselves homotopic, they yield a homotopy from $x_3$ to $w_3$.

In a similar way, we obtain the given generator $[w_5] \in KO_5^u(C(S^1, \sigma)) \cong \Z_2$.
 
\end{example}

\begin{example}
We now study the boundary maps for the short exact sequence
\begin{equation} \label{seq:ex2}
0 \xrightarrow{\iota} (C_0(S^1 \setminus \{\pm 1\}), \zeta)
  \rightarrow (C(S^1), \zeta)
  \xrightarrow{\pi} (\C \oplus \C, {\id}) \rightarrow 0 \; \end{equation}
where $\pi = ({\ev}_1, {\ev}_{-1})$ and $f^\zeta(z) = f(\overline{z})$.
The associated real \calg~ to $(C(S^1), \zeta)$
is 
$$A = \{f \colon S^1 \rightarrow \C \mid f(\overline{z}) = \overline{f(z)} \} \; .$$
There is a different split exact sequence involving $A$, namely
$$0 \rightarrow 0 \rightarrow S^{-1} \R \rightarrow A \rightarrow \R \rightarrow 0$$
which easily implies $KO_*(A) \cong KO_*(\R) \oplus \Sigma^{-1} KO_*(\R)$, 
with individual groups shown in the table below.
$$\begin{array}{|c||c|c|c|c|c|c|c|c|}
\hline
i & 0 & 1 & 2 & 3 & 4 & 5 & 6 & 7  \\ \hline
KO_i(C(S^1), \zeta) & \Z & \Z \oplus \Z_2 & \Z_2 \oplus \Z_2 &
	    \Z_2 & \Z & \Z & 0 & 0 \\
\hline
\end{array}$$
However, we will independently calculate the boundary maps associated with the 
Sequence~(\ref{seq:ex2})
using our methods, arrive at the same abstract groups, and identify explicit unitary generators. 

To compute
$$\partial_i \colon KO_i^u(\C \oplus \C, {\id}) \rightarrow 
  KO_{i-1}^u(C_0(S^1 \setminus \{ \pm 1 \}), \zeta)$$
we first identify the relevant groups as
$$\begin{array}{|c||c|c|c|c|c|c|c|c|}
\hline
i & 0 & 1 & 2 & 3 & 4 & 5 & 6 & 7  \\ \hline
KO^u_i(\C \oplus \C, {\id}) & \Z^2 & \Z_2^2 & \Z_2^2 & 0 & \Z^2 & 0 & 0 & 0 \\ \hline
KO^u_{i-1}(C_0(S^1 \setminus \{ \pm 1 \}), \zeta)
  & 0 & \Z & 0 & \Z & 0 & \Z & 0 & \Z \\ \hline
\end{array}$$
so we know right away that $\partial_i = 0$ unless $i = 0,4$. For $i = 0,4$ 
we have $\partial_i \colon \Z \oplus \Z \rightarrow \Z$. We will show that
$\partial_0(r,s) = r-s$ and $\partial_2(r,s) = 2r  - 2s$ (with appropriate identifications).

Suppose $j=0$. Recall that $I^{(0)} = {\diag}(1, -1)$. Then generators of 
 $KO^u_{0}\left(\C \oplus\C,{\id} \right)$
are $[w_1]$ and $[w_2]$ where
\[
w_{1}= \left( 1_2, I^{(0)} \right),
w_{2} = \left( I^{(0)}, 1_2  \right)
\in M_2(\C) \oplus M_2(\C) \; .
\] 
We find self-adjoint lifts $a_i$ of $w_i$ to be
\[
a_{1}(e^{2\pi it})= \begin{pmatrix} 1 & 0 \\ 0 & f(t) \end{pmatrix}
\qquad \text{and} \qquad
a_{2}(e^{2\pi it})= \begin{pmatrix} 1 & 0 \\ 0 & -f(t) \end{pmatrix}
\]
where $f$ is as in Example~1. Check that $a_i^\zeta = a_i$.
Then $\partial_{0}([w_{i}]) = [u_i]$
where $u_{i}=-\exp(\pi ia_{i})$ so
\[
u_{1}= \begin{pmatrix} 1 & 0 \\ 0 & v \end{pmatrix}
\qquad \text{and} \qquad
u_2 = \begin{pmatrix} 1 & 0 \\ 0 & v^*  \end{pmatrix}
\]
and $v$ is as in Example~1 (check that $v^\zeta = v$).
We have an isomorphism 
$(C_0(S^1 \setminus \{ \pm 1\}), \zeta) \cong (C_0(0,1) \oplus C_0(0,1), \sigma)$.
Since $[v]$ represents a generator of 
$KO_{-1}^u(C_0(S^1 \setminus \{ \pm 1 \}), \zeta) \cong \Z$ and $[u_1] = -[u_2]$, this proves our claim for $\partial_0$.

Now suppose $j=4$. The generators of $KO^u_{4}\left( \C \oplus \C,{\id} \right)$
are represented by the unitaries 
$$ w_1 = (1_4, I^{(4)} )
\qquad \text{and} \qquad
w_2 = (I^{(4)}, 1_4 ) \; $$
that satisfy $w_i^{\sharp \otimes {\id}} = w_i$.
Lifts of $w_{i}$ that satisfy $a_i^{\sharp \otimes \zeta} = a_i$ are
$$
a_1(e^{2 \pi i t}) = {\diag}(1, 1, f(t), f(t)) 
\qquad \text{and} \qquad 
a_2(e^{2 \pi i t}) = {\diag}(1, 1, -f(t), -f(t)) \; .
$$
Therefore $\partial_{1}([w_{i}]) = [E(a_i)] = [u_i]$
where
$$
u_{1} = {\diag}(1, 1, v, v) 
\qquad \text{and} \qquad u_2 = {\diag}(1, 1, v^*, v^*) \; .
$$
Through the isomorphisms  
$$\Z \cong KU_1^u(C_0(0,1), {\id} ) \cong KO_3^u(C_0(0,1) \oplus C_0(0,1), \sigma)
  \cong KO_3^u(S^1 \setminus \{\pm 1\}, \zeta)$$
we conclude that the $KO_3^u(S^1 \setminus \{\pm 1\}, \zeta) \cong \Z$ class of a unitary
is determined by the winding number of that unitary on the top half of the circle.  Thus
$[{\diag}(v,v)]$ is twice a generator. 
(A generator would be given for example by a unitary such as $w_3$ below).
Since $[u_1] = -[u_2]$, 
This proves the claim for $i =4$.

Now that the boundary maps are understood, the only group that is not fully determined up to isomorphism by the exact sequences is
$KO_1^u(C(S^1), \zeta)$ which is an extension of $\Z_2 \oplus \Z_2$ by $\Z$. We will show that
$KO_1^u(C(S^1), \zeta) \cong \Z \oplus \Z_2$. Note that 
the generator of $KO_1^u(C_0(S^1 \setminus \{ \pm 1 \}), \zeta)$ is given by
$[v']$ where $v'(z) = z^2$ (check that $(v')^\zeta = (v')^*$). The image $\iota_*[v']$ is divisible by $2$ in $KO_1^u(C(S^1), \zeta)$, since $[\iota(v')] = 2[v'']$ where $v''(z) = z$ (again, check that $(v'')^\zeta = (v'')^*$). This shows that the extension problem for
$$0 \rightarrow \Z \xrightarrow{\iota_*} KO_1^u(C(S^1), \zeta) 
  \xrightarrow{\pi_*} \Z_2^2 \rightarrow 0$$
is solved by $ KO_1^u(C(S^1), \zeta) \cong \Z \oplus \Z^2$.

Now that we have determined up to isomorphism that groups $KO_i^u(C(S^1), \zeta)$, we identify the
generators and write down specific isomorphisms.
\begin{itemize}
\item $KO_0^u(C(S^1), \zeta) \cong \Z$ generated by $[w_0]$ where $w_0 = 1_2$.
The isomorphism can be realized as $[w] \mapsto \tfrac{1}{2}\text{trace}(w(1))$.
\item $KO_1^u(C(S^1), \zeta) \cong \Z \oplus \Z_2$ generated by $[w_1]$ and $[w'_1]$ where
$w_1(z) = z$ and $w_1'(z) = -1$. Clearly, $[w_1']$ generates the element of order 2.
The map
\[
KO^u_{1}(C(S^1), \zeta)\rightarrow\mathbb{Z}\oplus\mathbb{Z}_{2}
\]
is described by
\[
[w]
=
\left(
\mathrm{winding}(t\mapsto\det\left(w(e^{2\pi it})\right)),
\tfrac{1}{2}-\tfrac{1}{2}\det(w(1))
\right)
\]
where $t$ is in $[0,1]$.
\item $KO_2^u(C(S^1), \zeta) \cong \Z_2 \oplus \Z_2$ generated by $[w_2]$ and $[w'_2]$ where
  $$w_2(x+iy) = \begin{pmatrix} y & ix \\ -ix & y \end{pmatrix}	\qquad \text{and} \qquad
  w'_2(x+iy) = \begin{pmatrix} y & -ix \\ ix & y \end{pmatrix} \; .	$$
Moreover, a nice formula is that the isomorphism 
$KO_2^u(C(S^1), \zeta) \rightarrow \Z_2 \oplus \Z_2$
is given by
\[
[w]\mapsto\left(\mathrm{sign}(\mathrm{Pf}(w(1)),\mathrm{sign}(\mathrm{Pf}(w(-1))\right).
\]

\item $KO_3^u(C(S^1), \zeta) \cong \Z_2$ generated by $[w_3]$ where
  $$w_3(z) = \begin{cases} 
      \begin{pmatrix} z^2 & 0 \\ 0 & 1 \end{pmatrix} & \Im z \geq 0 \\
        \begin{pmatrix} 1 & 0 \\ 0 & \overline{z}^2 \end{pmatrix} & \Im z < 0. \\
	      \end{cases}$$
The class of any unitary $w$ can be determined by looking at the winding number of $w$ restricted to the top half of the circle (modulo 2).
\item $KO_4^u(C(S^1), \zeta) \cong \Z$ generated by $[w_4]$ where $w_4 = 1_4$.
The isomorphism can be realized as $[w] \mapsto \tfrac{1}{2}\text{trace}(w(1))$.

\item $KO_5^u(C(S^1), \zeta) \cong \Z$ generated by $[w_5]$ where 
  $$w_5(z) = \begin{cases} 
      \begin{pmatrix} z^2 & 0 \\ 0 & 1 \end{pmatrix} & \Im z \geq 0 \\
        \begin{pmatrix} 1 & 0 \\ 0 & z^2 \end{pmatrix} & \Im z < 0. \\
	      \end{cases}$$
The class of any unitary $w$ can be determined by looking at the winding number of $w$ restricted to the top half of the circle.
\end{itemize} 

\begin{proof}[Sketch of Proof.]
For $i = 0, 1, 2$ it suffices to show that the shown generators map via $\pi_*$ to corresponding generators of $KO_i^u(\C \oplus \C, {\id})$.  
For $i = 4$, it suffices to show that the shown generator $[1_4]$ maps via $\pi_*$ to the generator of the kernel of $\partial_4$. This generator corresponds to $(2,2)$ in our conventional isomorphism
$KO_3^u(\C \oplus \C, {\id}) \cong \Z \oplus \Z$.

For $i = 3, 5$ it suffices to show that the shown generators are the image of generators of 
$KO_i^u(C_0(S^1 \setminus \{ \pm 1\}), \zeta)$.

For example, to identify the generators of $KO_2(C(S^1), \zeta) \cong \Z_2 \oplus \Z_2$, first note that $(w_2)^{\zeta} = -w_2$ and similarly for $w'_2$. We know that $\pi_*$ is an isomorphism on $KO_2^u(-)$ so it suffices to note that $[\pi(w_2)] = [I^{(2)}, -I^{(2)}]$ and $[\pi(w'_2)] = [-I^{(2)}, I^{(2)}]$ which are the generators of $KO_2^u(\C \oplus \C, {\id}) \cong \Z_2 \oplus \Z_2$.
\end{proof}

\end{example}

\begin{example}
We will consider the exact sequence
$$0 \rightarrow C_0(U, \R) \rightarrow C(D, \R) \rightarrow C(S^1, \R) \rightarrow 0$$
where $D$ is the unit disk and $U = D \setminus S^1$ is the interior of $D$. In terms of \ctalg s, we have
$$0 \rightarrow (C_0(U), {\id}) \rightarrow (C(D), {\id}) \rightarrow (C(S^1), {\id}) \rightarrow 0 \; .$$

We wish to disregard the summands of $KO^u_*(C(D), {\id})$ and $KO^u_*(C(S^1) ,{\id})$ associated with the unit, so we consider the reduced $K$-theory 
$$0 \rightarrow C_0(U, \R) \rightarrow C_0(D \setminus \{1\}, \R) 
  \rightarrow C_0(S^1 \setminus \{1\}, \R) \rightarrow 0$$
Then the boundary map is an isomorphism 
$$\partial_i \colon KO_i^u(C_0(S^1 \setminus \{1\}), {\id})  \rightarrow KO_{i-1}^u(C_0(U), {\id}) $$ 
with the groups as shown.
$$\begin{array}{|c||c|c|c|c|c|c|c|c|}
\hline
i & -2 & -1 & 0 & 1 & 2 & 3 & 4 & 5   \\ \hline
KO_*^u(C(S^1 \setminus \{1\}), {\id})  & 0 & \Z & \Z_2 & \Z_2 & 0 & \Z & 0 & 0 \\ \hline
KO^u_i(C_0(U), {\id}) & \Z & \Z_2 & \Z_2 & 0 & \Z & 0 & 0 & 0\\
\hline
\end{array}$$

We will focus on the case when $i = -1$. The free abelian generator of 
$KO_{-1}^u(C_0(S^1 \setminus \{1\}), {\id})$ 
is $[w_{-1}]$ where
$w_{-1}(x,y) = x + iy$, clearly satisfying 
$w_{-1}^{{\id}} = w_{-1}$. 
Then an appropriate lift of $z$ in $C(D)$ is
$a(x,y) = x+iy$, so
$\partial_{-1}[w_{-1}] = [W_2 B(a) W_2^*] \in KO_{-2}(C_0(U), {\id})$ where
\begin{align*} B(a) 
  &= \begin{pmatrix} 2a a^* - 1 & 2a \sqrt{1 - a^* a} \\ 2a^* \sqrt{1 - a a^*} & 1 - 2a^* a \end{pmatrix}  \\
  &= \begin{pmatrix} 2(x^2 + y^2) - 1 & 2(x+iy) \sqrt{1 - (x^2 + y^2)} \\
		2(x-iy) \sqrt{1 - (x^2 + y^2) } & 1 - 2(x^2 + y^2)
    \end{pmatrix} \; .
\end{align*}
Notice that on the boundary of $D$, $B(a) = {\diag}(1, -1)$ so $\lambda(W_2 B(a) W_2^*) = I^{(2)}$ as expected.

There is a continuous map from $S^2  \setminus \{(0,0,1)\}$ to $U$ given by 
$$(x,y,z) \mapsto \sqrt{\frac{z+1}{2(x^2 + y^2)}} (x,y)$$
which gives an isomorphism $C_0(S^2 \setminus \{(0,0,1) \})$ to $C_0(U)$. Under this transformation, we have
$$B(a) = \begin{pmatrix}
                z & x + iy \\ x-iy & -z 
                \end{pmatrix} \in M_2(C(S^2)) \; .$$
Then 
$$W_2 B(a) W_2^*  = \begin{pmatrix} -y & x + iz \\ x - iz & y \end{pmatrix} \; $$
which is equivalent (via a rigid automorphism of the sphere) to the generator of $KO_{-2}^u(C_0(S^2 \setminus \{ 1 \} ) ) \cong \Z$ that was identified in Example~\ref{ex:spheregenerators}.
\end{example}

Our final example uses our machinery to deal with index maps applied to
Fredholm operators with various symmetries.  For a more direct approach, and one
that allows for a $\mathbb{Z}_2$-graded in addition to a real structure,
see \cite{grossmann2015index}.

\begin{example}
Let $\B\pr$ be the real \calg~ of bounded operators on a separable infinite dimensional real Hilbert space and let $\K\pr$ be the ideal of compact operators. Then we have a short exact sequence
$$0 \rightarrow \K\pr \rightarrow \B\pr \xrightarrow{\pi} \Q\pr \rightarrow 0 $$
where $Q\pr$ is the real Calkin algebra. For any real \calg~ $A$, it follows from Theorem 1.12 and Proposition 1.15 of \cite{boersema02} that $KO_*(A) = 0$ if and only if $KU_*(A) = 0$.  Therefore $KO_*(\B\pr) = 0$ and $\partial_i \colon KO_i(\Q\pr) \rightarrow KO_{i-1}(\K\pr)$ is an isomorphism for each $i$. Therefore,
$$\begin{array}{|c||c|c|c|c|c|c|c|c|}
\hline
i & 0 & 1 & 2 & 3 & 4 & 5 & 6 & 7  \\ \hline
KO_i(\Q\pr) & 0 & \Z & \Z_2 & \Z_2 & 0 & \Z & 0 & 0 \\ \hline
KO_i(\K\pr) &  \Z & \Z_2 & \Z_2 & 0 & \Z & 0 & 0 & 0\\ 
\hline
\end{array}$$
We will identify the generators of $KO_i^u(\K\pr)$ and, working backwards, the generators of $KO_i^u(\Q\pr)$ for all $i$.

 Let $e$ be  a rank 1 projection in $\K\pr$ and let $\iota \colon \R \rightarrow \K\pr$ be the homomorphism given by $t \mapsto t e$, which induces an isomorphism on $KO_*(-)$. The generators of $KO^u_i(\R)$ are given in Example~\ref{ex:Rgenerators}. Recall that the generator of $KO^u_0(\R)$ was identified as $[1_2]$ where $1_2 \in M_2(\R)$. Working in the unitization $\widetilde{\R}$ this corresponds to the unitary $w = {\diag}(0,2) + {\diag}(\1, -\1) \in M_2(\widetilde{\R})$, which satisfies $[\lambda(w)] = [{\diag}(\1, -\1)] = 0$ (see Remark~\ref{rem:unit}).
Then the generator of $KO_0^u(\K\pr)$ is given by 
$[\widetilde{\iota}(w)] = [{\diag}(\1, 2e - \1)]$, as shown below. The rest of the generators of $KO_i(\K\pr)$ for $i = 1,2,4$ are worked out similarly.
Note that these unitary representatives are given in terms of the \ctalg ~$(\K, \tau)$ where $\tau$ is the associated involution on $\K$.
\begin{itemize}
\item The generator of $KO_0^u(\K\pr)$ is given by $[w_0]$ where
$w_0 = {\diag}(\1, 2e - \1,  ) \in M_2(\widetilde{\K})$.
\item The generator of $KO_1^u(\K\pr)$ is given by $[w_1]$ where
$w_1 = -2e + \1 \in \widetilde{\K}$.
\item The generator of $KO_2^u(\K\pr)$ is given by $[w_2]$ where
$$w_2 = \begin{pmatrix}
		0 & i(-2e + \1) \\ i(2e - \1) & 0
\end{pmatrix} \in M_2(\widetilde{\K}) \; .$$
\item The generator of $KO_4^u(\K\pr)$ is given by $[w_4]$ where
$$w_4 = {\diag}(\1, \1, 2e - \1, 2e - \1) \in M_4(\widetilde{\K}) \; .$$
\end{itemize}
In each case, an element generating $KO_i^u(\R)$ corresponds to an element in $KO_i^u(\K\pr)$ via 
$[u] \mapsto \left[(u - I_n^{(i)}) \otimes e + I_n^{(i)} \otimes \1_{\K} \right]$.

Let $s \in \B\pr$ be the one-sided shift operator defined  by $s(e_i) = e_{i+1}$ where $\{e_i\}$ is a given basis of the underlying (real) Hilbert space. Then $s$ satisfies $s^\tau = s^*$. Let $u = \pi(s) \in \Q\pr$, which is a unitary and also satisfies $u^\tau = u^*$. We claim that that generators of $KO^u_i(\Q\pr)$ are the following.
\begin{itemize}
\item The generator of $KO_1^u(\Q\pr)$ is given by $[v_1]$ where
$v_1 = u \in M_1(\widetilde{\Q})$.
\item  The generator of $KO_2^u(\Q\pr)$ is given by $[v_2]$ where
$$v_2 = \begin{pmatrix} 0 & iu \\ -iu^* & 0  \end{pmatrix} \in M_2(\widetilde{\Q}) \; .$$
\item The generator of $KO_3^u(\Q\pr)$ is given by $[v_3]$ where
$$v_3 = \begin{pmatrix} u & 0 \\ 0 & u^*  \end{pmatrix} \in M_2(\widetilde{\Q}) \; ,$$
\item The generator of $KO_5^u(\Q\pr)$ is given by $[v_5]$ where
$v_5 = {\diag}(u,u) \in M_2(\widetilde{\Q})$.
\end{itemize}

\begin{proof}
In each case, we verify that $\partial_i([v_i]) = [w_{i-1}]$ and then the result follows since $\partial_i$ is an isomorphism.

For the first statement, we calculate $\partial_1([v_1]) = \partial_1([u])$. First lift $u$ back to the partial isometry $s \in \B\pr$. Using the formulas $s^* s = 1$ and $s s^* = 1 - e$ we have
$$B(s) = \begin{pmatrix} 2 s s^* - \1 & 2 s \sqrt{\1 - s^* s} \\
			2 s^* \sqrt{\1 - s s^*} & \1 - 2 s^* s 
         \end{pmatrix}
         = \begin{pmatrix} \1 - 2e & 0 \\ 0 & -\1 \end{pmatrix} $$
Then, noting that $Y_2^{(1)} = 1_2$, we have
$$\partial_1[u] = [ Y_2^{(1)} B(s) Y_2^{(1)} ] = [{\diag}(\1-2e, -\1)] 
  =  [{\diag}(\1, 2e-\1)] \; ,$$ 
which is the generator of $KO_0^u(\K\pr)$. 

For the second statement, first notice that $v_2$ is a self-adjoint unitary and that $v_2^\tau = -v_2$. The appropriate lift to $\widetilde{\K}$ is 
$$a = \begin{pmatrix} 0 & is \\ -is^* & 0 \end{pmatrix} \; .$$
Then 
\begin{small}
\begin{align*}
& \partial_2([v_2]) \\
		&\quad= \left[-\exp \left( \pi \begin{pmatrix} 0 & -s \\ s^* & 0 \end{pmatrix} \right) \right] \\
		&\quad= \left[- \left( \begin{pmatrix} \1 & 0 \\ 0 & \1 \end{pmatrix} 
				    + \pi \begin{pmatrix} 0 & \!\!-s \\ s^* & 0 \end{pmatrix}
				    + \frac{\pi^2}{2!} \begin{pmatrix} -\1 + e  \!\!\!\!& 0 \\ 0 &\!\! -\1 \end{pmatrix}
				    + \frac{\pi^3}{3!} \begin{pmatrix} 0 & s \\ -s^* & 0 \end{pmatrix}
				    + \cdots \right) \right] \\
		&\quad= \left[-\left( \begin{pmatrix} e & 0 \\ 0 & 0 \end{pmatrix} + 
			  \cos \left( \pi \cdot \begin{pmatrix} \1 - e & 0 \\ 0 & \1 \end{pmatrix} \right)
			+ \sin \left( \pi \cdot \begin{pmatrix} 0 & -s \\ s^* & 0 \end{pmatrix} \right) \right) \right] \\
		&\quad= \left[ \begin{pmatrix} \1 - 2e & 0 \\ 0 & \1 \end{pmatrix} \right] \\	
		&\quad= [w_1]
\end{align*}
\end{small}
which is the desired generator of $KO_1^u(\K\pr)$.

Now consider $v_3$, which is a unitary that satisfies $v_3^{\sharp \otimes \tau} = v_3$. The obvious lift is
${\diag}(s,s^*) \in {\B}$. We calculate
$$B \left( \begin{pmatrix} s & 0 \\ 0 & s^* \end{pmatrix} \right) 
        =  \begin{pmatrix} \1 - 2e & 0 & 0 & 0 \\ 0 & \1 & 0 & 0 \\ 0 & 0 & -\1 & 0 \\ 0 & 0 & 0 & -\1 + 2 e 
           \end{pmatrix} $$
and then conjugate by $Y_4^{(3)} = V_4 Q_4 W_4$ to obtain
$$B' =  \begin{pmatrix} 0 & i(\1-e) & 0 & -i e \\ -i(\1-e) & 0 & ie & 0 \\ 0 & -ie & 0 & i(\1-e)
				\\ ie & 0 & -i(\1-e) & 0
           \end{pmatrix} \; .$$
Notice that $\lambda(B') = I_2^{(2)}$ as expected. 
The class $[B']$ corresponds to the class in $KO^u_2(\R)$ given by the unitary
$$ C' = \begin{pmatrix}
	  0 & 0 & 0 & -i \\ 0 & 0 & i & 0 \\ 0 & -i & 0 & 0 \\ i & 0 & 0 & 0
   \end{pmatrix} \in M_4(\R)
 \; .$$
The Pfaffian of $C'$ distinguishes it from the trivial element represented by
$$I^{(2)}_2 = \begin{pmatrix}
	  0 & i & 0 & 0 \\  -i & 0 & 0 & 0 \\ 0 & 0 & 0 & i \\ 0 & 0 &-i & 0 
   \end{pmatrix} 
 \; .$$
Therefore, $[B'] = [w_2]$ is the non-trivial class in $KO_2^u(\K\pr)$.

Finally, the proof for $v_5$ is similar to that for $v_1$.  The lift for $v_5$ is $a = {\diag}(s, s)$ and then 
$$\partial_5([v_5]) = [B(a)] = [ {\diag}(\1 - 2e, \1 - 2e, -\1, -\1) ] = [w_4]$$
as desired. (The conjugation matrix in this case is $Y^{(4)}_4 = X_4 = 1_4$.)
\end{proof}

\end{example}

\section*{Acknowledgments}

We would like to thank Efren Ruiz, Adam S{\o}rensen, and Hermann Schulz-Baldes for various assistances that contributed to this work.
The first named author would like to express his gratitude to the University of New Mexico for its hospitality during the time this work was accomplished.

\end{document}